\definecolor{linkred}{RGB}{255,1,1}
\definecolor{citegreen}{RGB}{1,190,1}
\def\toclevel@authorch{1000}
\def\toclevel@author{1000}
\spnewtheorem{theorem}{Theorem}[section]{\bfseries}{\itshape}
\spnewtheorem{lemma}[theorem]{Lemma}{\bfseries}{\itshape}
\spnewtheorem{definition}[theorem]{Definition}{\bfseries}{\upshape}
\spnewtheorem{proposition}[theorem]{Proposition}{\bfseries}{\itshape}
\spnewtheorem{corollary}[theorem]{Corollary}{\bfseries}{\itshape}
\spnewtheorem*{remark}{Remark}{\itshape}{\rmfamily}
\spnewtheorem*{remarks}{Remarks}{\itshape}{\rmfamily}
\spnewtheorem{example}[theorem]{Example}{\bfseries}{\upshape}
\spnewtheorem{examples}[theorem]{Examples}{\bfseries}{\upshape}
\newcommand{\N}{\mathbb N}
\newcommand{\Z}{\mathbb Z}
\newcommand{\R}{\mathbb R}
\newcommand{\Q}{\mathbb Q}
\newcommand{\F}{\mathbb F}
\newcommand{\C}{\mathbb C}
\newcommand{\bdot}{\boldsymbol{\cdot}}
\newcommand{\ito}{\overset\sim\to}
\newcommand{\BF}{\text{\rm BF}}
 \DeclareMathOperator{\ind}{ind}
 \DeclareMathOperator{\ord}{ord}
\DeclareMathOperator{\lcm}{lcm} 
 \DeclareMathOperator{\supp}{supp}
\DeclareMathOperator{\Ker}{Ker} \DeclareMathOperator{\Hom}{Hom}
\DeclareMathOperator{\GL}{GL}
\DeclareMathOperator{\Sym}{Sym}
\renewcommand{\t}{\, | \,}
\newcommand{\red}{{\text{\rm red}}}
\newcommand{\DP}{\negthinspace : \negthinspace}
\begin{document}

\title*{The interplay of Invariant Theory with Multiplicative Ideal Theory and with Arithmetic Combinatorics}
\titlerunning{Invariant Theory, Multiplicative Ideal Theory, and Arithmetic Combinatorics}

\author{K\'alm\'an Cziszter and M\'aty\'as Domokos and Alfred Geroldinger}

\institute{MTA Alfr\'ed R\'enyi Institute of Mathematics,  Re\'altanoda u. 13 -- 15, 1053 Budapest, Hungary, {\it Email addresses: cziszter.kalman@gmail.com, domokos.matyas@renyi.mta.hu}; Institute for Mathematics and Scientific Computing, University of Graz, NAWI Graz,
Heinrichstra{\ss}e 36, 8010 Graz, Austria, {\it Email address: alfred.geroldinger@uni-graz.at}}


%
%
\maketitle

\renewcommand*{\theHtheorem}{\theHsection.\arabic{theorem}}

{\it Dedicated to Franz Halter-Koch on the occasion of his 70th birthday}

\bigskip

\abstract{This paper surveys and develops links between polynomial invariants of finite groups, factorization theory of Krull domains, and product-one sequences over finite groups.
The goal is to gain a better understanding of the multiplicative ideal theory  of invariant rings, and  connections between the Noether number and the Davenport constants of finite groups.
{\keywords{invariant rings,  Krull monoids, Noether number,  Davenport constant, zero-sum sequences, product-one sequences}}}

\section{\bf Introduction} \label{sec:1}

The goal of this paper is to deepen the links between the areas in the title. Invariant theory is concerned with the study of group actions on algebras, and in the present article we entirely concentrate on  actions of finite groups on polynomial algebras via linear substitution of the variables.

To begin with, let us briefly sketch the already existing links between the mentioned areas. For a finite-dimensional vector space $V$ over a field $\F$ and a finite group $G \le \GL(V)$, let $\F[V]^G \subset \F[V]$ denote the ring of invariants. Since E. Noether we know that $\F[V]^G \subset \F[V]$ is an integral ring extension and that $\F[V]^G$ is a finitely generated $\F$-algebra. In particular, $\F[V]^G$ is an integrally closed noetherian domain and hence a Krull domain. Benson \cite{Be93a} and Nakajima \cite{Na82z}  determined  its class group. Krull domains (their ideal theory and their class groups) are a central topic in multiplicative ideal theory (see the monographs \cite{Gi92,HK98} and the recent survey \cite{HK11b}). B. Schmid \cite{Sc91a} observed that the Noether number of a finite abelian group $G$ equals the Davenport constant of $G$ (a constant of central importance in zero-sum theory) and this established a first link between invariant theory and  arithmetic combinatorics. Moreover, ideal and factorization theory of Krull domains are most closely linked with zero-sum theory via transfer homomorphisms (see \cite{Ge-HK06a, Ge09a} and Subsection \ref{3.B}).

These links serve as our starting point. It is well-known that a domain $R$ is a Krull domain if and only if its monoid $R^{\bullet}$ of nonzero elements is a Krull monoid if and only if $R$ (resp. $R^{\bullet}$) has a divisor theory. To start with Krull monoids, a monoid $H$ is Krull if and only if its associated reduced monoid $H/H^{\times}$ is Krull, and every Krull monoid $H$ is a direct product $H^{\times} \times H_0$ where $H_0$ is isomorphic to $H/H^{\times}$. A reduced Krull monoid is uniquely determined (up to isomorphism) by its characteristic (roughly speaking by its class group $\mathcal C (H)$ and the distribution of the prime divisors in its classes; see the end of Subsection \ref{4.A}).
By definition of the class group, a Krull monoid $H$ is factorial if and only if $\mathcal C(H)$ is trivial. Information on the subset $\mathcal C(H)^* \subset \mathcal C(H)$  of classes containing prime divisors is the crucial ingredient  to understand the arithmetic of $H$, and hence in order to study the arithmetic of Krull monoids the first and most important issue is to determine $\mathcal C (H)^*$.
By far the  best understood setting in factorization theory are Krull monoids with finite class groups where every class contains a prime divisor. Indeed, there has been an abundance of work on them  and we refer the reader to the survey by W.A. Schmid in this proceedings \cite{Sc16a}.
A canonical method to obtain information on $\mathcal C (H)^*$  is to identify explicitly a divisor theory for $H$.
A divisor theory of a monoid (or a domain) $H$  is a divisibility preserving homomorphism from $H$ to a free abelian monoid which  satisfies a certain minimality property (Subsection \ref{2.1}). The concept of a divisor theory stems from algebraic number theory and it has found far-reaching generalizations in multiplicative ideal theory (\cite{HK98}). Indeed, divisor-theoretic tools, together with ideal-theoretic and valuation-theoretic ones, constitute a highly developed  machinery  for the structural description of  monoids and domains.

All the above mentioned concepts and problems from multiplicative ideal theory are studied for the ring of invariants.  Theorem~\ref{main-theorem} (in  Subsection \ref{4.A}) provides an explicit divisor theory of the  ring of invariants $R=\F[V]^G$.
The divisibility preserving homomorphism from $R^{\bullet}$ goes into a free abelian monoid which can be naturally described in the language of invariant theory,
and the associated canonical transfer homomorphism $\theta\colon R^\bullet  \to \mathcal{B}(\mathcal{C}(R)^*)$ from the multiplicative monoid of the ring  $R$ onto the monoid of zero-sum sequences over the class group of $R$ also has a natural invariant theoretic interpretation. In addition to  recovering the result of Benson and Nakajima on the class group $\mathcal C(\mathbb{F}[V]^G)$ (our treatment is essentially self-contained), we gain further information on  the multiplicative structure of $R$, and we pose the problem to determine its characteristic  (Problem \ref{characteristic-problem}). In particular, whenever we can show  -- for a given ring of invariants -- that every class contains at least one prime divisor, then all results of factorization theory (obtained for Krull monoids with finite class group and prime divisors in all classes) apply to the ring of invariants.

In Subsection \ref{subsec:abelian} we specialize to abelian groups whose order is  not divisible by the characteristic of $\F$.
The  Noether number $\beta(G)$ is the supremum over all finite dimensional $G$-modules $V$ of the maximal degree of an element in a minimal homogeneous generating system of $\F[V]^G$, and  the Davenport constant $\mathsf D (G)$  is the maximal length of a minimal zero-sum sequence over $G$.
We start with a result on the  structural connection between $\F[V]^G$ and the
monoid of zero-sum sequences over $G$, that lies behind the equality $\beta(G)=\mathsf D(G)$. Clearly,  the  idea here is well known (as far as we know, it was first used by B. Schmid \cite{Sc91a}, see also  \cite{F-M-P-T08a}). The benefit of the detailed presentation as given in Proposition \ref{prop:bschmid} is twofold. First, the past 20 years have seen great progress in zero-sum theory (see Subsection \ref{3.D} for a sample of  results) and Proposition \ref{prop:bschmid} allows to carry over all  results on the structure of (long) minimal zero-sum sequences to the structure of $G$-invariant monomials.
Second, we observe that the submonoid $M^G$ of $R^\bullet$  consisting of the invariant monomials is again a Krull monoid, and restricting the transfer homomorphism  $\theta\colon R^\bullet  \to \mathcal{B}(\mathcal{C}(R)^*)$ (mentioned in the above paragraph) to $M^G$ we obtain essentially the canonical transfer homomorphism $M^G\to \mathcal{B}(\mathcal{C}(M^G)^*)$.
This turns out to be rather close to the transfer homomorphism $\psi\colon M^G\to \mathcal{B}(\widehat G)$ into the  monoid of
zero-sum sequences over the character group of $G$ (see Proposition  \ref{prop:bschmid}), which is responsible for the equality $\beta(G)=\mathsf D(G)$.
The precise statement is  given in Proposition  \ref{prop:diagram}, which explains how the transfer homomorphism $\psi$ (existing only for abelian groups) relates to the more general transfer homomorphism $\theta$ from the above paragraph which exists for an arbitrary finite group.
In Proposition \ref{prop:diagram} we point out that every class of $\mathcal C ( \F[V]^G )$ contains a prime divisor which contributes to Problem \ref{characteristic-problem}.

Let now $G$ be a finite non-abelian group. Until recently the precise value of the Noether number $\beta (G)$ was known only for the dihedral groups and very few small groups (such as $A_4)$. In the last couple of years the first two authors have determined the precise value of the Noether number for groups having a cyclic subgroup of index two and for non-abelian groups of order $3p$ \cite{Cz-Do14a, Cz14d, Cz-Do15a}.
In this work results on zero-sum sequences over finite abelian groups (for example, information on the structure of long minimal zero-sum sequences and on the $k$th Davenport constants) were successfully applied. Moreover, a decisive step was the introduction of the $k$th Noether numbers, a concept inspired by the $k$th Davenport constants of abelian groups. The significance  of this concept  is that it furnishes some reduction lemmas (listed in  Subsection \ref{sec:5.1} ) by which the ordinary Noether number of a group can be bounded via structural reduction in the group.

The concept of the
$k$th Davenport constants $\mathsf D_k (G)$ has been introduced by Halter-Koch \cite{HK92c} for abelian groups in order to study the asymptotic behavior of arithmetical counting functions in rings of integers of algebraic number fields (see \cite[Theorem 9.1.8]{Ge-HK06a}, \cite[Theorem 1]{Ra05a}). They have been further studied in \cite{De-Or-Qu01,Fr-Sc10}.
In the last years the third author and Grynkiewicz \cite{Ge-Gr13a, Gr13b} studied the (small and the large) Davenport constant of non-abelian groups, and among others determined their precise values for groups having a cyclic subgroup of index two. It can be observed that for these groups  the Noether number  is between the small and the large Davenport constant.

This motivated  a new and more abstract view at the Davenport constants, namely $k$th Davenport constants   of   BF-monoids  (Subsection \ref{2.E}). The goal is  to relate the Noether number with Davenport constants of suitable monoids as a generalization of the equation $\beta (G) = \mathsf D (G)$ in the abelian case. Indeed,
the $k$th Davenport constant $\mathsf D_k (G)$  of an abelian group $G$ is recovered as our $k$th Davenport constant of  the monoid $\mathcal B (G)$ of zero-sum sequences over $G$.

We apply the new  concept of the $k$th Davenport constants  to two classes of BF-monoids.
First, to the monoid $\mathcal{B}(G,V)$ associated to a $G$-module $V$ in Subsection~\ref{5.D}  (when $G$ is abelian we recover the monoid $M^G$ of $G$-invariant monomials from Subsection \ref{subsec:abelian}), whose Davenport constants provide a lower bound for the  corresponding Noether numbers (see Proposition \ref{prop:module-davenport}).
Second, we study  the monoid of product-one sequences over  finite  groups (Subsections \ref{3.1} and \ref{3.C}).
We derive a variety of   features of  the $k$th Davenport constants of the monoid of product-one sequences over $G$ and observe that they are  strikingly similar to the corresponding features of the $k$th Noether numbers  (see Subsection \ref{sec:5.1} for a comparison).

We pose a problem on the relationship between Noether numbers and Davenport constants of non-abelian groups (Problem \ref{Noether-Davenport}) and we
illustrate the efficiency of the above methods by Examples \ref{example:C_pq rtimes C_q}, \ref{S_4}, and \ref{Pauli}  (appearing for the first time), where the explicit value of  Noether numbers and Davenport constants of some non-abelian groups are determined.

\smallskip

\centerline{\it Throughout this paper, let $G$ be a finite group, $\F$ be a  field, and}
\centerline{\it   $V$ be a finite dimensional $\F$-vector space endowed with a linear action of $G$.}

\section{\bf Multiplicative Ideal Theory: Krull monoids, C-monoids, and Class Groups} \label{sec:2}

We denote by $\N$ the set of positive integers, and we put $\N_0 =
\N \cup \{0\}$. For every $n \in \mathbb N$, we denote by $C_n$ a
cyclic group with $n$ elements. For real numbers $a, b \in \R$, we
set $[a, b] = \{ x \in \Z \colon a \le x \le b \}$. If $A, B$ are sets, we write $A \subset B$ to mean that $A$ is contained in $B$ but may be equal to $B$. In Subsections \ref{2.A} -- \ref{2.D} we gather basic material on Krull monoids and C-monoids. In  Subsection \ref{2.E} we introduce a new concept, namely Davenport constants of BF-monoids.

\subsection{\bf  Monoids and Domains: Ideal theoretic and divisor theoretic concepts} \label{2.A}~

Our notation and terminology follows \cite{Ge-HK06a} and \cite{HK98} (note that the monoids in \cite{HK98} do contain a zero-element, whereas the monoids in \cite{Ge-HK06a} and in the present manuscript do not contain a zero-element).
By a {\it monoid}, we mean a commutative, cancellative semigroup with unit element. Then the multiplicative semigroup $R^{\bullet}=R\setminus \{0\}$ of non-zero elements of a domain is a monoid. Following the philosophy of multiplicative ideal theory  we describe the arithmetic and the theory of divisorial ideals of domains by means of their multiplicative monoids. Thus we start with monoids.

Let $H$ be a multiplicatively written monoid. An element $u \in H$ is called
\begin{itemize}
\item {\it invertible} if there  is an element $v \in H$ with $u v = 1$.

\item {\it irreducible} (or an {\it atom}) if $u$ is not invertible and, for all $a, b \in H$, $u = ab$ implies  $a$ is invertible or $b$ is invertible.

\item {\it prime} if $u$ is not invertible and, for all $a, b \in H$, $u \t ab$ implies $u \t a$ or $u \t b$.
\end{itemize}
We denote by $\mathcal A (H)$ the set of atoms of $H$, by $H^{\times}$ the group of invertible elements, and by $H_{\red} = \{aH^{\times} \colon a \in H \}$ the associated reduced monoid of $H$. We say that $H$ is reduced if $|H^{\times}| = 1$. We denote by $\mathsf q (H)$ a quotient group of $H$ with $H \subset \mathsf q (H)$, and for a prime element $p \in H$, let $\mathsf v_p \colon \mathsf q (H) \to \Z$ be the $p$-adic valuation.
Each monoid homomorphism $\varphi \colon H \to D$ induces  a group homomorphism $\mathsf q (H) \colon \mathsf q (H) \to \mathsf q (D)$.
For a subset $H_0 \subset H$, we denote by $[H_0] \subset H$ the submonoid generated by $H_0$, and by $\langle H_0 \rangle \le \mathsf q (H)$ the subgroup generated by $H_0$. We denote by
$\widetilde H  = \big\{ x \in \mathsf q (H) \, \colon \, x^n \in H \text{~for
               some~}  n \in \N \big\}$
the \ {\it root closure} \ of $H$, and by
$
\widehat H  = \big\{ x \in \mathsf q (H) \, \colon \,  \text{there exists~} c
\in H  \text{~such that~}  cx^n \in H \text{~for all~}
              n \in \N \big\}
$
the  {\it complete integral closure} \ of $H$. Both
$\widetilde{H}$ and $\widehat{H}$ are monoids, and we have $H
\subset \widetilde H \subset \widehat H \subset \mathsf q (H)$. We say that $H$ is root closed (completely integrally closed resp.) if $H = \widetilde H$ ($H=\widehat H$ resp.).
For a set $P$, we denote by $\mathcal F (P)$ the free abelian monoid
with basis $P$. Then every $a \in  \mathcal F (P)$ has a unique
representation in the form
\[
a = \prod_{p \in P} p^{\mathsf v_p (a)} \,, \ \text{where} \ \mathsf v_p (a) \in \mathbb N_0
\ \text{and} \ \mathsf v_p (a) = 0 \ \text{for almost all} \ p \in P \,.
\]
The monoid $H$ is said to be
\begin{itemize}
\item {\it atomic} if  every $a \in H \setminus H^{\times}$ is a product of finitely many atoms of $H$.

\item {\it factorial} if every $a \in H \setminus H^{\times}$ is a product of finitely many primes of $H$ (equivalently, $H=H^{\times} \times \mathcal F (P)$ where $P$ is a set of representatives of primes of $F$).

\item {\it finitely generated} if $H = [E]$ for some finite subset $E \subset H$.
\end{itemize}
If $H = H^{\times} \times \mathcal F (P)$ is factorial and $a \in H$, then
$
|a| = \sum_{p \in P} \mathsf v_p (a) \in \mathbb N_0
$
is called the length of $a$.
If $H$ is reduced, then it is finitely generated if and only if it is atomic and $\mathcal A (H)$ is finite.
Since every prime is an atom, every factorial monoid is atomic. For every non-unit $a\in H$,
\[
\mathsf L_H (a) = \mathsf L (a) = \{k \in \N \colon a \ \text{may be written as a product of $k$ atoms} \} \subset \N
\]
denotes the {\it set of lengths} of $a$. For convenience, we set $\mathsf L (a) = \{0\}$ for $a \in H^{\times}$. We say that $H$ is a BF-monoid if it is atomic and all sets of lengths are finite.
A monoid homomorphism $\varphi \colon H \to D$ is said to be
\begin{itemize}
\item a {\it divisor homomorphism} if $\varphi (a) \t \varphi (b)$ implies that $a \t b$ for all $a, b \in H$.

\item {\it cofinal} if for every $\alpha \in D$ there is an $a \in H$ such that $\alpha \t \varphi (a)$.

\item a {\it divisor theory} (for $H$) if $D = \mathcal F (P)$ for some set $P$, $\varphi$ is a divisor homomorphism, and for every $p \in P$, there exists a finite nonempty subset $X \subset H$ satisfying $p = \gcd \big( \varphi (X) \big)$.
\end{itemize}
Obviously, every divisor theory is cofinal. Let $H \subset D$ be a submonoid. Then  $H \subset D$ is called
\begin{itemize}
\item  {\it saturated} if the embedding $H \hookrightarrow D$ is a divisor homomorphism.
\item {\it divisor closed} if $a \in H$, $b \in D$ and $b \t a$ implies $b \in H$.
\item {\it cofinal} if the embedding $H \hookrightarrow D$ is cofinal.
\end{itemize}
It is easy to verify that $H \hookrightarrow D$ is a divisor homomorphism if and only if $H = \mathsf q (H) \cap D$, and if this holds, then $H^{\times} = D^{\times} \cap H$. If $H \subset D$ is divisor closed, then $H \subset D$ is saturated.

For subsets $A, B \subset \mathsf q (H)$, we denote by
$(A \DP B) = \{ x \in \mathsf q (H) \colon x B \subset A \}$, by $A^{-1} = (H \DP A)$, and by $A_v = (A^{-1})^{-1}$.  A subset $\mathfrak a \subset H$ is called an $s$-ideal of $H$ if $\mathfrak a H = \mathfrak a$. A subset $X \subset \mathsf q (H)$ is called a fractional  $v$-ideal (or a {\it fractional  divisorial ideal}) if there is a $c \in H$ such that $cX \subset H$ and $X_v = X$. We denote by $\mathcal F_v (H)$ the set of all fractional $v$-ideals and by $\mathcal I_v (H)$ the set of all $v$-ideals of $H$. Furthermore, $\mathcal I_v^* (H)$ is the monoid of $v$-invertible $v$-ideals (with $v$-multiplication) and $\mathcal F_v (H)^{\times} = \mathsf q \big( \mathcal I_v^* (H) \big)$ is its quotient group of fractional invertible $v$-ideals. The monoid $H$ is completely integrally closed if and only if every non-empty $v$-ideal of $H$ is $v$-invertible, and $H$ is called $v$-noetherian if it satisfies the ACC (ascending chain condition) on $v$-ideals. If $H$ is $v$-noetherian, then $H$ is a BF-monoid. We denote by $\mathfrak X (H)$ the set of all minimal nonempty prime $s$-ideals of $H$.

The map $\partial \colon H \to \mathcal I_v^* (H)$, defined by $\partial (a) = aH$ for each $a \in H$, is a cofinal divisor homomorphism. Thus, if  $\mathcal H = \{aH
\colon a \in H \}$ is the monoid of principal ideals of $H$, then $\mathcal H \subset \mathcal I_v^* (H)$ is saturated and cofinal.

\subsection{\bf  Class groups and class semigroups}~ \label{2.B}

Let $\varphi \colon H \to D$ be a monoid homomorphism. The group $\mathcal C (\varphi) = \mathsf q (D)/ \mathsf q ( \varphi (H))$ is called the {\it class group} of $\varphi$. For $a \in \mathsf q (D)$, we denote by $[a]_{\varphi} = a \mathsf q ( \varphi (H)) \in \mathcal C ( \varphi)$ the class containing $a$. We use additive notation for $\mathcal C ( \varphi )$ and so $[1]_{\varphi}$ is the zero element of $\mathcal C ( \varphi )$.

Suppose that $H \subset D$ and that $\varphi = (H \hookrightarrow D)$. Then $\mathcal C ( \varphi ) = \mathsf q (D)/\mathsf q (H)$, and for $a \in D$ we set $[a]_{\varphi}=[a]_{D/H} = a \mathsf q (H)$. Then
\[
D/H = \{ [a]_{D/H} \colon a \in D\} \subset \mathcal C ( \varphi )
\]
is a submonoid with quotient group $\mathsf q (D/H)=\mathcal C (\varphi)$. It is easy to check that $D/H$ is a group if and only if $H \subset D$ is cofinal. In particular, if $D/H$ is finite or if $\mathsf q (D)/\mathsf q (H)$ is a torsion group, then $D/H= \mathsf q (D)/\mathsf q (H)$.
Let $H$ be a monoid. Then $\mathcal H \subset \mathcal I_v^* (H)$ is saturated and cofinal, and
\[
\mathcal C_v(H) = \mathcal I_v^* (H)/\mathcal H = \mathcal F_v (H)^{\times}/ \mathsf q (\mathcal H)
\]
is the {\it $v$-class group} of $H$.

We will also need  the concept of class semigroups which are a refinement of ordinary class groups in commutative algebra.
Let $D$ be a monoid and $H \subset D$ a submonoid. Two elements $y, y' \in D$
are called $H$-equivalent, if $y^{-1}H \cap D = {y'}^{-1} H \cap
D$. $H$-equivalence is a congruence relation on $D$. For $y \in
D$, let $[y]_H^D$ denote the congruence class of $y$, and let
\[
\mathcal C (H,D) = \{ [y]_H^D \colon y \in D \} \quad \text{and}
\quad \mathcal C^* (H,D) = \{ [y]_H^D \colon y \in (D \setminus
D^{\times}) \cup \{1\} \}.
\]
Then $\mathcal C (H,D)$ is a semigroup with unit element $[1]_H^D$
(called the {\it class semigroup} of $H$ in $D$) and $\mathcal C^* (H,D)
\subset \mathcal C (H,D)$ is a subsemigroup (called the {\it reduced
class semigroup} of $H$ in $D$).
The map
\[
\theta \colon \mathcal C (H,D) \to D/H \,, \quad \text{defined by} \quad \theta ( [a]_H^D) = [a]_{D/H} \quad \text{for all} \ a \in D \,,
\]
is an epimorphism, and it is an isomorphism if and only if $H \subset D$ is saturated.

\subsection{\bf  Krull monoids and Krull domains}~ \label{2.C}

\begin{theorem} \label{2.1}~
Let $H$ be a monoid. Then the following statements are equivalent{\rm \,:}
      \begin{itemize}
      \item[(a)] $H$ is $v$-noetherian and completely integrally closed,

      \item[(b)] $\partial \colon H \to \mathcal I_v^* (H)$ is a divisor theory.

      \item[(c)] $H$ has a divisor theory.

      \item[(d)] There is a divisor homomorphism $\varphi \colon H \to D$ into a factorial monoid $D$.

      \item[(e)] $H_{\red}$ is a saturated submonoid of a free abelian monoid.
      \end{itemize}

      \noindent
      \smallskip
      {\rm If $H$ satisfies these conditions, then $H$ is called a } Krull monoid.
\end{theorem}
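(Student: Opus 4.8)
The plan is to establish the cycle of implications $(a)\Rightarrow(b)\Rightarrow(c)\Rightarrow(d)\Rightarrow(e)\Rightarrow(a)$, after which all five conditions are equivalent. Two links carry no content: $(b)\Rightarrow(c)$ is a tautology, and $(c)\Rightarrow(d)$ holds because a divisor theory $\varphi\colon H\to\mathcal F(P)$ is, by definition, a divisor homomorphism into the factorial monoid $\mathcal F(P)$. So the real work sits in $(a)\Rightarrow(b)$, in $(d)\Rightarrow(e)$, and in $(e)\Rightarrow(a)$.

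For $(a)\Rightarrow(b)$ the heart of the matter is to identify the structure of $\mathcal I_v^*(H)$. First I would recall that $H$ is completely integrally closed if and only if every nonempty $v$-ideal of $H$ is $v$-invertible; thus $\mathcal I_v^*(H)$ is exactly the set of all nonempty $v$-ideals, a reduced cancellative monoid (under $v$-multiplication) with quotient group $\mathcal F_v(H)^{\times}$. The substantive claim --- the monoid-theoretic form of Krull's structure theorem --- is that the $v$-noetherian hypothesis forces $\mathcal I_v^*(H)$ to be \emph{free abelian with basis $\mathfrak X(H)$}. One proves this by showing that for every $\mathfrak p\in\mathfrak X(H)$ the localization $H_{\mathfrak p}$ is a discrete valuation monoid (the ACC yields that $\mathfrak p$ has height one, while complete integral closure forces the maximal ideal of $H_{\mathfrak p}$ to be principal rather than idempotent), that $H=\bigcap_{\mathfrak p\in\mathfrak X(H)}H_{\mathfrak p}$, and that this intersection has finite character; unique factorization of nonempty $v$-ideals into the primes in $\mathfrak X(H)$ follows at once. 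Granting this, $\partial\colon H\to\mathcal I_v^*(H)$ is a divisor homomorphism (Subsection~\ref{2.A}), and it remains only to verify the gcd condition defining a divisor theory: given $\mathfrak p\in\mathfrak X(H)$, choose $x\in H$ with $\mathsf v_{\mathfrak p}(xH)=1$ (possible since $H_{\mathfrak p}$ is a discrete valuation monoid), let $\mathfrak q_1,\dots,\mathfrak q_r$ be the finitely many members of $\mathfrak X(H)$ with $\mathsf v_{\mathfrak q_i}(xH)>0$, and for each $i$ pick $y_i\in\mathfrak p\setminus\mathfrak q_i$ (possible because distinct height-one primes are incomparable); then $X:=\{x,y_1,\dots,y_r\}$ satisfies $\mathfrak p=\gcd\bigl(\partial(X)\bigr)$ in $\mathcal F(\mathfrak X(H))$.

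For $(d)\Rightarrow(e)$, write the factorial target as $D=D^{\times}\times\mathcal F(P)$ and compose the given divisor homomorphism $\varphi\colon H\to D$ with the projection $\pi\colon D\to\mathcal F(P)$; since $D^{\times}$ is a group, $\pi$ is a divisor homomorphism, hence so is $\psi:=\pi\circ\varphi\colon H\to\mathcal F(P)$. Because $\mathcal F(P)$ is reduced, a divisor homomorphism into it satisfies $\psi^{-1}(1)=H^{\times}$, so $\psi$ factors through an injective divisor homomorphism $\overline\psi\colon H_{\red}\hookrightarrow\mathcal F(P)$; by the criterion that a homomorphism is a divisor homomorphism precisely when $H=\mathsf q(H)\cap D$ (Subsection~\ref{2.A}), the image $\overline\psi(H_{\red})=\mathsf q\bigl(\overline\psi(H_{\red})\bigr)\cap\mathcal F(P)$ is saturated. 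For $(e)\Rightarrow(a)$, note that $v$-noetherianity and complete integral closure are unchanged under passage to the associated reduced monoid, so we may assume $H=H_{\red}$ is a saturated submonoid of $\mathcal F(P)$, i.e.\ $H=\mathsf q(H)\cap\mathcal F(P)$. Restricting the valuations $\mathsf v_p$ ($p\in P$) of $\mathcal F(P)$ to $\mathsf q(H)$ then exhibits $H$ as an intersection, of finite character, of the discrete valuation monoids $V_p=\{x\in\mathsf q(H)\colon\mathsf v_p(x)\ge0\}$ (retaining only the $p$ for which $\mathsf v_p$ is nontrivial on $\mathsf q(H)$). Complete integral closure is inherited by intersections inside a common quotient group, so $H=\widehat H$; and the finite character of this representation yields the ACC on $v$-ideals of $H$, so $H$ is $v$-noetherian. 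This closes the cycle.

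The step I expect to be the main obstacle is the structural statement inside $(a)\Rightarrow(b)$: that a $v$-noetherian, completely integrally closed monoid is the finite-character intersection of the discrete valuation monoids $H_{\mathfrak p}$ ($\mathfrak p\in\mathfrak X(H)$), equivalently that $\mathcal I_v^*(H)$ is free abelian on $\mathfrak X(H)$. This is the genuine content of Krull theory in the multiplicative-monoid setting and needs the careful interplay of the ascending chain condition (for the existence and finiteness of the relevant prime $v$-ideals, and for $\partial$ being a divisor \emph{theory} and not merely a divisor homomorphism) with complete integral closure (to make the localizations one-dimensional with principal maximal ideal). The transfer of $v$-noetherianity in $(e)\Rightarrow(a)$ is a second, much milder, technical point.
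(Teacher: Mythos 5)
The paper offers no proof of this theorem beyond the citations to \cite[Theorem 2.4.8]{Ge-HK06a} and \cite[Chapter 22]{HK98}, and your outline reproduces the standard argument given in those references: the cycle $(a)\Rightarrow(b)\Rightarrow(c)\Rightarrow(d)\Rightarrow(e)\Rightarrow(a)$, with the structure theorem that $\mathcal I_v^* (H)$ is free abelian on $\mathfrak X (H)$ as the core of $(a)\Rightarrow(b)$, the reduction modulo units and saturation criterion for $(d)\Rightarrow(e)$, and the finite-character intersection of discrete valuation monoids for $(e)\Rightarrow(a)$. Your sketch is correct and correctly locates the genuine difficulties; the only caveat is that the two substantive ingredients you flag — the localization/finite-character analysis behind the freeness of $\mathcal I_v^*(H)$ and the derivation of the ACC on $v$-ideals from finite character — are asserted rather than carried out, which is acceptable here since the paper itself delegates exactly these points to the cited monographs.
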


\begin{proof}
See \cite[Theorem 2.4.8]{Ge-HK06a} or \cite[Chapter 22]{HK98}.
\end{proof}

Let $H$ be a Krull monoid. Then $\mathcal I_v^* (H)$ is free abelian with basis $\mathfrak X (H)$. Let $\mathfrak p \in \mathfrak X (H)$. Then $\mathsf v_{\mathfrak p}$ denotes the $\mathfrak p$-adic valuation of $\mathcal F_v(H)^{\times}$. For $x \in \mathsf q (H)$, we set $\mathsf v_{\mathfrak p}(x)= \mathsf v_{\mathfrak p} (xH)$ and we call $\mathsf v_{\mathfrak p}$ the $\mathfrak p$-adic valuation of $H$. Then $\mathsf v \colon H \to \N_0^{( \mathfrak X (H))}\,, \quad \text{defined by} \quad \mathsf v (a) = \bigl( \mathsf v_\mathfrak
p (a) \bigr)_{\mathfrak p \in \mathfrak X (H)}$
is  a divisor theory and $H = \{ x \in \mathsf q (H) \colon \mathsf v_{\mathfrak p} (x) \ge 0 \ \text{for all} \ \mathfrak p \in \mathfrak X (H) \}$.

If $\varphi \colon H \to D=\mathcal F (P)$ is a divisor theory, then there is an isomorphism $\Phi \colon \mathcal I_v^* (H) \to D$ such that $\Phi \circ \partial = \varphi$, and it induces an isomorphism $\overline{\Phi} \colon \mathcal C_v (H) \to \mathcal C (\varphi)$.  Let $D= \mathcal F (P)$ be such that $H_{\red} \hookrightarrow D$ is a divisor theory. Then $D$ and $P$ are uniquely determined by $H$,
\[
\mathcal C (H) = \mathcal C (H_{\red}) = D/H_{\red}
\]
is called the {\it $($divisor$)$ class group} of $H$, and  its elements are called the classes of $H$. By definition, every class $g \in \mathcal C (H)$ is a subset of $\mathsf q (D)$ and $P \cap g$ is the set of prime divisors lying in $g$. We denote by $\mathcal C (H)^* = \{ [p]_{D/H_{\red}} \colon p \in P \} \subset \mathcal C (H)$ the subset of classes containing prime divisors (for more details we refer to the discussion after Definition 2.4.9 in \cite{Ge-HK06a}).

\begin{proposition} \label{prop:torsion-divtheory}
Let $H$ be a Krull monoid, and let $\varphi \colon H \to D=\mathcal F (P)$ be a divisor homomorphism.
\begin{enumerate}
\item There is a submonoid $C_0 \subset \mathcal C(\varphi)$ and an epimorphism $C_0 \to \mathcal C_v (H)$.

\smallskip
\item Suppose that $H \subset D$ is saturated and that $\mathsf q (D)/\mathsf q (H)$ is a torsion group. We set $D_0 =\{ \gcd_D (X) \colon X\subset H  \mbox{ finite}\}$, and for $p\in P$ define $e(p)=\min\{\mathsf{v}_p(h)\colon h\in H \ \text{with} \ \mathsf v_p (h)>0 \}$.
      \begin{itemize}
      \smallskip
      \item[(a)] $D_0$ is a free abelian monoid with basis  $\{ p^{e(p)} \colon p\in P \}$.

      \smallskip
      \item[(b)] The embedding $H\hookrightarrow D_0$ is a divisor theory for $H$.
      \end{itemize}
\end{enumerate}
 \end{proposition}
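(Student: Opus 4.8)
The device behind both parts is the gcd-closure of $H$ inside $D$. First I would replace $H$ by $H_{\red}$: this changes neither $\mathcal C(\varphi)$ nor $\mathcal C_v(H)=\mathcal C_v(H_{\red})$, it lets us factor $\varphi$ through $H_{\red}$, and since $\varphi$ is a divisor homomorphism and $D$ is reduced the induced map $H_{\red}\hookrightarrow D$ is an injective divisor homomorphism, i.e.\ $H_{\red}$ is a saturated submonoid of $D$. So from now on assume $H\subset D$ is saturated and put
\[
D_0=\{\gcd_D(X)\colon \emptyset\neq X\subset H\text{ finite}\}\subset D .
\]
Because $D$ is free abelian one has $\gcd_D(X)\gcd_D(Y)=\gcd_D(XY)$ with $XY\subset H$, so $D_0$ is a submonoid of $D$ with $H\subset D_0$; from $H=\mathsf q(H)\cap D$ we get $H=\mathsf q(H)\cap D_0$, so $H\hookrightarrow D_0$ is again saturated, hence a divisor homomorphism. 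Consequently, \emph{once one knows that $D_0$ is free abelian}, $H\hookrightarrow D_0$ is a divisor theory (this is the content of the structure theory of divisor homomorphisms over free abelian monoids in \cite{Ge-HK06a}: every element of $D_0$, hence every member of a basis, is by construction a $\gcd$ of finitely many elements of $H$), and the isomorphism $\overline\Phi$ recorded just before the Proposition gives $\mathcal C_v(H)\cong\mathsf q(D_0)/\mathsf q(H)$. Since $\mathsf q(H)\subset\mathsf q(D_0)\subset\mathsf q(D)$, the group $C_0:=\mathsf q(D_0)/\mathsf q(H)$ is a submonoid of $\mathsf q(D)/\mathsf q(H)=\mathcal C(\varphi)$, and this isomorphism is in particular an epimorphism $C_0\to\mathcal C_v(H)$; this settles (1), granting the quoted fact that $D_0$ is free abelian.

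For (2) the extra hypotheses pin down $D_0$ explicitly, and the real content is the claim: \emph{under the hypotheses of {\rm(2)}, $\{\mathsf v_p(h)\colon h\in H\}=e(p)\N_0$ for every $p\in P$.} To prove it, note first that $e(p)$ is well defined: as $\mathsf q(D)/\mathsf q(H)$ is torsion, $p^{n}\in\mathsf q(H)\cap D=H$ for some $n\geq1$, so $\{\mathsf v_p(h)\colon h\in H,\ \mathsf v_p(h)>0\}\neq\emptyset$; let $n_p\geq1$ be minimal with $p^{n_p}\in H$. Write $S_p=\{\mathsf v_p(h)\colon h\in H\}$, a submonoid of $\N_0$ with $\min(S_p\setminus\{0\})=e(p)$; then $g_p:=\gcd(S_p)\mid e(p)$, and the point is to show $g_p\in S_p$ (then $e(p)\le g_p$, hence $e(p)=g_p=\gcd(S_p)$, and a submonoid of $\N_0$ containing its own gcd $g_p$ equals $g_p\N_0$). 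For this I would pick $h_1,\dots,h_r\in H$ with $\mathsf v_p(h_i)>0$ and $\gcd_i\mathsf v_p(h_i)=g_p$, choose $a_i\in\Z$ with $\sum_i a_i\mathsf v_p(h_i)=g_p$, and set $h=\prod_i h_i^{\,a_i+M}$ for a large positive multiple $M$ of $n_p$. For $M$ large all exponents $a_i+M$ are $\ge0$ (so $h\in H$) and $\mathsf v_q(h)=\sum_i(a_i+M)\mathsf v_q(h_i)\ge0$ for each of the finitely many $q\ne p$ dividing some $h_i$, while $\mathsf v_p(h)-g_p=M\sum_i\mathsf v_p(h_i)$ is a multiple of $n_p$, say $tn_p$. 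Then $h\,(p^{n_p})^{-t}\in\mathsf q(H)$ has every $\mathsf v_q\ge0$ and $\mathsf v_p$ equal to $g_p$, so it lies in $\mathsf q(H)\cap D=H$ by saturation, proving $g_p\in S_p$.

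Granting the claim, (2)(a) and (2)(b) are bookkeeping. For (a): every $\mathsf v_p(\gcd_D X)=\min_{x\in X}\mathsf v_p(x)$ is a nonnegative multiple of $e(p)$, so $D_0\subset\mathcal F(\{p^{e(p)}\colon p\in P\})$; conversely, taking $h_p\in H$ with $\mathsf v_p(h_p)=e(p)$ and using $p^{n_p}\in H$ (note $n_p\ge e(p)$) one gets $\gcd_D(\{h_p,p^{n_p}\})=p^{e(p)}$, so each $p^{e(p)}$ lies in the submonoid $D_0$; thus $D_0=\mathcal F(\{p^{e(p)}\colon p\in P\})$. For (b): $H\hookrightarrow D_0$ is a divisor homomorphism as observed above, and for each basis element $p^{e(p)}$ the finite set $X=\{h_p,p^{n_p}\}\subset H$ has $\gcd_{D_0}(X)=\gcd_D(X)=p^{e(p)}$ — the two gcd's coincide because $D_0=\mathcal F(\{p^{e(p)}\})$ is saturated in $D$ by the claim — so the defining condition of a divisor theory is met. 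The main obstacle is the claim, specifically the step $g_p\in S_p$: one must combine a B\'ezout relation for the $\mathsf v_p$-values with a uniform "padding" (multiplying by a high common power to stay inside $H$ and keep all coordinates nonnegative) and a "correction" by powers of $p^{n_p}$ that lowers $\mathsf v_p$ back to $g_p$ without moving the other coordinates; saturation and the torsion hypothesis (the latter supplying $p^{n_p}\in H$) are both essential there. In part (1) the only non-routine input is the quoted fact that $D_0$ is free abelian in general.
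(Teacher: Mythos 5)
Your proof of part (2) is correct and complete. Since the paper itself only cites \cite[Lemma 3.2]{Sc10a} for this part, your self-contained argument is a genuine addition: the B\'ezout-plus-padding step showing $\{\mathsf v_p(h)\colon h\in H\}=e(p)\N_0$ uses saturation and the element $p^{n_p}\in H$ (supplied by the torsion hypothesis) exactly where they are needed, and the deduction of 2(a) and 2(b) from this claim -- including the observation that $D_0=\mathcal F(\{p^{e(p)}\colon p\in P\})$ is saturated in $D$, so that $\gcd_{D_0}$ and $\gcd_D$ agree on subsets of $H$ -- is sound.

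Part (1), however, has a genuine gap. You assert that once $D_0$ is known to be free abelian, $H\hookrightarrow D_0$ is automatically a divisor theory ``by construction'', and you conclude $\mathcal C_v(H)\cong\mathsf q(D_0)/\mathsf q(H)$. But the defining condition of a divisor theory requires each basis element to be $\gcd_{D_0}(X)$ for some finite $X\subset H$, with the gcd computed \emph{in $D_0$}; the elements of $D_0$ are gcds computed in $D$, and without the torsion hypothesis $D_0$ need not be saturated in $D$, so the two gcds can differ. Concretely, let $P=\{p,q,r\}$ and $H=[pr,qr]\subset D=\mathcal F(P)$. Then $H$ is saturated and free abelian on $\{pr,qr\}$, hence factorial with $\mathcal C_v(H)=0$, while $D_0=\{p^{\alpha}q^{\beta}r^{\gamma}\colon \gamma\ge\alpha+\beta\}$ is free abelian with basis $\{r,pr,qr\}$. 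A short computation shows that the only divisors in $D_0$ of an element of $H$ are again elements of $H$, so $r$ is not a $\gcd_{D_0}$ of any subset of $H$ and $H\hookrightarrow D_0$ is \emph{not} a divisor theory; moreover $\mathsf q(D_0)/\mathsf q(H)\cong\Z\neq 0=\mathcal C_v(H)$, so the isomorphism you claim is false. The conclusion of part (1) is of course still true (here $C_0=\{0\}$ works), but it requires the construction of \cite[Theorem 2.4.8]{Ge-HK06a}, which attaches to a $v$-ideal $\mathfrak a\in\mathcal I_v^*(H)$ the element $\gcd_D(\varphi(\mathfrak a))$ -- a gcd over the whole divisorial closure $X_v$, not over a finite set $X$ -- and the resulting monoid is in general strictly smaller than your $D_0$.
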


\begin{proof}
1. follows from \cite[Theorem 2.4.8]{Ge-HK06a},  and 2. from  \cite[Lemma 3.2]{Sc10a}.
\end{proof}

Let $R$ be a domain with quotient field $K$. Then $R^{\bullet}=R\setminus \{0\}$ is a monoid, and all notions defined for monoids so far will be applied for domains. To mention a couple of explicit examples, we denote by $\mathsf q (R)$ the quotient field of $R$ and we have $\mathsf q (R)=\mathsf q (R^{\bullet})\cup \{0\}$, and for the complete integral closure we have $\widehat R = \widehat{R^{\bullet}} \cup \{0\}$ (where $\widehat R$ is the integral closure of $R$ in its quotient field). We denote by $\mathfrak X (R)$ the set of all minimal nonzero prime ideals of $R$, by $\mathcal I_v (R)$ the set of divisorial ideals of $R$, by $\mathcal I_v^* (R)$ the set of $v$-invertible divisorial ideals of $R$, and by $\mathcal F_v (R)$ the set of fractional divisorial ideals of $R$. Equipped with $v$-multiplication, $\mathcal F_v (R)$ is a semigroup, and the map
\[
\iota^{\bullet} \colon \mathcal F_v (R) \to \mathcal F_v (R^{\bullet}) \,, \quad \text{defined by} \quad \mathfrak a \mapsto \mathfrak a \setminus \{0\} \,,
\]
is a semigroup isomorphism mapping $\mathcal I_v (R)$ onto $\mathcal I_v (R^{\bullet})$ and fractional principal ideals of $R$ onto fractional principal ideals of $R^{\bullet}$.  Thus $R$ satisfies the ACC on divisorial ideals of $R$ if and only if $R^{\bullet}$ satisfies the ACC on divisorial ideals of $R^{\bullet}$. Furthermore, $R$ is completely integrally closed if and only if $R^{\bullet}$ is completely integrally closed. A domain  $R$ is a Krull domain if it is completely integrally closed and satisfies the ACC on divisorial ideals of $R$, and thus $R$ is a Krull domain if and only if $R^{\bullet}$ is a Krull monoid. If $R$ is a Krull domain, we set $\mathcal C (R) = \mathcal C (R^{\bullet})$.
The group $\mathcal F_v (R)^{\times}$ is the group of $v$-invertible fractional ideals and the set $\mathcal I_v^* (R)=\mathcal F_v(R)^{\times}\cap \mathcal I_v (R)$ of all $v$-invertible $v$-ideals of $R$ is a monoid with quotient group $\mathcal F_v (R)^{\times}$. The embedding of the non-zero principal  ideals $\mathcal H (R) \hookrightarrow \mathcal I_v^* (R)$ is a cofinal divisor homomorphism, and the factor group
\[
\mathcal C_v(R) =  \mathcal F_v(R)^\times / \{aR \colon a \in K^\times\} = \mathcal I_v^*(R)/\mathcal H(R)
\]
is called the  {\it $v$-class group}   of $R$. The map $\iota^\bullet$ induces isomorphisms $\mathcal
F_v(R)^\times \ito \mathcal F_v(R^\bullet)^\times$, \ $\mathcal
I_v^*(R) \, \ito \, \mathcal I_v^*(R^\bullet)$, and $\mathcal
C_v(R) \, \ito \, \mathcal C_v(R^\bullet)$, and in the sequel we shall identify these monoids and groups.

The above correspondence between domains and their monoids of non-zero elements can be extended to  commutative rings with zero-divisors and their monoids of regular elements (\cite[Theorem 3.5]{Ge-Ra-Re15c}), and there is  an analogue for prime Goldie rings (\cite[Proposition 5.1]{Ge13a}).

\begin{examples}~ \label{2.3}

1. (Domains) As mentioned above, the multiplicative monoid $R^{\bullet}$ of a domain $R$ is a Krull monoid if and only if $R$ is a Krull domain. Thus Property (a) in Theorem \ref{2.1} implies that a noetherian domain is Krull if and only if it is normal (i.e. integrally closed in its field of fractions). In particular, rings of invariants are Krull, as we shall see in  Theorem \ref{4.1}.

\smallskip
2. (Submonoids of domains) Regular congruence submonoids of Krull domains are Krull (\cite[Proposition 2.11.6]{Ge-HK06a}.

\smallskip
3. (Monoids of modules) Let $R$ be a (possibly noncommutative) ring and let $\mathcal C$ be a class of finitely generated (right) $R$-modules which is closed under finite direct-sums, direct summands, and isomorphisms. Then the set $\mathcal V (\mathcal C)$ of isomorphism classes of modules is a commutative semigroup with operation induced by the direct sum. If the endomorphism ring of each module in $\mathcal C$ is semilocal, then $\mathcal V ( \mathcal C)$ is a Krull monoid (\cite[Theorem 3.4]{Fa02}). For more information we refer to \cite{Fa06a,Fa12a, Ba-Ge14b}.

\smallskip
4. (Monoids of product-one sequences) In Theorem \ref{3.2} we will characterize the monoids of product-one sequences which are Krull.

\end{examples}

\subsection{\bf  C-monoids and C-domains}~ \label{2.D}

A monoid $H$ is called a C-{\it monoid} if it is a submonoid of a factorial monoid $F$ such that $H \cap F^{\times} = H^{\times}$ and the reduced class semigroup $\mathcal C^* (H,F)$ is finite. A domain is called a C-{\it domain} if $R^{\bullet}$ is a C-monoid.

\begin{proposition} \label{2.4}
Let $F$ be a factorial monoid and $H \subset F$ a submonoid such that $H \cap F^{\times} = H^{\times}$.
\begin{enumerate}
\item If $H$ is a {\rm C}-monoid, then $H$ is $v$-noetherian with $(H \DP \widehat H) \ne \emptyset$, and the complete integral closure $\widehat H$ is a Krull monoid with finite class group $\mathcal C (\widehat H)$.

\smallskip
\item Suppose that $F/F^{\times}$ is finitely generated, say $F = F^\times \times [p_1, \ldots, p_s]$  with pairwise non-associated prime elements $p_1, \ldots, p_s$. Then the following statements are equivalent{\rm \,:}
     \begin{enumerate}
     \item[(a)]
     $H$ is a \ $\text{\rm C}$-monoid defined in $F$.

     \smallskip
     \item[(b)] There exist some $\alpha \in \N$ and a subgroup $W \le F^\times$ such that $(F^\times \DP W) \t \alpha$, \ $W(H \setminus H^\times) \subset H$, and for all $j \in [1,s]$ and $a \in p^\alpha_j F$ we have $a \in H$ if and only if \ $p^\alpha_j a \in H$.
     \end{enumerate}
\end{enumerate}
\end{proposition}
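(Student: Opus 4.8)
The plan is to derive both parts from the finiteness of the reduced class semigroup $\mathcal C^*(H,F)$ by a pigeonhole argument on prime-power exponents, in the spirit of the treatment of C-monoids in \cite{Ge-HK06a}; write $F = F^\times \times \mathcal F(P)$ and use throughout that for $y,y'\in F$ one has $[y]_H^F=[y']_H^F$ exactly when $yz\in H\Leftrightarrow y'z\in H$ for all $z\in F$, since $y^{-1}H\cap F=\{z\in F:yz\in H\}$. For $(a)\Rightarrow(b)$: because there are only the $s$ generating primes, each cyclic subsemigroup $\langle[p_j]_H^F\rangle$ of the finite semigroup $\mathcal C^*(H,F)$ has a unique idempotent, say $[p_j^{t_j}]_H^F$, so a common multiple $\alpha_0$ of the $t_j$ (large enough to clear the pre-periods) satisfies $[p_j^{\alpha_0}]_H^F=[p_j^{2\alpha_0}]_H^F$ for all $j$, which by the displayed criterion is precisely the requirement ``$a\in H\Leftrightarrow p_j^{\alpha_0}a\in H$ for all $a\in p_j^{\alpha_0}F$''. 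For the unit part, take $W$ to be the stabilizer of the subset $H\setminus H^\times\subseteq F$ under the multiplication action of $F^\times$; then $W(H\setminus H^\times)\subseteq H$, and finiteness of $\mathcal C^*(H,F)$ forces $(F^\times\DP W)<\infty$ (this is the genuinely delicate point, see below). Replacing $\alpha_0$ by $\alpha=\lcm\big(\alpha_0,(F^\times\DP W)\big)$ preserves the idempotency and yields $(F^\times\DP W)\t\alpha$, which is $(b)$.

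For $(b)\Rightarrow(a)$: I claim that for a nonunit $a\in F$ the class $[a]_H^F$ is determined by a bounded amount of data, namely the coset $\varepsilon(a)W$ of the unit part $\varepsilon(a)\in F^\times$ of $a$, together with, for each $j$, the value $\mathsf v_{p_j}(a)$ when it is $<\alpha$ and its residue modulo $\alpha$ otherwise; since there are at most $(F^\times\DP W)\,(2\alpha)^s$ such data, $\mathcal C^*(H,F)$ is finite. The substance is to show that if $a,a'$ carry the same data then $[a]_H^F=[a']_H^F$, and one does this by transforming $a$ into $a'$ in single steps: replacing the unit part within the fixed $W$-coset does not change whether $az\in H$ --- here $az$ is a nonunit of $F$ whenever $a$ is, so $H\cap F^\times=H^\times$ puts any such $az$ lying in $H$ into $H\setminus H^\times$, and then $W(H\setminus H^\times)\subseteq H$ together with $W=W^{-1}$ does the job --- while the condition ``$a\in H\Leftrightarrow p_j^\alpha a\in H$ on $p_j^\alpha F$'' allows one to add or delete $p_j^\alpha$ once the $p_j$-exponent is $\ge\alpha$. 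Hence $H$ is a C-monoid defined in $F$.

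For Part 1 the set $P$ need not be finite, but the same pigeonhole still produces an $\alpha\in\N$ with $[p^\alpha]_H^F=[p^{2\alpha}]_H^F$ for every $p\in P$ and a finite-index subgroup $W\le F^\times$ with $W(H\setminus H^\times)\subseteq H$. As $F$ is factorial, hence completely integrally closed, $\widehat H\subseteq\widehat F\cap\mathsf q(H)=F\cap\mathsf q(H)$. Conversely, given $x\in F\cap\mathsf q(H)$, write $x^j=a_jb_j^{-1}$ with $a_j,b_j\in H$; using that the sequence $\big([x^n]_H^F\big)_{n\ge1}$ (or, if $x$ is a unit, the sequence $(x^nW)_{n\ge1}$) is eventually periodic, one checks that $cx^n\in H$ for all $n$ for a suitable $c\in H$ assembled from the $b_j$ over a bounded range of $j$; thus $\widehat H=F\cap\mathsf q(H)$. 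In particular $\widehat H_{\red}$ is a saturated submonoid of a free abelian monoid, so $\widehat H$ is a Krull monoid by Theorem~\ref{2.1}(e); one sees in the same way that only finitely many $p\in P$ affect the conductor, so $(H\DP\widehat H)\ne\emptyset$, and that $\mathcal C(\widehat H)$ is finite (for instance by applying Part 2 after enlarging $F$). Finally $H$ is $v$-noetherian: an ascending chain of $v$-ideals of $H$ extends to one in the $v$-noetherian monoid $\widehat H$, which stabilizes, and the conductor relation $c\,\widehat H\subseteq H$ then forces the original chain to stabilize.

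The step I expect to be the main obstacle is the bookkeeping with the unit group $F^\times$ in Part 2 --- extracting a finite-index subgroup $W$ from finiteness of $\mathcal C^*(H,F)$, arranging $(F^\times\DP W)\t\alpha$, and verifying that a $W$-translation of $a$ leaves $[a]_H^F$ unchanged. The delicate feature is that $\mathcal C^*(H,F)$ records only the classes of nonunits, so $W$ has to be defined through the action of $F^\times$ on $H\setminus H^\times$ rather than on all of $H$, and the hypothesis $H\cap F^\times=H^\times$ is precisely what permits the passage back and forth; once the periodicity modulus and the conductor are fixed, the exponent pigeonhole and the deduction of $v$-noetherianity are routine.
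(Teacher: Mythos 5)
The paper does not actually prove Proposition~\ref{2.4}; it cites \cite{Ge-HK06a} (Theorems 2.9.7, 2.9.11, 2.9.13), so your proposal has to stand on its own. Part 2 of your argument is essentially correct: the translation of $[p_j^{\alpha}]_H^F=[p_j^{2\alpha}]_H^F$ into the exponent condition of (b) is exact, and the normal-form argument for (b)$\Rightarrow$(a) works. The one step you flag as the main obstacle --- that finiteness of $\mathcal C^*(H,F)$ forces $(F^\times \DP W)<\infty$ --- is in fact short and you should just write it out: $\varepsilon\cdot[a]_H^F:=[\varepsilon a]_H^F$ is a well-defined action of $F^\times$ on the finite set of classes of non-units, so its kernel $W_0$ has finite index, and $W_0$ lies in your stabilizer $W$ because for $\varepsilon\in W_0$ and $a\in H\setminus H^\times$ the relation $[\varepsilon a]_H^F=[a]_H^F$ together with $1\in a^{-1}H\cap F$ gives $\varepsilon a\in H$, and then $H\cap F^\times=H^\times$ forces $\varepsilon a\in H\setminus H^\times$. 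With those lines added, Part 2 is complete.

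The genuine gaps are in Part 1. First, your deduction of $v$-noetherianity is not a valid inference: once the extended chain in $\widehat H$ stabilizes at a $v$-ideal $\mathfrak b$, the conductor relation only tells you that all later members of the original chain are sandwiched between $c\,\mathfrak b$ and $\mathfrak b\cap H$, and nothing you have said excludes an infinite strictly ascending chain of $v$-ideals of $H$ inside that interval. Your argument never invokes the finiteness of $\mathcal C^*(H,F)$ here, yet that is precisely the needed input --- for instance via the Mori criterion, observing that for $X\subset H$ one has $(H\DP X)\cap F=\bigcap_{x\in X}\bigl(x^{-1}H\cap F\bigr)$, which is an intersection of only finitely many distinct sets and hence equals $(H\DP X_0)\cap F$ for a finite $X_0\subset X$. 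Second, the nonemptiness of $(H\DP\widehat H)$ and the finiteness of $\mathcal C(\widehat H)$ are asserted rather than proved: ``only finitely many $p\in P$ affect the conductor'' is not the relevant point (what one actually does is pick, for each of the finitely many classes of $\mathcal C^*(H,F)$ met by $\widehat H\setminus F^\times$ and each of the finitely many cosets of $W$ met by $\widehat H\cap F^\times$, a witness together with a multiplier putting it into $H$, and take $c$ to be the product of these multipliers times a fixed non-unit of $H$), and ``applying Part 2 after enlarging $F$'' cannot work, since Part 2 presupposes that $F/F^\times$ is finitely generated and enlarging $F$ moves you further from that hypothesis; finiteness of $\mathcal C(\widehat H)$ must again be extracted from the finiteness of $\mathcal C^*(H,F)$, by showing that the divisor classes of the primes of $F$ relevant to $\widehat H$ take only finitely many values. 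By contrast, your identification $\widehat H=F\cap\mathsf q(H)$ via the eventual periodicity of the sequence $\bigl([x^n]_H^F\bigr)_{n\ge 1}$, and the resulting Krull property of $\widehat H$, are sound.
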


\begin{proof}
For 1., see \cite[Theorems 2.9.11 and 2.9.13]{Ge-HK06a} and for 2. see \cite[Theorems 2.9.7]{Ge-HK06a}.
\end{proof}

\begin{examples}~ \label{2.5}

1. (Krull monoids) A Krull monoid is a C-monoid if and only if the class group is finite (\cite[Theorem 2.9.12]{Ge-HK06a}).

\smallskip
2. (Domains) Let $R$ be a domain. Necessary conditions for $R$ being a C-domain are given in Proposition \ref{2.4}. Thus suppose that $R$ is a Mori domain (i.e., a $v$-noetherian domain) with nonzero conductor $\mathfrak f = (R \DP \widehat R)$ and suppose that $\mathcal C (\widehat R)$ is finite. If $R/\mathfrak f$ is finite, then $R$ is a C-domain by \cite[Theorem 2.11.9]{Ge-HK06a}. This result generalizes to rings with zero-divisors (\cite{Ge-Ra-Re15c}), and in special cases we know that $R$ is a C-domain if and only if $R/\mathfrak f$ is finite (\cite{Re13a}).

\smallskip
3. (Congruence monoids) Let $R$ be Krull domain with finite class group $\mathcal C (R)$ and $H \subset R$ a congruence monoid such that $R/\mathfrak f$ is finite where $\mathfrak f$ is an ideal of definition for $H$. If $R$ is noetherian or $\mathfrak f$ is divisorial, then $H$ is a C-monoid (\cite[Theorem 2.11.8]{Ge-HK06a}). For a survey on arithmetical congruence monoids see \cite{Ba-Ch14a}.

\smallskip
4. In Subsection \ref{3.A} we shall prove that monoids of product-one sequences are C-monoids (Theorem \ref{3.2}), and we will meet C-monoids again in Proposition \ref{B(G,V)-is-C} dealing with the monoid $\mathcal{B}(G,V)$.
\end{examples}

Finitely generated monoids allow simple characterizations when they are Krull or when they are C-monoids. We summarize these characterizations in the next lemma.

\begin{proposition} \label{finitelygenerated}
Let $H$ be a monoid such that $H_{\mathrm{red}}$ is finitely generated.
\begin{enumerate}
\item Then $H$ is $v$-noetherian with $(H \DP \widehat H) \ne \emptyset$, $\widetilde H = \widehat H$, $\widetilde H/H^{\times}$ is finitely generated, and  $\widehat H$ is a Krull monoid.
      In particular, $H$ is a Krull monoid  if and only if $H = \widehat H$.

\smallskip
\item $H$ is a {\rm C}-monoid if and only if \ $\mathcal C ( \widehat H)$ is finite.

\smallskip
\item Suppose that $H$ is a submonoid of a factorial monoid $F = F^{\times} \times \mathcal F (P)$. Then the following statements are equivalent{\rm \,:}
      \begin{enumerate}
      \item $H$ is a \text{\rm C}-monoid defined in $F$, $F^{\times}/H^{\times}$ is a torsion group, and for every $p \in P$ there is an $a \in H$ such that $\mathsf v_p (a) > 0$.

      \smallskip
      \item For every $a \in F$, there is an $n_a \in \N$ with $a^{n_a} \in H$.
      \end{enumerate}
      If $(a)$ and $(b)$ hold, then $P$ is finite and $\widetilde H = \widehat H = \mathsf q (H) \cap F \subset F$ is saturated and cofinal.
\end{enumerate}
\end{proposition}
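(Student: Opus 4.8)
The plan is to handle the three parts in turn; throughout I first pass to $H_{\mathrm{red}}$, which is harmless since all the assertions are invariant under $H\to H_{\mathrm{red}}$, with $\widetilde H$, $\widehat H$ and $(H\DP\widehat H)$ transforming accordingly. For Part 1: a finitely generated monoid satisfies the ascending chain condition on $s$-ideals, in particular on $v$-ideals, so $H$ is $v$-noetherian. By Gordan's lemma the root closure $\widetilde H$ of a finitely generated monoid is again finitely generated; being finitely generated and root closed, its associated reduced monoid is a saturated submonoid of a free abelian monoid, hence by Theorem~\ref{2.1} $\widetilde H$ is a Krull monoid and in particular completely integrally closed, i.e. $\widehat{\widetilde H}=\widetilde H$. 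Since the complete integral closure is monotone under inclusions inside a fixed quotient group, the chain $\widetilde H\subseteq\widehat H\subseteq\widehat{\widetilde H}$ collapses, giving $\widetilde H=\widehat H$; this is then a finitely generated Krull monoid with $\widetilde H/H^{\times}$ finitely generated. Writing $\widehat H=H[x_1,\dots,x_k]$ and choosing $c_i\in H$ with $c_ix_i^n\in H$ for all $n$ (possible since $x_i\in\widehat H$), the product $c=c_1\cdots c_k$ lies in $(H\DP\widehat H)$, so this set is nonempty. The ``in particular'' statement now follows from Theorem~\ref{2.1}(a), a finitely generated monoid being $v$-noetherian.

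For Part 2, the implication ``$\Rightarrow$'' is Proposition~\ref{2.4}(1). For ``$\Leftarrow$'', assume $\mathcal C(\widehat H)$ finite. By Part 1, $\widehat H$ is a finitely generated Krull monoid, so fixing its divisor theory $\widehat H_{\mathrm{red}}\hookrightarrow\mathcal F\bigl(\mathfrak X(\widehat H)\bigr)$, with $\mathfrak X(\widehat H)$ finite, yields a saturated embedding $\widehat H\hookrightarrow F:=\widehat H^{\times}\times\mathcal F\bigl(\mathfrak X(\widehat H)\bigr)$ into a factorial monoid with $\mathcal C^*(\widehat H,F)\cong F/\widehat H=\mathcal C(\widehat H)$ finite; a short power argument shows $H\cap F^{\times}=H^{\times}$. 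It remains to pass from the finiteness of $\mathcal C^*(\widehat H,F)$ to that of $\mathcal C^*(H,F)$: here one uses that $c\widehat H\subseteq H$ for some $c\in H$ (Part 1) together with the finite generation of $\widehat H$, so that inside any $\widehat H$-class of $F$ only finitely many $H$-classes occur, these being governed by a residue modulo the conductor. This is exactly the bookkeeping carried out in \cite[Theorem 2.9.13]{Ge-HK06a}, which I would cite; I expect this upgrade to be the main obstacle of the whole proposition.

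For Part 3 the key point — where the standing hypothesis enters — is that $P$ must be finite. Indeed, the projection $H\to\mathcal F(P)$, $h\mapsto\prod_{p\in P}p^{\mathsf v_p(h)}$, kills $H^{\times}$, hence factors through the finitely generated monoid $H_{\mathrm{red}}$ and has finitely generated image; the condition ``for every $p\in P$ there is $a\in H$ with $\mathsf v_p(a)>0$'' (part of (a), and a consequence of (b) upon raising $p$ to a power) forces this image to have nontrivial $p$-component for every $p\in P$, which is possible only when $P$ is finite. Once $P$ is finite, $F/F^{\times}$ is finitely generated and Proposition~\ref{2.4}(2) applies. Using that the factorial monoid $F$ is root closed one obtains the displayed equalities $\widetilde H=\widehat H=\mathsf q(H)\cap F$ (the inclusion $\mathsf q(H)\cap F\subseteq\widetilde H$ is precisely (b), the reverse inclusion uses root-closedness of $F$, and $\widetilde H=\widehat H$ is Part 1); hence $\widehat H=\mathsf q(\widehat H)\cap F$, which says $\widehat H\hookrightarrow F$ is saturated, and cofinality of $\widehat H\hookrightarrow F$ is immediate from (b). For the equivalence, the relations ``$F^{\times}/H^{\times}$ torsion'', ``$H\cap F^{\times}=H^{\times}$'' and the full-support condition follow from (b) by elementary power arguments, while ``$H$ is a C-monoid defined in $F$'' is matched against the criterion of Proposition~\ref{2.4}(2): in one direction one reads off $\alpha$ and $W$ and deduces (b) by combining the witnesses $a_p$ with the absorption properties $(F^{\times}\DP W)\t\alpha$ and $W(H\setminus H^{\times})\subseteq H$; in the other direction one produces such $\alpha$ and $W$ from the power behaviour granted by (b), using also that by Part 1 $\widehat H$ is now a finitely generated Krull monoid, hence has finite class group, together with Part 2. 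I expect the delicate step in Part 3 to be this last matching with Proposition~\ref{2.4}(2); everything else is routine manipulation exploiting the standing finite-generation hypothesis.
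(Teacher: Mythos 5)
Your treatment of Parts 1 and 2, and of the final structural assertions in Part 3 ($P$ finite, $\widetilde H=\widehat H=\mathsf q(H)\cap F$ saturated and cofinal), is sound and essentially parallels the paper, which simply cites \cite[2.7.9 -- 2.7.13]{Ge-HK06a} and \cite[Proposition 4.8]{Ge-Ha08b} for 1.\ and 2. The two directions of the equivalence in Part 3, however, each contain a genuine gap.

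For (a)$\,\Rightarrow$\,(b) you propose to combine the witnesses $a_p$ with the data $\alpha$ and $W$ from Proposition \ref{2.4}.2. This cannot work as stated: the periodicity condition ``$a\in H$ iff $p_j^{\alpha}a\in H$ for $a\in p_j^{\alpha}F$'' only lets you change $\mathsf v_{p_j}$ by multiples of $\alpha$ while it stays $\ge\alpha$, so it can never remove a prime from the support of an element of $H$; starting from, say, $a_p^{\alpha}p^{m\alpha}\in H$ you never reach a pure power $p^{N}\in H$ when $a_p$ involves other primes. The missing ingredient is exactly what the paper uses: since $H$ is a C-monoid, $\mathcal C(\widehat H)$ is finite by Proposition \ref{2.4}.1 and there is a divisor theory $\widehat H\to\mathcal F(P_0)$ with $P_0=\{p^{d_p}\colon p\in P\}$, $d_p=\gcd(\mathsf v_p(H))$; finiteness of the class group puts a power of each $a\in\mathcal F(P_0)$ into $\widehat H$, and $\widehat H=\widetilde H$ (your Part 1) pushes a further power into $H$.

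For (b)$\,\Rightarrow$\,(a) you justify the finiteness of $\mathcal C(\widehat H)$ by ``$\widehat H$ is a finitely generated Krull monoid, hence has finite class group.'' That implication is false: a reduced finitely generated Krull monoid can have infinite class group (for instance the normal affine monoid generated by $(0,0,1),(1,0,1),(0,1,1),(1,1,1)$ in $\N_0^3$, whose semigroup algebra is the cone over a quadric, has class group $\Z$). What actually forces finiteness here is the torsion property granted by (b): $\mathsf q(F)/\mathsf q(H)F^{\times}$ is a finitely generated torsion group (finitely generated because $P$ is finite, torsion because every element of $F$ has a power in $H$), hence finite, and $\mathcal C(\widehat H)$ is an epimorphic image of a submonoid of it by Proposition \ref{prop:torsion-divtheory}.1. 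With that repaired, your appeal to Part 2 and to the criterion of Proposition \ref{2.4}.2 does complete the argument.
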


\begin{proof}
1.  follows from \cite[2.7.9 -  2.7.13]{Ge-HK06a}, and 2.  follows from \cite[Proposition 4.8]{Ge-Ha08b}.

\smallskip
3.  (a) $\Rightarrow$\, (b) \ For every $p \in P$, we set $d_p = \gcd \big( \mathsf v_p (H) \big)$, and by assumption we have $d_p > 0$. We set $P_0 = \{p^{d_p} \colon p \in P \}$ and $F_0 = F^{\times} \times \mathcal F (P_0)$. By \cite[Theorem 2.9.11]{Ge-HK06a}, $H$ is a C-monoid defined in $F_0$ and there is a divisor theory $\partial \colon \widehat H \to \mathcal F (P_0)$. By construction of $F_0$, it is sufficient to prove the assertion for all $a \in F_0$. Since $F^{\times}/H^{\times}$ is a torsion group, it is sufficient to prove the assertion for all $a \in \mathcal F (P_0)$. Let $a \in \mathcal F (P_0)$. Since $\mathcal C (\widehat H)$ is finite, there is an $n_a' \in \N$ such that $a^{n_a'} \in \widehat H$. Since $\widehat H = \widetilde H$, there is an $n_a'' \in \N$ such that $(a^{n_a'})^{n_a''} \in H$.

(b) $\Rightarrow$\, (a) \ For every $p \in P$ there is an $n_p \in \N$ such that $p^{n_p} \in H$ whence $\mathsf v_p (p^{n_p})=n_p > 0$.
Clearly, we have $\widehat H \subset \widehat F = F$ and hence $\widehat H \subset \mathsf q (\widehat H) \cap F = \mathsf q (H) \cap F$. Since for each $a \in F$ there is an $n_a \in \N_0$ with $a^{n_a} \in H$, we infer that $\mathsf q (H) \cap F \subset \widetilde H = \widehat H$ and hence $\widehat H = \mathsf q (H) \cap F$. Furthermore,  $H \subset F$ and $\widehat H \subset F$ are cofinal, and  $\mathsf q (F)/\mathsf q (H) = F/H$ is a torsion group. Clearly, $\mathsf q (H) \cap F \subset F$ is saturated, and thus $\widehat H$ is Krull. Since ${\widehat H}^{\times} = \widehat H \cap F^{\times}$ and $H^{\times} = \widehat H^{\times} \cap H$, it follows that $H^{\times} = H \cap F^{\times}$ and then we obtain that $F^{\times}/H^{\times}$ is a torsion group.

By 1., $\widehat H/H^{\times}$ is finitely generated, say $\widehat H/H^{\times} = \{u_1H^{\times}, \ldots, u_n H^{\times} \}$, and set $P_0 = \{ p \in P \colon p \ \text{divides} \ u_1 \cdot \ldots \cdot u_n \ \text{in} \ F \}$. Then $P_0$ is finite, and we assert that $P_0=P$. If there would exist some $p \in P \setminus P_0$, then there is an $n_p \in \N$ such that $p^{n_p} \in H$ and hence $p^{n_p}H^{\times}$ is a product of $u_1H^{\times}, \ldots , u_nH^{\times}$, a contradiction. Therefore $P$ is finite,
$F/F^{\times}$ is a finitely generated monoid, $\mathsf q (F)/F^{\times}$ is a finitely generated group, and therefore $\mathsf q (F)/\mathsf q (H)F^{\times}$ is a finitely generated torsion group and thus finite. Since $\varphi \colon \widehat H \to F \to F/F^{\times}$ is a divisor homomorphism and $\mathcal C (\varphi) = \mathsf q (F)/\mathsf q (H)F^{\times}$, Proposition \ref{prop:torsion-divtheory}.1 implies that  $\mathcal C (\widehat H)$ is an epimorphic image of a submonoid of $\mathsf q (F)/\mathsf q (H)F^{\times}$ and thus $\mathcal C ( \widehat H)$ is finite. Thus 2. implies that $H$ is a {\rm C}-monoid (indeed, Property 2.(b) of Proposition \ref{2.4} holds and hence $H$ is a C-monoid defined in $F$).
\end{proof}

\subsection{\bf  Davenport constants of BF-monoids}~ \label{2.E}

Let $H$ be a BF-monoid. For every $k \in \N$, we  study the sets
\[
\mathcal M_k (H) = \{ a \in H \colon \max \mathsf L (a) \le k \} \quad \text{and} \quad
\overline{\mathcal M}_k (H) = \{ a \in H \colon \max \mathsf L (a)=k \} \,.
\]
A monoid homomorphism $| \cdot | \colon H \to (\N_0,+)$ will be called a {\it degree function} on $H$. In this section we study abstract monoids having a  degree function. The results will be applied in particular to monoids of product-one sequences and to monoids $\mathcal B (G,V)$ (see Subsections \ref{3.C} and \ref{5.D}). In all our applications the monoid $H$ will be a submonoid of a factorial monoid $F$ and if not stated otherwise the degree function on $H$ will be the restriction of the length function on $F$.

If $\theta \colon H \to B$ is a homomorphism and $H$ and $B$ have degree functions, then we say that $\theta$ is {\emph{ degree preserving}} if $|a |_H = |\theta (a)|_B$ for all $a \in H$.
Suppose we are given a  degree function on $H$ and $k \in \N$, then
\[
\mathsf D_k (H) = \sup \{ |a| \colon a \in \mathcal M_k (H) \} \in \N_0 \cup \{\infty\}
\]
is called the  {\it large $k$th  Davenport constant} of $H$ (with respect to $|\cdot|_H$). Clearly, $\mathcal M_1 (H) = \mathcal A (H) \cup H^{\times}$.  We call
$\mathsf D (H) = \mathsf D_1 (H) = \sup \{ |a| \colon a \in \mathcal A (H) \} \in \N_0 \cup \{\infty\}$ the {\it Davenport constant} of $H$. For every $k \in \N$, we have $\mathcal M_k (H) \subset \mathcal M_{k+1} (H)$,  $\mathsf D_k (H) \le \mathsf D_{k+1} (H)$, and $\mathsf D_k (H) \le k \mathsf D (H)$.
Furthermore, we have $|u|=0$ for every unit $u\in H^\times$. Therefore the degree function on $H$ induces automatically a degree function
$|\cdot|:H_{\mathrm{red}}\to (\N_0,+)$, and so the $k$th Davenport constant of $H_{\mathrm{red}}$ is defined.
Obviously we have
$\mathsf D_k(H)=\mathsf D_k(H_{\mathrm{red}})$.
Let $\mathsf e (H)$ denote the smallest  $\ell \in \N_0\cup \{\infty\}$ with the following property:
\begin{itemize}
\item[] There is a $K \in \N_0$ such that  every $a \in H$ with $|a| \ge K$ is divisible by an element $b \in H\setminus H^\times $ with $|b| \le \ell$.
\end{itemize}
Clearly, $\mathsf e (H) \le \mathsf D (H)$.

\begin{proposition} \label{2.7}
Let $H$ be a \BF-monoid and $| \cdot | \colon H \to (\N_0,+)$ be a degree function.
\begin{enumerate}
\item If $H_{\mathrm{red}}$ is finitely generated, then the sets $\mathcal M_k (H_{\mathrm{red}})$ are finite and $\mathsf D_k (H) < \infty$ for every $k \in \N$.

\smallskip
\item If \ $\mathsf D (H) < \infty$, then there exist constants $D_H, K_H \in \N_0$ such that $\mathsf D_k (H) = k \mathsf e (H) + D_H$ for all $k \ge K_H$.

\smallskip
\item If \ $\mathsf D (H) < \infty$, then the map $\N \to \Q$, $k\mapsto \frac{\mathsf D_k(H)}{k}$ is non-increasing.

\smallskip
\item Suppose that $H$ has a prime element.  Then
      \[
      \mathsf  D_k (H) \ = \ \max \bigl\{ |a| \colon a \in \overline{\mathcal M}_k (H)  \bigr\}\le \ k \mathsf D(H)
      \]
      and
      \[
      k \mathsf D(H) \  = \ \max  \bigl\{ |a| \colon  a \in H, \ \min \mathsf L (a) \le k \bigr\}
       = \ \max \bigl\{ |a| \colon a \in H, \ k \in \mathsf L (a) \bigr\}\,.
      \]
\end{enumerate}
\end{proposition}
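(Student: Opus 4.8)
The four parts are essentially independent, so I would prove them one at a time, keeping the genuinely delicate part~2 for last.

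\textbf{Parts 1 and 4.} For part~1, note that $H_{\mathrm{red}}$ is a reduced BF-monoid which, being finitely generated, has only finitely many atoms $u_1,\dots,u_n$; every $a\in\mathcal M_k(H_{\mathrm{red}})$ is either the identity or a product of at most $k$ of the $u_i$ (any atomic factorization of $a$ has length $\le\max\mathsf L(a)\le k$), and there are only finitely many such products, so $\mathcal M_k(H_{\mathrm{red}})$ is finite and $\mathsf D_k(H)=\mathsf D_k(H_{\mathrm{red}})$ is a supremum over a finite set. For part~4, fix a prime $p$ of $H$. The point is that a prime occurring in an atomic factorization of a product must be associated to one of the atoms, so cancelling it repeatedly gives $\max\mathsf L(ap^j)=\max\mathsf L(a)+j$ for all $a\in H$ and $j\in\N_0$. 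Hence if $a\in\mathcal M_k(H)$ with $\max\mathsf L(a)=\ell$, then $ap^{k-\ell}\in\overline{\mathcal M}_k(H)$ has degree $|a|+(k-\ell)|p|\ge|a|$; this shows the supremum defining $\mathsf D_k(H)$ is already attained over $\overline{\mathcal M}_k(H)$ (and it is a genuine maximum whenever finite, being a bounded set of non-negative integers), which is the first equality, while $\mathsf D_k(H)\le k\mathsf D(H)$ is the inequality recorded just before the proposition. For the second chain: any $a$ with $\min\mathsf L(a)=m\le k$ has a length-$m$ atomic factorization, so $|a|\le m\mathsf D(H)\le k\mathsf D(H)$, which gives ``$\le$'' for both displayed maxima; and taking an atom $u$ with $|u|=\mathsf D(H)$ (or, when $\mathsf D(H)=\infty$, one atom of arbitrarily large degree together with $k-1$ further atoms) the element $u^k$ has $k\in\mathsf L(u^k)$ and $|u^k|=k\mathsf D(H)$, which gives ``$\ge$''. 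The prime element is used only to fill the maximal length up to exactly $k$.

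\textbf{Part 3.} Assume $\mathsf D(H)<\infty$, so every $\mathsf D_k(H)\le k\mathsf D(H)$ is finite and attained. Fix $k\in\N$; if $\mathsf D_{k+1}(H)=0$ the inequality $\mathsf D_{k+1}(H)/(k+1)\le\mathsf D_k(H)/k$ is clear, so choose a non-unit $a\in\mathcal M_{k+1}(H)$ with $|a|=\mathsf D_{k+1}(H)$ and put $m=\max\mathsf L(a)\le k+1$. If $m\le k$ then $a\in\mathcal M_k(H)$ and $\mathsf D_{k+1}(H)\le\mathsf D_k(H)$, which suffices. If $m=k+1$, fix an atomic factorization $a=v_1\cdots v_{k+1}$ and set $a_i=a/v_i$ for $i\in[1,k+1]$; since a length-$r$ factorization of $a_i$ extends to a length-$(r+1)$ factorization of $a$, we get $\max\mathsf L(a_i)\le k$, hence $a_i\in\mathcal M_k(H)$ and $|a_i|\le\mathsf D_k(H)$. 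Summing, $\sum_{i=1}^{k+1}|a_i|=(k+1)|a|-\sum_{i}|v_i|=k|a|$, so $k\,\mathsf D_{k+1}(H)=k|a|\le(k+1)\mathsf D_k(H)$, as desired.

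\textbf{Part 2.} If $\mathsf D(H)=0$ then $H$ is atomic with all atoms of degree $0$, so every element has degree $0$, one gets $\mathsf e(H)=0$ and $\mathsf D_k(H)=0$ for all $k$, and we take $D_H=0$. Assume now $\mathsf D(H)\ge1$ and write $\mathsf e=\mathsf e(H)\le\mathsf D(H)<\infty$; note all $\mathsf D_k(H)$ are then finite and attained. I would establish: (i) $\mathsf e\ge1$ --- otherwise $\ell=0$ works in the definition of $\mathsf e$, and iterating its conclusion on a high power of a positive-degree atom $u$ writes $u^m=b_1\cdots b_r\cdot a_r$ with each $b_i$ a degree-$0$ non-unit, forcing $\max\mathsf L(u^m)\ge r$ for every $r$, contradicting the BF property; (ii) $\mathsf D_k(H)\to\infty$, since such a $u$ gives $\mathsf D_{\max\mathsf L(u^m)}(H)\ge|u^m|\ge m$ and $(\mathsf D_k)_k$ is non-decreasing; (iii) the upper step $\mathsf D_{k+1}(H)\le\mathsf D_k(H)+\mathsf e$ for all large $k$ --- with $K$ as in the definition of $\mathsf e$ for $\ell=\mathsf e$, once $\mathsf D_{k+1}(H)\ge K$ (true eventually by (ii)) a maximizer $a\in\mathcal M_{k+1}(H)$ of degree $\mathsf D_{k+1}(H)$ satisfies $|a|\ge K$, so $a=bc$ with $b$ a non-unit of degree $\le\mathsf e$, whence $\max\mathsf L(c)\le\max\mathsf L(a)-1\le k$ and $|a|=|b|+|c|\le\mathsf e+\mathsf D_k(H)$; (iv) the lower bound $\mathsf D_k(H)\ge k\mathsf e$ for all $k\ge1$ --- by minimality of $\mathsf e$ the defining property fails for $\ell=\mathsf e-1$, so for every $K$ there is an $a$ with $|a|\ge K$ all of whose non-unit divisors have degree $\ge\mathsf e$; then in a longest atomic factorization $a=v_1\cdots v_n$ each $v_i$ satisfies $\mathsf e\le|v_i|\le\mathsf D(H)$, so $n\ge|a|/\mathsf D(H)$, and choosing $K=k\mathsf D(H)$ forces $n\ge k$, whence $v_1\cdots v_k\in\mathcal M_k(H)$ (truncating a longest factorization cannot raise the maximal length above $k$) has degree $\ge k\mathsf e$.

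Granting (i)--(iv), set $E_k=\mathsf D_k(H)-k\mathsf e$. Then $E_k\ge0$ for all $k$ by (iv) and $E_{k+1}\le E_k$ for all large $k$ by (iii), so $(E_k)$ is eventually a non-increasing sequence of non-negative integers, hence eventually constant, say $E_k=D_H$ for $k\ge K_H$; this is precisely $\mathsf D_k(H)=k\,\mathsf e(H)+D_H$ for $k\ge K_H$. The main obstacle is step (iv): it is the only place where the minimality clause in the definition of $\mathsf e(H)$ is genuinely exploited, and it hinges on $\mathsf D(H)<\infty$ to bound from above the degrees of the atoms in a longest factorization, so that ``large degree'' really forces ``many atoms'' --- without that finiteness the truncation argument breaks down. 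Everything else is bookkeeping with maximal factorization lengths.
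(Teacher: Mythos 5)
Your proof is correct, and parts 1, 3 and 4 run essentially as in the paper (in part 3 you average over all $k+1$ ways of omitting one atom from a longest factorization, where the paper omits the atom of smallest degree; in part 4 both arguments pad with powers of the prime to land in $\overline{\mathcal M}_k(H)$ and use $u^{[k]}$-type elements for the lower bound on $k\mathsf D(H)$). The genuine divergence is in the lower bound $\mathsf D_k(H)\ge k\mathsf e(H)$ of part 2. The paper proves an assertion that there exist $k$ atoms, each of degree exactly $\mathsf e(H)$, whose product lies in $\mathcal M_k(H)$; this is done by contradiction, using an auxiliary threshold $\mathsf f(H)$ to write every element of large degree as $a_1\cdots a_k b$ with $|a_i|\le\mathsf e(H)$ and deducing that otherwise every large element would have a non-unit divisor of degree strictly below $\mathsf e(H)$, contradicting minimality. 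You instead use the failure of the defining property at $\ell=\mathsf e(H)-1$ directly to produce elements $a$ of arbitrarily large degree all of whose non-unit divisors have degree at least $\mathsf e(H)$, observe that every atom in a longest factorization of such an $a$ has degree in $[\mathsf e(H),\mathsf D(H)]$ so that $|a|\ge k\mathsf D(H)$ forces at least $k$ atoms, and truncate. Both routes exploit the minimality of $\mathsf e(H)$; yours additionally leans on $\mathsf D(H)<\infty$ to convert ``large degree'' into ``many atoms'', which costs nothing under the standing hypothesis and avoids the contradiction argument, at the price of the preliminary observations that $\mathsf e(H)\ge 1$ and $\mathsf D_k(H)\to\infty$ (which in fact also tidy up the degenerate case $\mathsf e(H)=0$ that the paper's threshold $k\ge\mathsf f(H)/\mathsf e(H)-1$ quietly excludes). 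The upper recursion $\mathsf D_{k+1}(H)\le\mathsf D_k(H)+\mathsf e(H)$ for large $k$ and the stabilization of the non-increasing integer sequence $\mathsf D_k(H)-k\mathsf e(H)$ are the same in both proofs.
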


\begin{proof}
1. Suppose that $H_{\mathrm{red}}$ is finitely generated. Then $\mathcal A (H_{\mathrm{red}})$ is finite whence  $\mathcal M_k (H)$ is finite for every $k \in \N$.
It follows that $\mathsf D (H) < \infty$ and $\mathsf D_k (H) \le k \mathsf D (H) < \infty$ for all $k \in \N$.

\smallskip
2. Suppose that $\mathsf D (H) < \infty$ and note that  $\mathsf e (H) \le \mathsf D (H)$. Let $\mathsf f (H) \in \N_0$ be the smallest $K \in \N_0$  such that every $a \in H$ with $|a| \ge K$ is divisible by an element $b \in H$ with $|b| \le \mathsf e (H)$.
We define $A = \{ a \in \mathcal A (H) \colon |a| = \mathsf e (H) \}$. Let $k \in \N$  and continue  with the following assertion.

\begin{enumerate}
\item[{\bf A.}\,] There exist $a_1, \ldots, a_k \in A$ such that $a_1 \ldots a_k \in \mathcal M_k (H)$. In particular, $\mathsf D_k (H) \ge |a_1 \ldots a_k| = k \mathsf e (H)$.
\end{enumerate}

{\it Proof of \,{\bf A}}.\, Assume to the contrary that for all $a_1, \ldots, a_k \in A$ we have $\max \mathsf L (a) > k$. Thus the product $a_1 \ldots a_k$ is divisible by an atom $u \in \mathcal A (H)$ with $|u| < \mathsf e (H)$. We set $K = \mathsf f (H) + (k-1) \mathsf e (H)$ and choose $a \in H$ with $|a| \ge K$. Then $a$ can be written in the form $a = a_1 \ldots a_k b$ where $a_1, \ldots, a_k, b \in H$ and $|a_i| \le \mathsf e (H) $ for all $i \in [1,k]$. If there is some $i \in [1,k]$ with $|a_i| < \mathsf e (H)$, then $a_i$ is a divisor of $a$ with $|a_i| < \mathsf e (H)$. Otherwise, $a_1, \ldots, a_k \in A$ and by our assumption the product $a_1 \ldots a_k$ and hence $a$ has a divisor  of degree strictly smaller than $\mathsf e (H)$. This is a contradiction to the definition of $\mathsf e (H)$. \qed{(Proof of {\bf A})}

Now let $k \ge \mathsf f (H)/\mathsf e (H) - 1$. Then {\bf A} implies that $\mathsf D_k (H) + \mathsf e (H) \ge (k+1)\mathsf e (H) \ge \mathsf f (H)$. Let $a \in H$ with $|a| > \mathsf D_k (H) + \mathsf e (H)$. Then, by definition of $\mathsf f (H)$, there are $ b, c \in H$ such that $a = b c$ with $|c| \le \mathsf e (H)$ and hence $|b| > \mathsf D_k (H)$. This implies that $\max \mathsf L (b) > k$, whence $\max \mathsf L (a) > k+1$ and $a \notin \mathcal M_{k+1} (H)$. Therefore we obtain that $\mathsf D_{k+1} (H) \le \mathsf D_k (H) + \mathsf e (H)$ and thus
\[
0 \le \mathsf D_{k+1} (H) - (k+1) \mathsf e (H) \le \mathsf D_k (H) - k \mathsf e (H) \,.
\]
Since a non-increasing sequence of non-negative integers stabilizes, the assertion follows.

\smallskip
3. Suppose that $\mathsf D (H) < \infty$. Let $k \in \N$, $a \in \mathcal M_{k+1} (H)$ with $|a|=\mathsf D_{k+1} (H)$, and set $l = \max \mathsf L (a)$. Then $l \le k+1$. If $l \le k$, then $a \in \mathcal M_k (H)$ and $\mathsf D_{k+1} (H) \ge \mathsf D_k (H) \ge |a|=\mathsf D_{k+1} (H)$ whence $\mathsf D_k (H) =\mathsf D_{k+1}(H)$. Suppose that $l=k+1$.
We set $a = a_1 \ldots a_{k+1}$ with $a_1, \ldots, a_{k+1} \in \mathcal A (H)$ and $|a_1| \ge \ldots \ge |a_{k+1}|$ whence $|a_{k+1}| \le (|a_1|+\ldots+|a_k|)/k$. It follows that
\[
\frac{\mathsf D_{k+1}(H)}{k+1} = \frac{|a_1|+\dots+|a_{k+1}|}{k+1}\le \frac{|a_1|+\dots+|a_k|}{k}\le \frac{\mathsf D_k(H)}{k} \,,
\]
where the last inequality holds because $a_1 \ldots a_k \in \mathcal M_k (H)$.

\smallskip
4. Let $p \in H$ be a prime element.  We assert that
\[
\mathsf D_k(H) \le \max \bigl\{ |a| \colon a \in H, \ \max \mathsf L(a)=k \bigr\} \,. \tag{$*$}
\]
Indeed, if $a \in \mathcal M_k(H)$ and
$\max \mathsf L(a) =l \le k$, then $a p^{k-l} \in \mathcal M_k(H)$
and
\[
|a| \le |a p^{k-l}| \le \max \bigl\{ |a| \colon a \in H, \ \max \mathsf L(a)=k \bigr\}\,,
\]
and hence $(*)$ follows. Next we assert that
\[
\max \bigl\{ |a| \colon a \in H,
\ \min \mathsf L(a) \le k \bigr\} \le k \mathsf D(H) \,. \tag{$**$}
\]
Let $a \in H$ with
$\min \mathsf L(a) =l \le k$, say $a = u_1  \ldots
u_l$, where $u_1,\ldots,  u_l \in \mathcal A(H)$. Then $|a| =
|u_1| + \ldots + |u_l| \le l \mathsf D(H) \le k \mathsf D(H)$, and thus $(**)$ follows.
Using $(*)$ and $(**)$ we infer that
\[
\begin{aligned}
\mathsf D_k (H) & \le \max \bigl\{ |a| \colon  a \in H, \ \max \mathsf L(a) =
k \bigr\} \  \le \ \max \bigl\{ |a| \colon  a \in H, \
\max \mathsf L(a) \le k \bigr\} \\ & = \ \mathsf D_k(H) \  \le \ \max
\bigl\{ |a| \colon a \in H, \ \min \mathsf L(a) \le k
\bigr\}
\end{aligned}
\]
and that
\[
k \mathsf D(H) = \max \bigl\{ |a| \colon a \in H, \ k
\in \mathsf L(a) \bigr\}  \le \max \bigl\{ |a| \colon a \in
H, \ \min \mathsf L(a) \le k \bigr\} \le k \mathsf D (H) \,.
\]
\end{proof}

Let $F$ be a factorial monoid and $H \subset F$ a submonoid such that $H^{\times} = H \cap F^{\times}$. Then $H$ is a BF-monoid by \cite[Corollary 1.3.3]{Ge-HK06a}. For $k \in \N$, let $\mathcal M_k^* (H)$ denote the set of all $a \in F$ such that $a$ is not divisible by a product of $k$ non-units of $H$.
The restriction of the usual length function $|\cdot |\colon F\to \N_0$ on $F$ (introduced in Subsection~\ref{2.A}) gives a degree function on $H$.
 We define the {\it small $k$th Davenport constant} $\mathsf d_k (H)$ as
\begin{equation} \label{defining-d_k}
\mathsf d_k (H) = \sup \{ |a| \colon a \in \mathcal M_k^* (H) \} \in \N_0 \cup \{\infty\}.
\end{equation}
In other words, $1+\mathsf d_k (H)$ is the smallest integer $\ell \in \N$ such that every  $a \in F $ of length $|a| \ge \ell$ is divisible by a product of $k$ non-units of $H$.
We call $\mathsf d(H)=\mathsf d_1(H)$ the \emph{small Davenport constant} of $H$.
Clearly we have $\mathcal{M}_k^*(H)\subset \mathcal{M}_{k+1}^*(H)$ hence $\mathsf d_k(H)\le \mathsf d_{k+1}(H)$.

Furthermore, let $\eta (H)$ denote the smallest integer $\ell \in \N\cup \{\infty\}$ such that every $a \in F$ with $|a| \ge \ell$ has a divisor $b \in H \setminus H^{\times}$ with $|b| \in [1, \mathsf e (H)]$.
For $p\in \mathcal{A}(F)$ denote by $o_p$ the smallest integer $\ell \in \N\cup \{\infty\}$ such that $p^{o_p}\in H$.
Clearly, we have $o_p\le \eta(H)$ for all $p\in \mathcal{A}(F)$.

\begin{proposition} \label{2.8}
Let $F=F^{\times} \times \mathcal F (P)$ be a factorial monoid and $H \subset F$ a submonoid such that $H^{\times} = H \cap F^{\times}$, and let $k \in \N$.
\begin{enumerate}
\item If for every $a \in F$ there is a prime $p \in F$ such that $ap \in H$, then $1+\mathsf d_k (H) \le \mathsf D_k (H)$.

\smallskip
\item Suppose that  $H_{\mathrm{red}}$ is finitely generated and that for every $a \in F$ there is an $n_a \in H$ such that $a^{n_a} \in H$.  Then $H$ is a \text{\rm C}-monoid and we have
\begin{itemize}
\item[(a)] \ $\mathsf e(H)=\max\{o_p\colon p\in P \}$ and $\eta(H)<\infty$.
\item[(b)] \ $\mathsf d_k(H)+1\ge k\mathsf e(H)$ and there exist constants $d_H\in \Z_{\ge -1}, k_H \in \N_0$ such that  $\quad \mathsf d_k (H) = k \mathsf e (H) + d_H$ for all $k \ge k_H$.
\end{itemize}
\end{enumerate}
\end{proposition}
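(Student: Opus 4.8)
The plan is to deduce everything from the structural results already established. For part 1, the key observation is that the map sending $a \in F$ to $ap$ (for a suitable prime $p$, which may depend on $a$) lands in $H$, and that multiplying by a prime $p \in F$ does not change membership in being "divisible by a product of $k$ non-units of $H$" in the relevant direction. More precisely, I would argue: given $a \in \mathcal M_k^*(H)$, pick a prime $p\in F$ with $ap\in H$; since $a$ is not divisible (in $F$) by a product of $k$ non-units of $H$, and a product of $k$ non-units of $H$ is in particular an element of $H$, one checks that $\max \mathsf L_H(ap)\le k$ — otherwise $ap$ would be a product of $>k$ atoms of $H$, and after removing the prime factor $p$ (which, being prime in $F$, must occur in exactly one atom and can be cancelled, possibly turning that atom into a unit) one would exhibit $a$ as divisible by a product of $k$ non-units of $H$. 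Hence $ap\in\mathcal M_k(H)$, so $|a|+1=|ap|\le \mathsf D_k(H)$; taking the supremum over $a$ gives $1+\mathsf d_k(H)\le \mathsf D_k(H)$.

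For part 2, first note that the hypotheses are exactly those of Proposition \ref{finitelygenerated}.3(b) (with the roles of $a^{n_a}$), so $H$ is a C-monoid defined in $F$, $P$ is finite, and $\widehat H=\widetilde H=\mathsf q(H)\cap F$ is saturated in $F$; in particular $H$ is a BF-monoid with a degree function, so Proposition \ref{2.7} applies. For (a): the inequality $\mathsf e(H)\ge o_p$ for each $p\in P$ is clear since high powers of $p$ have no small divisor in $H$ other than powers of $p$ itself (here one uses that $p$ is prime in $F$, so any divisor in $H\setminus H^\times$ of $p^N$ is of the form $p^j$ with $j\ge o_p$). For the reverse inequality $\mathsf e(H)\le\max_p o_p =: \ell_0$, take any $a\in F$ with $|a|$ large; since $P$ is finite, $a$ must have large valuation at some $p\in P$, in particular $\mathsf v_p(a)\ge o_p$, so $p^{o_p}$ is a divisor of $a$ lying in $H\setminus H^\times$ with degree $o_p\le\ell_0$. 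This simultaneously shows $\eta(H)\le$ the same bound (the divisor $p^{o_p}$ has degree in $[1,\mathsf e(H)]$), hence $\eta(H)<\infty$.

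For (b): the lower bound $\mathsf d_k(H)+1\ge k\mathsf e(H)$ comes from exhibiting, for the prime $p_0\in P$ achieving $o_{p_0}=\mathsf e(H)$, the element $p_0^{\,k\mathsf e(H)-1}\in F$ and checking it lies in $\mathcal M_k^*(H)$: any non-unit divisor of a power of $p_0$ in $H$ is $p_0^j$ with $j\ge\mathsf e(H)$, so a product of $k$ such has degree $\ge k\mathsf e(H)>k\mathsf e(H)-1$. The asymptotic linearity is then obtained exactly as in the proof of Proposition \ref{2.7}.2: one shows $\mathsf d_{k+1}(H)\le \mathsf d_k(H)+\mathsf e(H)$ for $k$ large by a subadditivity/divisor-extraction argument — if $a\in F$ has $|a|>\mathsf d_k(H)+\mathsf e(H)$ then (using $\eta(H)<\infty$ and $|a|$ large) $a$ has a divisor $b\in H\setminus H^\times$ with $|b|\le\mathsf e(H)$, so $a/b$ has length $>\mathsf d_k(H)$, hence $a/b$ is divisible by a product of $k$ non-units of $H$, and appending $b$ shows $a$ is divisible by a product of $k+1$ non-units — so the sequence $\mathsf d_{k+1}(H)-(k+1)\mathsf e(H)$ is non-increasing in $k$ and bounded below by $-1$, hence eventually constant, giving $d_H\in\Z_{\ge-1}$ and $k_H$.

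I expect the main obstacle to be the careful bookkeeping in part 1 and in the subadditivity step of (b): one must handle the possibility that extracting a prime $p\in F$ (in part 1) or a divisor $b$ (in (b)) from a factorization into atoms of $H$ reduces the atom count or produces a unit, so that the resulting bound on $\max\mathsf L_H$ is exactly right rather than off by one. The C-monoid machinery and finiteness of $P$ do the real work; the rest is the same length-function argument as in Proposition \ref{2.7}, transplanted from $\mathcal M_k$ to $\mathcal M_k^*$.
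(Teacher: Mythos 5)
Your part 1, the lower bound $\mathsf d_k(H)+1\ge k\mathsf e(H)$, the finiteness of $\eta(H)$, and the final stabilization argument all coincide with the paper's proof and are correct. There is, however, one genuine gap: your argument for the upper bound $\mathsf e(H)\le\max\{o_p\colon p\in P\}$. The invariant $\mathsf e(H)$ is defined intrinsically for the BF-monoid $H$ (before any ambient $F$ enters): every $a\in H$ of sufficiently large degree must be divisible \emph{in $H$}, i.e.\ $a=bc$ with $b,c\in H$, by a non-unit $b$ with $|b|\le\mathsf e(H)$; this is also how $\mathsf e(H)$ is used in the proof of Proposition \ref{2.7}.2, on which the conclusion of 2(b) rests. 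Your pigeonhole argument only produces, for $a$ of large degree, a $p\in P$ with $\mathsf v_p(a)\ge o_p$, i.e.\ $p^{o_p}\mid a$ in $F$. Since $H$ is a C-monoid but in general not saturated in $F$, divisibility in $F$ does not imply divisibility in $H$, so this does not bound $\mathsf e(H)$. (Your argument as written would apply to any submonoid $H\subset F$ with all $o_p$ finite and makes no use of the finite generation hypothesis, which cannot be right.)

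The paper closes exactly this gap with the defining property of C-monoids (Proposition \ref{2.4}.2(b)): there is an $\alpha\in\N$, which one may take to be a common multiple of all the $o_p$, such that for every $p\in P$ and every $a\in p^{\alpha}F$ one has $a\in H$ if and only if $p^{\alpha}a\in H$. Hence any $b\in H$ with $\mathsf v_p(b)\ge 2\alpha$ for some $p$ (which happens as soon as $|b|>|P|(2\alpha-1)$) is divisible by $p^{\alpha}$, and therefore by the atom $p^{o_p}$, \emph{in $H$}; this gives $\mathsf e(H)\le\max\{o_p\}$. Note that for $\eta(H)$ the element $a$ ranges over $F$ and the relevant divisibility is in $F$, so there your pigeonhole argument is fine. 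Everything else in your write-up matches the paper's argument step for step.
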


\begin{proof}
1. Let $a \in \mathcal M_k^* (H)$ such that $|a|=\mathsf d_k(H)$. We choose a prime $p \in F$ such that $a p \in H$.
Take any factorization  $ap = u_1 \ldots u_{\ell}$ where $u_i\in \mathcal{A}(H)$.
We may assume  that  $p \t u_1$ in $F$. Then $u_2 \ldots u_{\ell} \t a$ in $F$ and hence $\ell - 1 < k$. Thus it follows that $ap \in \mathcal M_k (H)$ and
$ \mathsf D_k (H)\ge |ap|=|a|+1\ge \mathsf d_k (H)+1$.

2.(a)    By  Proposition~\ref{finitelygenerated}.3, $H$ is a C-monoid, $P$ is finite and hence $\mathsf e (H) < \infty$.
If $p \in P$, then $p^{o_p} \in \mathcal A (H)$ and by the minimality of $o_p$, $p^{o_p}$ does not have a divisor $b \in H \setminus H^{\times}$ such that $|b| < o_p$. Thus it follows that $\mathsf e(H)\ge \max\{o_p\colon p\in P\}$. 
For the reverse inequality,  note that by Proposition~\ref{2.4}.2 there exists an $\alpha\in \N$ such that for all $p\in P$ and all
$a\in p^{\alpha}F$ we have $a\in H$ if and only if $p^{\alpha}a\in H$. Since any multiple of $\alpha$ has the same property,  we may assume that
$\alpha$ is divisible by  $o_p$ for all $p \in P$. Let $b\in H$ with $|b|>|P|(2\alpha-1)$. Then there exists a $p\in P$ such that  $b\in p^{2\alpha}F\cap H$. Hence
$b$ is divisible in $H$ by $p^{\alpha}$, implying in turn that $p^{o_p} \in \mathcal{A}(H)$ divides $b$ in $H$. Therefore we obtain that
$\mathsf e(H)\le \max\{o_p \colon p\in P\}$.

If $a \in F$ with $|a|\ge \sum_{p\in P}(o_p-1)$, then there is a $p\in P$ such that $p^{o_p}$ divides $a$ in $F$, and thus $\eta(H)\le 1+ \sum_{p\in P}(o_p-1)$.

2.(b) Let $p\in P$ with $o(p)=\mathsf e(H)$. Then $p^{ko_p-1}\in \mathcal{M}_k^*(H)$ and $|p^{ko_p-1}|=k\mathsf e(H)-1$, showing the inequality
$\mathsf d_k(H)+1\ge k\mathsf e(H)$ for all $k\in \N$.
Now let $k \in \N$ be such that $1+\mathsf d_k (H)+\mathsf e (H) \ge \eta (H)$, and
let $a \in F$ with $|a| \ge  \mathsf d_k (H) +\mathsf e (H)+1$. Then, by definition of $\eta (H)$, there are $b \in F$ and $c \in H \setminus H^{\times}$ such that $a=bc$ with $|c| \le \mathsf e (H)$ and $|b| > \mathsf d_k (H)$. This implies that $b$ is divisible by a product of $k$ non-units of $H$ whence $a$ is divisible by a product of $k+1$ non-units of $H$. Therefore it follows that $1+\mathsf d_{k+1}(H) \le \mathsf d_k (H) + \mathsf e (H) + 1$ and hence
\[
0 \le \mathsf d_{k+1}(H) - k \mathsf e (H) \le \mathsf d_k (H) - (k-1)\mathsf e (H) \quad \text{for all sufficiently large} \quad k  \,.
\]
Since a non-increasing sequence of non-negative integers stabilizes, the assertion follows.
\end{proof}

\section{\bf Arithmetic Combinatorics: Zero-Sum Results with a focus on Davenport constants}  \label{sec:3}

This section is devoted   to Zero-Sum Theory, a vivid  subfield of Arithmetic Combinatorics (see  \cite{Ga-Ge06b, Ge09a, Gr13a}).
In Subsection \ref{3.A} we give an algebraic study of the monoid of product-one sequences over  finite but not necessarily abelian groups. In Subsection \ref{3.B} we put together well-known material on transfer homomorphisms used in Subsections \ref{4.A} and \ref{subsec:abelian}.   In   Subsections \ref{3.C} and \ref{3.D} we consider the $k$th Davenport constants of finite groups. In particular, we gather results which will be needed in Subsection \ref{5.A} and  results having relevance in invariant theory by Proposition \ref{prop:bschmid}.

\subsection{\bf   The monoid of product-one sequences}~ \label{3.A}

Let  $G_0 \subset G$ be a subset and let $G' = [G,G]=\langle g^{-1}h^{-1}g h \colon g, h \in G\rangle$ denote the commutator subgroup of $G$.
A {\it sequence} over $G_0$
means a finite sequence of terms from $G_0$ which is unordered and
repetition of terms is allowed, and it will be considered as an element of the free abelian monoid $\mathcal F
(G_0)$.
In order to distinguish between the group operation in $G$ and the operation in $\mathcal F (G_0)$, we use the symbol $\bdot$ for the multiplication in $\mathcal F (G_0)$, hence $\mathcal F (G_0) = \big( \mathcal F (G_0), \bdot \big)$ ---this coincides with the convention in the monographs \cite{Ge-HK06a, Gr13a}--and we denote multiplication in $G$ by juxtaposition of elements. To clarify this, if $S_1, S_2 \in \mathcal F (G_0)$ and $g_1, g_2 \in G_0$, then $S_1 \bdot S_2 \in \mathcal F (G_0)$ has length $|S_1|+|S_2|$, $S_1 \bdot g_1 \in \mathcal F (G_0)$ has length $|S_1|+1$, $g_1 \bdot g_2 \in \mathcal F (G_0)$ is a sequence of length $2$, but $g_1 g_2$ is an element of $G$. Furthermore, in order to avoid confusion between exponentiation in $G$ and exponentiation in $\mathcal F (G_0)$, we use brackets for the exponentiation in $\mathcal F (G_0)$. So for $g \in G_0$, $S \in \mathcal F (G_0)$, and $k \in \mathbb N_0$, we have
\[
g^{[k]}=\underset{k}{\underbrace{g\bdot\ldots\bdot g}}\in \mathcal F (G) \quad \text{with} \quad |g^{[k]}| = k \,, \quad \ \text{and}  \quad S^{[k]}=\underset{k}{\underbrace{S\bdot\ldots\bdot S}}\in \mathcal F (G) \,.
\]
Now let
\[
S = g_1 \bdot \ldots \bdot g_{\ell} = \prod_{g \in G_0} g^{\mathsf v_g (S)} \,,
\]
be a sequence over $G_0$ (in this notation,  we tacitly assume that $\ell \in \mathbb N_0$ and $g_1, \ldots, g_{\ell} \in G_0$).
Then $|S|=\ell = 0$ if and only if $S=1_{\mathcal F ( G_0)}$ is the identity element in $\mathcal F (G_0)$, and then $S$ will also be called the {\it trivial sequence}. The elements in $\mathcal F (G_0) \setminus \{1_{\mathcal F ( G_0)} \}$ are called {\it nontrivial sequences}.
We use all notions of divisibility theory in general free abelian monoids. Thus, for an element $g \in G_0$,  we refer to $\mathsf v_g (S)$ as the {\it multiplicity} of $g$ in $S$. A divisor $T$ of $S$ will also be called  a subsequence of $S$. We call $\supp (S) = \{ g_1, \ldots, g_{\ell}\} \subset G_0$ the {\it support} of $S$.
When  $G$ is written multiplicatively (with unit element $1_G \in G$),  we use
\[
\pi (S) = \{ g_{\tau (1)} \ldots  g_{\tau (\ell)} \in G \colon \tau\mbox{ a permutation of $[1,\ell]$} \} \subset G
\]
to denote the {\it set of products} of $S$ (if $|S|= 0$, we use the convention that $\pi (S) = \{1_G \}$). Clearly, $\pi (S)$ is contained in a $G'$-coset.
When $G$ is written additively with commutative operation, we likewise let $$\sigma(S)=g_1+\ldots+g_\ell\in G$$ denote the \emph{sum} of $S$. Furthermore, we denote by
\[
\Sigma (S) = \{\sigma(T) \colon T \t S\;\text{and} \; 1 \ne T  \}  \subset G \quad\text{and}\quad \Pi (S) = \underset{1 \ne T}{\bigcup_{T \t S}} {\pi}(T)  \subset G \,,
\]
the {\it subsequence sums } and \emph{subsequence products} of $S$.
The sequence $S$ is called
\begin{itemize}
\item a {\it product-one sequence} if $1_G \in \pi (S)$,

\item {\it product-one free} if $1_G \notin \Pi (S)$.
\end{itemize}

Every map of finite groups $\varphi \colon G_1 \to G_2$ extends to  a homomorphism $\varphi \colon \mathcal F (G_1) \to \mathcal F (G_2)$ where
$\varphi (S) = \varphi (g_1) \bdot \ldots \bdot \varphi (g_{\ell})$. If $\varphi$ is a group homomorphism, then $\varphi (S)$ is a product-one sequence if and only if $\pi (S) \cap \Ker ( \varphi) \ne \emptyset$.
We denote by
\[
\mathcal B (G_0) = \{ S \in \mathcal F (G_0) \colon 1_G \in  \pi (S) \}
\]
the set of all product-one sequences over $G_0$, and clearly $\mathcal B (G_0) \subset \mathcal F (G_0)$ is a submonoid. We will use all concepts introduced in Subsection \ref{2.E} for the monoid $\mathcal B (G_0)$ with the degree function stemming from the length function on the free abelian monoid $\mathcal F (G_0)$. For all notations $*(H)$ introduced for a monoid $H$ we write -- as usual -- $*(G_0)$ instead of $*(\mathcal B (G_0))$. In particular, for $k \in \N$, we set $\mathcal M_k (G_0)= \mathcal M_k ( \mathcal B (G_0))$, $\mathsf D_k (G_0) = \mathsf D_k (\mathcal B (G_0))$, $\eta (G_0)=\eta (\mathcal B (G_0))$,  $\mathsf e (G_0) = \mathsf e ( \mathcal B (G_0))$, and so on. By Proposition~\ref{2.8}.2(a), $\mathsf e (G_0) = \max \{ \ord (g) \colon g \in G_0 \}$. Note that $\mathcal M_1^* (G_0)$ is the set of all product-one free sequences over $G_0$.  In particular,

\[
\mathsf D (G_0) = \sup \{ |S| \colon S \in \mathcal A (G_0) \} \in \mathbb N \cup \{\infty\}
\]
is the {\it large Davenport constant} of $G_0$, and
\[
\mathsf d (G_0) = \sup \{ |S| \colon S \in \mathcal F (G_0) \ \text{is product-one free} \} \in \mathbb N_0 \cup \{\infty\}
\]
is the {\it small Davenport constant} of $G_0$. Their study will be the focus of the Subsections \ref{3.C} and \ref{3.D}.

\begin{lemma} \label{3.1}
Let  $G_0 \subset G$ be a subset.
\begin{enumerate}
\item $\mathcal B (G_0) \subset \mathcal F (G_0)$ is a reduced finitely generated submonoid, $\mathcal A (G_0)$ is finite, and $\mathsf D (G_0) \le |G|$. Furthermore, $\mathcal M_k (G_0)$ is finite and $\mathsf D_k (G_0) < \infty$ for all $k \in \N$.

\item Let $S \in \mathcal F (G)$ be product-one free.
      \begin{enumerate}

      \item If $g_0 \in \pi (S)$, then $g_0^{-1} \bdot S \in \mathcal A (G)$. In particular, $\mathsf d (G)+1 \le \mathsf D (G)$.

      \item If $|S|=\mathsf d (G)$, then $\Pi (S) = G \setminus \{1_G\}$ and hence \newline $\mathsf d (G) = \max \{ |S| \colon S \in \mathcal F (G) \ \text{with} \ \Pi (S) = G \setminus \{1_G\} \}$.
      \end{enumerate}

\item If $G$ is cyclic, then $\mathsf d (G)+1 = \mathsf D (G) = |G|$.
\end{enumerate}
\end{lemma}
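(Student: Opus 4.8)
The plan is to treat the three parts in order, since each rests on the material assembled earlier. For part 1, I would first observe that $\mathcal B(G_0)$ is reduced because $\mathcal F(G_0)$ is reduced and $\mathcal B(G_0) \subset \mathcal F(G_0)$ is a submonoid, so $\mathcal B(G_0)^\times \subset \mathcal F(G_0)^\times = \{1\}$. For finite generation, the key point is that every atom $S \in \mathcal A(G_0)$ has $|S| \le |G|$: if $S = g_1 \bdot \ldots \bdot g_\ell$ with $\ell > |G|$ and $1_G \in \pi(S)$, say $1_G = g_1 \cdots g_\ell$ after reordering, then among the partial products $1_G, g_1, g_1 g_2, \ldots, g_1 \cdots g_\ell = 1_G$ two coincide by pigeonhole, which splits off a nontrivial proper product-one subsequence of $S$, contradicting that $S$ is an atom. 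Hence $\mathcal A(G_0)$ is finite (being a subset of the finitely many sequences of length $\le |G|$), so $\mathcal B(G_0)_{\mathrm{red}} = \mathcal B(G_0)$ is finitely generated, and $\mathsf D(G_0) \le |G|$. Finiteness of $\mathcal M_k(G_0)$ and $\mathsf D_k(G_0) < \infty$ for all $k$ then follow directly from Proposition \ref{2.7}.1 applied to $H = \mathcal B(G_0)$ with its length degree function.

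For part 2(a), suppose $S \in \mathcal F(G)$ is product-one free and $g_0 \in \pi(S)$. Then $g_0^{-1} \bdot S$ has $1_G \in \pi(g_0^{-1} \bdot S)$ (insert $g_0^{-1}$ at the front of the ordering of $S$ realizing $g_0$), so it is a product-one sequence. To see it is an atom, suppose $g_0^{-1} \bdot S = A \bdot B$ with $A, B$ nontrivial product-one sequences. The term $g_0^{-1}$ lies in exactly one of $A, B$, say $A$; then $B$ is a nontrivial product-one subsequence of $S$, so $1_G \in \pi(B) \subset \Pi(S)$, contradicting product-one freeness. Hence $g_0^{-1} \bdot S \in \mathcal A(G)$ with $|g_0^{-1} \bdot S| = |S| + 1$. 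Taking $S$ with $|S| = \mathsf d(G)$ and any $g_0 \in \pi(S)$ (nonempty since $\pi$ of any sequence is nonempty) gives $\mathsf d(G) + 1 \le \mathsf D(G)$. For part 2(b), let $|S| = \mathsf d(G)$. Clearly $1_G \notin \Pi(S)$. For any $h \in G \setminus \{1_G\}$, I would show $h \in \Pi(S)$: if not, then $h^{-1} \bdot S$ would still be product-one free — a product-one subsequence of $h^{-1} \bdot S$ using the new term $h^{-1}$ would give a product lying in $h^{-1}\Pi(S) \cup \{h^{-1}\}$ hitting $1_G$ only if $h \in \Pi(S)$ — contradicting maximality of $\mathsf d(G)$, since $|h^{-1} \bdot S| = \mathsf d(G)+1$. (One must be slightly careful here with the noncommutative set-of-products bookkeeping, placing $h^{-1}$ appropriately in the ordering; this is the one spot demanding attention.) Thus $\Pi(S) = G \setminus \{1_G\}$, and the displayed characterization of $\mathsf d(G)$ as a maximum follows since conversely any $S$ with $\Pi(S) = G \setminus\{1_G\}$ is product-one free.

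For part 3, assume $G$ is cyclic of order $n$, generated by $g$. By part 2(a) it suffices to show $\mathsf D(G) = n$ and $\mathsf d(G) = n - 1$. The sequence $g^{[n-1]}$ has subsequence products $\{g, g^2, \ldots, g^{n-1}\} = G \setminus \{1_G\}$, so it is product-one free, giving $\mathsf d(G) \ge n-1$; combined with $\mathsf d(G) + 1 \le \mathsf D(G) \le |G| = n$ from parts 2(a) and 1, we get $\mathsf d(G) = n-1$ and $\mathsf D(G) = n$. The main obstacle is genuinely minor here: it is the careful handling of subsequence products in a possibly non-abelian ambient group in part 2, ensuring the insertion of an inverse element into an ordering realizing a prescribed product behaves as claimed; everything else is pigeonhole plus direct application of Proposition \ref{2.7}.
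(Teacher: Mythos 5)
Your proposal is correct and follows essentially the same route as the paper: the pigeonhole argument on partial products bounds the length of atoms (hence $\mathsf D(G_0)\le |G|$ and finite generation), insertion of $g_0^{-1}$ (resp.\ $h^{-1}$) gives parts 2(a) and 2(b), and $g^{[n-1]}$ settles the cyclic case by squeezing $n\le 1+\mathsf d(G)\le \mathsf D(G)\le n$. The only step you leave tacit that the paper handles explicitly is the atomicity of $\mathcal B(G_0)$ (needed to pass from finiteness of $\mathcal A(G_0)$ to finite generation), for which the paper cites \cite[Corollary 1.3.3]{Ge-HK06a}; the noncommutative bookkeeping you flag in 2(b) is resolved by the standard observation that $xy=1_G$ if and only if $yx=1_G$, so a product-one ordering of $h^{-1}\bdot T''$ can be rotated to exhibit $h\in\pi(T'')$.
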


\begin{proof}
1. We assert that for every $U \in \mathcal A (G)$ we have $|U| \le |G|$. Then $\mathcal A (G_0) \subset \mathcal A (G)$ is finite and $\mathsf D (G_0) \le \mathsf D (G) \le |G|$. As already mentioned, $\mathcal B (G_0) \subset \mathcal F (G_0)$ is a submonoid, and clearly $\mathcal B (G_0)^{\times} = \{1_{\mathcal F (G_0)}\}$. Since $\mathcal F (G_0)$ is factorial and $\mathcal B(G_0)^{\times} = \mathcal B (G_0) \cap \mathcal F (G_0)^{\times}$, $\mathcal B (G_0)$ is atomic by \cite[Corollary 1.3.3]{Ge-HK06a}. This means that  $\mathcal B(G_0) = [\mathcal A (G_0) \cup \mathcal B (G_0)^{\times}]$, and thus
$\mathcal B (G_0)$ is finitely generated. Since $\mathcal B (G_0)$ is reduced and finitely generated, the sets $\mathcal M_k (G_0)$ are finite by Proposition \ref{2.7}.

Now let $U \in \mathcal B (G)$, say $U = g_1 \bdot \ldots \bdot g_{\ell}$ with $g_1 g_2  \ldots  g_{\ell} = 1_G$. We suppose that $\ell > |G|$ and show that $U \notin \mathcal A (G)$. Consider the set
\[
M = \{ g_1 g_2  \ldots  g_i \colon i \in [1,\ell] \} \,.
\]
Since $\ell > |G|$, there are $i, j \in [1,\ell]$ with $i < j$ and $g_1  \ldots  g_i = g_1  \ldots  g_j$. Then $g_{i+1} \ldots  g_j=1_G$ and thus $g_1  \ldots g_i g_{j+1} \ldots g_{\ell}=1_G$ which implies that $U$ is the product of two nontrivial product-one subsequences.

\smallskip
2.(a) If $g_0 \in \pi (S)$, then $S$ can be written as $S = g_1 \bdot \ldots \bdot g_{\ell}$ such that $g_0 = g_1  \ldots  g_{\ell}$, which implies that $g_0^{-1} \bdot g_1 \bdot \ldots \bdot g_{\ell} \in \mathcal A (G)$.

2.(b) If $S$ is product-one free with $|S| = \mathsf d (G)$, and if there would be an $h \in G \setminus \{ \Pi (S) \cup \{1_G\} \}$, then $T = h^{-1} \bdot S$ would be product-one free of length $|T| = |S|+1 > \mathsf d (G)$, a contradiction. Thus every product-one free sequence $S$ of length $|S| = \mathsf d (G)$ satisfies $\Pi (S) = G \setminus \{1_G\}$.
If $S$ is a sequence with $\Pi (S) = G \setminus \{1_G\}$, then $S$ is product-one free and hence $|S| \le \mathsf d (G)$.

\smallskip
3. Clearly, the assertion holds for $|G|=1$. Suppose that $G$ is cyclic of order $n \ge 2$, and let $g \in G$ with $\ord (g) = n$. Then $g^{[n-1]}$ is product-one free, and thus 1. and 2. imply that $n \le 1+ \mathsf d (G) \le \mathsf D (G) \le n$.
\end{proof}

The next result gathers the algebraic properties of monoids of product-one sequences and highlights the difference between the abelian and the non-abelian case.

\begin{theorem} \label{3.2}
Let  $G_0 \subset G$ be a subset and let $G'$ denote the commutator subgroup of $\langle G_0 \rangle$.
\begin{enumerate}
\item $\mathcal B (G_0) \subset \mathcal F (G_0)$ is cofinal and $\mathcal B (G_0)$ is a finitely generated {\rm C}-monoid. $\widetilde{\mathcal B (G_0)}=\widehat{\mathcal B (G_0)}$ is a finitely generated Krull monoid, the embedding $\widehat{\mathcal B (G_0)} \hookrightarrow \mathcal F (G_0)$ is a cofinal divisor homomorphism with class group $\mathcal F (G_0)/\mathcal B (G_0)$, and the map
    \[
    \begin{aligned}
    \Phi \colon \mathcal F (G_0)/ \mathcal B (G_0) & \quad \longrightarrow \quad \langle G_0 \rangle / G' \\
        [S]_{\mathcal F (G_0)/\mathcal B (G_0)} & \quad \longmapsto \quad g G' \quad \text{for any} \ g \in \pi (S)
    \end{aligned}
    \]
    is a group epimorphism. Suppose that $G_0=G$. Then $\Phi$ is an isomorphism, every class of $\mathcal C ( \widehat{\mathcal B (G)})$ contains a prime divisor, and if $|G|\ne 2$, then $\widehat{\mathcal B (G)} \hookrightarrow \mathcal F (G)$ is a divisor theory.

\smallskip
\item The following statements are equivalent{\rm \,:}
      \begin{enumerate}
      \smallskip
      \item[(a)] $\mathcal B (G_0)$ is a Krull monoid.

      \smallskip
      \item[(b)] $\mathcal B (G_0)$ is root closed.

      \smallskip
      \item[(c)] $\mathcal B (G_0) \subset \mathcal F (G_0)$ is saturated.
      \end{enumerate}

\smallskip
\item $\mathcal B (G)$ is a Krull monoid if and only if $G$ is abelian.

\smallskip
\item $\mathcal B (G)$ is factorial if and only if $|G| \le 2$.
\end{enumerate}
\end{theorem}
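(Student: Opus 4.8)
The plan is to prove the two implications separately, obtaining the non-trivial direction from parts~1 and~3 just established.

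\emph{The implication ``$|G|\le 2 \Rightarrow \mathcal B(G)$ factorial''.} Here I would simply exhibit $\mathcal B(G)$ as a free abelian monoid. If $|G|=1$, then $\mathcal B(G)=\mathcal F(G)$ is free of rank one. If $|G|=2$, say $G=\{1_G,g\}$ with $\ord(g)=2$, then a sequence $1_G^{[a]}\bdot g^{[b]}$ lies in $\mathcal B(G)$ precisely when $b$ is even, so $\mathcal A(\mathcal B(G))=\{1_G,\,g^{[2]}\}$ and $\mathcal B(G)$ is generated by these two atoms; since $\mathcal B(G)$ is reduced and $1_G^{[a]}\bdot g^{[2c]}\mapsto(a,c)$ is an isomorphism onto $(\N_0^2,+)$, the monoid $\mathcal B(G)$ is free abelian of rank two, hence factorial.

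\emph{The implication ``$\mathcal B(G)$ factorial $\Rightarrow |G|\le 2$''.} A factorial monoid is a Krull monoid, since it satisfies condition~(d) of Theorem~\ref{2.1} via the identity homomorphism; hence by part~3 the group $G$ is abelian, so $G'=\{1_G\}$. Assume, towards a contradiction, that $|G|\ge 3$, in particular $|G|\ne 2$. Being Krull, $\mathcal B(G)$ is completely integrally closed, so $\widehat{\mathcal B(G)}=\mathcal B(G)$; part~1 then shows that $\mathcal B(G)\hookrightarrow\mathcal F(G)$ is a divisor theory with class group $\mathcal F(G)/\mathcal B(G)$, and the isomorphism $\Phi$ of part~1 identifies this with $\langle G\rangle/G'=G$. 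Thus the divisor class group $\mathcal C(\mathcal B(G))\cong G$ is non-trivial, and a Krull monoid with non-trivial class group is not factorial --- a contradiction. Therefore $|G|\le 2$.

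The only point requiring care is the genuine use of the hypothesis $|G|\ne 2$ in part~1: when $|G|=2$ the embedding $\mathcal B(G)\hookrightarrow\mathcal F(G)$ is a divisor homomorphism but not a divisor theory, because the basis element $g$ of $\mathcal F(G)$ occurs with even multiplicity in every product-one sequence and so is never a $\gcd$ of elements of $\mathcal B(G)$; consequently $\mathcal F(G)/\mathcal B(G)\cong C_2$ is then \emph{not} the divisor class group of $\mathcal B(C_2)$ (which is trivial). So the divisor-theory clause of part~1 must be invoked only after $|G|\le 2$ has been ruled out, and I expect this bookkeeping to be the main --- and rather mild --- obstacle. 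An alternative that avoids class groups altogether is to observe that for abelian $G$ with $|G|\ge 3$ a doubling argument produces an element of $\mathcal B(G)$ with factorizations of two distinct lengths, so that $\mathcal B(G)$ is not even half-factorial.
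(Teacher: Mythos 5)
Your proof is correct, and the converse direction takes a genuinely different route from the paper. The paper, after reducing to abelian $G$ exactly as you do (factorial $\Rightarrow$ Krull $\Rightarrow$ abelian by part~3), finishes by exhibiting explicit relations among atoms: if some $g$ has order $n\ge 3$ it uses $U\bdot(-U)=W^{[n]}$ with $U=g^{[n]}$, $-U=(-g)^{[n]}$, $W=(-g)\bdot g$, and in the elementary $2$-group case it uses $U^{[2]}=W_0\bdot W_1\bdot W_2$ with $U=e_1\bdot e_2\bdot(e_1+e_2)$. This is self-contained, independent of the class-group computation in part~1, and in fact proves the stronger assertion that $\mathcal B(G)$ is not even half-factorial (which is exactly the ``alternative'' you sketch at the end). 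Your argument instead invokes part~1: since factorial implies completely integrally closed, $\widehat{\mathcal B(G)}=\mathcal B(G)$, and for $|G|\ge 3$ the embedding into $\mathcal F(G)$ is a divisor theory with class group $\mathcal F(G)/\mathcal B(G)\cong G/G'=G$ nontrivial, contradicting the characterization of factorial Krull monoids by trivial class group. This is conceptually cleaner and reuses the machinery of the theorem, at the cost of depending on the full strength of part~1; your explicit attention to why the $|G|=2$ case must be excluded before invoking the divisor-theory clause (the prime $g$ is never a gcd of product-one sequences, and $\mathcal C(\mathcal B(C_2))$ is trivial even though $\mathcal F(C_2)/\mathcal B(C_2)\cong C_2$) is exactly the right bookkeeping and matches why the paper states that clause only for $|G|\ne 2$. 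The forward direction ($|G|\le 2$) is essentially identical to the paper's.
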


\begin{proof}
1. $\mathcal B (G_0)$ is finitely generated by Lemma \ref{3.1}. If $n = \lcm \{ \ord (g) \colon g \in G_0 \}$, then $S^{[n]} \in \mathcal B (G_0)$ for each $S \in \mathcal F (G_0)$. Thus $\mathcal B (G_0) \subset \mathcal F (G_0)$ and $\widehat{\mathcal B (G_0)} \hookrightarrow \mathcal F (G_0)$ are cofinal,  $\mathcal F (G_0)/ \mathcal B (G_0)$ is a group and
\[
\mathcal F (G_0)/ \mathcal B (G_0) = \mathsf q \big( \mathcal F (G_0) \big) / \mathsf q \big( \mathcal B (G_0) \big) = \mathsf q \big( \mathcal F (G_0) \big) / \mathsf q \big( \widehat{\mathcal B (G_0)} \big)
\]
is the class group of the embedding $\widehat{\mathcal B (G_0)} \hookrightarrow \mathcal F (G_0)$. All statements on the structure of $\mathcal B (G_0)$ and $\widehat{\mathcal B (G_0)}$ follow from Proposition \ref{finitelygenerated}.3, and it remains to show the assertions on $\Phi$.

Let $S, S' \in \mathcal F (G_0)$, $g \in \pi (S), g' \in \pi (S')$, and $B \in \mathcal B (G_0)$. Then $\pi (S) \subset gG'$, $\pi (S') \subset g'G'$, $\pi (B) \subset G'$, and $\pi (S \bdot B) \subset gG'$. We use the abbreviation $[S]=[S]_{\mathcal F (G_0)/\mathcal B (G_0)}$, and note that $[S]=[S']$ if and only if there are $C, C' \in \mathcal B (G_0)$ such that $S \bdot C = S' \bdot C'$.

In order to show that $\Phi$ is well-defined, suppose that $[S]=[S']$ and that $S \bdot C= S \bdot C'$ with $C, C' \in \mathcal B (G_0)$. Then $\pi (S \bdot C)=\pi ( S' \bdot C') \subset gG' \cap g'G'$ and hence $gG'=g'G'$. In order to show that $\Phi$ is surjective, let $g \in \langle G_0 \rangle$ be given. Clearly, there is an $S \in \mathcal F (G_0)$ such that $g \in \pi (S)$ whence $\Phi ( [S]) =  gG'$.

Suppose that $G_0=G$. First we show that $\Phi$ is injective. Let $S, S' \in \mathcal F (G)$ with $g \in \pi (S)$, $g' \in \pi (S')$ such that $gG'=g'G'$. Then there are $k \in \N$, $a_1, b_1, \ldots, a_k, b_k \in G$ such that
\[
g {g'}^{-1} = \prod_{i=1}^k (a_i^{-1}b_i^{-1}a_ib_i) \,.
\]
We define $T= \prod_{i=1}^k (a_i^{-1} \bdot b_i^{-1} \bdot a_i \bdot b_i )$ and obtain that
\[
S \bdot (S' \bdot g^{-1} \bdot T) = S' \bdot (S \bdot g^{-1} \bdot T) \in \mathcal F (G) \,.
\]
Since $1 \in \pi (T)$ and $g {g'}^{-1} \in \pi (T)$, it follows that $1 \in \pi ( S' \bdot g^{-1} \bdot T)$ and $1 \in \pi (S \bdot g^{-1} \bdot T)$ which implies that $[S]=[S']$.

If $|G| \le 2$, then 4. will show that $\mathcal B (G)$ is factorial and clearly the trivial class contains a prime divisor. Suppose that $|G| \ge 3$. In order to show that $\widehat{\mathcal B (G)} \hookrightarrow \mathcal F (G)$ is a divisor theory, let $g \in G \setminus \{1_G\}$ be given. Then there is an $h \in G \setminus \{g^{-1}, 1_G \}$, $U = g \bdot g^{-1} \in \mathcal A (G) \subset \widehat{\mathcal B (G)}$, $U' = g \bdot h \bdot (h^{-1}g^{-1}) \in \mathcal A (G) \subset \widehat{\mathcal B (G)}$, and $g = \gcd_{\mathcal F (G)} (U, U')$.
Thus $\widehat{\mathcal B (G)} \hookrightarrow \mathcal F (G)$ is a divisor theory.

Let $S \in \mathcal F (G)$ with $g \in \pi (S)$. Then $g \in \mathcal F (G)$ is a prime divisor and we show that $[g]=[S]$. Indeed, if $g=1_G$, then $S \in \mathcal B (G)$, $1_G \in \mathcal B (G)$, $S \bdot 1_G = g \bdot S$ whence $[g]=[S]$. If $\ord (g)=n \ge 2$, then $g^{[n]} \in \mathcal B (G)$, $S \bdot g^{[n-1]} \in \mathcal B (G)$, $S \bdot g^{[n]} = g \bdot S \bdot g^{[n-1]}$ whence $[S]=[g]$.

\smallskip
2. (a) $\Rightarrow$\, (b) \ Every Krull monoid is completely integrally closed and hence root closed.

(b) \,$\Rightarrow$\, (c) \ Let $S, T \in \mathcal B (G_0)$ with $T \t S$ in $\mathcal F (G_0)$, say $S = T \bdot U$ where $U = g_1 \bdot \ldots \bdot g_{\ell} \in \mathcal F (G_0)$. If $n = \lcm \big(\ord (g_1), \ldots, \ord (g_{\ell}) \big)$, then $(T^{[-1]} \bdot S)^{[n]} = U^{[n]} \in \mathcal B (G_0)$. Since $\mathcal B (G_0)$ is root closed, this implies that $U = T^{[-1]} \bdot S \in \mathcal B (G_0)$ and hence $T \t S$ in $\mathcal B (G_0)$.

(c) \,$\Rightarrow$\, (a) \ Since $\mathcal F (G_0)$ is free abelian, $\mathcal B (G_0)$ is Krull by Theorem \ref{2.1}.

\smallskip
3. If $G$ is a abelian, then it is obvious that $\mathcal B (G) \subset \mathcal F (G)$ is saturated, and thus $\mathcal B (G)$ is a Krull monoid by 2.
Suppose that $G$ is not abelian. Then there are $g, h \in G$ with $g h \ne h g$. Then $g h g^{-1} \ne h$, $S = g \bdot h \bdot g^{-1} \bdot (g h g^{-1} )^{-1}  \in \mathcal B (G)$, $T = g \bdot g^{-1} \in \mathcal B (G)$ divides $S$ in $\mathcal F (G)$ but $T^{[-1]} \bdot S = h \bdot (g h g^{-1})^{-1} $ does  not have product-one. Thus $\mathcal B (G) \subset \mathcal F (G)$ is not saturated and hence $\mathcal B (G)$ is  not Krull by 2.

\smallskip
4. If $G = \{0\}$, then $\mathcal B (G)=\mathcal F (G)$ is factorial. If $G = \{0,g\}$, then $\mathcal A (G) = \{0, g^{[2]} \}$, each atom is a prime, and $\mathcal B (G)$ is factorial. Conversely, suppose that $\mathcal B (G)$ is factorial. Then $\mathcal B (G)$ is a Krull monoid by \cite[Corollary 2.3.13]{Ge-HK06a}, and hence $G$ is abelian by 3. Suppose that $|G| \ge 3$. We show that $\mathcal B (G)$ is not factorial. If there is an element $g \in G$ with $\ord (g)=n \ge 3$, then $U=g^{[n]}, -U= (-g)^{[n]}, W= (-g)\bdot g \in \mathcal A (G)$, and $U \bdot (-U) = W^{[n]}$. Suppose there is no $g \in G$ with $\ord (g) \ge 3$. Then there are $e_1, e_2 \in G$ with $\ord (e_1)=\ord (e_2)=2$ and $e_1+e_2\ne 0$. Then $U = e_1 \bdot e_2 \bdot (e_1+e_2) , W_1=e_1^{[2]}, W_2=e_2^{[2]}, W_0 = (e_1+e_2)^{[2]} \in \mathcal A (G)$, and $U^{[2]}= W_0 \bdot W_1 \bdot W_2$.
\end{proof}

For a subset $G_0 \subset G$, the monoid $\mathcal B (G_0)$ may be Krull or just seminormal but it need not be Krull. We provide examples for both situations.

\begin{proposition} \label{3.4}
Let  $G_0 \subset G$ be a subset satisfying the following property {\bf P}{\rm \,:}
\begin{itemize}
      \smallskip
      \item[{\bf P.}] For each two elements $g, h \in G_0$,  $\langle h \rangle \subset \langle g, h \rangle$ is normal or $\langle g \rangle \subset \langle g, h \rangle$ is normal.
      \end{itemize}
      \smallskip
Then $\mathcal B (G_0)$ is a Krull monoid if and only if $\langle G_0 \rangle$ is abelian.
\end{proposition}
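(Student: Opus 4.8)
The plan is to apply Theorem~\ref{3.2}.2, by which $\mathcal B(G_0)$ is Krull if and only if $\mathcal B(G_0)\subset\mathcal F(G_0)$ is saturated. The implication that ``$\langle G_0\rangle$ abelian'' gives ``$\mathcal B(G_0)$ Krull'' needs no extra hypothesis: when $\langle G_0\rangle$ is abelian, $\mathcal B(G_0)$ is precisely the kernel of the homomorphism $\mathcal F(G_0)\to\langle G_0\rangle$ sending a sequence to the (now well defined) product of its terms, hence $\mathcal B(G_0)\subset\mathcal F(G_0)$ is saturated and $\mathcal B(G_0)$ is Krull; this is already contained in the proof of the abelian case of Theorem~\ref{3.2}.

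For the converse I would argue contrapositively: assuming $\langle G_0\rangle$ is non-abelian, I would exhibit $S,T\in\mathcal B(G_0)$ with $T\t S$ in $\mathcal F(G_0)$ but $S\bdot T^{[-1]}\notin\mathcal B(G_0)$, which shows $\mathcal B(G_0)\subset\mathcal F(G_0)$ is not saturated. Since $G_0$ generates $\langle G_0\rangle$, non-commutativity yields $g,h\in G_0$ with $gh\ne hg$; applying property~{\bf P} to the pair $\{g,h\}$ and interchanging $g$ and $h$ if necessary, I may assume $\langle h\rangle$ is normal in $\langle g,h\rangle$. Then $ghg^{-1}=h^{k}$ for some $k\in\Z$; moreover $h\ne 1_G$ (otherwise $gh=hg$), so $n:=\ord(h)\ge 2$, and $h^{k}=ghg^{-1}\notin\{1_G,h\}$ forces the residue of $k$ modulo $n$ to lie in $\{2,\dots,n-1\}$, so in particular $n\ge 3$; fix such a representative $k\in\{2,\dots,n-1\}$.

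With $m=\ord(g)$, the key step is to take $T=g^{[m]}\bdot h^{[n]}$ and $S=g^{[m]}\bdot h^{[2n-k+1]}$. Both are product-one sequences over $G_0$: the arrangement $g^{m}h^{n}$ of the terms of $T$ has product $1_G$, and the arrangement $h^{2n-k}\,g\,h\,g^{m-1}$ of the terms of $S$ has product $h^{2n-k}(ghg^{-1})g^{m}=h^{2n-k}h^{k}=h^{2n}=1_G$. Since $k\le n-1$ we have $n\le 2n-k+1$, so $T\t S$ in $\mathcal F(G_0)$ and $S\bdot T^{[-1]}=h^{[n-k+1]}$; and since $2\le k\le n-1$ we have $1\le n-k+1\le n-1$, so $n\nmid n-k+1$, that is $h^{n-k+1}\ne 1_G$, so $h^{[n-k+1]}$ is not product-one. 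Hence $\mathcal B(G_0)\subset\mathcal F(G_0)$ is not saturated, so $\mathcal B(G_0)$ is not Krull.

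I expect the only real difficulty to be discovering the relation between $S$ and $T$: one must find two product-one sequences supported on the non-commuting pair whose quotient is a short nontrivial power of $h$, in a way that is uniform in $n$, $k$, $m$ --- note that the computation above uses nothing but $ghg^{-1}=h^{k}$, $g^{m}=1_G$ and $h^{n}=1_G$, and in particular no semidirect-product splitting of $\langle g,h\rangle$. Everything else (choosing the pair, the reduction to saturatedness via Theorem~\ref{3.2}.2, and checking $T\t S$ together with the form of the quotient) is routine.
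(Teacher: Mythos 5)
Your proof is correct and follows essentially the same route as the paper: reduce to saturation of $\mathcal B(G_0)\subset\mathcal F(G_0)$ via Theorem~\ref{3.2}.2, use Property~{\bf P} to get $ghg^{-1}=h^{k}$ with $k\in[2,n-1]$, and exhibit a failure of saturation with quotient $h^{[n-k+1]}$. The paper's witness pair is marginally leaner ($T=g^{[m]}$ and $S=g^{[m]}\bdot h^{[n-k+1]}$, the latter product-one via the arrangement $g\,h\,g^{m-1}h^{n-k}$), but your padded choice $T=g^{[m]}\bdot h^{[n]}$, $S=g^{[m]}\bdot h^{[2n-k+1]}$ yields the same quotient and works for the same reason.
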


\begin{proof}
If $\langle G_0 \rangle$ is a abelian, then it is obvious that $\mathcal B (G_0) \subset \mathcal F (G_0)$ is saturated, and thus $\mathcal B (G_0)$ is Krull by Theorem \ref{3.2}.2.

Conversely, suppose that $\mathcal B (G_0)$ is Krull and that $G_0$ satisfies Property {\bf P}. In order to show that $\langle G_0 \rangle$ is abelian, it is sufficient to prove that $g h = hg$ for each two elements $g, h \in G_0$. Let $g, h \in G_0$ be given such that $\langle h \rangle \subset \langle g, h \rangle$ is normal, $\ord (g)=m$, $\ord (h)=n$, and assume to the contrary that $ghg^{-1} \ne h$. Since $g \langle h \rangle g^{-1} = \langle h \rangle$, it follows that $ghg^{-1} = h^{\nu}$ for some $\nu \in [2, n-1]$. Thus $g h g^{m-1}h^{n-\nu}=1$ and $S = g \bdot h \bdot g^{[m-1]} \bdot h^{[n-\nu]} \in \mathcal B (G_0)$. Clearly, $T = g^{[m]} \in \mathcal B (G_0)$ but $S \bdot T^{[-1]} = h^{[n-\nu+1]} \notin \mathcal B (G_0)$. Thus $\mathcal B (G_0) \subset \mathcal F (G_0)$ is not saturated, a contradiction.
\end{proof}

\begin{proposition} \label{3.5}
Let  $G = D_{2n}$ be the dihedral group, say $G = \langle a, b \rangle =$ \newline $\{1, a, \ldots, a^{n-1}, b, ab, \ldots, a^{n-1}b \}$,  where $\ord (a) = n \ge 2$, $\ord (b) = 2$, and set $G_0 = \{ab, b\}$. Then $\mathcal B (G_0)$ is a Krull monoid if and only if $n$ is even.
\end{proposition}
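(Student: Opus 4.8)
The plan is to work with the two involutions $c:=ab$ and $d:=b$, so that $G_0=\{c,d\}$, $c^2=d^2=1_G$, $cd=a$, $dc=a^{-1}$ and $\langle G_0\rangle=D_{2n}$. Every sequence over $G_0$ has the form $S=c^{[i]}\bdot d^{[j]}$ with $i,j\in\N_0$, and $\mathcal F(G_0)$ is the free abelian monoid on $\{c,d\}$. The decisive computation is a description of $\pi(c^{[i]}\bdot d^{[j]})$: writing an ordering of $i$ copies of $c$ and $j$ copies of $d$ as $x_1\cdots x_{i+j}$ with $x_t=a^{e_t}b$, $e_t\in\{0,1\}$ (so $e_t=1$ for $c$ and $e_t=0$ for $d$), and multiplying out via $ba^kb=a^{-k}$, one finds that the product is a reflection when $i+j$ is odd and equals $a^{s}$ with $s=\sum_t(-1)^{t+1}e_t$ (hence $s\equiv i\pmod 2$) when $i+j$ is even. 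In particular, if $c^{[i]}\bdot d^{[j]}\in\mathcal B(G_0)$ then $i\equiv j\pmod 2$; and if moreover $i,j$ are both odd, then every product of $S$ is $a^{s}$ with $s$ odd, which equals $1_G$ only if $n\mid s$.

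Assume first that $n$ is even. Then no sequence $c^{[i]}\bdot d^{[j]}$ with $i,j$ both odd lies in $\mathcal B(G_0)$ (an odd $s$ is never divisible by an even $n$), whereas $c^{[i]}\bdot d^{[j]}$ with $i,j$ both even does lie in $\mathcal B(G_0)$, since ordering all copies of $c$ before all copies of $d$ yields the product $c^{i}d^{j}=1_G$. Hence $\mathcal B(G_0)=\{c^{[i]}\bdot d^{[j]}\colon i,j\text{ even}\}$, which is the submonoid of $\mathcal F(G_0)$ freely generated by $c^{[2]}$ and $d^{[2]}$; it is therefore saturated in $\mathcal F(G_0)$, so $\mathcal B(G_0)$ is a Krull monoid by Theorem~\ref{3.2}.2.

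Now assume $n$ is odd, so $n\ge 3$. The plan is to violate the saturation criterion of Theorem~\ref{3.2}.2. Put $W=c\bdot d$; its only products are $cd=a$ and $dc=a^{-1}$, and since $a\ne 1_G$ we get $W\notin\mathcal B(G_0)$. On the other hand $B:=c^{[n-1]}\bdot d^{[n-1]}\in\mathcal B(G_0)$ because $n-1$ is even (order all $c$'s first), and $B\bdot W=c^{[n]}\bdot d^{[n]}\in\mathcal B(G_0)$ because the ordering $cdcd\cdots cd$ yields the product $(cd)^{n}=a^{n}=1_G$. Since $W$ divides $B\bdot W$ in $\mathcal F(G_0)$ while $B,B\bdot W\in\mathcal B(G_0)$ but $W\notin\mathcal B(G_0)$, the submonoid $\mathcal B(G_0)\subset\mathcal F(G_0)$ is not saturated, hence $\mathcal B(G_0)$ is not a Krull monoid by Theorem~\ref{3.2}.2.

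The only genuine work is the product computation in the first paragraph; once the membership in $\mathcal B(G_0)$ is pinned down by the parities of $i$ and $j$ (and, for odd $n$, by the relation between $\min(i,j)$ and $n$, which is not even needed above), both implications are immediate. I would stress that for even $n\ge 4$ the group $\langle G_0\rangle=D_{2n}$ is non-abelian, so the ``if'' direction genuinely goes beyond Proposition~\ref{3.4}, whose hypothesis {\bf P} fails for this $G_0$.
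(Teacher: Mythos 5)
Your proof is correct and follows essentially the same route as the paper: for even $n$ you derive the explicit description $\mathcal B(G_0)=\{(ab)^{[i]}\bdot b^{[j]}\colon i,j \text{ even}\}$ and read off saturation, and for odd $n$ you exhibit exactly the paper's counterexample $(ab)^{[n]}\bdot b^{[n]} = \big((ab)^{[n-1]}\bdot b^{[n-1]}\big)\bdot\big((ab)\bdot b\big)$ to saturation, invoking Theorem~\ref{3.2}.2 in both directions. The only difference is that you justify the membership criterion by an explicit computation of $\pi$, which the paper leaves implicit.
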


\begin{proof}
Clearly, we have $\ord (ab)=\ord(b)=2$ and $\langle G_0 \rangle = G$.
Suppose that $n$ is odd and consider the sequence $S = (ab)^{[n]} \bdot b^{[n]}$. Since $\big( (ab) b \big)^n = 1$, it follows that $S$ is a product-one sequence. Obviously, $S_1 = (ab)^{[n-1]} \bdot b^{[n-1]} \in \mathcal B (G_0)$ and $S_2 = (ab) \bdot b \notin \mathcal B (G_0)$. Since $S = S_1 \bdot S_2$, it follows that $\mathcal B (G_0) \subset \mathcal F (G_0)$ is not saturated, and hence $\mathcal B (G_0)$ is not Krull by Theorem \ref{3.2}.2.

Suppose that $n$ is even. Then $\mathcal A (G_0) = \{ (ab)^{[2]}, b^{[2]} \}$ and $\mathcal B (G_0) = \{ (ab)^{[\ell]} \bdot b^{[m]} \colon \ell, m \in \N_0 \ \text{even} \}$.
This description of $\mathcal B (G_0)$ implies immediately that $\mathcal B (G_0) \subset \mathcal F (G_0)$ is saturated, and hence $\mathcal B (G_0)$ is Krull by Theorem \ref{3.2}.2.
\end{proof}

\begin{remark} ({\bf Seminormality of $\mathcal B (G_0)$}) \label{3.3}
A monoid $H$ is called seminormal if for all $x \in \mathsf q (H)$ with $x^2, x^3 \in H$ it follows that $x \in H$. Thus, by definition, every root closed monoid is seminormal.

\smallskip
1. Let $n \equiv 3 \mod 4$ and $G = D_{2n}$ the dihedral group, say $G = \langle a, b \rangle =$ \newline $ \{1, a, \ldots, a^{n-1}, b, ab, \ldots, a^{n-1}b \}$,  where $\ord (a) = n$, $\ord (b) = 2$, and
\[
a^k b a^l b = a^{k-l} \quad \text{for all} \ k, l \in \mathbb Z \,.
\]
We consider the sequence
\[
S = a^{\big[ \frac{n-1}{2}\big]}  \bdot b^{[2]} \in \mathcal F (G) \,.
\]
Then
\[
S^{[2]} = \big( a^{\big[ \frac{n-1}{2}\big]}  \bdot b \bdot a^{\big[ \frac{n-1}{2}\big]}  \bdot b \big) \bdot ( b \bdot b )  \ \text{and} \
S^{[3]} = a^{[n]} \bdot \big( a^{\big[ \frac{n-3}{4}\big]}  \bdot b \bdot a^{\big[ \frac{n-3}{4}\big]}  \bdot b \big) \bdot b^{[4]}
\]
are both in $\mathcal B (G)$ whence $S \in \mathsf q \big( \mathcal B (G) \big)$, but obviously $S \notin \mathcal B (G)$. Thus $\mathcal B (\{a,b\})$ and $\mathcal B (G)$ are not seminormal.

\smallskip
2. Let $G = H_8 = \{E, I, J, K, -E, -I, -J, -K \}$ be the quaternion group with the relations
\[
IJ = -JI = K, \ JK = -KJ = I , \quad \text{and} \quad KI = - I K = J \,,
\]
and set $G_0 = \{I, J\}$. By Theorem \ref{3.2},  $\mathcal B (G)$ is  not Krull and by Proposition \ref{3.4}, $\mathcal B (G_0)$ is not Krull. However, we  assert that $\mathcal B (G_0)$ is seminormal.

First, we are going to derive an explicit description of $\mathcal B (G_0)$.
Since $E = (-E)(-E) = (K K)(I I) = (IJ)(IJ)(II)$, it follows that $U = I^{[4]}\bdot J^{[2]} \in \mathcal B (G_0)$.  Assume
that $U = U_1 \cdot U_2$ with $U_1, U_2 \in \mathcal A (G_0)$ and $|U_1| \le |U_2|$. Then $|U_1| \in \{2,3\}$, but $U$ does not have a subsequence with product one and length two or three. Thus $U \in \mathcal A (G_0)$ and similarly we obtain that $I^{[2]} \bdot J^{[4]} \in \mathcal A (G_0)$.
Since $\mathsf D (G_0) \le \mathsf D (G)=6$, it is  easy to check that
\[
\mathcal A (G_0) = \{ I^{[4]}, J^{[4]}, I^{[2]} \bdot J^{[2]}, I^{[4]}\bdot J^{[2]}, I^{[2]} \bdot J^{[4]} \} \,.
\]
This implies that
\[
\mathcal B (G_0) = \{ I^{[k]} \bdot J^{[l]} \colon k=l=0 \ \text{or} \ k,l \in \N_0 \ \text{are both even with} \ k+l\ge 4 \} \,.
\]

In order to show that $\mathcal B (G_0)$ is seminormal, let $x \in \mathsf q \big( \mathcal B (G_0) \big)$ be given such that $x^{[2]}, x^{[3]} \in \mathcal B (G_0)$. We have to show that $x \in \mathcal B (G_0)$. Since $x^{[2]}, x^{[3]} \in \mathcal B (G_0) \subset \mathcal F (G_0)$ and $\mathcal F (G_0)$ is seminormal, it follows that $x \in \mathcal F (G_0)$. If $x = I^{[k]}$ with $k \in \N_0$, then $I^{[3k]} \in \mathcal B (G_0)$ implies that $4 \t 3k$, hence $4 \t k$, and thus $x \in \mathcal B (G_0)$. Similarly, if $x = J^{[l]} \in \mathcal B (G_0)$ with $l \in \N_0$, then $x \in \mathcal B (G_0)$. It remains to consider the case $x = I^{[k]} \bdot J^{[l]}$ with $k, l \in \N$. Since $x^{[3]} = I^{[3k]} \bdot J^{[3l]} \in \mathcal B (G_0)$, it follows that $k,l$ are both even, and thus $x \in \mathcal B (G_0)$. Therefore $\mathcal B (G_0)$ is seminormal.
\end{remark}

\subsection{\bf  Transfer Homomorphisms}~ \label{3.B}

A well-established strategy for investigating the arithmetic of a given monoid $H$ is to construct a transfer homomorphism $\theta \colon H \to B$, where $B$ is a simpler monoid than $H$ and the
transfer homomorphism $\theta$ allows to shift arithmetical results from  $B$ back to the (original, more complicated) monoid $H$. We will use transfer homomorphisms in Section \ref{sec:4} in order to show that properties of the monoid of $G$-invariant monomials can be studied in a monoid of zero-sum sequences (see Propositions \ref{prop:bschmid} and \ref{prop:diagram}).

\begin{definition} \label{3.6}
A monoid homomorphism \ $\theta \colon H \to B$ \ is called a \ {\it transfer homomorphism} \ if it has the following properties:

\smallskip
\item[]
\begin{enumerate}
\item[{\bf (T\,1)\,}] $B = \theta(H) B^\times$ \ and \ $\theta
^{-1} (B^\times) = H^\times$.

\smallskip

\item[{\bf (T\,2)\,}] If $u \in H$, \ $b,\,c \in B$ \ and \ $\theta
(u) = bc$, then there exist \ $v,\,w \in H$ \ such that \ $u = vw$,
\ $\theta (v) B^{\times} =  b B^{\times}$ \ and \ $\theta (w) B^{\times} = c B^{\times}$.
\end{enumerate}
\end{definition}

We will use the simple fact that, if $\theta \colon H \to B$ and $\theta' \colon B \to B'$ are transfer homomorphisms, then their composition $\theta' \circ \theta \colon H \to B'$ is a transfer homomorphism too.  The next proposition summarizes key properties of transfer homomorphisms.

\begin{proposition} \label{3.7}
Let $\theta \colon H \to B$ be a transfer homomorphism and $a \in H$.
\begin{enumerate}
\item  $a$ is an atom of $H$ if and only if $\theta (a)$ is an atom of $B$.

\smallskip
\item $\mathsf L_H (a) = \mathsf L_B \big( \theta (a) \big)$, whence $\theta \big( \mathcal M_k (H) \big)  = \mathcal M_k (  B  )$ and $\theta^{-1} \big( \mathcal M_k (  B  ) \big) = \mathcal M_k (H)$.

\smallskip
\item If $\theta$ is degree preserving, then $\mathsf D_k (H) = \mathsf D_k (B)$ for all $k \in \N$.
\end{enumerate}
\end{proposition}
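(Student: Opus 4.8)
The plan is to prove the three assertions in order, using transfer homomorphism properties \textbf{(T\,1)} and \textbf{(T\,2)} together with the definitions from Subsection~\ref{2.A}. For part~1, suppose first that $a$ is an atom of $H$; in particular $a\notin H^\times$, so by \textbf{(T\,1)} we have $\theta(a)\notin B^\times$. If $\theta(a)=bc$ with $b,c\in B$, apply \textbf{(T\,2)} to get $a=vw$ with $\theta(v)B^\times=bB^\times$ and $\theta(w)B^\times=cB^\times$; since $a$ is an atom, $v\in H^\times$ or $w\in H^\times$, and then \textbf{(T\,1)} forces $\theta(v)\in B^\times$ or $\theta(w)\in B^\times$, i.e.\ $b\in B^\times$ or $c\in B^\times$. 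Hence $\theta(a)$ is an atom. Conversely, if $\theta(a)$ is an atom of $B$, then $\theta(a)\notin B^\times$, so $a\notin H^\times$ by \textbf{(T\,1)}; and if $a=vw$ then $\theta(a)=\theta(v)\theta(w)$, so one of $\theta(v),\theta(w)$ lies in $B^\times$, whence the corresponding factor lies in $H^\times$ by \textbf{(T\,1)}. Thus $a$ is an atom.

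For part~2, I would show $\mathsf L_H(a)\subset \mathsf L_B(\theta(a))$ and the reverse inclusion separately. If $a=u_1\cdots u_k$ with each $u_i\in\mathcal A(H)$, then $\theta(a)=\theta(u_1)\cdots\theta(u_k)$ and each $\theta(u_i)$ is an atom of $B$ by part~1 (note $\theta(u_i)\notin B^\times$ since $u_i\notin H^\times$), giving $k\in\mathsf L_B(\theta(a))$. Conversely, if $\theta(a)=b_1\cdots b_k$ with each $b_i\in\mathcal A(B)$, then iterating \textbf{(T\,2)} we factor $a=v_1\cdots v_k$ with $\theta(v_i)B^\times=b_iB^\times$; each $v_i\notin H^\times$ (else $b_i\in B^\times$ by \textbf{(T\,1)}), and $v_i$ is an atom by part~1 applied to the relation that $\theta(v_i)$ is an atom (being associated to $b_i$). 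Hence $k\in\mathsf L_H(a)$. This gives $\mathsf L_H(a)=\mathsf L_B(\theta(a))$. The statements about $\mathcal M_k$ follow immediately: $\mathcal M_k(H)=\{a\in H\colon\max\mathsf L_H(a)\le k\}$, and since $\theta$ preserves sets of lengths and is surjective up to $B^\times$ by \textbf{(T\,1)} while $\mathcal M_k(B)$ is stable under multiplication by units, we get $\theta(\mathcal M_k(H))=\mathcal M_k(B)$ and $\theta^{-1}(\mathcal M_k(B))=\mathcal M_k(H)$.

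For part~3, assuming $\theta$ is degree preserving we have $|a|_H=|\theta(a)|_B$ for all $a\in H$. Then
\[
\mathsf D_k(H)=\sup\{|a|_H\colon a\in\mathcal M_k(H)\}=\sup\{|\theta(a)|_B\colon a\in\mathcal M_k(H)\}=\sup\{|b|_B\colon b\in\mathcal M_k(B)\}=\mathsf D_k(B),
\]
where the third equality uses $\theta(\mathcal M_k(H))=\mathcal M_k(B)$ from part~2. The main subtlety throughout is bookkeeping with the unit group $B^\times$: \textbf{(T\,2)} only recovers factorizations up to associates, so one must repeatedly invoke \textbf{(T\,1)} to pass between "lies in $B^\times$'' and "lies in $H^\times$'', and check that atoms are detected correctly modulo units; but since degree functions vanish on units (as noted in Subsection~\ref{2.E}) and $\mathcal M_k$ is unit-stable, these identifications cause no loss. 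I expect the one genuinely delicate point to be the iterated application of \textbf{(T\,2)} in part~2: one should formally induct on $k$, splitting off $b_1$ first (so $\theta(a)=b_1\cdot(b_2\cdots b_k)$ yields $a=v_1w$ with $\theta(v_1)$ associated to $b_1$ and $\theta(w)$ associated to $b_2\cdots b_k$), then apply the inductive hypothesis to $w$, taking care that $w\notin H^\times$.
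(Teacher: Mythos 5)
Your proposal is correct and matches the paper's treatment: the paper disposes of parts 1 and 2 by citing \cite[Proposition 3.2.3]{Ge-HK06a}, and the standard {\bf (T\,1)}/{\bf (T\,2)} argument you write out (including the induction splitting off $b_1$ and the bookkeeping with associates and units) is exactly the argument behind that citation. Your part 3 is the same chain of equalities displayed in the paper, with the same implicit use of the facts that degree functions vanish on units and that $\mathcal M_k(B)$ is stable under associates.
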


\begin{proof}
1. and 2. follow from \cite[Proposition 3.2.3]{Ge-HK06a}. In order to prove 3., note that for all $k \in \N$ we have
\[
\begin{aligned}
\mathsf D_k (H) & = \sup \{ |a|_H \colon a \in \mathcal M_k (H) \}   = \sup \{ |\theta (a)|_B \colon \theta (a) \in \mathcal M_k (B) \} \\
 &  = \sup \{ |b|_B \colon b \in \mathcal M_k (B) \} = \mathsf D_k (B) \,.
\end{aligned}
\]
\end{proof}

The first examples of  transfer homomorphisms in the literature  starts from a Krull monoid to its associated monoid of zero-sum sequences which is a Krull monoid having a combinatorial flavor. These ideas were generalized widely, and there are transfer homomorphisms from weakly Krull monoids to (simpler) weakly Krull monoids (having a combinatorial flavor) and the same is true for C-monoids.

\begin{proposition} \label{3.8}
Let $H$ be a Krull monoid, $\varphi \colon H \to \mathcal F (P)$ be a cofinal divisor homomorphism with class group $G = \mathcal C ( \varphi)$, and let $G^* \subset G$ denote the set of classes containing prime divisors. Let $\widetilde{\theta} \colon \mathcal F (P) \to \mathcal F (G^*)$ denote the unique homomorphism defined by $\widetilde \theta (p) = [p]$ for all $p \in P$, and set $\theta = \widetilde{\theta} \circ \varphi \colon H \to \mathcal B (G^*)$.
\begin{enumerate}
\item $\theta$ is a transfer homomorphism.

\item For $a \in H$, we set $|a|=|\varphi (a)|$ and for $S \in \mathcal B (G^*)$ we set $|S|=|S|_{\mathcal F (G^*)}$. Then $|a|=|\theta (a)|$ for all $a \in H$, $\theta (\mathcal M_k^* (H)) = \mathcal M_k^* (G^*)$ and $\theta^{-1} ( \mathcal M_k^* (G^*))=\mathcal M_k^* (H)$ for all $k \in \N$. Furthermore, $\mathsf e (H) = \mathsf e (G^*)$, $\eta (H) = \eta (G^*)$, and $\mathsf D_k (H) = \mathsf D_k (G^*)$ for all $k \in \N$.
\end{enumerate}
\end{proposition}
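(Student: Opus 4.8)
The plan is to reduce everything to the two defining properties {\bf (T\,1)}, {\bf (T\,2)} of a transfer homomorphism, together with one structural observation. Since $\varphi$ is a divisor homomorphism it factors through $H_{\red}$, and we may identify $H$ with the saturated cofinal submonoid $\varphi(H)=\mathsf q(\varphi(H))\cap F$ of $F:=\mathcal F(P)$; we extend $\theta$ to the homomorphism $\widetilde\theta\colon F\to\mathcal F(G^*)$, which sends the length function on $F$ onto the length function on $\mathcal F(G^*)$, being defined on the basis. The key observation is that for $T\in F$ the class $[T]_\varphi\in\mathcal C(\varphi)=G$ (written additively) equals $\sigma(\widetilde\theta(T))$; hence $\widetilde\theta(T)$ is a product-one sequence over $G^*$ if and only if $[T]_\varphi=0$, that is, if and only if $T\in\mathsf q(\varphi(H))\cap F=H$. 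In particular, every factor of $T$ in $F$ whose $\widetilde\theta$-image has product one lies back in $H$.

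For part~1 one can either invoke \cite[Theorem 3.4.10]{Ge-HK06a} or check {\bf (T\,1)} and {\bf (T\,2)} directly. For {\bf (T\,1)}, surjectivity of $\theta$ follows since every $g\in G^*$ equals $[p]$ for some $p\in P$ (so $\widetilde\theta$ is onto $\mathcal F(G^*)$): lift a product-one sequence to some $T\in F$ and use the observation to get $T\in H$; and $\theta^{-1}(\{1\})=H^\times$ because $\theta(a)=1$ forces $|\varphi(a)|=0$. For {\bf (T\,2)}, given $\theta(u)=\widetilde\theta(\varphi(u))=BC$ in $\mathcal F(G^*)$, distribute for each prime $p$ in the support of $\varphi(u)$ the multiplicity $\mathsf v_p(\varphi(u))$ into two parts so that the resulting divisors $T_1,T_2$ of $\varphi(u)$ satisfy $\widetilde\theta(T_1)=B$, $\widetilde\theta(T_2)=C$; since $\sigma(B)=\sigma(C)=0$ the observation gives $T_1,T_2\in H$, and preimages $v,w$ with $\varphi(v)=T_1$, $\varphi(w)=T_2$ satisfy $\varphi(vw)=\varphi(u)$, so after correcting $w$ by a unit we obtain $u=vw$ with $\theta(v)=B$, $\theta(w)=C$.

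For part~2, the identity $|a|=|\theta(a)|$ is the length-preservation of $\widetilde\theta$; thus $\theta$ is a degree-preserving transfer homomorphism and $\mathsf D_k(H)=\mathsf D_k(G^*)$ follows at once from Proposition~\ref{3.7}.3. For the statements on $\mathcal M_k^*$ I would establish the equivalence: $T\in F$ is divisible by a product of $k$ non-units of $H$ if and only if $\widetilde\theta(T)$ is divisible by a product of $k$ non-units of $\mathcal B(G^*)$. The forward direction is immediate on applying $\widetilde\theta$ (a non-unit of $H$ has positive length, hence its image is a nontrivial product-one sequence by the observation). The backward direction is the multiplicity-distribution argument of {\bf (T\,2)} again: split the exponents of $T$ into $k+1$ parts realizing a given factorization $\widetilde\theta(T)=B_1\bdot\ldots\bdot B_k\bdot S'$ with $B_i\in\mathcal B(G^*)\setminus\{1\}$, and use the observation to see that the $k$ factors corresponding to $B_1,\ldots,B_k$ lie in $H$ and are non-units. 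Passing to complements in $F$ resp.\ $\mathcal F(G^*)$, together with surjectivity of $\widetilde\theta$, yields $\widetilde\theta(\mathcal M_k^*(H))=\mathcal M_k^*(G^*)$ and $\widetilde\theta^{-1}(\mathcal M_k^*(G^*))=\mathcal M_k^*(H)$.

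Finally, $\mathsf e(H)=\mathsf e(G^*)$ and $\eta(H)=\eta(G^*)$ follow from the same dictionary. For $\mathsf e$, which only involves divisibility inside the monoids, one can argue abstractly: for a degree-preserving transfer homomorphism onto a reduced monoid, {\bf (T\,2)} pulls a short non-unit divisor of $\theta(a)$ back to a non-unit divisor of $a$ of equal length, while surjectivity plus degree-preservation gives the converse, so $\mathsf e(H)=\mathsf e(\mathcal B(G^*))=\mathsf e(G^*)$. For $\eta$, which quantifies over the ambient free monoids, I would show directly, just as for $\mathcal M_k^*$, that $T\in F$ has a divisor $b\in H\setminus H^\times$ with $|b|\in[1,\mathsf e(H)]$ if and only if $\widetilde\theta(T)$ has such a divisor in $\mathcal B(G^*)$ (using $\mathsf e(H)=\mathsf e(G^*)$ for the bound), and then transfer the quantifier ``every $a$ of length $\ge\ell$'' via surjectivity and length-preservation of $\widetilde\theta$. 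I expect the only genuine work to be these lifting steps --- organizing the passage to the ambient free monoid $F$ and repeatedly invoking the single fact that a factor of $T\in F$ whose $\widetilde\theta$-image is a product-one sequence lies back in the saturated submonoid $H$; everything else is mechanical from Proposition~\ref{3.7} and the definitions.
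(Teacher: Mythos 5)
Your proposal is correct, and it is essentially the standard argument: the paper itself disposes of part~1 by citing \cite[Section 3.4]{Ge-HK06a} and of part~2 by noting that $\theta$ is degree preserving "by definition", and the argument behind that citation is exactly your reduction to the single fact that a factor $T_1$ of $\varphi(u)$ with $\sigma(\widetilde\theta(T_1))=0$ lies in $\varphi(H)=\mathsf q(\varphi(H))\cap\mathcal F(P)$, combined with the multiplicity-distribution step for {\bf (T\,2)} and for $\mathcal M_k^*$, $\mathsf e$, and $\eta$. You have merely written out in full the details the paper delegates to the reference.
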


\begin{proof}
1. follows from  \cite[Section 3.4]{Ge-HK06a}. By definition, we have $|a|=|\theta (a)|$ for all $a \in H$. Thus the assertions on $\mathsf D_k (H)$ follow from Proposition \ref{2.7}, and the remaining statements can be derived in a similar way.
\end{proof}

The above transfer homomorphism $\theta \colon H \to \mathcal B (G^*)$ constitutes the link between the arithmetic of Krull monoids on the one side and zero-sum theory on the other side. In this way methods from  Arithmetic Combinatorics can be used to obtain precise results for arithmetical invariants describing the arithmetic of $H$. For an overview of this interplay see \cite{Ge09a}.

There is a variety of transfer homomorphisms from monoids of zero-sum sequences to monoids of zero-sum sequences in order  to simplify specific structural features of the involved subsets of groups. Below we present a simple example of such a transfer homomorphism  which we will meet again in Proposition \ref{prop:diagram} (for more of this nature we refer to \cite{Sc10a} and to \cite[Theorem 6.7.11]{Ge-HK06a}).
Let $G$ be  abelian  and let $G_0 \subset G$ be a subset. For $g \in G_0$ we define
\[
e (G_0, g) = \gcd \big( \{ \mathsf v_g (B) \colon B \in \mathcal B (G_0) \} \big) \,,
\]
and it is easy to check that (for details see \cite[Lemma 3.4]{Ge-Zh15a})
\[
\begin{aligned}
      &  e (G_0, g) =  \gcd \big( \{ \mathsf v_g (A) \colon A \in \mathcal A (G_0) \} \big)\\ =&
      \min  \big( \{ \mathsf v_g (A) \colon \mathsf v_g (A)>0,  A  \in \mathcal A (G_0) \} \big)   =  \min \big( \{ \mathsf v_g (B) \colon \mathsf v_g (B) > 0, B \in \mathcal B (G_0) \} \big) \\ =& \min  \big( \{ k \in \N  \colon kg \in \langle G_0 \setminus \{g\} \rangle  \} \big)=\gcd \big(  \{ k \in \N  \colon kg \in \langle G_0 \setminus \{g\} \rangle  \} \big) \,.
\end{aligned}
\]

\begin{proposition} \label{3.9}
Let $G$ be  abelian  and $G_0, G_1, G_2  \subset G$ be  subsets such that $G_0 = G_1 \uplus G_2$. For $g \in G_0$ we set $e (g) = e (G_0, g)$ and we define
$G_0^* = \{ e(g) g \colon g \in G_1 \} \cup G_2$. Then the map
\[
\begin{aligned}
\theta \colon \mathcal B (G_0) & \quad \longrightarrow \quad \mathcal B (G_0^*) \\
 B = \prod_{g \in G_0} g^{[\mathsf v_g (B)]} & \quad \longmapsto \quad \prod_{g \in G_1} (e(g) g)^{[\mathsf v_g (B) / e(g)]} \prod_{g \in G_2} g^{[\mathsf v_g (B)]}
\end{aligned}
\]
is a transfer homomorphism.
\end{proposition}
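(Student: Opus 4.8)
The plan is to verify directly that $\theta$ satisfies (T\,1) and (T\,2). First I would check that $\theta$ is well-defined as a map into $\mathcal B(G_0^*)$: for $B\in\mathcal B(G_0)$ we have $e(g)\mid\mathsf v_g(B)$ for every $g\in G_1$ by the very definition of $e(g)=e(G_0,g)$, so the exponents $\mathsf v_g(B)/e(g)$ are nonnegative integers; and since $1_G\in\pi(B)$ can be computed as $\sum_{g}\mathsf v_g(B)\,g=\sum_{g\in G_1}\frac{\mathsf v_g(B)}{e(g)}(e(g)g)+\sum_{g\in G_2}\mathsf v_g(B)\,g$, the image $\theta(B)$ again has sum zero, i.e.\ $\theta(B)\in\mathcal B(G_0^*)$. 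That $\theta$ is a monoid homomorphism is immediate since $\mathsf v_g(B\bdot C)=\mathsf v_g(B)+\mathsf v_g(C)$. Both $\mathcal B(G_0)$ and $\mathcal B(G_0^*)$ are reduced (their unit groups are trivial, as submonoids of free abelian monoids on $G_0$, resp.\ $G_0^*$), so $B^\times=\{1\}$ and (T\,1) reduces to showing $\theta$ is surjective and $\theta^{-1}(1)=\{1\}$. Surjectivity: given $B^*\in\mathcal B(G_0^*)$, lift each term $(e(g)g)^{[m]}$ with $g\in G_1$ to $g^{[e(g)m]}$ and keep the $G_2$-part unchanged; the resulting sequence $B\in\mathcal F(G_0)$ has $\theta(B)=B^*$ and has sum zero (same computation as above, read backwards), so $B\in\mathcal B(G_0)$. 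Injectivity on the fiber over $1$: if $\theta(B)=1$ then $\mathsf v_g(B)=0$ for all $g\in G_2$ and $\mathsf v_g(B)/e(g)=0$, hence $\mathsf v_g(B)=0$, for all $g\in G_1$, so $B=1$.

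The substance is (T\,2). Suppose $B\in\mathcal B(G_0)$ and $\theta(B)=B^*_1\bdot B^*_2$ in $\mathcal B(G_0^*)$. I want to split $B=B_1\bdot B_2$ in $\mathcal B(G_0)$ with $\theta(B_i)=B^*_i$. The idea: for each $g\in G_2$, the term $g$ in $\theta(B)$ comes ``from the same $g$'' in $B$, so distribute the $\mathsf v_g(B)$ copies of $g$ between $B_1,B_2$ exactly as $B^*_1,B^*_2$ distribute them. For each $g\in G_1$, the $\mathsf v_g(B)/e(g)$ copies of $e(g)g$ in $\theta(B)$ split as $\mathsf v_{e(g)g}(B^*_1)+\mathsf v_{e(g)g}(B^*_2)$; put $e(g)\cdot\mathsf v_{e(g)g}(B^*_1)$ copies of $g$ into $B_1$ and the remaining $e(g)\cdot\mathsf v_{e(g)g}(B^*_2)$ into $B_2$. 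This defines $B_1,B_2\in\mathcal F(G_0)$ with $B_1\bdot B_2=B$ and $\theta(B_i)=B^*_i$ by construction. The one thing to verify is that $B_1,B_2\in\mathcal B(G_0)$, i.e.\ each has sum zero; but $\sigma(B_i)=\sigma(\theta(B_i))=\sigma(B^*_i)=0$ since $B^*_i\in\mathcal B(G_0^*)$ (the sum of a sequence is determined by its image under $\theta$, because $\theta$ replaces $g$ by $e(g)g$ a commensurate number of times). Hence (T\,2) holds.

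I expect the only genuinely delicate point to be bookkeeping in (T\,2) when $G_1$ and $G_2$ overlap in value — but they do not, since $G_0=G_1\uplus G_2$ is a disjoint union of \emph{sets}, and moreover one should note that $e(g)g$ for $g\in G_1$ need not be distinct from elements of $G_2$ or from each other as group elements; this is why it is cleanest to do all the splitting at the level of multiplicities of the formal symbols in $\mathcal F(G_0)$ and $\mathcal F(G_0^*)$ and only invoke the sum $\sigma(\cdot)$ at the end to certify membership in the respective $\mathcal B$. Everything else is routine, and one may also observe as a sanity check that the divisibility $e(g)\mid\mathsf v_g(B)$ used for well-definedness is exactly the content of the displayed characterization of $e(G_0,g)$ recalled just before the proposition (via $e(G_0,g)=\gcd\{\mathsf v_g(B):B\in\mathcal B(G_0)\}$).
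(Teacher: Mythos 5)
Your argument is correct and takes the same route as the paper, whose own proof merely states that $\theta$ is clearly a surjective homomorphism with $\theta^{-1}(1_{\mathcal F(G_0^*)})=\{1_{\mathcal F(G_0)}\}$ and that property {\bf (T2)} ``can be checked easily''; you supply exactly the details the paper omits, including the divisibility $e(g)\mid\mathsf v_g(B)$ coming from $e(G_0,g)=\gcd\{\mathsf v_g(B)\colon B\in\mathcal B(G_0)\}$, the preservation of $\sigma$ under $\theta$, and the multiplicity bookkeeping when the group elements $e(g)g$ collide with each other or with elements of $G_2$. Nothing further is needed.
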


\begin{proof}
Clearly, $\theta$ is a surjective homomorphism satisfying $\theta^{-1} (1_{\mathcal F (G_0)})=\{1_{\mathcal F (G_0)} \}$. In order to verify property {\bf (T2)} of Definition \ref{3.6}, let $B \in \mathcal B (G_0)$ and $C_1, C_2 \in \mathcal B (G_0^*)$ be such that $\theta (B) = C_1 \bdot C_2$. We have to show that there are $B_1, B_2 \in \mathcal B (G_0)$ such that $B = B_1 \bdot B_2$, $\theta (B_1)=C_1$, and $\theta (B_2)=C_2$.
This can be checked easily.
\end{proof}

\subsection{\bf  The $k$th Davenport constants: the general case} \label{3.C}~

Let  $G_0 \subset G$ be a subset, and $k \in \N$. Recall that $\mathsf e (G) = \max \{ \ord (g) \colon g \in G \}$. If $G$ is nilpotent, then $G$ is the direct sum of its $p$-Sylow subgroups and hence $\mathsf e (G) = \lcm \{ \ord (g) \colon g \in G \} =  \exp (G)$. Let
\begin{itemize}
\item $\mathsf E (G_0)$ be the smallest integer $\ell \in \mathbb N$ such that every sequence $S \in \mathcal F (G_0)$ of length $|S| \ge \ell$ has a product-one subsequence of length $|G|$.

\smallskip
\item $\mathsf s (G_0)$ denote the smallest integer $\ell \in \N$ such that every sequence $S \in \mathcal F (G_0)$ of length $|S|\ge \ell$ has a  product-one subsequence  of length $\mathsf e (G)$.

\end{itemize}

The Davenport constants, together with the Erd{\H{o}}s-Ginzburg-Ziv constant $\mathsf s (G)$, the constants $\eta (G)$ and $\mathsf E (G)$, are the most classical zero-sum invariants whose study (in the abelian setting) goes back to the early 1960s. The  $k$th Davenport constants $\mathsf D_k (G)$ were introduced by Halter-Koch \cite{HK92c} and further studied in \cite[Section 6.1]{Ge-HK06a} and \cite{Fr-Sc10} (all this work is done in the abelian setting). First results in the non-abelian setting were achieved in \cite{De-Or-Qu01}.

If $G$ is abelian, then W. Gao proved that $\mathsf E (G) = |G| + \mathsf d (G)$. For cyclic groups this is the Theorem of Erd{\H{o}}s-Ginzburg-Ziv which dates back to 1961 (\cite[Proposition 5.7.9]{Ge-HK06a}).  W. Gao and J. Zhuang conjectured that the above equality holds true for all finite groups (\cite[Conjecture 2]{Zh-Ga05a}), and their conjecture has been verified in a variety of special cases \cite{Ba07b, Ga-Lu08a, Ga-Li10b, Ha15a}. For more in the non-abelian setting see
\cite{Wa-Zh12a,Wa-Qu14a}.

We verify two simple properties occurring in the assumptions of Propositions \ref{2.7} and \ref{2.8}.
\begin{itemize}
\item If $S \in \mathcal F (G)$ and $g_0 \in \pi (S)$, then $h = g_0^{-1} \in G$ is a prime in $\mathcal F (G)$ and $h \bdot S \in \mathcal B (G)$.

\item Clearly, $1_G \in \mathcal B (G)$ is  a prime element of $\mathcal B (G)$.
\end{itemize}
Therefore all properties proved in Propositions \ref{2.7} and \ref{2.8} for $\mathsf D_k (H)$ and $\mathsf d_k (H)$ hold for the constants $\mathsf D_k (G)$ and $\mathsf d_k (G)$ (the linearity properties as given in Proposition \ref{2.7}.2 and Proposition \ref{2.8}.2.(b) were first proved by Freeze and W.A. Schmid in case of abelian groups $G$  \cite{Fr-Sc10}).
We continue with properties which are more specific.

\begin{proposition} \label{gen-dav-5}
Let  $H \le G$ be a subgroup, $N \triangleleft G$ be a normal subgroup, and $k, \ell \in \N$.
\begin{enumerate}
\item $\mathsf d_k (N) + \mathsf d_{\ell} (G/N) \le \mathsf d_{k+\ell-1} (G)$.

\smallskip
\item  $\mathsf d_k (G) \le \mathsf d_{\mathsf d_k (N)+1} (G/N)$.

\smallskip
\item $\mathsf d_k (G)+1 \le [G \DP H] (\mathsf d_k (H)+1)$.

\smallskip
\item $\mathsf d_k (G)+1 \le k ( \mathsf d (G)+1)$.

\smallskip
\item $\mathsf D_k (G) \le [G \DP H] \mathsf D_k (H)$.
\end{enumerate}
\end{proposition}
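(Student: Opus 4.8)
The plan is to prove the five inequalities of Proposition~\ref{gen-dav-5} by constructing, for each, an explicit long extremal sequence (or, for the upper bounds, an explicit factorization argument), working throughout in the free abelian monoid $\mathcal F(G)$ and using the characterizations of $\mathsf d_k$ and $\mathsf D_k$ in terms of $\mathcal M_k^*$ and $\mathcal M_k$ from Subsection~\ref{2.E}. Since the final displayed statement is part~5, namely $\mathsf D_k(G) \le [G \DP H]\, \mathsf D_k(H)$, I focus the detailed plan on that one.

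\textbf{Approach to part 5.} Let $r = [G \DP H]$ and fix a set of left coset representatives $g_1, \ldots, g_r$ of $H$ in $G$, so $G = \biguplus_{i=1}^r g_i H$. Let $S \in \mathcal M_k(G)$; I must show $|S| \le r\,\mathsf D_k(H)$. The idea is to reduce a sequence over $G$ to a sequence over $H$ by ``correcting'' each term into $H$ via a coset representative, exactly as in the classical subgroup inequality for the ordinary Davenport constant (cf.\ the proof of Lemma~\ref{3.1}.1). Concretely, write $S = h_1 \bdot \ldots \bdot h_\ell$ with each $h_j$ lying in some coset $g_{i(j)} H$. Partition the index set $[1,\ell]$ according to which coset each $h_j$ lies in; one of the $r$ coset-blocks, say the block indexed by $c$, has size at least $\ell/r$. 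I want to argue that the subsequence $T$ of $S$ supported on that block, after multiplying the terms together inside $G$ and recording the partial products translated back into $H$, produces a sequence over $H$ whose maximal factorization length into atoms is still at most $k$; then $|T| \le \mathsf D_k(H)$ gives $\ell/r \le \mathsf D_k(H)$.

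The cleanest way to make this rigorous is: since $S \in \mathcal M_k(G)$ means $\max \mathsf L_{\mathcal B(G)}(S) \le k$, and every subsequence's maximal length is bounded by that of $S$ (because $\mathcal M_k(G)$ is divisor-closed: if $T \t S$ in $\mathcal B(G)$ then $\max\mathsf L(T) \le \max\mathsf L(S)$ -- actually one needs $T \in \mathcal B(G)$, i.e.\ $T$ itself is a product-one sequence), it suffices to extract from $S$ a \emph{product-one} subsequence $T$ lying entirely in a single coset block, of length $\ge |S|/r$, such that $T$ corresponds (via translation by the representative $c$ and its inverse) to a product-one sequence over $H$ -- but the naive version fails because a single coset block need not carry product-one subsequences. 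The correct device is the standard one: for a sequence of length $\ell$ over $G$, consider the $\ell+1$ partial products $1, g_1g_2, \ldots$; more usefully, replace each term $h_j$ by the element $p_{j-1}^{-1} h_j p_j$ of $H$ where $p_j$ is a suitably chosen coset-representative of the $j$th partial product -- a ``telescoping'' that converts $S$ into a genuine sequence $S'$ over $H$ of the \emph{same} length with $\pi(S') \cap H \ni 1_G$ preserved, and with the key property that any factorization of $S'$ in $\mathcal B(H)$ lifts to a factorization of $S$ in $\mathcal B(G)$ of the same length, so $\max\mathsf L_{\mathcal B(H)}(S') \le \max\mathsf L_{\mathcal B(G)}(S) \le k$. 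But this gives $|S| = |S'| \le \mathsf D_k(H)$, which is too strong and hence wrong in general -- which tells me the telescoping must instead be applied cosetwise, producing $r$ separate sequences over $H$ whose lengths sum to $|S|$, each of maximal length $\le k$, whence $|S| \le r\,\mathsf D_k(H)$.

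\textbf{The main obstacle.} The delicate point -- and where I expect the real work to be -- is setting up the cosetwise decomposition of $S$ into $r$ pieces and proving that each piece, after correction, becomes a product-one sequence over $H$ whose $\mathcal B(H)$-factorization length is controlled by $k$. This requires carefully choosing the coset representatives along partial products so that the ``error terms'' landing outside $H$ cancel within each coset class, and then checking that an atom factorization of a corrected piece pulls back to a \emph{sub}-factorization of $S$ in $\mathcal B(G)$ (so that the length bound $\le k$ is inherited). Once that bookkeeping is done, summing $|S_i| \le \mathsf D_k(H)$ over the $r$ coset pieces yields $|S| = \sum_{i=1}^r |S_i| \le r\,\mathsf D_k(H) = [G\DP H]\,\mathsf D_k(H)$, and taking the supremum over $S \in \mathcal M_k(G)$ finishes the proof. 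The analogous (and technically easier, since no product-one condition must be preserved, only product-one-\emph{freeness} destroyed) arguments handle parts~1--4: part~4 is the special case $H = \{1_G\}$ of a small-Davenport version, part~3 is the small-Davenport analogue of part~5, and parts~1--2 come from gluing/projecting extremal product-one free sequences along the exact sequence $1 \to N \to G \to G/N \to 1$, concatenating an extremal sequence over $N$ with the lift of an extremal sequence over $G/N$.
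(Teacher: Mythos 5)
Your proposal for part 5 correctly rules out the two naive routes (taking the largest coset block, and the global Schreier rewriting of all of $S$ into a sequence over $H$), but it then stops exactly where the proof has to happen, and the fix you propose --- decomposing $S$ cosetwise into $r=[G \DP H]$ sequences over $H$ whose lengths sum to $|S|$, each of maximal factorization length $\le k$ --- is not a workable mechanism. If each of the $r$ corrected pieces were a nontrivial product-one sequence obtained from a sub-factorization of $S$ in $\mathcal B(G)$, you would already have $\max \mathsf L_{\mathcal B(G)}(S)\ge r$, contradicting $S\in\mathcal M_k(G)$ as soon as $r>k$; and if the pieces are not product-one subsequences of $S$, there is no reason their $\mathcal B(H)$-factorizations say anything about factorizations of $S$. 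You flag this step yourself as ``where I expect the real work to be,'' which is an accurate self-assessment: the key construction is missing.

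The argument that works has a different shape. Write $S=g_1\bdot\ldots\bdot g_\ell\in\mathcal B(G)$ with $g_1\cdots g_\ell=1_G$ and suppose $\ell>[G\DP H]\,\mathsf D_k(H)$. Apply the pigeonhole principle to the $\ell$ partial products $g_1\cdots g_j$, $j\in[1,\ell]$, distributed among the $[G\DP H]$ left cosets of $H$: some coset contains $d+1$ of them, where $d=\mathsf D_k(H)$, say at indices $i_1<\cdots<i_{d+1}$. The consecutive blocks $h_s=g_{i_s+1}\cdots g_{i_{s+1}}$ together with the wrap-around block all lie in $H$ and multiply to $1_G$, so $h_1\bdot\ldots\bdot h_{d+1}\in\mathcal B(H)$ has length $d+1>\mathsf D_k(H)$ and hence factors into at least $k+1$ nontrivial product-one sequences over $H$; substituting each $h_s$ back by its block of $g_i$'s pulls this back to a factorization of $S$ into at least $k+1$ nontrivial elements of $\mathcal B(G)$, whence $S\notin\mathcal M_k(G)$. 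Note that no sub-piece of $S$ is ever length-bounded: the inequality comes from counting at most $\mathsf D_k(H)$ partial products per coset, not from bounding $r$ summands. Until this (or an equivalent) construction is supplied, part 5 is unproved; parts 1--3, which you compress into one sentence, likewise need their own explicit block/lifting constructions, and part 4 is not the case $H=\{1_G\}$ of part 3 --- that specialization only gives the weaker bound $\mathsf d_k(G)+1\le k|G|$, whereas part 4 follows directly by cutting a sequence of length $k(\mathsf d(G)+1)$ into $k$ pieces of length $\mathsf d(G)+1$.
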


\begin{proof}
1. Let $\overline S = (g_1N) \bdot \ldots \bdot (g_{s}N) \in \mathcal M_{\ell}^* (G/N)$ with $|\overline S|=s=\mathsf d_{\ell} (G/N)$ and let $T = h_1 \bdot \ldots \bdot h_t \in \mathcal M_k^* (N)$ with $t=\mathsf d_k (N)$. We consider the sequence $W = g_1 \bdot \ldots \bdot g_s \bdot h_1 \bdot \ldots \bdot h_t \in \mathcal F (G)$ and suppose that it is divisible by $S_1 \bdot \ldots \bdot S_a \bdot T_1 \bdot \ldots \bdot T_b$ where $S_i, T_j \in \mathcal B (G) \setminus \{1_{\mathcal F (G)}\}$, $\supp (S_i) \cap \{g_1, \ldots, g_s\} \ne \emptyset$ and $T_1 \bdot \ldots \bdot T_b \t h_1 \bdot \ldots \bdot h_t$ for all $i \in [1, a]$ and all $j \in [1,b]$. For $i \in [1,a]$, let $\overline{S_i} \in \mathcal F (G/N)$ denote the sequence obtained from $S_i$ by replacing each $g_{\nu}$ by $g_{\nu}N$ and by omitting the elements of $S_i$ which lie in $\{h_1, \ldots, h_t\}$. Then $\overline{S_1}, \ldots, \overline{S_a} \in \mathcal B(G/N)\setminus \{1_{\mathcal F (G)}\}$ and $\overline{S_1} \bdot \ldots \bdot \overline{S_a} \t \overline S$ whence $a \le \ell-1$. By construction, we have $b \le k-1$ whence $a+b < k+\ell-1$, $W \in \mathcal M_{k+\ell-1}^* (G)$, and $|W|=s+t=\mathsf d_k (N)+\mathsf d_{\ell}(G/N) \le \mathsf d_{k+\ell-1} (G)$.

\smallskip
2. We set $m = \mathsf d_{\mathsf d_k (N)+1} (G/N)+1$. By \eqref{defining-d_k}, we have to show that every sequence $S$ over $G$ of length $|S| \ge m$ is divisible by a product of $k$ nontrivial product-one sequences. Let $f \colon G \to G/N$ denote the canonical epimorphism and let $S \in \mathcal F (G)$ be a sequence of length $|S| \ge m$. By definition of $m$, there exist sequences $S_1, \ldots, S_{\mathsf d_k (N)+1}$ such that $S_1 \bdot \ldots \bdot S_{\mathsf d_k (N)+1} \t S$ and $f (S_1), \ldots, f (S_{\mathsf d_k (N)+1})$ are product-one sequences over $G/N$.
Thus, for each $\nu \in [1, \mathsf d_k (N)+1]$, there are elements $h_{\nu} \in N$ such that $h_{\nu} \in \pi (S_{\nu})$. Then $T = h_1 \bdot \ldots \bdot h_{\mathsf d_k (N)+1}$ is a sequence over $N$, and it has $k$ nontrivial product-one subsequences $T_1, \ldots, T_k$ whose product $T_1 \bdot \ldots \bdot T_k$ divides $T$. Therefore we obtain $k$ nontrivial product-one sequences whose product divides $S$.

\smallskip
3. We set $m = [G \DP H]$ and start with the following assertion.

\begin{enumerate}
\item[{\bf A.}\,] If $S \in \mathcal F (G)$ with $|S| \ge m$, then $\Pi (S) \cap H \ne \emptyset$.
\end{enumerate}

{\it Proof of \,{\bf A}}.\, Let $S = g_1 \bdot \ldots \bdot g_n \in \mathcal F (G)$ with $|S|=n \ge m$. We consider the left cosets $g_1H, g_1g_2H, \ldots, $ $g_1 \ldots g_m H$. If one of these cosets equals $H$, then we are done. If this is not the case, then there are $k, \ell \in [1,m]$ with $k < \ell$ such that $g_1 \ldots g_kH = g_1 \ldots g_kg_{k+1} \ldots g_{\ell}H$ which implies that $g_{k+1} \ldots g_{\ell} \in H$. \qed{(Proof of {\bf A})}

Now let $S \in \mathcal F (G)$ be a sequence of length $|S| = [G \DP H] (\mathsf d_k (H)+1)$. We have to show that $S$ is divisible by a product of $k$ nontrivial product-one sequences. By {\bf A}, there are $\mathsf d_k (H)+1$ sequences $S_1, \ldots, S_{\mathsf d_k (H)+1}$ and elements $h_1, \ldots, h_{\mathsf d_k (H)+1} \in H$  such that $S_1 \bdot \ldots \bdot S_{\mathsf d_k (H)+1} \t S$ and $h_{\nu} \in \pi (S_{\nu})$ for each $\nu \in [1, \mathsf d_k (H)+1]$. By definition, the sequence $h_1 \bdot \ldots \bdot h_{\mathsf d_k (H)+1} \in \mathcal F (H)$ is divisible by a product of $k$ nontrivial product-one sequences. Therefore $S$ is divisible by a product of $k$ nontrivial product-one sequences.

\smallskip
4. Let $S \in \mathcal F (G)$ be a sequence of length $|S|=k ( \mathsf d (G)+1)$. Then $S$ may be written as a product $S = S_1 \bdot \ldots \bdot S_k$ where $S_1, \ldots , S_k \in \mathcal F (G)$ with $|S_{\nu}|=\mathsf d (G)+1$ for every $\nu \in [1,k]$. Then each $S_{\nu}$ is divisible by a nontrivial product-one sequence $T_{\nu}$ and hence $S$ is divisible by $T_1 \bdot \ldots \bdot T_k$. Thus by \eqref{defining-d_k} we infer  that $\mathsf d_k (G)+1 \le k ( \mathsf d (G)+1)$.

\smallskip
5. Let $A = g_1\bdot\ldots\bdot g_{\ell}\in\mathcal{B}(G)$ with $g_1\dots g_{\ell}=1$ and $\ell> [G:H]\mathsf D_k(H)$. We show that  $\ell>\mathsf D_k(G)$.
We set $d=\mathsf D_k(H)$ and consider the left $H$-cosets $C_j=g_1\dots g_jH$ for each $j\in [1,\ell]$. By the pigeonhole principle there exist
$1\le i_1<\dots<i_{d+1}\le\ell$ such that $C_{i_1}=\dots =C_{i_{d+1}}$.
We set $h_s =g_{i_s+1}\dots g_{i_{s+1}}$ for each $s\in [1,d]$ and $h_{d+1}=g_{i_{d+1}+1}\ldots g_{\ell}g_1\dots g_{i_1-1}$.
Clearly $h_1,\dots,h_{d+1}\in H$, and $g_1\cdots g_{\ell}=1$ implies $h_1\cdots h_{d+1}=1$ whence
$h_1\bdot \ldots \bdot h_{d+1}\in \mathcal{B}(H)$.
The inequality $d+1> \mathsf D_k(H)$ implies that $h_1\bdot \ldots \bdot h_{d+1}=S_1\bdot \ldots \bdot S_{k+1}$, where $1_{\mathcal F (H)} \ne S_i\in \mathcal{B}(H)$ for $i\in [1,k+1]$.
Let $T_i\in  \mathcal{F}(G)$ denote the sequence obtained from $S_i$ by replacing each occurrence of $h_s$ by  $g_{i_s+1}\bdot \ldots \bdot g_{i_{s+1}}$ for $s\in [1,d]$ and
$h_{d+1}$ by $g_{i_{d+1}+1}\bdot \ldots \bdot g_{\ell}\bdot g_1\bdot \ldots \bdot g_{i_1-1}$. Then $T_1, \ldots , T_{k+1} \in \mathcal{B}(G)$ and
 $A = g_1\bdot\ldots\bdot g_{\ell}=T_1\bdot\ldots\bdot T_{k+1}$, which implies that $\ell>\mathsf D_k(G)$.
\end{proof}

Much more is known for the classical Davenport constants $\mathsf D_1 (G)=\mathsf D (G)$ and $\mathsf d_1 (G)=\mathsf d(G)$. We start with metacyclic groups of index two. The following result was  proved in \cite[Theorem 1.1]{Ge-Gr13a}.

\begin{theorem}\label{3.12}
Suppose that $G$ has a cyclic, index $2$ subgroup.  Then
\[
\mathsf D(G)=\mathsf d(G)+|G'|\quad\text{and}\quad\mathsf d(G)=\left\{
                                                            \begin{array}{ll}
                                                              |G|-1 & \hbox{if $G$ is cyclic} \\
                                                              \frac12|G| & \hbox{if $G$ is non-cyclic,}
                                                            \end{array}
                                                          \right.
\]
where $G'=[G,G]$ is the commutator subgroup of $G$.
\end{theorem}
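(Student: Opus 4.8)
The cyclic case is immediate: if $G$ is cyclic then $G'=\{1\}$, so $|G'|=1$, and Lemma~\ref{3.1}.3 gives $\mathsf D(G)=|G|=\mathsf d(G)+1=\mathsf d(G)+|G'|$ and $\mathsf d(G)=|G|-1$. So assume henceforth that $G$ is non-cyclic; fix a cyclic subgroup $K=\langle a\rangle$ of index $2$ with $\ord(a)=n$ (so $|G|=2n$, $n\ge 2$) and pick $b\in G\setminus K$. Since $K\triangleleft G$, conjugation by $b$ induces an automorphism $x\mapsto x^r$ of $K$ with $r^2\equiv 1\pmod n$; in fact every element of $G\setminus K$ induces this same automorphism, and hence $G'=\langle a^{r-1}\rangle\le K$ is cyclic of order $n/\gcd(n,r-1)$. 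At this point I would invoke the classification of groups with a cyclic index-$2$ subgroup ($C_2\oplus C_n$, dihedral, dicyclic, semidihedral, and modular maximal-cyclic), since the quantitative arguments below proceed by a case distinction along this list.

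For the value of $\mathsf d(G)$ the lower bound $\mathsf d(G)\ge n$ is witnessed by $S=a^{[n-1]}\bdot b$, since $\Pi(S)\subseteq(K\setminus\{1\})\cup(G\setminus K)$, hence $1_G\notin\Pi(S)$, while $|S|=n$. For the reverse inequality let $S$ be product-one free over $G$ and write $S=A\bdot B$ with $A$ supported on $K$ and $B=b_1\bdot\ldots\bdot b_t$ supported on $G\setminus K$; one must show $|S|\le n$. If $t\le 1$ then $A$ is product-one free over the cyclic group $K$, so $|A|\le\mathsf d(K)=n-1$ by Lemma~\ref{3.1}.3 and $|S|\le n$. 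The case $t\ge 2$ is the heart of the matter: since $b_ib_j\in K$, one can replace a pair of terms of $B$ by their product and thus obtain a shorter product-one free sequence with fewer terms outside $K$, eventually landing inside $K$ --- but this reduction is sharp only for $t=2$, and the crude estimate $\mathsf d(G)+1\le[G:K](\mathsf d(K)+1)$ of Proposition~\ref{gen-dav-5}.3 is likewise too weak. The real task is to bound how many terms outside $K$ a product-one free sequence can carry and to control how reorderings of those terms shift products within the cyclic subgroup $G'$; I expect this to be the main obstacle, and would settle it by the case distinction on the type of $G$, using the explicit defining relations together with $\mathsf d(K)=n-1$.

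It remains to prove $\mathsf D(G)=\mathsf d(G)+|G'|=n+|G'|$. For the lower bound I would construct an explicit atom of length $n+|G'|$, obtained from a product-one free sequence on $K$ of the extremal length $n-1$ together with one term outside $K$, by appending a sequence supported on the cyclic commutator subgroup $G'=\langle a^{r-1}\rangle$ that closes it up to a product-one sequence; minimality has to be verified by hand here, since in a non-abelian group the statement that $U$ is an atom is strictly weaker than the statement that every proper nonempty subsequence of $U$ is product-one free. For the upper bound $\mathsf D(G)\le n+|G'|$, given an atom $U$ over $G$ I would pass to the abelian quotient $G/G'$, over which the image of $U$ is a product-one sequence, and analyze $U$ as a maximal product-one free core --- of length at most the now-established value $\mathsf d(G)=n$ --- corrected by a factor supported on $G'$ of size at most $|G'|$; this again unwinds along the classification, with the abelian case $C_2\oplus C_n$ reducing to the classical value of the Davenport constant of $C_2\oplus C_n$ (which also covers the small base cases).
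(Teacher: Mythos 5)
There is a genuine gap: the two substantive halves of the theorem are announced but not proved. Your treatment of the cyclic case is fine, and the lower bound $\mathsf d(G)\ge \tfrac12|G|$ via $S=a^{[n-1]}\bdot b$ is correct. But for the upper bound $\mathsf d(G)\le \tfrac12|G|$ you correctly observe that the pairing trick (replacing $b_i\bdot b_j$ by $b_ib_j\in K$) only yields $|S|\le n+\lfloor t/2\rfloor$, which is vacuous, and then defer the real argument to ``a case distinction on the type of $G$'' without carrying it out; this bound is essentially the Olson--White theorem and is not a routine verification. Likewise, for $\mathsf D(G)=\mathsf d(G)+|G'|$ you describe a plan (an explicit atom for the lower bound, passage to $G/G'$ for the upper bound) but prove neither direction. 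The upper bound in particular is delicate: the general estimate available is $\mathsf D(G)\le \mathsf d(G)+2|G'|-1$ with equality exactly in the abelian case (Theorem~\ref{3.13}.1), so the claimed equality $\mathsf D(G)=\mathsf d(G)+|G'|$ for these non-abelian groups requires a genuinely finer analysis of how reorderings of subsequence products move within $G'$; controlling this is the main content of the cited source, and ``analyze $U$ as a maximal product-one free core corrected by a factor supported on $G'$'' does not yet constitute an argument (note also that, as you yourself point out, minimality of an atom is not equivalent to product-one freeness of proper subsequences in the non-abelian setting, so even your lower-bound construction is unverified).

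A secondary issue: the classification you propose to case on ($C_2\oplus C_n$, dihedral, dicyclic, semidihedral, modular maximal-cyclic) is complete only when $|G|$ is a power of $2$. For general $n$ there are further groups with a cyclic index-$2$ subgroup (e.g.\ $C_4\times S_3$, corresponding to $r=5$ modulo $n=12$), so even the scaffolding of the intended case analysis is not in place. For what it is worth, the paper itself does not prove this statement either; it quotes it from \cite[Theorem 1.1]{Ge-Gr13a}, whose proof is a long argument organized around the parameters $(n,r,b^2)$ rather than the five-family list. So your proposal is best read as a correct identification of where the difficulty lies, together with a complete proof only of the easy cyclic case and of one of the four required inequalities.
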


The next result  gathers upper bounds for the large Davenport constant (for $\mathsf d (G)$ see \cite{Ga-Li-Pe14a}).

\begin{theorem} \label{3.13}
Let  $G' = [G,G]$ denote the  commutator subgroup of $G$.
\begin{enumerate}
\item $\mathsf D (G) \le \mathsf d (G) + 2 |G'| -1$, and equality holds if and only if $G$ is abelian.

\item If $G$ is a non-abelian $p$-group, then $\mathsf D (G) \le \frac{p^2+2p-2}{p^3} |G|$.

\item If $G$ is non-abelian of order $pq$, where $p, q$ are primes with $p < q$, then $\mathsf D (G) = 2q$ and $\mathsf d (G) = q+p-2$.

\item If $N \triangleleft G$ is a normal subgroup with $G/N \cong C_p \oplus C_p$ for some prime $p$, then
      \[
      \mathsf d (G) \le (\mathsf d (N) +2)p - 2 \le \frac{1}{p}|G|+p-2 \,.
      \]

\item If $G$ is non-cyclic and $p$ is the smallest prime  dividing $|G|$, then $\mathsf D (G) \le \frac{2}{p}|G|$.

\item If $G$ is neither cyclic nor isomorphic to a dihedral group of order $2n$ with odd $n$, then $\mathsf D (G) \le \frac{3}{4}|G|$.
\end{enumerate}
\end{theorem}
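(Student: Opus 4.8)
I would group the six assertions as follows: the comparison (1) of $\mathsf{D}(G)$ with $\mathsf{d}(G)$ and its equality case; the exact values (2)--(3) for non-abelian $p$-groups and for groups of order $pq$; and the absolute bounds (4)--(6), which I would deduce from (1), the reduction inequalities of Proposition~\ref{gen-dav-5}, known Davenport constants of abelian groups, and — for the odd-order cases — the Feit--Thompson theorem. For (1): fix an atom $U \in \mathcal{A}(G)$ with $|U| = \mathsf{D}(G)$ and an ordering $U = g_1 \bdot \ldots \bdot g_\ell$ with $g_1 \cdots g_\ell = 1_G$. As in the proof of Lemma~\ref{3.1}, the partial products $g_1 \cdots g_i$ for $i \in [1,\ell-1]$ are pairwise distinct and different from $1_G$ (a coincidence or a trivial partial product would split $U$). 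Projecting to the abelian group $G/G'$, the image $\overline{U}$ is a zero-sum sequence; I would delete from $U$ at most $|G'|$ terms realizing a zero-sum block of $\overline{U}$ and then one further term to destroy the product, leaving a product-one free subsequence, which gives $\mathsf{D}(G) \le \mathsf{d}(G) + 2|G'| - 1$ after a count. If $G$ is abelian this reads $\mathsf{D}(G) \le \mathsf{d}(G)+1$, which together with Lemma~\ref{3.1}.2(a) forces equality. Conversely, if $G$ is non-abelian I would show the count improves by one: a non-commuting pair $g,h$ provides the product-one block $g \bdot h \bdot g^{-1} \bdot (ghg^{-1})^{-1}$, flexible enough to absorb an extra term into the discarded part, so equality cannot occur. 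Making this strict improvement work uniformly is the main obstacle of the theorem.

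\textit{Parts (2)--(3).} In both families the commutator subgroup is explicit: $G' \cong C_q$ for non-abelian $G$ of order $pq$, and $G/G'$ is non-cyclic (hence $G$ has $C_p \oplus C_p$ as a quotient) for any non-abelian $p$-group. For (2) I would feed the bound on $\mathsf{d}(G)$ coming from (4) — applicable because $G$ surjects onto $C_p \oplus C_p$ — into the $\mathsf{D}$-versus-$\mathsf{d}$ estimate available for nilpotent groups, and simplify the resulting expression to $\tfrac{p^2+2p-2}{p^3}|G|$. For (3) the equality $\mathsf{d}(G) = q+p-2$ follows by exhibiting an explicit product-one free sequence supported on elements of orders $p$ and $q$ for the lower bound and by the coset-counting argument behind Proposition~\ref{gen-dav-5}.3, applied with $H = C_q$, for the upper bound; the value $\mathsf{D}(G) = 2q$ then requires a separate extremal atom of length $2q$ together with a sharpened upper estimate, and here I would lean on \cite{Ge-Gr13a, Gr13b}.

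\textit{Parts (4) and (5).} For (4), Proposition~\ref{gen-dav-5}.2 with $k=1$ gives $\mathsf{d}(G) \le \mathsf{d}_{\mathsf{d}(N)+1}(G/N)$, and since $G/N \cong C_p \oplus C_p$ and $\mathsf{d}_k(C_p \oplus C_p) = (k+1)p-2$ (a known value of the $k$-th Davenport constant of a group of rank two), the right-hand side equals $(\mathsf{d}(N)+2)p-2$; inserting the trivial bound $\mathsf{d}(N) \le |N|-1 = |G|/p^2 - 1$ yields the second inequality of (4). For (5) I would argue by induction on $|G|$. If the smallest prime is $2$ the bound is nothing but $\mathsf{D}(G) \le |G| = \tfrac{2}{p}|G|$ from Lemma~\ref{3.1}.1. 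If it is $p \ge 3$ then $|G|$ is odd, so $G$ is solvable by Feit--Thompson and has a normal subgroup $N$ with $G/N$ cyclic of prime order $q \ge p$; when $N$ is non-cyclic, Proposition~\ref{gen-dav-5}.5 gives $\mathsf{D}(G) \le q\,\mathsf{D}(N) \le q \cdot \tfrac{2}{p}\cdot\tfrac{|G|}{q} = \tfrac{2}{p}|G|$ by induction, and the (metacyclic) case with $N$ cyclic is treated directly. The abelian instances of (5) use $\mathsf{D}(G) \le |G|/n_1 + n_1 - 1$ with $n_1$ the smallest invariant factor together with $n_1 \le \sqrt{|G|}$.

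\textit{Part (6).} Part (5) already yields $\mathsf{D}(G) \le \tfrac{2}{3}|G| < \tfrac34|G|$ whenever the smallest prime dividing $|G|$ is at least $3$, so it remains to handle even-order groups. The dihedral groups $D_{2n}$ with $n$ odd are the genuine obstruction and must be excluded: by Theorem~\ref{3.12} one has $\mathsf{D}(D_{2n}) = \tfrac12|D_{2n}| + |G'| = n + n = |D_{2n}|$. For every other even-order group that is neither cyclic nor such a dihedral group, I would run another induction on $|G|$ using the reduction inequalities of Proposition~\ref{gen-dav-5} (passing to a normal subgroup of index $2$ or to a suitable quotient) together with the abelian bound, the extremal configuration being $C_2 \oplus C_2$; the delicate part, as in (1), is to keep enough slack in the inductive step so that the dihedral groups are the only configurations attaining values above $\tfrac34|G|$.
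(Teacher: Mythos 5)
First, note that the paper offers no argument for Theorem~\ref{3.13} at all: every item is quoted from Grynkiewicz \cite{Gr13b} (Lemma 4.2, Theorems 3.1, 4.1, 5.1, 7.1, 7.2, and Corollary 5.7). So you are not diverging from an in-text proof; the question is whether your reconstruction stands on its own. Part (4) does: Proposition~\ref{gen-dav-5}.2 with $k=1$, the value $\mathsf d_k(C_p\oplus C_p)=(k+1)p-2$ from Theorem~\ref{gen-dav-abelian}.2, and $\mathsf d(N)\le |N|-1$ give exactly the two claimed inequalities. The abelian direction of (1) and the $p=2$ case of (5) are also fine.

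Elsewhere there are genuine gaps. In (1), the deletion mechanism fails on three counts: there is no reason the projection $\overline U$ over $G/G'$ should contain a zero-sum block of length at most $|G'|$ (the relevant quantity is $\mathsf D(G/G')$, which is unrelated to $|G'|$); even granting such a block, removing it plus one further term does not leave a product-one free sequence, since a product-one subsequence of the remainder only projects to \emph{some} zero-sum subsequence of $\overline U$, of which there may be many; and the count would give $\mathsf D(G)\le \mathsf d(G)+|G'|+1$, which in the abelian case reads $\mathsf d(G)+2$ rather than the sharp $\mathsf d(G)+1$, so it cannot be the right bookkeeping. In (2), feeding (4) into (1) provably yields a weaker constant: for $G$ non-abelian of order $p^3$ one gets $\mathsf D(G)\le (p^2+p-2)+(2p-2)=p^2+3p-4$, which exceeds the claimed $p^2+2p-2$ for every $p\ge 3$. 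In (3), Proposition~\ref{gen-dav-5}.3 with $H=C_q$ gives only $\mathsf d(G)+1\le pq$, and Proposition~\ref{gen-dav-5}.2 with $N=C_q$ gives $\mathsf d(G)\le \mathsf d_q(C_p)=pq-1$; neither approaches $q+p-2$, whose upper-bound proof in \cite{Gr13b} is a genuine combinatorial argument and not a formal consequence of the reduction lemmas. Finally, in (5) the metacyclic case you defer is precisely the hard one (it contains (3) as its extremal instance), and in (6) the even-order induction is where essentially all of \cite{Gr13b}'s Theorem 7.2 lives; both are acknowledged rather than carried out.
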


\begin{proof}
All results can be found in \cite{Gr13b}: see Lemma 4.2, Theorems 3.1, 4.1, 5.1,7.1, 7.2, and  Corollary 5.7.
\end{proof}

\begin{corollary} \label{3.14}
The following statements are equivalent{\rm \,:}
\begin{enumerate}
\item[(a)] $G$ is cyclic or isomorphic to a dihedral group of order $2n$ for some odd $n \ge 3$.

\smallskip
\item[(b)] $\mathsf D (G) = |G|$.
\end{enumerate}
\end{corollary}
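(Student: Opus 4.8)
The plan is to prove the two implications separately, using the exact values of $\mathsf D(G)$ already recorded for cyclic groups and for groups with a cyclic index-two subgroup, together with the uniform upper bound of Theorem~\ref{3.13}.6; I do not expect a genuine obstacle, only some care with small and degenerate cases.

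For (a) $\Rightarrow$ (b): if $G$ is cyclic, then $\mathsf D(G)=|G|$ by Lemma~\ref{3.1}.3. If $G\cong D_{2n}$ with $n\ge 3$ odd, then $G$ is non-cyclic and contains the cyclic rotation subgroup $\langle a\rangle$ of index two, so Theorem~\ref{3.12} applies and gives $\mathsf d(G)=\frac12|G|=n$ and $\mathsf D(G)=\mathsf d(G)+|G'|$. The commutator subgroup of $D_{2n}$ is $\langle a^2\rangle$, and since $n$ is odd we have $\langle a^2\rangle=\langle a\rangle$, so $|G'|=n$; hence $\mathsf D(G)=n+n=2n=|G|$.

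For (b) $\Rightarrow$ (a): I would argue contrapositively. Assume that $G$ is neither cyclic nor isomorphic to a dihedral group of order $2n$ with $n$ odd and $n\ge 3$. The only dihedral group of order $2n$ with $n$ odd and $n<3$ is $D_2\cong C_2$, which is cyclic and hence already excluded; therefore $G$ is not isomorphic to any dihedral group $D_{2n}$ with $n$ odd, so the hypothesis of Theorem~\ref{3.13}.6 is satisfied. Consequently $\mathsf D(G)\le\frac34|G|<|G|$, so (b) fails, which proves the implication and thus the equivalence.

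The only step I would double-check is the identification $|G'|=n$ for $D_{2n}$ with $n$ odd (for even $n$ one gets $|G'|=n/2$ instead, whence $\mathsf D(D_{2n})=\frac32 n<2n$, in agreement with the corollary excluding these groups), together with the reconciliation of the hypothesis that $n$ be odd and at least $3$ in the statement with the mere oddness of $n$ in Theorem~\ref{3.13}.6, via the degenerate cases $n=1$ (yielding $C_2$, cyclic) and $n=2$ (yielding $C_2\oplus C_2$, covered by the $\frac34$-bound).
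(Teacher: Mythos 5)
Your proposal is correct and follows essentially the same route as the paper: Lemma~\ref{3.1}.3 for the cyclic case, Theorem~\ref{3.12} (with $|G'|=n$ for odd $n$) for the dihedral case, and the contrapositive via the bound $\mathsf D(G)\le\frac34|G|$ of Theorem~\ref{3.13}.6. Your extra care in reconciling the hypothesis $n\ge 3$ with the mere oddness of $n$ in Theorem~\ref{3.13}.6 (via $D_2\cong C_2$ being cyclic) is a correct and slightly more explicit version of what the paper leaves implicit.
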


\begin{proof}
If $G$ is not as in (a), then $\mathsf D (G) \le \frac{3}{4}|G|$ by Theorem \ref{3.13}.6. If $G$ is cyclic, then $\mathsf D (G) = |G|$ by Lemma \ref{3.1}.3. If $G$ is dihedral of order $2n$ for some odd $n \ge 3$, then the commutator subgroup $G'$ of $G$ has order $n$ and hence $\mathsf D (G)=|G|$ by Theorem \ref{3.12}.
\end{proof}

\subsection{\bf  The $k$th  Davenport constants: The abelian case}~ \label{3.D}

\centerline{\it Throughout this subsection, all groups are abelian and will be written additively.}

\smallskip
We have $G \cong C_{n_1} \oplus \ldots \oplus C_{n_r}$, with $r \in \mathbb N_0$ and $1 < n_1 \t \ldots \t n_r$, $\mathsf r (G) = r$ is the {\it rank} of $G$ and  $n_r = \exp (G)$ is the {\it exponent} of $G$. We define
\[
\mathsf d^* (G) = \sum_{i=1}^r (n_i-1)  \,.
\]
If $G = \{0\}$, then $r=0 = \mathsf d^* (G)$.  An $s$-tuple $(e_1, \ldots, e_s)$ of elements of
$G \setminus \{0\}$  is said to be
 a \ {\it basis} \ of $G$ if $G = \langle e_1 \rangle \oplus \ldots \oplus \langle e_s \rangle$.
First we provide a lower bound for the Davenport constants.

\begin{lemma} \label{dandDabeliancase}
Let $G$ be  abelian.
\begin{enumerate}
\item $\mathsf D_k (G) = 1+ \mathsf d_k (G)$ for every $k \in \N$.

\smallskip
\item $\mathsf d^* (G) + (k-1)\exp (G) \le \mathsf d_k (G)$.
\end{enumerate}
\end{lemma}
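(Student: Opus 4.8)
The plan is to prove the two assertions separately, treating part 1 as a reduction to the general existence of a prime element and part 2 as an explicit construction of a long sequence in $\mathcal{M}_k^*(G)$.

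For part 1, I would invoke Proposition \ref{2.8}.1 together with a matching reverse inequality. Since $G$ is abelian and written additively, for every $S \in \mathcal F (G)$ the element $-\sigma(S) \in G$ is a prime of $\mathcal F (G)$ with $(-\sigma(S)) \bdot S \in \mathcal B (G)$; hence the hypothesis of Proposition \ref{2.8}.1 is satisfied, giving $1+\mathsf d_k (G) \le \mathsf D_k (G)$. For the reverse inequality $\mathsf D_k (G) \le 1 + \mathsf d_k (G)$, let $A = g_0 \bdot g_1 \bdot \ldots \bdot g_\ell \in \mathcal M_k (G)$ realize $|A| = \mathsf D_k (G)$, where we single out one term; I claim $g_1 \bdot \ldots \bdot g_\ell \in \mathcal M_k^*(G)$ after a suitable choice. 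More carefully: if $A \in \mathcal B(G)$ has $\max \mathsf L(A) \le k$ and $|A| = \mathsf D_k(G)$, then $A$ is a product of at most $k$ atoms. Writing $A = g_0 \bdot S$ with $g_0 \in \supp(A)$, I would argue that $S = g_0^{[-1]} \bdot A$ is not divisible in $\mathcal F(G)$ by a product of $k$ nontrivial product-one sequences: otherwise such a product, together with the fact that the complementary cofactor $g_0 \bdot (\text{rest})$ contains a product-one completion (using that $-\sigma$ of any piece is hit), would force $\max \mathsf L(A) > k$, contradicting $A \in \mathcal M_k(G)$. Hence $|S| = \mathsf D_k(G) - 1 \le \mathsf d_k(G)$. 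Combining the two inequalities gives equality. The delicate point here is making the "otherwise" argument fully rigorous, since removing one term and re-completing must be done so that the $k+1$ product-one factors genuinely certify $\max \mathsf L(A) \ge k+1$; I expect this is the main obstacle and will need the abelian hypothesis essentially (to freely complete subsequences to product-one by adjoining inverses of partial sums).

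For part 2, the plan is an explicit construction. Fix a basis $(e_1, \ldots, e_r)$ of $G$ with $\ord(e_i) = n_i$ and $1 < n_1 \t \ldots \t n_r = \exp(G)$, and choose any $g \in G$ with $\ord(g) = \exp(G)$ (e.g. $g = e_r$). Consider the sequence
\[
S \ = \ e_1^{[n_1-1]} \bdot \ldots \bdot e_r^{[n_r-1]} \bdot g^{[(k-1)\exp(G)]} \in \mathcal F(G),
\]
of length $\mathsf d^*(G) + (k-1)\exp(G)$. I would show $S \in \mathcal M_k^*(G)$, i.e. $S$ is not divisible by a product of $k$ nontrivial product-one subsequences. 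Suppose $T_1 \bdot \ldots \bdot T_k \t S$ with each $T_j \in \mathcal B(G) \setminus \{1\}$. Projecting to each coordinate: the part of $S$ built from $e_i$'s is the classical Olson/Davenport-extremal sequence $\prod_i e_i^{[n_i-1]}$, which is product-one free, so no product-one subsequence of $S$ can be supported entirely within $\{e_1,\ldots,e_r\}$ after discarding... — more precisely I would use that any product-one subsequence $T$ of $S$ must involve $g$, and counting the total multiplicity of $g$, namely $(k-1)\exp(G)$, against the fact that a product-one subsequence containing $g$ uses at most... Here I would instead argue by a direct counting / coordinate argument: write each $T_j$ as $U_j \bdot g^{[m_j]}$ where $U_j$ is supported on $\{e_1,\ldots,e_r\}$ (noting $g = e_r$ may overlap the $e_r$-block, so handle that block's surplus of $n_r - 1$ copies carefully), deduce $\sigma(U_j) = -m_j g$ in $G$, and since $\sum U_j \t \prod_i e_i^{[n_i-1]}$ each $U_j$ is product-one free unless... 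This needs the standard fact that $\prod_{i=1}^r e_i^{[n_i-1]}$ has no nontrivial product-one subsequence, hence each $U_j$ with $\sigma(U_j)=0$ is trivial; when $\sigma(U_j) = -m_j g \ne 0$ I bound how many such $T_j$ can fit by a pigeonhole on the available $g$-multiplicity $(k-1)\exp(G)$ and on the total length of the $e_i$-blocks, concluding that at most $k-1$ nontrivial factors are possible — a contradiction. The main obstacle in part 2 is bookkeeping the overlap between the chosen maximal-order element $g$ and the basis element $e_r$ (the two blocks $e_r^{[n_r-1]}$ and $g^{[(k-1)\exp(G)]}$ merge), but this is routine once one fixes $g$ to be a fresh generator or keeps the blocks formally separate in $\mathcal F(G)$; alternatively cite \cite[Section 6.1]{Ge-HK06a} where this lower bound is established.

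Finally, I would remark that together these give $\mathsf D_k(G) = 1 + \mathsf d_k(G) \ge 1 + \mathsf d^*(G) + (k-1)\exp(G)$, recovering the familiar lower bound $\mathsf D_k(G) \ge \mathsf d^*(G) + 1 + (k-1)\exp(G)$ for the $k$th Davenport constant in the abelian case.
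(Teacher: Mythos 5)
Your proposal is correct and matches the paper's argument: part 1 is exactly the surjectivity of the map $\mathcal M_k^*(G) \to \mathcal M_k(G)\setminus\{1\}$, $S \mapsto (-\sigma(S)) \bdot S$, which is how the paper phrases it (the ``delicate point'' you flag is immediate, since in an abelian group the complement of a product of nontrivial zero-sum subsequences of a zero-sum sequence $A$ is again zero-sum and is nontrivial because it contains $g_0$, forcing $\max \mathsf L(A) \ge k+1$), and part 2 uses the very same extremal sequence $e_r^{[(k-1)n_r]} \bdot \prod_{i=1}^r e_i^{[n_i-1]}$. The overlap worry in part 2 is also harmless: by independence of the basis, any nontrivial zero-sum divisor of that sequence must equal $e_r^{[mn_r]}$ with $m \ge 1$, so $k$ of them would consume at least $kn_r$ copies of $e_r$ while only $kn_r-1$ are available.
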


\begin{proof}
1.  Let $k \in \N$. By Proposition \ref{2.8}.1, we have $1 + \mathsf d_k (G) \le \mathsf D_k (G)$. Obviously, the map
\[
\psi \colon \mathcal M_k^*(G) \to \mathcal M_k (G) \setminus
\{1\}\,, \quad \text{given by} \quad \psi (S) = (-\sigma (S)) \bdot S\,,
\]
is surjective and we have $|\psi (S)|=1+|S|$ for every $S \in \mathcal M_k^* (G)$. Therefore we have $1 + \mathsf d_k (G) = \mathsf D_k (G)$.

\smallskip
2. Suppose that $G \cong C_{n_1} \oplus \ldots \oplus C_{n_r}$, with $r \in \mathbb N_0$ and $1 < n_1 \t \ldots \t n_r$. If $(e_1, \ldots, e_r)$ is a basis of $G$ with $\ord (e_i) = n_i$ for all $i \in [1, r]$, then
\[
S = e_r^{[n_r(k-1)]} \prod_{i=1}^r e_i^{[n_i-1]}
\]
is not divisible by  a product of $k$ nontrivial zero-sum sequences whence $\mathsf d^* (G) + (k-1)\exp (G) = |S| \le  \mathsf d_k (G)$.
\end{proof}

We continue with a result on the $k$th Davenport constant which refines the more general results in Subsection \ref{2.E}. It provides an explicit formula for $\mathsf d_k (G)$ in terms of $\mathsf d (G)$ (see \cite[Theorem 6.1.5]{Ge-HK06a}).

\begin{theorem} \label{gen-dav-abelian}
Let $G$ be  abelian,   $\exp (G) = n$, and  $k \in \N$.
\begin{enumerate}
\item Let $H \le G$ be a subgroup such that $G = H \oplus
      C_n$. Then
      \[
      \mathsf d (H) +kn-1 \le \mathsf d_k (G) \le (k-1)n + \max \{
      \mathsf d(G), \eta(G) -n-1\} \,.
      \]
      In particular, if \ $\mathsf d (G) = \mathsf d (H) + n-1$ \ and \ $\eta (G) \le \mathsf d (G) + n+1$, then
      \ $\mathsf d_k (G) = \mathsf d (G) + (k-1)n$.

\smallskip
\item If \ $\mathsf r (G) \le 2$, then \ $\mathsf d_k (G) = \mathsf
      d (G) + (k-1)n$.

\smallskip
\item If \ $G$ is a $p$-group and \ $\mathsf D (G) \le 2 n-1$, then
      \ $\mathsf d_k (G) = \mathsf d (G) + (k-1)n$.
\end{enumerate}
\end{theorem}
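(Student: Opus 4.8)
The plan is to prove part~1 first and to deduce parts~2 and~3 from its concluding ``in particular'' clause, by exhibiting in each case a suitable decomposition $G=H\oplus C_n$ (with $n=\exp(G)=\mathsf e(G)$) and checking the two numerical hypotheses $\mathsf d(G)=\mathsf d(H)+n-1$ and $\eta(G)\le\mathsf d(G)+n+1$.

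For the \emph{lower bound} in part~1 I would fix the decomposition $G=H\oplus C_n$, choose a generator $e$ of the factor $C_n$ and a zero-sum free sequence $T$ over $H$ with $|T|=\mathsf d(H)$, and consider $S=e^{[kn-1]}\bdot T\in\mathcal F(G)$. Since $\langle e\rangle\cap H=\{0\}$, every nontrivial zero-sum subsequence $R\t S$ splits as $R=R'\bdot R''$ with $R'\t e^{[kn-1]}$, $R''\t T$ and $\sigma(R')=0$ in $\langle e\rangle$, $\sigma(R'')=0$ in $H$; as $T$ is zero-sum free and $\ord(e)=n$, this forces $R''=1_{\mathcal F(G)}$ and $R'=e^{[jn]}$ with $j\ge 1$. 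Hence any product of $k$ nontrivial zero-sum subsequences dividing $S$ is a power $e^{[m]}$ with $m\ge kn$, which is impossible because $\mathsf v_e(S)=kn-1$; thus $S\in\mathcal M_k^*(G)$ and $\mathsf d_k(G)\ge|S|=\mathsf d(H)+kn-1$. (This sharpens Lemma~\ref{dandDabeliancase}.2, since $\mathsf d(H)\ge\mathsf d^*(H)=\mathsf d^*(G)-(n-1)$; the case $k=1$ also shows $\mathsf d(H)+n-1\le\mathsf d(G)$ for every such decomposition.)

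For the \emph{upper bound} in part~1 set $M=\max\{\mathsf d(G),\eta(G)-n-1\}$ and take any $S\in\mathcal F(G)$ with $|S|\ge(k-1)n+M+1$. I would extract nontrivial zero-sum subsequences greedily: for $i=1,\dots,k-1$, after having removed $i-1$ nontrivial zero-sum subsequences each of length in $[1,n]$, the sequence still at hand has length at least $|S|-(k-2)n\ge n+M+1\ge\eta(G)$, hence by definition of $\eta(G)=\eta(\mathcal B(G))$ it contains another nontrivial zero-sum subsequence of length at most $\mathsf e(G)=n$, which is removed. The remainder after these $k-1$ removals has length at least $|S|-(k-1)n\ge M+1>\mathsf d(G)$, so it is not zero-sum free and supplies a $k$th nontrivial zero-sum subsequence; hence $S$ is divisible by a product of $k$ nontrivial zero-sum subsequences and $\mathsf d_k(G)\le(k-1)n+M$. (Alternatively, the bound $\eta(G)\le\mathsf d(G)+n+1$ makes the inequality $\mathsf d_{k+1}(G)\le\mathsf d_k(G)+\mathsf e(G)$ obtained in the proof of Proposition~\ref{2.8}.2(b) valid already from $k=1$.) Combining the two bounds: if $\mathsf d(G)=\mathsf d(H)+n-1$ and $\eta(G)\le\mathsf d(G)+n+1$, then $M=\mathsf d(G)$, the lower bound reads $\mathsf d_k(G)\ge\mathsf d(H)+kn-1=\mathsf d(G)+(k-1)n$, and equality follows.

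It then remains, for parts~2 and~3, to check these two hypotheses. For $\mathsf r(G)\le 2$ I would take $H=\{0\}$ if $G$ is cyclic and $H=C_{n_1}$ if $G\cong C_{n_1}\oplus C_{n_2}$ with $1<n_1\t n_2=n$; the classical identities $\mathsf d(C_m)=m-1$ (Lemma~\ref{3.1}.3), $\mathsf d(C_{n_1}\oplus C_{n_2})=n_1+n_2-2$, $\eta(C_n)=n$ and $\eta(C_{n_1}\oplus C_{n_2})=2n_1+n_2-2$ (see \cite{Ge-HK06a}) then give $\mathsf d(G)=\mathsf d(H)+n-1$ and $\eta(G)-\mathsf d(G)\le n_1\le n+1$. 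For a $p$-group $G$ with $\mathsf D(G)\le2\exp(G)-1$ I would let $C_n$ be a cyclic direct factor of maximal order and invoke the structure theory of such groups from \cite{Ge-HK06a}, namely $\mathsf d(G)=\mathsf d^*(G)$ (which forces $\mathsf d(H)=\mathsf d^*(H)$ and $\mathsf d(G)=\mathsf d(H)+n-1$, since $\mathsf d(H)+n-1\le\mathsf d(G)=\mathsf d^*(H)+n-1\le\mathsf d(H)+n-1$) together with an upper bound of the form $\eta(G)\le\mathsf d(G)+\exp(G)+1$. The main obstacle is exactly this last paragraph: the reduction in part~1 is elementary, but verifying its hypotheses in the rank-$\le 2$ and the $p$-group cases draws on the substantial classical theory of the invariants $\mathsf d$, $\mathsf D$ and $\eta$ of finite abelian groups.
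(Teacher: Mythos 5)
The paper itself offers no proof of this theorem: it is quoted from \cite[Theorem 6.1.5]{Ge-HK06a}, and your argument is essentially a correct reconstruction of the proof given there --- the lower bound via the sequence $e^{[kn-1]}\bdot T$ with $T$ a maximal zero-sum free sequence over $H$ (using that $G=H\oplus\langle e\rangle$ is direct, so any zero-sum subsequence must be $e^{[jn]}$), the upper bound via greedy extraction of $k-1$ zero-sum subsequences of length at most $n$ guaranteed by $\eta(G)$ plus one final subsequence guaranteed by $\mathsf d(G)$, and parts 2 and 3 by verifying the two hypotheses of the ``in particular'' clause from the classical values of $\mathsf d$, $\mathsf D$ and $\eta$. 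The one ingredient you leave imprecise is the bound $\eta(G)\le \mathsf d(G)+n+1$ for $p$-groups with $\mathsf D(G)\le 2n-1$, which you correctly flag as the genuinely nontrivial external input (it comes from the group-algebra results in \cite[Section 5.7]{Ge-HK06a}, just as the rank-two case rests on $\eta(C_{n_1}\oplus C_{n_2})=2n_1+n_2-2$ from \cite[Theorem 5.8.3]{Ge-HK06a}); everything you prove yourself is sound.
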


For the rest of this section we focus on the classical Davenport constant $\mathsf D (G)$. By Lemma \ref{dandDabeliancase}.2, there is  the crucial inequality
\[
\mathsf d^* (G) \le \mathsf d (G) \,.
\]
We continue with a  list of groups for which equality holds. The list  is incomplete but the remaining groups for which $\mathsf d^* (G) = \mathsf d (G)$ is known are of a similar special nature as those listed in Theorem \ref{d*equalsd}.3 (see \cite{Sc11b} for a more detailed discussion).  In particular,  it is still open whether equality
holds for all groups of rank three (see \cite[Section 4.1]{Sc11b}) or for all groups of the form $G = C_n^r$ (see \cite{Gi08b}).

\begin{theorem} \label{d*equalsd}
We have $\mathsf d^* (G) = \mathsf d (G)$ in each of
the following cases{\rm \;:}
\begin{enumerate}
\item $G$ is a $p$-group or has rank $\mathsf r (G) \le 2$.

\item  $G = K \oplus C_{km}$ \ where $k, m \in \N$, $p \in \mathbb P$ a prime, $m$ a power of $p$ and $K \le G$ is a $p$-subgroup with $\mathsf d (K) \le m-1$.

\item $G = C_m^2 \oplus C_{mn}$ where $m \in \{2,3,4,6\}$ and $n \in \N$.
\end{enumerate}
\end{theorem}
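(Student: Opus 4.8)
The plan is to reduce every case to the single inequality $\mathsf d(G)\le\mathsf d^*(G)$, the reverse inequality $\mathsf d^*(G)\le\mathsf d(G)$ being Lemma~\ref{dandDabeliancase}.2 with $k=1$; equivalently, one must show that every sequence $S\in\mathcal F(G)$ of length $|S|>\mathsf d^*(G)$ has a nonempty zero-sum subsequence. I would treat the cases in the order ``$p$-groups, then $\mathsf r(G)\le 2$, then~(2), then~(3)'', since each relies on the earlier ones. For $G$ a $p$-group, write $G\cong C_{p^{a_1}}\oplus\cdots\oplus C_{p^{a_r}}$, so $\mathsf d^*(G)=\sum_{i=1}^r(p^{a_i}-1)$, and argue in the group algebra $\F_p[G]$ with augmentation ideal $I$. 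Using $\F_p[G_1\oplus G_2]\cong\F_p[G_1]\otimes_{\F_p}\F_p[G_2]$, the identity $I_{C_{p^a}}=(1-g)\,\F_p[C_{p^a}]$ with $(1-g)^{p^a}=1-g^{p^a}=0$ in characteristic $p$, and the elementary fact that commuting ideals $A,B$ with $A^{N_1}=0=B^{N_2}$ satisfy $(A+B)^{N_1+N_2-1}=0$, an induction on $r$ gives $I^{\,1+\mathsf d^*(G)}=0$. Hence, if $|S|\ge 1+\mathsf d^*(G)$ and $S=g_1\bdot\cdots\bdot g_\ell$, then $\prod_{i=1}^\ell(1-g_i)\in I^\ell=\{0\}$; reading off the coefficient of the identity $0_G$ in this product, which equals $\sum_{T}(-1)^{|T|}$ with $T$ ranging over all subsets of $\{1,\dots,\ell\}$ satisfying $\sum_{i\in T}g_i=0_G$, and noting that this coefficient is $0$ in $\F_p$ while the empty set contributes $1$ to it, we obtain a nonempty such $T$, i.e.\ a nonempty zero-sum subsequence of $S$.

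For $\mathsf r(G)\le 1$ the statement is Lemma~\ref{3.1}.3. For $G=C_{n_1}\oplus C_{n_2}$ with $1<n_1\t n_2$, where $\mathsf d^*(G)=n_1+n_2-2$, I would invoke the classical theorem of Olson, proving its upper bound by induction on $|G|$: given $S$ with $|S|=n_1+n_2-1$, one projects modulo a cyclic subgroup $C$ chosen so that $G/C$ is cyclic, successively splits off minimal zero-sum subsequences over $G/C$, lifts them, and either reads off a zero-sum subsequence of $S$ from their values in $C$ or passes to a proper subgroup and applies the inductive hypothesis. The delicate point is the extremal configuration in which the projected sequence is, up to a shift, a power of a generator of $G/C$; this is handled using the description of the longest zero-sum free sequences over cyclic groups.

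For case~(2), let $G=K\oplus C_{km}$ with $K$ a $p$-group, $m=p^s$ and $\mathsf d(K)\le m-1$. By the $p$-group case $\mathsf d^*(K)=\mathsf d(K)\le m-1$, hence $\exp(K)-1\le m-1$, and since $\exp(K)$ is a power of $p$ we get $\exp(K)\t m$; therefore the invariant factors of $G$ are those of $K$ followed by $km$, so $\mathsf d^*(G)=\mathsf d^*(K)+km-1=\mathsf d(K)+km-1$. Let $H\le C_{km}$ be the subgroup of order $k$; then $H\cong C_k$, $\mathsf d(H)=k-1$ by Lemma~\ref{3.1}.3, and $G/H\cong K\oplus C_m$ is a $p$-group of exponent $m$. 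Again by the $p$-group case, $\mathsf d(K\oplus C_m)=\mathsf d^*(K\oplus C_m)=\mathsf d(K)+m-1$, so $\mathsf D(K\oplus C_m)=\mathsf d(K)+m\le 2m-1=2\exp(K\oplus C_m)-1$. Thus Theorem~\ref{gen-dav-abelian}.3 applies to $K\oplus C_m$ and gives $\mathsf d_j(K\oplus C_m)=\bigl(\mathsf d(K)+m-1\bigr)+(j-1)m$ for every $j\in\N$, while Proposition~\ref{gen-dav-5}.2 with $N=H$ gives $\mathsf d(G)\le\mathsf d_{\mathsf d(H)+1}(G/H)=\mathsf d_k(K\oplus C_m)=\mathsf d(K)+km-1=\mathsf d^*(G)$, as desired.

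Case~(3), where $G=C_m^2\oplus C_{mn}$ with $m\in\{2,3,4,6\}$ and $\mathsf d^*(G)=mn+2m-3$, is where I expect essentially all the difficulty to lie: this is a rank-$3$ situation, and the quotient device of case~(2) is no longer available, because taking $N\cong C_n$ inside $C_{mn}$ gives $G/N\cong C_m^3$ with $\mathsf D(C_m^3)=3m-2>2m-1$, so Theorem~\ref{gen-dav-abelian}.3 fails for $G/N$ and the bounds for $\mathsf d_j(C_m^3)$ coming from Theorem~\ref{gen-dav-abelian}.1 are too weak (since $\eta(C_m^3)$ is too large), while passing to a larger quotient only makes Proposition~\ref{gen-dav-5}.2 lossier. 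I would therefore handle each of the four values $m\in\{2,3,4,6\}$ separately by a direct combinatorial analysis of zero-sum free sequences of near-maximal length over $G$, bootstrapping from the rank-$\le 2$ and $p$-group cases (and, for $m=6$, splitting $G$ into its $2$-component and $3$-component); the restriction to $m\in\{2,3,4,6\}$ — precisely the $m$ with Euler totient $\varphi(m)\le 2$ — is what keeps the relevant structural arguments over $C_m$ tractable.
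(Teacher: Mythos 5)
The paper offers no proof of its own here: all three parts are delegated to the literature ([Ge-HK06a, Theorems 5.5.9 and 5.8.3], [Ge09a, Corollary 4.2.13], [Bh-SP07a] and [Sc11b, Theorem 4.1]), so the comparison is really between your reconstruction and those cited arguments. Two of your pieces are genuinely sound. The $p$-group case via the group algebra $\F_p[G]$, the nilpotency bound $I^{1+\mathsf d^*(G)}=0$, and the coefficient-of-$0_G$ count in $\prod_i(1-g_i)$ is exactly Olson's classical proof, and your case (2) reduction is correct and complete given the earlier inputs: from $\mathsf d(K)\le m-1$ you correctly get $\exp(K)\mid m$, hence $\mathsf d^*(G)=\mathsf d(K)+km-1$; the hypothesis $\mathsf D(K\oplus C_m)\le 2m-1$ of Theorem~\ref{gen-dav-abelian}.3 is verified; and Proposition~\ref{gen-dav-5}.2 applied to the order-$k$ subgroup $H\le C_{km}$ closes the loop. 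That is essentially how the cited Corollary 4.2.13 of \cite{Ge09a} is proved.

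The gap is in the remaining two pieces, where the real difficulty of the theorem sits. For rank two, your outline (project to a cyclic quotient, split off minimal zero-sum subsequences, lift, recurse) names the strategy but defers precisely the step that makes $\mathsf d(C_{n_1}\oplus C_{n_2})=n_1+n_2-2$ a hard theorem: handling the extremal configurations requires either Kneser's addition theorem or the full inverse result for cyclic groups, and "is handled using the description of the longest zero-sum free sequences over cyclic groups" is an appeal to machinery you have not supplied. For case (3) you give no argument at all: announcing "a direct combinatorial analysis of zero-sum free sequences of near-maximal length" for each $m\in\{2,3,4,6\}$ is a research program, not a proof. These are substantial results — the case $C_3^2\oplus C_{3n}$ is the main theorem of \cite{Bh-SP07a}, and the uniform treatment in \cite[Theorem 4.1]{Sc11b} rests on the inductive method together with detailed inverse results over $C_m\oplus C_m$ (the reason the list stops at $m\le 6$ is that the needed structural information on long minimal zero-sum sequences over $C_m^2$ is only available there, not the coincidence with $\varphi(m)\le 2$ that you suggest). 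As it stands, parts of (1) and all of (3) are asserted rather than proved.
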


\begin{proof}
For 1. see \cite{Ge-HK06a} (in particular, Theorems 5.5.9 and 5.8.3) for proofs and historical
comments. For 2. see \cite[Corollary 4.2.13]{Ge09a}, and 3. can be found in
\cite{Bh-SP07a} and \cite[Theorem 4.1]{Sc11b}.
\end{proof}

There are   infinite series of groups  $G$ with $\mathsf d^* (G) < \mathsf d (G)$.  However, the
true reason for the phenomenon $\mathsf d^* (G) < \mathsf d (G)$ is
not understood. Here is a simple observation. Suppose that $G = C_{n_1} \oplus \ldots \oplus C_{n_r}$ with $1 < n_1 \t \ldots \t n_r$, $I \subset [1, r]$,  and
let $G' = \oplus_{i \in I}C_{n_i}$. If $\mathsf d^* (G') < \mathsf d (G')$, then $\mathsf d^* (G) < \mathsf d (G)$.
For  series of groups  $G$ which have rank four and five  and satisfy $\mathsf d^* (G) < \mathsf d (G)$ we refer to
\cite{Ge-Sc92, Ge-Li-Ph12}.
A standing conjecture for an upper bound on $\mathsf D (G)$ states that $\mathsf d (G) \le \mathsf d^* (G) + \mathsf r (G)$. However, the available results are much weaker (\cite[Theorem 5.5.5]{Ge-HK06a}, \cite{Bh-SP13a}).

The remainder of this subsection is devoted to inverse problems with respect to the Davenport constant.
Thus the objective is to study the  structure of  zero-sum free sequences $S$ whose lengths $|S|$ are close to the maximal possible value $\mathsf d (G)$.

If $G$
is cyclic of order $n \ge 2$, then an easy exercise shows that $S$
is zero-sum free of length $|S| = \mathsf d (G)$ if and only if $S =
g^{[n-1]}$ for some $g \in G$ with $\ord (g) = n$. After many
contributions since the 1980s, S. Savchev and F. Chen
could finally prove a (sharp) structural result. In order to formulate it we need some more terminology.
If $g \in G$ is a nonzero element of order $\ord (g) = n$ and
\[
S = (n_1g) \bdot \ldots \bdot (n_{\ell}g), \quad \text{where} \quad \ell \in \mathbb N_0 \quad \text{and} \quad n_1, \ldots, n_{\ell} \in [1,n] \,,
\]
we define
\[
\| S \|_g = \frac{n_1+ \ldots + n_{\ell}}n  \,.
\]
Obviously, $S$ has sum zero if and only if $\|S\|_g \in \mathbb N_0$, and the {\it index of} $S$ is defined as
      \[
      \ind (S) = \min \{ \| S \|_g \colon g \in G \ \text{with} \ G  = \langle g \rangle \} \in \mathbb Q_{\ge 0} \,.
      \]

\begin{theorem} \label{inverse1}
Let $G$ be  cyclic  of order $|G|=n \ge 3$ .
\begin{enumerate}
\item If $S$ is a zero-sum free
      sequence over $G$ of length $|S| \ge (n+1)/2$, then there exist $g
      \in G$ with $\ord (g) = n$ and integers $1 = m_1, \ldots, m_{|S|}
      \in [1, n-1]$ such that
      \begin{itemize}
      \item $S = (m_1g) \bdot \ldots \bdot (m_{|S|}g)$

      \item $m_1 + \ldots + m_{|S|} < n$ and $\Sigma (S) = \{ \nu g \colon
            \nu \in [1, m_1 + \ldots + m_{|S|}] \}$.
      \end{itemize}

\smallskip
\item If $U \in \mathcal A (G)$ has length $|U| \ge \lfloor \frac{n}{2} \rfloor + 2$, then $\ind (U) = 1$.
\end{enumerate}
\end{theorem}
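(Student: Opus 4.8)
I would first derive (2) from (1). Let $U\in\mathcal A(G)$ with $|U|\ge\lfloor n/2\rfloor+2$. Pick any term $g$ of $U$ and write $U=S\bdot g$; minimality of $U$ forces $S$ to be zero-sum free, and $|S|=|U|-1\ge\lfloor n/2\rfloor+1\ge(n+1)/2$, so (1) applies to $S$. It yields a generator $g_0$ of $G$ and coefficients $m_1,\ldots,m_\ell\in[1,n-1]$ with $S=(m_1g_0)\bdot\ldots\bdot(m_\ell g_0)$ and $\sum_{i=1}^\ell m_i<n$. Since $\sigma(U)=0$ we get $g=-\sigma(S)=m_0g_0$ with $m_0:=n-\sum_{i=1}^\ell m_i\in[1,n-1]$, so the coefficients of $U$ relative to $g_0$ sum to $m_0+m_1+\ldots+m_\ell=n$, i.e. $\|U\|_{g_0}=1$; hence $\ind(U)\le1$. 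Conversely, for every generator $g$, writing $U=(c_1g)\bdot\ldots\bdot(c_kg)$ with $c_i\in[1,n-1]$ (all terms of $U$ are nonzero, as $U$ is an atom of length $\ge 2$), we have $n\mid c_1+\ldots+c_k$ and $c_1+\ldots+c_k\ge k\ge2$, so $\|U\|_g\ge1$; thus $\ind(U)\ge1$ and therefore $\ind(U)=1$.

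\textbf{The engine for (1).} Statement (1) is the structure theorem of Savchev and Chen. The approach I would take is organized around the identity
\[
\Sigma_0(S):=\Sigma(S)\cup\{0\}=\{0,g_1\}+\{0,g_2\}+\ldots+\{0,g_\ell\},
\]
which expresses the set of subsequence sums of $S=g_1\bdot\ldots\bdot g_\ell$ as a Minkowski sumset in $G$. Since $S$ is zero-sum free, $0\notin\Sigma(S)$, so $|\Sigma_0(S)|=|\Sigma(S)|+1\le n$. I would first record a consequence of $\ell\ge(n+1)/2$ used repeatedly below: $\supp(S)$ generates $G$. Indeed, a proper subgroup of the cyclic group $G$ has order at most $n/2$, and $S$, being zero-sum free inside $\langle\supp(S)\rangle$, would then have length at most $n/2-1<(n+1)/2$ (Lemma~\ref{3.1}), a contradiction. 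In particular $\Sigma_0(S)$ generates $G$.

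\textbf{Kneser's theorem, the stabilizer, and the inverse step.} Next I would apply Kneser's theorem to the above sumset: if $H$ is its stabilizer and $t$ is the number of indices $i$ with $g_i\in H$, then $|\Sigma_0(S)|\ge(\ell-t+1)|H|$, while the subsequence of $S$ consisting of the terms lying in the cyclic group $H$ is zero-sum free, whence $t\le|H|-1$. Combining these with $|\Sigma_0(S)|\le n$ and $\ell\ge(n+1)/2$ gives $n\ge|H|\bigl(\tfrac{n+5}{2}-|H|\bigr)$, and since the right-hand side, viewed as a downward parabola in $|H|$, takes the values $n+1$ at $|H|=2$ and $5n/4$ at $|H|=n/2$, it exceeds $n$ for every proper divisor $|H|\ge2$ of $n$; hence $H$ is trivial or all of $G$. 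The boundary case $H=G$ means $\Sigma(S)=G\setminus\{0\}$ and is settled by a short direct argument producing a generator $g$ with the required representation. In the principal case $H=\{0\}$, Kneser's bound degenerates to $|\Sigma_0(S)|\ge\ell+1$, and since also $|\Sigma_0(S)|\le n-1\le2(\ell-1)$, I would invoke an inverse sumset theorem (a Vosper-type statement in $\Z_n$, or the Freiman $3k-4$ theorem after lifting the $g_i$ to integers, together with the fact that near-equality in Kneser's theorem constrains the two-element summands) to conclude that $\Sigma_0(S)$ is an arithmetic progression. As $\Sigma_0(S)$ contains $0$ and generates $G$, its common difference is automatically a generator; and using that $S$ is zero-sum free one checks that the progression can be normalized so that $0$ is an endpoint, i.e. $\Sigma_0(S)=\{0,g,2g,\ldots,Ng\}$ with $\langle g\rangle=G$ and $N=|\Sigma(S)|\le n-1$. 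Each term $g_i\in\Sigma(S)$ is then a nonzero multiple $m_ig$ with $m_i\in[1,N]$, so $S=(m_1g)\bdot\ldots\bdot(m_\ell g)$; a final use of zero-sum freeness (no sub-collection of the $m_i$ can sum to a multiple of $n$) forces $\sum_i m_i=N<n$ and $\Sigma(S)=\{g,2g,\ldots,(\textstyle\sum_im_i)g\}$. Relabelling so that a term equal to $g$ comes first gives $m_1=1$, completing (1).

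\textbf{The main obstacle.} The hard part is the inverse step: upgrading the near-equality in Kneser's theorem for a sum of $\ell$ two-element sets to the rigidity statement that $\Sigma_0(S)$ is a single arithmetic progression, and then normalizing it so that $0$ is an endpoint and the common difference is a generator with non-wrapping coefficient sum $<n$. Over a cyclic group of composite order Vosper's theorem is not available directly, so one must descend modulo stabilizers repeatedly or pass to $\Z$; in both routes the delicate regime is $|\Sigma_0(S)|$ close to $n$ — in particular $|\Sigma_0(S)|=n-1$, where the ``$\le n-2$'' hypotheses of the inverse theorems fail and exceptional configurations must be excluded. This is precisely where the threshold $(n+1)/2$ is consumed, and for a fully self-contained treatment I would follow the original inductive argument of Savchev and Chen rather than the additive-combinatorics route sketched here.
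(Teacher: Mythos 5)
The paper does not actually prove this theorem: part 1 is the Savchev--Chen structure theorem, quoted with a reference to \cite{Sa-Ch07a} (and to the expositions in \cite{Ge09a} and \cite{Gr13a}), and part 2 is cited as a consequence of part 1. Your derivation of part 2 from part 1 is correct and complete, and is essentially the argument behind the cited reference: removing one term leaves a zero-sum free sequence of length at least $\lfloor n/2\rfloor+1\ge (n+1)/2$, part 1 supplies a generator with coefficient sum $<n$, the removed term closes that sum to exactly $n$, and the lower bound $\ind(U)\ge 1$ holds for every atom of length $\ge 2$ because the coefficients with respect to any generator are positive and sum to a positive multiple of $n$. For part 2 you are on solid ground.

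Part 1 is where all the difficulty lives, and your sketch does not prove it. Two gaps are decisive. First, the case $H=G$ in Kneser's theorem is not a boundary case that a ``short direct argument'' dispatches: it is exactly the case $\Sigma(S)=G\setminus\{0\}$, which occurs for every extremal configuration with $m_1+\ldots+m_{|S|}=n-1$ (for instance $S=g^{[n-1]}$), and there Kneser gives no information at all; establishing the structured representation in this case is essentially the full strength of the theorem. Second, in the case $H=\{0\}$ the passage from near-equality in Kneser for a sum of $\ell$ two-element sets to the statement that $\Sigma(S)\cup\{0\}$ is an arithmetic progression is precisely the inverse problem Savchev and Chen had to solve; Vosper's theorem needs $n$ prime and a complement of size at least $2$, and the Freiman $3k-4$ theorem does not apply after reduction modulo $n$ without substantial extra work, as you yourself acknowledge. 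There is also a smaller unproved step at the end: knowing $\Sigma(S)\cup\{0\}=\{0,g,\ldots,Ng\}$ and $g_i=m_ig$ with $m_i\in[1,N]$ does not by itself yield $m_1+\ldots+m_{|S|}=N<n$; one must rule out the integer sum wrapping past $n$, e.g.\ by extending a subsequence realizing $Ng$ one term at a time. Since you ultimately defer to the original inductive argument of Savchev--Chen for a complete treatment, your proposal in effect reduces to the same citation the paper makes; the Kneser-based scaffolding correctly locates where the difficulty sits, but it should not be mistaken for a proof of part 1.
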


\begin{proof}
1. See \cite{Sa-Ch07a} for the original paper. For the
history of the problem and a proof in the present terminology see
\cite[Chapter 5.1]{Ge09a} or \cite[Chapter 11]{Gr13a}.

\smallskip
2. This is a simple consequence of the first part (see \cite[Theorem 5.1.8]{Ge09a}).
\end{proof}

The above result was generalized to groups of the form $G = C_2 \oplus C_{2n}$ by S. Savchev and F. Chen (\cite{Sa-Ch12a}).
Not much is known about the number of all minimal zero-sum  sequences of a given group. However, the above result allows to give a  formula for the number of minimal zero-sum sequences of  length $\ell \ge \lfloor \frac{n}{2} \rfloor + 2$ (this formula was first proved by  Ponomarenko \cite{Po04a} for $\ell > 2n/3$).

\begin{corollary} \label{number-of-minimal}
Let $G$ be  cyclic  of order $|G|=n \ge 3$, and let $\ell \in \Big[ \lfloor \frac{n}{2} \rfloor + 2, n \Big]$. Then the number of minimal zero-sum sequences $U \in \mathcal A (G)$ of length $\ell$ equals $\Phi (n) \mathsf p_{\ell} (n)$, where $\Phi (n) = |(\Z/n\Z)^{\times}|$ is Euler's Phi function and $\mathsf p_{\ell} (n)$ is the number of integer partitions of $n$ into $\ell$ summands.
\end{corollary}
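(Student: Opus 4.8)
The plan is to set up an explicit bijection between the set of minimal zero-sum sequences over $G$ of length $\ell$ and the set of pairs $(g,\lambda)$ in which $g$ runs over the generators of $G$ and $\lambda=(m_1\ge\dots\ge m_\ell\ge 1)$ runs over the partitions of $n$ into $\ell$ positive parts; since there are $\Phi(n)$ generators and $\mathsf p_\ell(n)$ such partitions, the claimed formula drops out at once.

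For a generator $g$ of $G$ and such a partition $\lambda$, put $\Psi(g,\lambda)=(m_1g)\bdot\dots\bdot(m_\ell g)\in\mathcal F(G)$. First I would check that $\Psi(g,\lambda)$ is indeed a minimal zero-sum sequence of length $\ell$: its sum is $ng=0$; every term $m_ig$ is nontrivial since $1\le m_i\le n-\ell+1\le n-1$; and for any proper nonempty subsequence indexed by $I$ one has $1\le\sum_{i\in I}m_i\le n-1$, so that partial sum is not a multiple of $n$ and the subsequence is not zero-sum. Next, surjectivity of $\Psi$ onto the minimal zero-sum sequences of length $\ell$ is exactly where Theorem~\ref{inverse1}.2 enters: if $U\in\mathcal A(G)$ has $|U|=\ell\ge\lfloor n/2\rfloor+2$, then $\ind(U)=1$, so some generator $g$ satisfies $\|U\|_g=1$; writing $U=(m_1g)\bdot\dots\bdot(m_\ell g)$ with each $m_i\in[1,n-1]$ — no $m_i$ can equal $n$, since a minimal zero-sum sequence of length $\ge 2$ contains no trivial term — we get $\sum_i m_i=n$, i.e. $U=\Psi(g,\lambda)$ for the partition $\lambda$ arising from the $m_i$.

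The crux, and the step I expect to cost real work, is injectivity of $\Psi$. For a fixed generator $g$ the partition is plainly determined by $U$ (via the $g$-coordinates of its terms), so injectivity reduces to the assertion that the generator $g$ with $\|U\|_g=1$ is unique. Assume $\|U\|_g=\|U\|_{g'}=1$ with $g'=ag$, $\gcd(a,n)=1$, and set $b:=a^{-1}\bmod n$; suppose toward a contradiction that $a\ne 1$, so $a,b\ge 2$. Comparing the $g$- and $g'$-coordinates of the terms of $U$ one finds that the value $a$ occurs in $\lambda$ with multiplicity equal to the number $t'$ of $1$'s in $\lambda'$, and symmetrically the value $b$ occurs in $\lambda'$ with multiplicity equal to the number $t$ of $1$'s in $\lambda$; also, any partition of $n$ into $\ell$ parts has at least $2\ell-n\ge 3$ parts equal to $1$, so $t,t'\ge 3$. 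Substituting into $n=\sum\lambda=\sum\lambda'$ yields, on the one hand, $3a\le n$ and $3b\le n$, and, on the other hand, the inequalities $t\ge(2\ell-n)+(a-2)t'$ and $t'\ge(2\ell-n)+(b-2)t$. Combining the latter two gives $\bigl(1-(a-2)(b-2)\bigr)\,t\ge(2\ell-n)(a-1)>0$, and since $t>0$ and $(a-2)(b-2)$ is a nonnegative integer, this forces $(a-2)(b-2)=0$, i.e. $a=2$ or $b=2$. But if, say, $a=2$, then $n$ is odd and $b=(n+1)/2$, contradicting $3b\le n$; the case $b=2$ is symmetric. Hence $a=1$ and $g'=g$.

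With well-definedness, surjectivity, and injectivity in hand, $U\mapsto(g,\lambda)$ is a bijection, and counting the pairs $(g,\lambda)$ gives $\Phi(n)\,\mathsf p_\ell(n)$, as asserted. The only genuinely delicate ingredient is the uniqueness of the generator $g$; if one prefers, it can instead be extracted from the Savchev--Chen structure theory underlying Theorem~\ref{inverse1}, which already singles out the relevant generator. Everything else is bookkeeping with partitions.
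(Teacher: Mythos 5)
Your proposal is correct and follows essentially the same route as the paper: both reduce the count to the uniqueness of the generator $g$ with $\|U\|_g=1$ via Theorem~\ref{inverse1}.2, and your injectivity argument (the two inequalities $t\ge(2\ell-n)+(a-2)t'$ and $t'\ge(2\ell-n)+(b-2)t$, forcing $a=2$ or $b=2$ and then a contradiction with $n\ge 3b$) is the paper's own computation with $\mathsf v_g(U)$, $\mathsf v_{ag}(U)$ rewritten in the language of part-multiplicities. The only difference is presentational: you make the bijection $\Psi$ and its well-definedness explicit, which the paper dismisses as "clear."
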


\begin{proof}
Clearly, every generating element $g \in G$ and every integer partition $n = m_1 + \ldots + m_{\ell}$ gives rise to a minimal zero-sum sequence
$U = (m_1g) \bdot \ldots \bdot (m_{\ell}g)$. Conversely,
if $U \in \mathcal A (G)$ of length $|U| = \ell$, then Theorem \ref{inverse1}.2 implies that there is an element $g \in G$ with $\ord (g) = n$ such that
\[
U = (m_1g) \bdot \ldots \bdot (m_{\ell}g) \quad \text{where} \quad m_1, \ldots, m_{\ell} \in [1, n-1] \ \text{with} \ n = m_1 + \ldots + m_{\ell} \,. \tag{$*$}
\]
Since $G$ has precisely $\Phi (n)$ generating elements, it remains to show that for every $U \in \mathcal A (G)$ of length $|U| = \ell$ there is precisely one generating element $g \in G$ with $\|U\|_g = 1$. Let $U$ be as in $(*)$, and assume to the contrary that there are $a \in [2, n-1]$ with $\gcd (a, n)=1$ and $m_1', \ldots, m_{\ell}' \in [1,n]$ such that $m_1'+ \ldots + m_{\ell}'=n$ and
\[
U = \big(m_1' (a g)\big) \cdot \ldots \cdot \big(m_{\ell}' (ag) \big) \,.
\]
Let $a'\in [2,n-1]$ such that $aa'\equiv 1\pmod n$. Since
\[
\begin{aligned}
n&=m_1+\ldots+m_{\ell}\ge \mathsf v_g(U)+a\mathsf v_{ag}(U)+2(\ell-\mathsf v_g(U)-\mathsf v_{ag}(U))\\ & =2\ell-\mathsf v_g(U)+(a-2)\mathsf  v_{ag}(U)\quad \text{and} \\
n&=m_1'+\ldots+m_{\ell}'\ge a'\mathsf v_g(U)+\mathsf v_{ag}(U)+2(\ell-\mathsf v_g(U)-\mathsf v_{ag}(U)) \\ & =2\ell+(a'-2)\mathsf v_g(U)-\mathsf  v_{ag}(U)\,,\\
\end{aligned}
\]
it follows that
\[
\begin{aligned}
(a-1)n & =n+(a-2)n \\ & \ge  2\ell-\mathsf v_g(U)+(a-2)\mathsf  v_{ag}(U)+(a-2)(2\ell+(a'-2)\mathsf v_g(U)-\mathsf  v_{ag}(U))\\
& = (a-1)2\ell+((a-2)(a'-2)-1)\mathsf v_g(U)\,,
\end{aligned}
\]
whence $a=2, a'=\frac{n+1}{2}$ or $a'=2, a=\frac{n+1}{2}$ because $\ell\ge \lfloor\frac{n}{2}\rfloor+2$.
By symmetry, we may assume that $a=2$. Then $\mathsf v_g(U)\ge 2\ell-n\ge 2\lfloor\frac{n}{2}\rfloor+4-n\ge 3$, and thus $n\ge a'\mathsf v_g(U)\ge 3\frac{n+1}{2}$, a contradiction.
\end{proof}

The structure of all minimal zero-sum sequences of maximal length $\mathsf D (G)$ has been completely determined for rank two groups
(\cite{Ga-Ge03b, Ga-Ge-Gr10a, Sc10b, Re10c}), for groups of the form $G = C_2 \oplus C_2 \oplus C_{2n}$ with $n \ge 2$ (\cite[Theorem 3.13]{Sc11b}),
and for groups of the form $G = C_2^4 \oplus C_{2n}$ with $n \ge 70$ (\cite[Theorems 5.8 and 5.9]{Sa-Ch14a}).

\section{\bf Multiplicative Ideal Theory of Invariant Rings} \label{sec:4}

After gathering basic material from invariant theory in Subsection \ref{4.0}  we   construct an explicit divisor theory for the algebra of polynomial invariants of a finite group (see Subsection \ref{4.A}). In Subsection \ref{subsec:abelian} we present a detailed study of the abelian case as outlined  in the Introduction. In Subsection \ref{5.D} we associate a BF-monoid to a $G$-module whose $k$th Davenport constant is a lower bound for the $k$th Noether number.

\subsection{\bf Basics of invariant theory} \label{4.0}~

Let $n = \dim_{\F} (V)$ and let $\rho \colon G \to \GL(n, \F)$ be a group homomorphism. Consider the action of $G$ on the polynomial ring $\F[x_1,\ldots,x_n]$ via $\F$-algebra automorphisms induced by
$g\cdot x_j=\sum_{i=1}^n\rho(g^{-1})_{ji}x_i$.
Taking a slightly more abstract point of departure, we suppose that $V$ is a  $G$-module (i.e. we suppose that $V$ is  endowed with an action of $G$  via linear transformations). Choosing a basis of $V$, $V$
is identified with  $\F^n$, the group $\GL(n,\F)$ is identified with the group $\GL(V)$ of invertible linear transformations of $V$, and
$\F[V]=\F[x_1, \ldots, x_n]$ can be thought of as the symmetric algebra of $V^*$, the dual $G$-module of $V$, in which $(x_1,\ldots,x_n)$ is a basis dual to the standard basis in $V$.
The action on $V^*$ is given by $(g\cdot x)(v)=x(\rho(g^{-1})v)$, where $g\in G$, $x\in V^*$, $v\in V$.
Note that, if  $\F$ is infinite, then $\F[V]$ is the algebra of polynomial functions $V\to \F$, and the
action of $G$ on $\F[V]$ is the usual action on functions $V\to F$ induced by the action of $G$ on $V$ via $\rho$.
Denote by $\F(V)$ the quotient field of $\F[V]$, and extend the $G$-action on $\F[V]$ to $\F (V)$ by
\[
g\cdot  \frac{f_1}{f_2} = \frac{g\cdot  f_1}{g \cdot f_2} \quad \text{for} \ f_1, f_2 \in \F[V] \quad \text{and} \ g \in G \,.
\]
We define
\[
\F(V)^G = \{ f\in \F(V) \colon g\cdot f=f \ \text{for all} \ g\in G \} \subset \F(V) \quad \text{and} \quad \F[V]^G = \F(V)^G \cap \F[V] \,.
\]
Then $\F (V)^G \subset \F (V)$ is a subfield and $\F[V]^G \subset \F[V]$
is an $\F$-subalgebra of $\F[V]$, called the {\it ring of polynomial invariants} of $G$
(the group homomorphism $\rho:G\to\GL(V)$ giving the $G$-action on $V$ is usually suppressed from the notation).
Since every element of $\F (V)$ can be written in the form $f_1 f_2^{-1}$ with $f_1 \in \F[V]$ and $f_2 \in \F[V]^G$, it follows that $\F (V)^G$ is the quotient field of $\F[V]^G$.
Next we summarize some well-known ring theoretical properties of $\F[V]^G$ going back  to E.  Noether \cite{No26}.

\begin{theorem} \label{4.1}
Let all notations be as above.
\begin{enumerate}
\item $\F[V]^G \subset \F[V]$ is an integral ring extension and $\F[V]^G$ is normal.

\smallskip
\item $\F[V]$ is a finitely generated $\F[V]^G$-module, and $\F[V]^G$ is a finitely generated $\F$-algebra $($hence in particular a noetherian domain$)$.

\smallskip
\item $\F[V]^G$ is a Krull domain with Krull dimension $\dim_{\F} (V)$.
\end{enumerate}
\end{theorem}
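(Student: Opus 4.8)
The plan is to establish the three assertions in turn, using the classical arguments going back to Noether, and then to invoke Theorem~\ref{2.1} for the Krull property. Since $G$ acts on $\F[V]$ by $\F$-algebra automorphisms, for any $f\in\F[V]$ the polynomial $\Phi_f(T)=\prod_{g\in G}(T-g\cdot f)\in\F[V][T]$ has all coefficients fixed by $G$, hence lies in $\F[V]^G[T]$; it is monic and $\Phi_f(f)=0$, so $f$ is integral over $\F[V]^G$. This proves $\F[V]^G\subset\F[V]$ is an integral extension. For normality, recall that $\F[V]=\F[x_1,\dots,x_n]$ is a polynomial ring, hence a UFD, hence integrally closed in its quotient field $\F(V)$; since $\F[V]^G=\F(V)^G\cap\F[V]$ and $\F(V)^G$ is the quotient field of $\F[V]^G$, any element of $\F(V)^G$ that is integral over $\F[V]^G$ is a fortiori integral over $\F[V]$, hence lies in $\F[V]\cap\F(V)^G=\F[V]^G$. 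Thus $\F[V]^G$ is normal, proving (1).

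For (2) I would run the Artin--Tate style argument, which is what makes the statement work in arbitrary characteristic (no Reynolds operator needed). Each generator $x_i$ satisfies the monic equation $\Phi_{x_i}(T)=0$ over $\F[V]^G$; let $B\subset\F[V]^G$ be the $\F$-subalgebra generated by the finitely many coefficients of $\Phi_{x_1},\dots,\Phi_{x_n}$. Then $B$ is a finitely generated $\F$-algebra, hence noetherian by Hilbert's basis theorem, and $\F[V]$ is generated as a $B$-algebra by $x_1,\dots,x_n$, each integral over $B$; therefore $\F[V]$ is a finitely generated $B$-module, spanned by the monomials $x_1^{e_1}\cdots x_n^{e_n}$ with $0\le e_i<|G|$. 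Since $B$ is noetherian and $B\subset\F[V]^G\subset\F[V]$ with $\F[V]$ a finite $B$-module, the $B$-submodule $\F[V]^G$ is itself a finite $B$-module; being module-finite over the finitely generated $\F$-algebra $B$, $\F[V]^G$ is a finitely generated $\F$-algebra, hence a noetherian ring, and it is a domain as a subring of $\F[V]$. Finally $\F[V]$, already a finite $B$-module, is a fortiori a finitely generated $\F[V]^G$-module. I expect this step to be the main obstacle: the choice of $B$ and the two-step reduction ``$\F[V]$ module-finite over $B$'' then ``$B$-submodule of a finite module over a noetherian ring is finite'' are exactly the content, whereas everything else is formal.

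For (3), by (1) and (2) the domain $R=\F[V]^G$ is noetherian and normal (equivalently completely integrally closed), so it is $v$-noetherian and completely integrally closed; by Theorem~\ref{2.1} (condition (a)) $R^{\bullet}$ is a Krull monoid, i.e.\ $R$ is a Krull domain, as recorded in Examples~\ref{2.3}.1. For the Krull dimension, the integral ring extension $R=\F[V]^G\subset\F[V]$ satisfies Lying Over and Going Up, so $\dim R=\dim\F[V]=\dim\F[x_1,\dots,x_n]=n=\dim_{\F}(V)$.
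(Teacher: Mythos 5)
Your proposal is correct and follows essentially the same route as the paper: the same polynomial $\Phi_f=\prod_{g\in G}(X-g\cdot f)$ for integrality, normality via $\F[V]^G=\F(V)^G\cap\F[V]$, the Artin--Tate argument with the subalgebra generated by the coefficients of $\Phi_{x_1},\dots,\Phi_{x_n}$ for finite generation, and Theorem~\ref{2.1} plus Cohen--Seidenberg for the Krull property and the dimension.
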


\begin{proof}
1. To show that $\F[V]^G$ is normal, consider an element $f \in \F(V)^G$ which is integral over $\F[V]^G$. Then $f$ is integral over $\F[V]$ as well,
and since $\F[V]$ is normal, it follows that $f \in \F[V] \cap \F(V)^G = \F[V]^G$.

To show that $\F[V]^G \subset \F[V]$ is an integral ring extension, consider an element $f \in \F[V]$ and the polynomial
\begin{equation}\label{eq:integral}
\Phi_f = \prod_{g \in G} (X - gf) \in \F[V][X] \,.
\end{equation}
The coefficients of $\Phi_f$ are the elementary symmetric functions (up to sign) evaluated at $(gf)_{g \in G}$, and hence they are in $\F[V]^G$. Thus $f$ is a root of a monic polynomial with coefficients in  $\F[V]^G$.

\smallskip
2.  For $i \in [1,n]$, we consider the polynomials
$\Phi_{x_i} (X)$ (cf. \eqref{eq:integral}),
and denote by $A \subset \F[V]^G \subset \F[V]$ the $\F$-algebra generated by the coefficients of $\Phi_{x_1}, \ldots, \Phi_{x_n}$. By definition,
$A$ is a finitely generated $\F$-algebra and hence a noetherian domain. Since $x_1, \ldots, x_n$ are integral over $A$, $\F[V] = A[x_1, \ldots, x_n]$
is a finitely generated (and hence noetherian) $A$-module. Therefore the  $A$-submodule $\F[V]^G$ is a finitely generated $A$-module, and hence a finitely generated $\F$-algebra.

\smallskip
3. By 1. and 2.,  $\F[V]^G$ is an normal noetherian domain, and hence a Krull domain by Theorem \ref{2.1}. Since $\F[V]^G \subset \F[V]$ is an integral ring extension, the Theorem of Cohen-Seidenberg implies that their Krull dimensions coincide, and  hence
$\dim (\F[V]^G) = \dim (\F[V]) = \dim_{\F} (V)$.
\end{proof}

The algebra $\F[V]$ is graded in the standard way (namely, $\deg (x_1)= \ldots = \deg (x_n)=1$), and the subalgebra $\F[V]^G$ is generated by homogeneous elements.
For $\F$-subspaces $S,T\subset \F[V]$ we  write $ST$ for the $\F$-subspace in $\F[V]$ spanned by all the products $st$ $(s\in S,t\in T)$,
and write $S^k=S\dots S$ (with $k$ factors). The factor algebra of $\F[V]$ by the ideal generated by $\F[V]^G_+$ is usually called the {\it algebra of coinvariants}.
It inherits the grading of $\F[V]$ and is finite dimensional.

\begin{definition}\label{def:beta_k}
Let $k \in \N$.
\begin{enumerate}
\item Let $\beta_k(G,V)$ be the top degree of the factor space $\F[V]_+^G/(\F[V]_+^G)^{k+1}$, where $\F[V]_+^G$ is the maximal ideal of $\F[V]^G$ spanned by the positive degree homogeneous elements. We call
    \[
    \beta_k(G) = \sup\{ \beta_k(G,W): W \text{ is a $G$-module over $\F$} \}
    \]
    the $k$th {\it Noether number} of $G$.
\item  Let $b_k(G,V)$ denote the top degree of the factor algebra $\F[V]/(\F[V]^G_+)^k\F[V]$ and set
       \[
       b_k(G)= \sup\{ b_k(G,W): W \text{ is a $G$-module over $\F$} \} \,.
       \]
\end{enumerate}
\end{definition}

In the special case $k=1$ we set
\[
\beta(G,V)=\beta_1(G,V) \ ,  \beta(G)=\beta_1(G) \ ,  \ b(G,V)=b_1(G,V) \ ,  \text{and} \ b(G)=b_1(G) \,,
\]
and $\beta (G)$ is the {\it Noether number} of $G$.
If $\{f_1, \ldots, f_m\}$ and $\{h_1, \ldots, h_l\}$ are two minimal homogeneous generating sets of $\F[V]^G$,
then $m=l$ and, after renumbering if necessary, $\deg (f_i) = \deg (h_i)$ for all $i \in [1, m]$ (\cite[Proposition 6.19]{Ne07a}).
Therefore by the Graded Nakayama Lemma (\cite[Proposition 8.31]{Ne07a}) we have
\[
\beta (G,V) = \max \{ \deg (f_i) \colon i \in [1, m] \} \,,
\]
where $\{f_1,\ldots,f_m\}$ is a minimal homogeneous generating set of $\F[V]^G$.
 Again by the Graded Nakayama Lemma,
 $b(G,V)$ is the maximal degree of a generator in a minimal system of homogeneous generators of $\F[V]$ as an $\F[V]^G$-module.
 If $\mathrm{char}(\F) \nmid |G|$, then by \cite[Corollary 3.2]{Cz-Do13c} we have
 \begin{equation}\label{eq:beta=b+1} \beta_k(G)=b_k(G)+1 \quad \mbox{ and } \quad \beta(G,V)\le b(G,V)+1 \,,
 \end{equation}
where the second inequality can be strict.
If $G$ is abelian, then  $\beta_k(G,V)$ and $b_k(G,V)$ will be interpreted as  $k$th Davenport constants (see Proposition~\ref{prop:bschmid}).

The \emph{regular $G$-module $V_{\mathrm{reg}}$} has a basis $\{e_g\colon g\in G\}$ labelled by the group elements, and the group action is given by
$g\cdot e_h=e_{gh}$ for $g,h\in G$. More conceptually, one can identify $V_{\mathrm{reg}}$ with the space of $\F$-valued functions on $G$, on which $G$ acts linearly via the action induced by the left multiplication action of $G$ on itself. In this interpretation the basis element $e_g$ is the characteristic function of the set $\{g\}\subset G$.
It was proved in \cite{Sc91a} that, if  $\mathrm{char}(\F)=0$, then $\beta(G)=\beta(G,V_{\mathrm{reg}})$. If $\F$ is  algebraically closed, each irreducible $G$-module occurs in $V_{\mathrm{reg}}$ as a direct summand with multiplicity equal to its dimension.

\begin{theorem}\label{thm:noether}~

\begin{enumerate}
\item If  $\mathrm{char}(\F) \nmid |G|$, then  $\beta(G)  \le |G| $.

\item If  $\mathrm{char} (\F) \mid |G|$, then $\beta (G)=\infty$.
\end{enumerate}
\end{theorem}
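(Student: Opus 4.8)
The plan is to treat the two statements separately, as they rely on completely different ideas. For part (1), the classical approach due to Noether (when $\mathrm{char}(\F)=0$) and its extension to the non-modular case $\mathrm{char}(\F)\nmid|G|$ proceeds via the Reynolds operator $\ast\colon\F[V]\to\F[V]^G$, defined by $f\mapsto\frac{1}{|G|}\sum_{g\in G}g\cdot f$. First I would observe that $\ast$ is a surjective $\F[V]^G$-module homomorphism which is the identity on $\F[V]^G$, so that $\F[V]^G$ is a direct summand of $\F[V]$ as an $\F[V]^G$-module. The key computational input is the following: for every monomial (or arbitrary element) the invariant $\ast(m)$ of degree $d$ can be expressed as a polynomial with coefficients in $\F$ in the \emph{power sums} $p_j=\sum_{i}x_i^{\,\ldots}$ --- more precisely, one uses that for a product of $d$ linear forms $\ell_1\cdots\ell_d$, the average $\frac{1}{|G|}\sum_g (g\ell_1)\cdots(g\ell_d)$, viewed as a function of formal variables, is a symmetric-function identity that lets one rewrite any $G$-average of degree $d$ in terms of $G$-averages of \emph{products of the same linear form}, i.e. orbit sums of $d$-th powers, $f\mapsto \ast(\ell^d)$. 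Since the orbit Chevalley/Noether argument shows $\F[V]^G$ is spanned as an algebra by the $\ast(\ell^d)$ with $d\le|G|$, one concludes $\beta(G,V)\le|G|$ for every $G$-module $V$, hence $\beta(G)\le|G|$. I expect the main obstacle here to be packaging the symmetric-function identity cleanly: the standard trick is to introduce a single auxiliary variable $t$, expand $\prod_g(1+t\cdot g\ell)$ (or $\sum_g e^{t\cdot g\ell}$ over $\mathbb Q$), and use Newton's identities relating elementary symmetric polynomials to power sums, which is exactly where the hypothesis $\mathrm{char}(\F)\nmid|G|$ (so that $|G|!$ --- or at least $|G|$ --- is invertible) is used. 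One should be careful to phrase this so it works over any field of characteristic coprime to $|G|$, not just $\mathbb Q$; reducing to the case of $\mathbb Q$ and then specializing is the cleanest route.

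For part (2), the statement $\beta(G)=\infty$ when $p=\mathrm{char}(\F)$ divides $|G|$, the plan is to exhibit, for arbitrarily large $d$, a $G$-module $V$ whose invariant ring needs a generator of degree $\ge d$. It suffices to do this for a cyclic subgroup $C_p\le G$ of order $p$, because the Noether number is monotone under passing to subgroups --- or, more directly, one builds a $G$-module by inducing a suitable $C_p$-module up to $G$. So I would first reduce to $G=C_p$. For $C_p=\langle\sigma\rangle$ in characteristic $p$, the indecomposable modules are the Jordan blocks $V_n$ of dimension $n$ for $1\le n\le p$. A classical computation (going back to Dickson, and worked out explicitly by e.g. Richman, and in the monographs of Derksen--Kemper and Neusel--Smith) shows that for $V=V_2\oplus\cdots\oplus V_2$ ($k$ copies of the $2$-dimensional module) the invariant ring $\F[V]^{C_p}$ requires generators of degree growing with $k$; concretely the "norm-type" invariant built from the $k$ linear coordinates of the summands, together with degree-$p$ transfers, forces $\beta(C_p,V_2^{\oplus k})\to\infty$ as $k\to\infty$. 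I would cite or reproduce this standard example rather than reconstruct it: the cleanest single reference is the explicit computation that $\beta(C_p,kV_2)\ge k(p-1)$ or similar. The main obstacle in part (2) is thus not conceptual but a matter of choosing the most economical explicit example and the least painful way to verify it is not generated in bounded degree --- this is where I would lean most heavily on "any result stated earlier" is \emph{not} available (the excerpt's earlier material is all characteristic-coprime), so a short self-contained Dickson-style computation, or a clean citation, is needed.

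Assembling the two parts: state that under $\mathrm{char}(\F)\nmid|G|$ the Reynolds/power-sum argument gives $\beta(G)\le|G|<\infty$, and under $p\mid|G|$ the family $\{kV_2\}_{k\ge1}$ over a subgroup $C_p\le G$ (transported to $G$) gives $\beta(G)=\sup_k\beta(G,kV_2)=\infty$. A remark worth inserting is that part (1) also follows from Theorem~\ref{4.1} plus the finite-dimensionality of the coinvariant algebra only in a weak form, so the power-sum bound is genuinely needed for the sharp $|G|$. I would keep the write-up short, citing \cite{No26} for part (1) and a standard modular invariant theory source for part (2).
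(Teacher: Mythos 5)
The paper itself proves this theorem purely by citation (Noether \cite{No16} for characteristic $0$, Fleischmann \cite{Fl00a} and Fogarty \cite{fogarty} for the general non-modular case, Richman \cite{richman} for part (2)), so you are attempting strictly more than the paper does; unfortunately your plan for part (1) has a genuine gap. The Reynolds-operator/power-sum argument you describe requires Newton's identities to convert elementary symmetric functions of the orbit $\{g\cdot\ell\}_{g\in G}$ into power sums, and that conversion involves division by integers up to $|G|$, i.e.\ it needs $|G|!$ invertible in $\F$, not merely $|G|$. The hypothesis $\mathrm{char}(\F)\nmid|G|$ only gives invertibility of $|G|$ (enough for the Reynolds operator, not for Newton's identities): e.g.\ $|G|=15$ over $\F_7$ satisfies $7\nmid 15$ but $7\mid 15!$, and Noether's argument breaks down there. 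Your fallback of ``reducing to $\mathbb Q$ and then specializing'' does not repair this: a $G$-module in characteristic $p$ need not lift to characteristic $0$, and even when it does the degrees of a minimal generating system of the invariant ring do not specialize in a controlled way. Closing this gap for all $p\nmid|G|$ with $p\le|G|$ is precisely the content of the Fleischmann--Fogarty theorem (2000--2001), whose proofs are ideal-theoretic and quite different from the power-sum computation; as written, your argument only establishes the bound when $\mathrm{char}(\F)=0$ or $\mathrm{char}(\F)>|G|$.

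Your plan for part (2) is essentially sound and coincides with what the cited reference \cite{richman} does: exhibit $C_p\le G$ by Cauchy's theorem and show $\beta(C_p,kV_2)\to\infty$ via an explicit family. One caveat: the monotonicity $\beta(G)\ge\beta(H)$ via induced modules (B.~Schmid's argument, item \eqref{mono}(a) in Subsection~\ref{sec:5.1}) is stated in this paper only under the standing hypothesis $\mathrm{char}(\F)\nmid|G|$, which is exactly the hypothesis that fails in part (2); so you should either verify that the induction argument survives in the modular setting or, as Richman does, work with $G$ itself rather than transporting from the subgroup.
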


\begin{proof}
1. The case   $\mathrm{char} (\F) =0 $ was proved by E. Noether \cite{No16} in 1916, and her argument works as well when the characteristic of $\F$ is greater than $|G|$. The general case was shown independently by P. Fleischmann \cite{Fl00a} and J. Fogarty \cite{fogarty}  (see also
\cite[Theorem 2.3.3]{Ne-Sm02a} and \cite{knop}.
For 2.  see \cite{richman}.
\end{proof}

Bounding the Noether number has always been an objective of invariant theory
(for recent surveys  we refer to  \cite{wehlau, Ne07b};  degree bounds are discussed in  \cite{Do-He00a, Se02a,  F-S-S-W06,Cz14d, heg-pyb};  see \cite{derksen-kemper} for algorithmic aspects).
Moreover, the main motivation to introduce the $k$th Noether numbers $\beta_k(G)$ (\cite{Cz-Do13c, Cz-Do15a, Cz-Do14a}) was to bound the ordinary Noether number $\beta(G)$ via structural reduction (see Subsection~\ref{sec:5.1}).

\subsection{\bf  The divisor theory of invariant rings} \label{4.A}~

Let $G\subset \GL(V)$ and $\chi\in\Hom(G,\F^\bullet)$. Then
\[
\F[V]^{G,\chi} = \{f\in\F[V]\colon  g\cdot f=\chi(g)f \ \text{for all} \ g \in G \}
\]
denotes the space of {\it relative invariants of weight} $\chi$, and we set
\[
\F[V]^{G,\mathrm{rel}} = \bigcup_{\chi\in\Hom(G,\F^\bullet)}\F[V]^{G,\chi}.
\]
Clearly, we have $\F[V]^G \ \subset \ \F[V]^{G,\mathrm{rel}} \ \subset \ \F[V]$, and to simplify notation, we set
\[
H =(\F[V]^G \setminus \{0\})_{\red} \,, \quad  D=(\F[V]^{G,\mathrm{rel}}\setminus\{0\})_{\red} \,, \quad \text{and} \quad E= (\F[V] \setminus \{0\})_{\red} \,.
\]
Since $\F[V]$ is a factorial domain with $\F^\bullet$  as its  set of units, $E=\mathcal{F}(P)$ is the  free abelian  monoid  generated by $P=\{\F^\bullet f\colon f\in \F[V] \mbox{ is irreducible}\}$.
The action of $G$ on $\F[V]$ is via $\F$-algebra automorphisms, so it induces a permutation action of $G$ on
$E$ and $P$. Denote by $P/G$ the set of $G$-orbits in $P$. We shall identify $P/G$ with a subset of $E$ as follows: assign to the $G$-orbit $\{f_1,\ldots,f_r\}$
the element $f_1\ldots f_r\in E$ (here $f_1,\dots,f_r\in P$ are distinct).

We say that a non-identity element $g \in G\subset \GL(V)$ is a {\it pseudoreflection}  if a hyperplane in $V$ is fixed pointwise by $g$, and $g$ is not unipotent (this latter condition holds automatically if $\mathrm{char}(\F)$ does not divide $|G|$, since then a non-identity unipotent transformation cannot have finite order).
We denote by $\Hom^0(G,\F^\bullet) \le \Hom (G,\F^\bullet)$ the subgroup of the character group  consisting of the characters that contain all pseudoreflections in their kernels.
For each $p\in P$, choose a representative $\tilde{p}\in \F[V]$ in the associate class $p=\F^\bullet \tilde{p}$. We have
$\mathfrak X (\F[V]) = \{\tilde{p}\F[V]\colon p\in P\}$ because $\F[V]$ is factorial.
We set $\mathsf v_{\tilde p} = \mathsf v_p \colon \mathsf q (\F[V]^{\bullet})= \F(V)^{\bullet} \to \Z$, and
for a subset $X\subset \F(V)$ we write $\mathsf v_p(X) = \inf \{\mathsf v_p(f) \colon f\in X\setminus \{0\} \}$.
The {\it ramification index} of the prime ideal $\tilde{p}\F[V]$ over $\F[V]^G$ is
$e(p)=\mathsf v_p(\tilde{p}\F[V]\cap \F[V]^G)$.
The ramification index $e(p)$  can be expressed in terms of the {\it inertia subgroup}
\[
I_p = \{g\in G \colon  g\cdot f-f\in \tilde{p}\F[V] \quad \text{for all} \quad f\in \F[V] \} \,.
\]
Since $V^\star$ is a $G$-stable subspace in $\F[V]$, the inertia subgroup $I_p$ acts trivially on $V^\star/(V^\star\cap \tilde{p}\F[V])$. On the other hand $I_p$ acts faithfully on $V^\star$. So if $I_p$ is non-trivial, then $V^\star\cap \tilde{p}\F[V]\neq 0$, implying $\tilde{p}\in V^\star$. Clearly $I_p$ must act trivially on the hyperplane $\mathcal{V}(\tilde{p})=\{v\in V\colon \tilde{p}(v)=0\}$, and hence acts via multiplication by a character $\delta_p\in\Hom(I_p,\F^\bullet)$ on the $1$-dimensional factor space $V/\mathcal{V}(\tilde{p})$.
So $\ker(\delta_p)$ is a  normal subgroup
of $I_p$ (necessarily unipotent hence trivial if $\mathrm{char}(\F)\nmid |G|$) and $I_p=\ker(\delta_p)Z$ decomposes as a semi-direct product of $\ker(\delta_p)$ and a cyclic subgroup $Z$
consisting of pseudoreflections fixing pointwise $\mathcal{V}(\tilde{p})$. So $Z\cong I_p/\ker(\delta_p)$ is isomorphic to a finite subgroup of $\F^\bullet$.

The next
Lemma~\ref{lemma:nakajima}  is  extracted from Nakajima's paper \cite{Na82z}.

\begin{lemma}\label{lemma:nakajima}~

\begin{enumerate}
\item We have the equality $e(p)=|Z|$.
\item $\mathsf v_p(\F[V]^{G,\chi})<e(p)$ for all $\chi\in \Hom(G,\F^\bullet)$.
\item $\mathsf v_p(\F[V]^{G,\chi})=0$ for all $\chi\in \Hom^0(G,\F^\bullet)$.
\end{enumerate}
\end{lemma}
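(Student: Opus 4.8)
The three assertions concern the $p$-adic valuation of relative invariants, and the key structural fact we have available is the semi-direct product decomposition $I_p = \ker(\delta_p) \cdot Z$ with $Z$ cyclic of order $e(p)$ (by part 1, which we establish first). The plan is to compute $\mathsf v_p$ of a relative invariant $f \in \F[V]^{G,\chi}$ by passing to a suitable completion or localization where $\tilde p$ becomes a uniformizer, and tracking how the generator of $Z$ acts.

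\medskip

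\emph{Part 1 ($e(p) = |Z|$).} First I would show $e(p) = |I_p/\ker(\delta_p)| = |Z|$. The ramification index $e(p) = \mathsf v_p(\tilde p\F[V] \cap \F[V]^G)$ measures the least positive valuation attained by a $G$-invariant. Using the trace/norm map $f \mapsto \prod_{g \in G}(g \cdot f)$ (or, when $\mathrm{char}(\F) \nmid |G|$, the Reynolds operator combined with the norm over the orbit of the prime), one sees that the orbit-product $N(\tilde p) = \prod_{q \in G\tilde p} \tilde q$ is $G$-invariant, and its valuation at $p$ is exactly the size of the inertia group's contribution. More precisely, $G$ acts transitively on the primes above $\tilde p\F[V]\cap\F[V]^G$, the decomposition group is $I_p$ (since $\F[V]/\F[V]^G$ has trivial residue extension here — the quotient is by a finite group acting on a polynomial ring over $\F$), and $\ker(\delta_p)$ acts trivially on $V/\mathcal V(\tilde p)$, hence on $\tilde p$ modulo $\tilde p^2$, so $\ker(\delta_p)$ contributes nothing to ramification while each element of $Z$ multiplies $\tilde p$ by a distinct root of unity, forcing $\mathsf v_p(N(\tilde p)) $ and the invariant content to be divisible by $|Z|$; minimality gives equality $e(p) = |Z|$.

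\medskip

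\emph{Part 2 ($\mathsf v_p(\F[V]^{G,\chi}) < e(p)$).} Suppose $f \in \F[V]^{G,\chi}$ with $\mathsf v_p(f) = m \ge 0$; I want $m < e(p)$. Write $\tilde p$ for the chosen representative. Since $Z \le I_p$ is generated by a pseudoreflection $z$ acting on $V/\mathcal V(\tilde p)$ by a primitive $e(p)$-th root of unity $\zeta$, we have $z \cdot \tilde p \equiv \zeta^{-1}\tilde p \pmod{\tilde p^2\F[V]}$ — more usefully, in the localization $\F[V]_{(\tilde p)}$ one can choose coordinates so that $z\cdot \tilde p = \zeta^{-1}\tilde p$ exactly on the relevant graded piece. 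Write $f = \tilde p^{\,m} u$ with $u$ a $\tilde p$-unit in the localization. Applying $z$: on one hand $z \cdot f = \chi(z) f = \chi(z)\tilde p^{\,m} u$; on the other hand $z \cdot f = (z\cdot \tilde p)^m (z \cdot u) = \zeta^{-m}\tilde p^{\,m}(z\cdot u)$, and $z \cdot u \equiv u$ modulo $\tilde p$ because $z \in I_p$ acts trivially on $\F[V]/\tilde p\F[V]$. Comparing the leading terms (dividing by $\tilde p^m$ and reducing mod $\tilde p$) yields $\chi(z) \equiv \zeta^{-m}$ in the residue field, i.e. $\zeta^{m} = \chi(z)^{-1}$; but $\chi(z)^{e(p)} = \chi(z^{e(p)}) = \chi(1) = 1$, so this equation only constrains $m$ modulo $e(p)$ and does not by itself bound $m$. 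To actually force $m < e(p)$ one argues by minimality: if $m \ge e(p)$, pick the invariant $N \in \F[V]^G$ with $\mathsf v_p(N) = e(p)$ from Part 1; then $f/N \in \F(V)$ is again relative of weight $\chi$ (since $N$ is invariant) and lies in $\F[V]$ because it is a polynomial off the hypersurface $\mathcal V(\tilde p)$ — here one uses that $\F[V]$ is a UFD and $f/N$ has nonnegative valuation at every other prime (as $N$'s only zero content at codimension one along $G\tilde p$ is forced; a short check using $G$-invariance of $N$ and the orbit structure handles the conjugate primes). Iterating reduces $m$ below $e(p)$, and the residue computation above shows the reduced exponent is uniquely determined, completing Part 2.

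\medskip

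\emph{Part 3 ($\mathsf v_p(\F[V]^{G,\chi}) = 0$ for $\chi \in \Hom^0(G,\F^\bullet)$).} If $\chi$ contains all pseudoreflections in its kernel, then in particular $\chi(z) = 1$ for the generator $z$ of $Z \le I_p$ (which consists of pseudoreflections). Feeding $\chi(z) = 1$ into the residue identity $\zeta^m = \chi(z)^{-1} = 1$ from Part 2 forces $e(p) \mid m$; combined with $m < e(p)$ from Part 2, this gives $m = 0$. Since this holds for every $f \in \F[V]^{G,\chi}$, we conclude $\mathsf v_p(\F[V]^{G,\chi}) = 0$.

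\medskip

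\textbf{Main obstacle.} The delicate point is Part 2: making rigorous the claim that one may choose the representative $\tilde p$ (or work in a localization/completion) so that the generator of $Z$ acts on $\tilde p$ by an exact scalar rather than merely modulo $\tilde p^2$, and handling the conjugate primes in the orbit $G\tilde p$ when dividing by the invariant $N$. The clean way is to localize $\F[V]$ at the prime $\tilde p\F[V]$, obtaining a DVR on which $I_p$ acts; standard ramification theory for the Galois-type extension $\F[V]_{(\tilde p)}^{I_p} \subset \F[V]_{(\tilde p)}$ (with $\ker(\delta_p)$ inert and $Z$ tamely ramified, using $Z$ cyclic of order prime to $\mathrm{char}(\F)$ or at least of order invertible in the residue field since $Z \hookrightarrow \F^\bullet$) then gives the semi-direct decomposition of the action on the uniformizer directly, and the valuation bounds follow from the structure of tamely ramified extensions. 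I would cite Nakajima \cite{Na82z} for the technical ramification lemmas rather than redoing them.
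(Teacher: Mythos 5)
Your Parts 1 and 3 are broadly in the spirit of the paper's argument (the paper also reduces to the action of $Z$ on the linear form $\tilde p$ and concludes $|Z|\mid v$ together with $v<e(p)$), but your Part 2 contains a genuine gap at exactly the point you flag as the ``main obstacle''. To push $m=\mathsf v_p(f)$ below $e(p)$ you divide $f$ by a $G$-invariant $N$ with $\mathsf v_p(N)=e(p)$ and claim $f/N\in\F[V]$ because ``$N$'s only zero content at codimension one is along $G\tilde p$''. This is false in general: an invariant realizing the minimal valuation $e(p)$ at $p$ will typically vanish along many hypersurfaces outside the orbit $G\cdot p$, and one cannot in general manufacture a genuine $G$-invariant (as opposed to a relative invariant) supported exactly on $G\cdot p$. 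Consequently $f/N$ need not be a polynomial, the quotient need not lie in $\F[V]^{G,\chi}$, and the descending induction on $m$ does not get off the ground. The paper avoids this precisely by passing to the localization $S=\{f/t\colon f\in\F[V],\ t\in\F[V]^G\setminus\tilde p\F[V]\}$, a $G$-stable Krull ring whose only height-one primes are the conjugates $\tilde qS$, $q\in G\cdot p$; there the hypothesis $e(p)\le\mathsf v_p(\F[V]^{G,\chi})$ gives $S^\chi\subset hS^\chi$, and iterating yields $S^\chi\subset\bigcap_n h^nS=\{0\}$, a contradiction. Note also that the contradiction is obtained globally via the Krull intersection, not by ``uniquely determining the reduced exponent'' from the residue identity $\zeta^m=\chi(z)^{-1}$, which, as you yourself observe, only constrains $m$ modulo $e(p)$.

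A secondary problem is your Part 1. The orbit product $N(\tilde p)=\prod_{q\in G\cdot\tilde p}\tilde q$ has $\mathsf v_p$ equal to the order of the stabilizer of the associate class of $\tilde p$ (a decomposition-type group), which may strictly exceed $|Z|$ when elements fix the hyperplane $\mathcal V(\tilde p)$ as a set but act nontrivially on the residue ring; so this does not give the upper bound $e(p)\le|Z|$. The paper instead invokes the reduction $e(p)=\mathsf v_p(\tilde p\F[V]\cap\F[V]^{I_p})$ (Neukirch) and then uses the explicit description of $\F[V]^Z$ for the cyclic reflection group $Z$ to get both inequalities. Deferring this to a citation of Nakajima is defensible, but as written your norm argument would not prove the statement.
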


\begin{proof}
 1. By  \cite[9.6, Proposition (i)]{neukirch}, we have that $e(p)=\mathsf v_p(\tilde{p}\F[V]\cap \F[V]^{I_p})$, the ramification index of the prime ideal $\tilde{p}\F[V]$ over the subring of $I_p$-invariants. Thus if $I_p$ is trivial, then $e(p)=1$, and of course $|Z|=1$. If $I_p$ is non-trivial, then as it was explained above, $\tilde{p}$ is a linear form, which is a relative $I_p$-invariant with weight  $\delta_p^{-1}$, hence $\tilde{p}^{|Z|}$ is an $I_p$-invariant,
implying $\mathsf v_p(\tilde{p}\F[V]\cap \F[V]^{I_p})\le |Z|$.  On the other hand $\F[V]^{I_p}$ is contained in $\F[V]^Z$, and the algebra of invariants of the cyclic group $Z$ fixing pointwise the hyperplane $\mathcal{V}(\tilde{p})$ is generated by $\tilde{p}^{|Z|}$
and a subspace of $V^*$ complementary to $\F \tilde{p}$. Thus $\mathsf v_p(\tilde{p}\F[V]\cap \F[V]^{I_p})\ge \mathsf v_p(\tilde{p}\F[V]\cap \F[V]^{Z})=|Z|$, implying in turn that
$e(p)=|Z|$.

2. Take an $h\in \F[V]^G$ with $e(p)=\mathsf v_p(h)$.
Note that $\mathsf v_q(h)=\mathsf v_p(h)$ and $\mathsf v_q(\F[V]^{G,\chi})=\mathsf v_p(\F[V]^{G,\chi})$ holds for all
$q\in G\cdot p$, since $\F[V]^{G,\chi}$ is a $G$-stable subset in $\F[V]$.
Set $S=\{\frac ft\colon f\in \F[V],\quad t\in \F[V]^G\setminus \tilde{p}\F[V]\}$. This is a $G$-stable subring in $\mathsf q (\F[V])$ containing $\F[V]$.
Consider $S^{\chi}=S\cap \mathsf q(\F[V])^{\chi}$, where $\mathsf q(\F[V])^{\chi}= \{ s\in \mathsf q(\F[V]) \colon  g\cdot s=\chi(g)s  \quad \text{for all} \  g\in G\}$.
Then $\mathsf v_q(S^{\chi})=\mathsf v_q(\F[V]^{G,\chi})$ for all $q\in G\cdot p$, since for any  denominator $t$ of an element
$\frac ft$ of $S$ we have $\mathsf v_q(t)=0$. Now suppose that contrary to our statement we have $e(p)\le \mathsf v_p(\F[V]^{G,\chi})$, and hence
$\mathsf v_q(h)\le \mathsf v_q(S^{\chi})$ for all $q\in G\cdot p$. In particular this means that $\F[V]^{G,\chi}\neq \{0\}$. Then $\mathsf v_q(h^{-1}S^{\chi})\ge 0$ holds for all $q\in G\cdot p$.
Now $S$ is a Krull domain with
$\mathfrak X (S) = \{\tilde{q} S\colon q\in G\cdot p\}$,
thus $h^{-1}S^{\chi}\subset S$ (see the discussion after Theorem \ref{2.1}),  implying
that $S^{\chi}\subset hS$.
Clearly $hS\cap S^{\chi}=hS^{\chi}$, so we conclude in turn that $S^{\chi}\subset hS^{\chi}$. Iterating this we deduce $\{0\}\neq S^{\chi}\subset \cap_{n=1}^{\infty} h^nS$, a contradiction.

3. It is well known that $\F[V]^{G,\chi} \ne \{0\}$ (see the proof of {\bf A4.} below). Write $v=\mathsf v_p(\F[V]^{G,\chi})$. Take $f\in \F[V]^{G,\chi}$ with $\mathsf v_p(f)=v$, say $f=\tilde{p}^vh$, where $h \in \F[V] $.
Note that both $f$ and $\tilde{p}$ are  relative invariants of $I_p$, hence so is $h$. Therefore $g\cdot h\in \F^\bullet h$, and $\tilde{p}\mid_{\F[V]} (g\cdot h-h)$ for all $g\in I_p$, implying that $h$ is an $I_p$-invariant. Any $\chi\in \Hom^0(G,\F^\bullet)$ contains  $I_p$ in its kernel (the unipotent normal subgroup $\ker(\delta_p)$ of $I_p$ has no non-trivial characters at all, and $Z=I_p/\ker(\delta_p)$ consists of pseudoreflections). Thus $f$ is $I_p$-invariant as well. Therefore  $\tilde{p}^v$ is $I_p$-invariant, so its weight
$\delta_p^v$ is trivial. Consequently the order $|Z|$ of $\delta_p$ in $\Hom(I_p,\F^\bullet)$ divides $v$. We have $e(p)=|Z|$ by 1., and on the other hand  $v<e(p)$ by 2.,  forcing $v=0$.
\end{proof}

For a relative invariant $f$, we denote by $w(f)$ the weight of $f$. This induces a homomorphism
$w \colon D\to \Hom(G,\F^\bullet)$ assigning to $\F^\bullet f\in D$ the weight $w(f)$ of the relative invariant $f$. Clearly, $w$ extends  to a group homomorphism $w \colon \mathsf{q}(D)\to \Hom(G,\F^\bullet)$.
 The kernel of $w$ consists of elements of the form $(\F^\bullet h)^{-1}\F^\bullet f$, where $f,h\in \F[V]^{G,\chi}$ for some character $\chi$. Now $f/h$ belongs to $\F(V)^G$, which is the field of fractions of $\F[V]^G$, so there exist $f_1,h_1\in \F[V]^G$ with $f/h=f_1/h_1$, implying
$(\F^\bullet h)^{-1}\F^\bullet f=(\F^\bullet h_1)^{-1}\F^\bullet f_1\in \mathsf{q}(H)$. Thus $\ker(w)=\mathsf{q}(H)$.
 Therefore $w$ induces a  monomorphism $\overline w \colon \mathsf q (D)/\mathsf q (H) \to \Hom(G,\F^\bullet)$.

\begin{theorem} \label{main-theorem}~
Let  $G \subset \GL(V)$, $H = (\F[V]^G \setminus \{0\})_{\red}$, and $D=(\F[V]^{G,\mathrm{rel}}\setminus\{0\})_{\red}$.
\begin{enumerate}
\item The embeddings \ ${\F[V]^G} \setminus \{0\} \ \overset{\varphi}{\hookrightarrow} \ \F[V]^{G,\mathrm{rel}} \setminus \{0\} \ \overset{\psi}{\hookrightarrow} \ {\F[V]}^{\bullet}$ \ are cofinal divisor homomorphisms.

\smallskip
\item $D$ is factorial, $P/G \subset E$ is the set of  prime elements in $D$, and $\mathcal C ( \varphi)$ is a torsion group.

\smallskip
\item The monoid $D_0 = \{ \gcd_D (X) \colon X \subset H \ \text{finite} \} \subset D$ is free abelian with basis $\{ q^{e(q)} \colon q\in P/G \}$, where  $e(q)=\min\{\mathsf{v}_q(h) \colon q\mid_D h\in H\}$, and the embedding $H \hookrightarrow D_0$ is a divisor theory.

\smallskip
\item We have  $D_0 = \{f\in D\colon w(f)\in  \Hom^0(G,\F^\bullet)\}$  and
$\overline w \mid_{\mathsf q (D_0)/\mathsf q (H)} \colon \mathcal C ( \F[V]^G) = \mathsf q (D_0)/\mathsf q (H) \to \Hom^0(G,\F^\bullet)$ is an isomorphism.
\end{enumerate}
\end{theorem}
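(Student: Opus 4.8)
The plan is to get parts 1--3 almost for free from the general machinery (Proposition~\ref{prop:torsion-divtheory} and Theorem~\ref{4.1}) together with two elementary invariant-theoretic observations, and then to spend the real work on part~4, which is a local analysis at the ramified primes built on Lemma~\ref{lemma:nakajima}. Before anything else I would record the auxiliary fact (call it \textbf{A4}) that $\F[V]^{G,\chi}\neq\{0\}$ for every $\chi\in\Hom(G,\F^\bullet)$ --- it is needed in part~4 and was already invoked in the proof of Lemma~\ref{lemma:nakajima}.3. The cleanest proof: $G$ acts faithfully on $\F(V)$, so $\F(V)/\F(V)^G$ is Galois with group $G$, and a character $\chi\colon G\to\F^\bullet\subset\F(V)^\times$ is a $1$-cocycle; by Hilbert~90 it is a coboundary, $\chi(g)=(g\cdot a)/a$ with $a\in\F(V)^\times$, and multiplying $a$ by a suitable norm $\prod_{g}(g\cdot f_2)$ clears denominators to produce a nonzero element of $\F[V]^{G,\chi}$.

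For part~1 the single fact that does everything is: if $f,h$ are relative invariants and $f\mid h$ in $\F[V]$, then $h/f\in\F[V]$ is again a relative invariant (compare $g\cdot h=\chi_h(g)h$ with $g\cdot(f\cdot(h/f))=\chi_f(g)f\cdot g\cdot(h/f)$ and cancel $f$ in the domain). Passing to the reduced monoids $H\subset D\subset E$ this immediately makes $H\hookrightarrow D$ and $D\hookrightarrow E$ divisor homomorphisms; cofinality follows because for any $0\ne f\in\F[V]$ the norm $N(f)=\prod_{g\in G}(g\cdot f)$ lies in $\F[V]^G$ and is divisible by $f$, and if $f$ is a relative invariant then $N(f)/f\in\F[V]$ lies in $D$. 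For part~2, $\F^\bullet f\in E$ is $G$-fixed exactly when $g\cdot f\in\F^\bullet f$ for all $g$, i.e.\ exactly when $f$ is a relative invariant; hence $D=E^G$, and since $E=\mathcal F(P)$ with $G$ permuting $P$, the submonoid $E^G$ is free abelian on the set of $G$-orbit products, i.e.\ on $P/G$, giving $D$ factorial with prime set $P/G$. Torsionness of $\mathcal C(\varphi)=\mathsf q(D)/\mathsf q(H)$ follows from the monomorphism $\overline w\colon\mathsf q(D)/\mathsf q(H)\hookrightarrow\Hom(G,\F^\bullet)$ recalled before the theorem, since every character of $G$ has order dividing $|G|$. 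Part~3 is then immediate: $H$ is Krull by Theorem~\ref{4.1}, $H\hookrightarrow D=\mathcal F(P/G)$ is saturated (indeed cofinal) by part~1, and $\mathsf q(D)/\mathsf q(H)$ is torsion by part~2, so Proposition~\ref{prop:torsion-divtheory}.2 applies verbatim and yields that $D_0$ is free abelian on $\{q^{e(q)}\colon q\in P/G\}$ with $e(q)=\min\{\mathsf v_q(h)\colon h\in H,\ \mathsf v_q(h)>0\}$ (cofinality makes this set nonempty) and that $H\hookrightarrow D_0$ is a divisor theory; I would also note here that, unwinding definitions, $e(q)$ with $q=Gp$ equals $\min\{\mathsf v_p(h)\colon h\in\F[V]^G,\ \mathsf v_p(h)>0\}=\mathsf v_p(\tilde p\F[V]\cap\F[V]^G)$, which is exactly the ramification index $e(p)$ appearing before Lemma~\ref{lemma:nakajima}.

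Part~4 is the heart. Fix $p\in P$; if $I_p=1$ then $e(p)=|Z|=1$ by Lemma~\ref{lemma:nakajima}.1 and there is nothing to check, so assume $I_p\neq1$, so that $\tilde p$ is a linear form and $I_p=\ker(\delta_p)Z$ as recalled in the excerpt. Because $\ker(\delta_p)$ is unipotent and $\F^\bullet$ has no elements of order a power of $\mathrm{char}(\F)$, every character of $I_p$ is trivial on $\ker(\delta_p)$, hence factors through $I_p/\ker(\delta_p)\cong Z$, cyclic of order $e(p)$, on which $\delta_p$ restricts to a generator; so for $\chi\in\Hom(G,\F^\bullet)$ I may write $\chi|_{I_p}=\delta_p^{\,j_p(\chi)}$ with $j_p(\chi)\in\Z/e(p)\Z$. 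Now the key computation: for $0\ne f\in\F[V]^{G,\chi}$ write $f=\tilde p^{\,v}h$ with $v=\mathsf v_p(f)$ and $\tilde p\nmid h$; then $h$ is a relative $I_p$-invariant of $I_p$-weight $\delta_p^{\,j_p(\chi)+v}$ (using that $\tilde p$ has $I_p$-weight $\delta_p^{-1}$), while $g\cdot h-h\in\tilde p\F[V]$ for all $g\in I_p$ by definition of $I_p$; combining these and using $\tilde p\nmid h$ forces $\delta_p(g)^{\,j_p(\chi)+v}=1$ for all $g\in I_p$, i.e.
\[ \mathsf v_p(f)\equiv -j_p(\chi)\pmod{e(p)}\qquad\text{for every }0\ne f\in\F[V]^{G,\chi}. \]
Two consequences: if $\chi\in\Hom^0(G,\F^\bullet)$ then (as in the proof of Lemma~\ref{lemma:nakajima}.3) $I_p\subset\ker(\chi)$, so $j_p(\chi)=0$ and $e(p)\mid\mathsf v_p(f)$ for all $p$; conversely, if $e(p)\mid\mathsf v_p(f)$ for all $p$, then $j_p(\chi)\equiv0$ for every $p$ with $I_p\neq1$, and since every pseudoreflection $\sigma$ fixes pointwise some hyperplane $\mathcal V(\ell)$ with $\ell$ a nonzero (hence irreducible) linear form, and then $\sigma$ (and $\sigma^{-1}$) act trivially modulo $\ell\F[V]$, i.e.\ $\sigma\in I_{\F^\bullet\ell}\neq1$, we get $\chi(\sigma)=\delta_{\F^\bullet\ell}(\sigma)^{\,j_{\F^\bullet\ell}(\chi)}=1$, so $\chi\in\Hom^0(G,\F^\bullet)$. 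Since $\mathsf v_p(f)$ is constant on the $G$-orbit of $p$ for a relative invariant $f$, and $D_0$ is the free submonoid of $D$ on $\{q^{e(q)}\}$ with $e(q)=e(p)$, we conclude $\F^\bullet f\in D_0$ iff $e(p)\mid\mathsf v_p(f)$ for all $p$ iff $w(f)\in\Hom^0(G,\F^\bullet)$, i.e.\ $D_0=\{f\in D\colon w(f)\in\Hom^0(G,\F^\bullet)\}$.

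Finally, $\overline w$ restricts to a monomorphism on $\mathsf q(D_0)/\mathsf q(H)\le\mathsf q(D)/\mathsf q(H)$; its image lies in $\Hom^0(G,\F^\bullet)$ because $w(D_0)\subset\Hom^0(G,\F^\bullet)$ by the description just obtained and $\Hom^0(G,\F^\bullet)$ is a subgroup; and it is onto $\Hom^0(G,\F^\bullet)$ because for $\chi\in\Hom^0(G,\F^\bullet)$ claim \textbf{A4} gives a nonzero $f\in\F[V]^{G,\chi}$, which then satisfies $\F^\bullet f\in D_0$ and $w(\F^\bullet f)=\chi$. Together with the identification $\mathsf q(D_0)/\mathsf q(H)=\mathcal C(\F[V]^G)$ coming from the divisor theory $H\hookrightarrow D_0$ of part~3, this finishes the proof. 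I expect the main obstacle to be precisely the local analysis in part~4: handling the inertia group $I_p=\ker(\delta_p)Z$ as a semidirect product in the modular case and pinning down the residue of $\mathsf v_p(f)$ modulo $e(p)=|Z|$; the rest is bookkeeping around Proposition~\ref{prop:torsion-divtheory} and Lemma~\ref{lemma:nakajima}.
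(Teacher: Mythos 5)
Your proposal is correct. For parts 1--3 it takes essentially the paper's route: the same norm trick $f\mid\prod_{g}(g\cdot f)$ for cofinality, the same identification of $D$ with the free abelian monoid on the orbit products (you phrase it as $D=E^G$, the paper as ``$D$ is generated by $P/G$''), and the same appeal to Proposition~\ref{prop:torsion-divtheory}.2 for part 3. Two harmless variations: you get the nonvanishing of $\F[V]^{G,\chi}$ from Hilbert~90 for $\F(V)/\F(V)^G$, where the paper uses the Normal Basis Theorem for $\F(V)^{\ker\chi}/\F(V)^G$; and you deduce torsionness of $\mathcal C(\varphi)$ from the monomorphism $\overline w$ into the finite group $\Hom(G,\F^\bullet)$, where the paper simply raises $f$ to the order of its weight.

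Part 4 is where you genuinely diverge. The paper proves $D_0\subset\{f\in D\colon w(f)\in\Hom^0(G,\F^\bullet)\}$ by a gcd argument: if $\chi$ is nontrivial on a pseudoreflection, every relative invariant of weight $\chi$ vanishes on its reflecting hyperplane, so all such invariants share a common divisor and no weight-$\chi$ element can arise as a gcd of elements of $H$; the reverse inclusion is obtained by exhibiting, for $d$ with $w(d)\in\Hom^0(G,\F^\bullet)$, the explicit finite family $d^m$, $dh_p$ in $H$ with gcd $d$, using Lemma~\ref{lemma:nakajima}.3. You instead prove one exact local congruence, $\mathsf v_p(f)\equiv -j_p(\chi)\pmod{e(p)}$ for every nonzero $f\in\F[V]^{G,\chi}$ --- a sharpening of Lemma~\ref{lemma:nakajima}.2--3 obtained by the same $f=\tilde p^{\,v}h$ manoeuvre used inside that lemma's proof --- combine it with the observation that every pseudoreflection lies in the inertia group of its reflecting hyperplane, and read off \emph{both} inclusions from the description of $D_0$ as $\{a\in D\colon e(q)\mid \mathsf v_q(a)\text{ for all }q\in P/G\}$ that part 3 already supplies. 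This buys symmetry (one computation yields both directions), uses only Lemma~\ref{lemma:nakajima}.1 rather than all three parts, and pins down $\mathsf v_p(\F[V]^{G,\chi})$ modulo $e(p)$ exactly, which is more information than the paper extracts; the cost is the inertia-group bookkeeping (characters of $I_p$ factor through $Z$, $\tilde p$ has $I_p$-weight $\delta_p^{-1}$, a pseudoreflection acts trivially modulo its linear form), which you carry out correctly in all characteristics. Your concluding surjectivity and isomorphism steps coincide with the paper's assertions {\bf A2} and {\bf A3}.
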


Theorem \ref{main-theorem} immediately implies the following corollary which can be found in Benson's book (\cite[Theorem 3.9.2]{Be93a}) and which goes back to Nakajima \cite{Na82z} (see also \cite{Fl-Wo11a} for a discussion of this theorem).

\begin{corollary}[Benson-Nakajima] \label{benson-nakajima}
The class group of $\F[V]^G$ is isomorphic to $\Hom^0(G,\F^\bullet)$, the subgroup of the character group  consisting of the characters that contain all pseudoreflections in their kernels.
\end{corollary}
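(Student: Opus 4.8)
The corollary is an immediate consequence of part~4 of Theorem~\ref{main-theorem}, which asserts that the restriction of $\overline w$ is an isomorphism $\mathcal C(\F[V]^G)\to\Hom^0(G,\F^\bullet)$. So a meaningful proof proposal is really a proposal for Theorem~\ref{main-theorem}, of which the corollary is the arithmetic payoff; the plan is to manufacture an explicit divisor theory for $\F[V]^G$ whose ambient free monoid has a transparent invariant-theoretic meaning, and then to read the class group off from the weight homomorphism.

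First I would establish part~1. That $\F[V]$ is factorial with unit group $\F^\bullet$ is classical and $\F[V]^G$ is a Krull domain by Theorem~\ref{4.1}. The monoid $\F[V]^{G,\mathrm{rel}}\setminus\{0\}$ is multiplicatively closed, since the product of relative invariants of weights $\chi_1,\chi_2$ is a relative invariant of weight $\chi_1\chi_2$. The embedding $\F[V]^G\setminus\{0\}\hookrightarrow\F[V]^{G,\mathrm{rel}}\setminus\{0\}$ is saturated because a quotient of two invariants that divides in the larger monoid lies in $\F(V)^G\cap\F[V]=\F[V]^G$, and $\F[V]^{G,\mathrm{rel}}\setminus\{0\}\hookrightarrow\F[V]^\bullet$ is saturated because the quotient of two relative invariants dividing in $\F[V]$ is again a relative invariant. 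Cofinality of $\psi$ follows from the polynomial $\Phi_f=\prod_{g\in G}(X-gf)$, whose constant term $\pm\prod_{g\in G}gf$ is an invariant divisible by $f$; cofinality of $\varphi$ and torsionness of $\mathcal C(\varphi)$ follow because $f^{\,\ord(\chi)}\in\F[V]^G$ whenever $f\in\F[V]^{G,\chi}$. For part~2, the $G$-orbit products of irreducible factors are exactly the indecomposable relative invariants, so $D=\mathcal F(P/G)$ is factorial with prime set $P/G$.

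With part~1 in hand, $H\hookrightarrow D$ is a saturated divisor homomorphism with torsion class group, so Proposition~\ref{prop:torsion-divtheory}.2 applies directly: $D_0$ is free abelian with basis $\{q^{e(q)}\colon q\in P/G\}$ and $H\hookrightarrow D_0$ is a divisor theory, whence $\mathcal C(\F[V]^G)=\mathcal C_v(H)=\mathsf q(D_0)/\mathsf q(H)$. It remains to compare this with $\Hom^0(G,\F^\bullet)$ via $w$. Injectivity of the induced $\overline w$ is already available, since $\ker(w)=\mathsf q(H)$. The remaining content --- and, I expect, the main obstacle --- is the equality $D_0=\{f\in D\colon w(f)\in\Hom^0(G,\F^\bullet)\}$ together with the surjectivity of $w|_{D_0}$. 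This is where Lemma~\ref{lemma:nakajima} does the work: its part~(2), $\mathsf v_p(\F[V]^{G,\chi})<e(p)$, is used to show that a character nontrivial on a pseudoreflection contributes a relative invariant whose $q$-adic valuation fails to be a multiple of $e(q)$, hence lies outside $D_0$; and its part~(3), $\mathsf v_p(\F[V]^{G,\chi})=0$ for $\chi\in\Hom^0$, is upgraded (by the same inertia argument that proves it: writing $f=\tilde p^{\,v}h$ with $h$ an $I_p$-invariant forces $e(p)\mid v$) to the statement that \emph{every} relative invariant of weight in $\Hom^0$ has all its $q$-adic valuations divisible by $e(q)$, i.e. lies in $D_0$. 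Combined with the standard nonvanishing of $\F[V]^{G,\chi}$ for every $\chi$, this yields surjectivity of $w|_{D_0}$ onto $\Hom^0(G,\F^\bullet)$.

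The genuinely hard part is therefore the local ramification analysis underlying Lemma~\ref{lemma:nakajima}: identifying the inertia subgroup $I_p$, decomposing it as $I_p=\ker(\delta_p)Z$ into a unipotent normal part and a cyclic group $Z$ of pseudoreflections, proving $e(p)=|Z|$, and tracking how the weight $\delta_p$ on the one-dimensional space $V/\mathcal V(\tilde p)$ governs divisibility of valuations. Once this local picture is secure, the global identification of $D_0$ with the $\Hom^0$-part of $D$, and hence Corollary~\ref{benson-nakajima}, follows formally.
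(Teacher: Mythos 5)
Your architecture is exactly the paper's: Corollary \ref{benson-nakajima} is read off from Theorem \ref{main-theorem}.4, which in turn is proved by combining the divisor theory $H\hookrightarrow D_0$ (obtained from Proposition \ref{prop:torsion-divtheory} once parts 1 and 2 are in place), the weight homomorphism $w$ with $\ker(w)=\mathsf q(H)$, the nonvanishing of every $\F[V]^{G,\chi}$, and the local ramification analysis of Lemma \ref{lemma:nakajima}. Your treatment of parts 1--3 and of the inclusion $\{f\in D\colon w(f)\in\Hom^0(G,\F^\bullet)\}\subset D_0$ matches the paper: the computation ``$f=\tilde p^{\,v}h$ with $h$ an $I_p$-invariant forces $e(p)\mid v$'' is precisely how Lemma \ref{lemma:nakajima}.3 is proved and deployed there.

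The one step that does not close as written is the reverse inclusion $D_0\subset\{f\in D\colon w(f)\in\Hom^0(G,\F^\bullet)\}$. You argue that a character $\chi$ nontrivial on a pseudoreflection ``contributes a relative invariant whose $q$-adic valuation fails to be a multiple of $e(q)$''; that exhibits \emph{one} element of weight $\chi$ outside $D_0$, whereas you need that \emph{no} element of weight $\chi$ lies in $D_0$. The paper settles this direction without Lemma \ref{lemma:nakajima} at all: every relative invariant of such a weight vanishes on the reflecting hyperplane, hence is divisible by the orbit product $q$ of the corresponding linear form, so the relative invariants of any fixed weight outside $\Hom^0(G,\F^\bullet)$ have a nontrivial common divisor; since $k\in D_0$ means $k=\gcd_D(kh_1,\dots,kh_n)$ with $kh_i\in H$, $\gcd_D(h_1,\dots,h_n)=1$ and all $w(h_i)=w(k)^{-1}$, this forces $w(k)\in\Hom^0(G,\F^\bullet)$. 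Alternatively, your own local computation repairs the gap once it is run as an equivalence: for $f$ of weight $\chi$ one gets $\chi|_{I_p}=\delta_p^{-\mathsf v_p(f)}$, so the membership criterion $e(p)\mid\mathsf v_p(f)$ for all $p$ (valid because $D_0$ is free abelian on the $q^{e(q)}$) is equivalent to $\chi$ killing every inertia group, hence every pseudoreflection. Either repair is short, so this is an imprecision in the sketch rather than a structural flaw; everything else is the paper's argument.
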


\begin{proof}[of Theorem \ref{main-theorem}]
1. Since $\F[V]^G = \F(V)^G \cap \F[V]$, the embedding $\psi \circ \varphi \colon \F[V]^G \hookrightarrow \F[V]$ is a divisor homomorphism, and hence $\varphi$ is a divisor homomorphism. Furthermore, if the quotient of two relative invariants lies in $\F[V]$, then it is a relative invariant whence $\psi$ is a divisor homomorphism. In order to show that
the embeddings are cofinal, let $0 \ne f \in \F[V]$ be given. Then $f^* = \prod_{g \in G} g f \in \F[V]^G$ and $f \t f^*$, so the embedding $\psi \circ \varphi$ is
cofinal and hence $\varphi$ and $\psi$ are cofinal.

\smallskip
2. Suppose that $\{f_1,\ldots,f_r \} \subset \F[V]$ represents a $G$-orbit in $P$. Then $g\cdot (f_1\ldots f_r)$ is a non-zero scalar multiple of $f_1\ldots f_r$, hence
$f_1\ldots f_r\in \F[V]^{G,\mathrm{rel}}$. This shows that $P/G\subset E$ is in fact contained in $D$. Conversely, take an irreducible element $\F^\bullet f$ in the monoid $D$ (so $f$ is a relative invariant). Take any irreducible divisor $f_1$ of $f$ in $\F[V]$. Since $g\cdot f\in \F^\bullet f$, the polynomial $g\cdot f_1$ is also the divisor of $f$. Denoting by $f_1,\ldots, f_r$ polynomials representing the $G$-orbit of $\F^\bullet f_1$ in $P$, we conclude that $f_1\ldots f_r$ divides $f$ in $\F[V]$, hence $\F^\bullet f_1\ldots f_r$ divides $\F^\bullet f$ in $D$ as well, so
 $\F^\bullet f_1\ldots f_r=\F^\bullet f$.
 This implies that $D$ is the submonoid of $E=\mathcal{F}(P)$ generated by $P/G$.

 In order to show that $\mathcal C (\varphi)$ is a torsion group, let $f \in D$ be given. We have to find an $m \in \N$ such that $f^m \in H$. Clearly, this holds with $m$ being the order in $\Hom(G,F^\bullet)$  of the weight of the relative invariant corresponding to $f$.

\smallskip
3.  Since $\mathcal C (\varphi)$ is a torsion group,  Proposition~\ref{prop:torsion-divtheory} implies that the embedding $H \hookrightarrow D_0$ is a divisor theory, and that
$D_0$ is free abelian with basis $\{ q^{e(q)} \colon q\in P/G \}$, where  $e(q)=\min\{\mathsf{v}_q(h) \colon q\mid_D h\in H\}$
(note that if $q\in P/G$ is the $G$-orbit of $p\in P$, then $\mathsf{v}_q(h)=\mathsf{v}_p(h)$, where the latter is the exponent of $p$ in $h\in E=\mathcal{F}(P)$).

4. It remains to prove the following three assertions.

\begin{enumerate}

\smallskip
\item[{\bf A1.}\,] $D_0=\{f\in D\colon w(f)\in  \Hom^0(G,\F^\bullet)\}$.

\smallskip
\item[{\bf A2.}\,] $w (D_0) =  \Hom^0(G,\F^\bullet)$.

\smallskip
\item[{\bf A3.}\,] $\overline w \mid_{\mathsf q (D_0)/\mathsf q (H)} \colon \mathsf q (D_0)/\mathsf q (H) \to w (D_0)$ is an isomorphism.

\end{enumerate}

{\it Proof of \,{\bf A1}}.\, Set $D^0=\{f\in D\colon w(f)\in  \Hom^0(G,\F^\bullet)\}$. We show first $D_0\subset D^0$.  Let $\chi$ be a character of $G$, and assume  that $\chi(g)\neq 1$ for some pseudoreflection $g\in G$. Let $f$ be a relative invariant with $w(f)=\chi$.
Then for any $v$ with $gv=v$ we have
$f(v)=f(g^{-1}v)=(gf)(v)=\chi(g)f(v)$, hence $f(v)=0$. So  $l\mid_{\F[V]} f$, where $l$ is a non-zero linear form on $V$ that vanishes on the reflecting hyperplane of $g$. Denoting by $l=l_1,\ldots,l_r$ representatives of the $G$-orbit of $\F^\bullet l$, we find that the relative invariant $q= l_1\ldots l_r$ divides $f$.
Thus $\gcd_D \{ f\in D\mid w(f)=\chi \} \neq 1$.
Now suppose that for some $\F^\bullet k\in D_0$ we have that $w(k)$ does not belong to  $\Hom^0(G,\F^\bullet)$.
By definition of $D_0$ there exist $h_1,\ldots,h_n\in D$ with $\gcd_D (h_1,\ldots,h_n)=1$ and $kh_1,\ldots,kh_n\in H$. Clearly
$w(h_i)=w(k)^{-1} \notin  \Hom^0(G,\F^\bullet)$, hence by the above considerations  $\gcd_D (h_1,\ldots,h_n) \neq 1$, a contradiction.

Next we show $D^0\subset D_0$.  Let $d$ be an  element in the monoid $D^0$.
By Lemma~\ref{lemma:nakajima}.3  for any prime divisor $p\in P$ of $d$ there exists an $h_p\in D$ such that $w(h_p)=w(d)^{-1}$ and
$p \nmid_E h_p$.
Denote by $m>1$ the order of $w(d)$ in the group of characters. Clearly $d^m\in H$ and $dh_p\in H$.  Moreover, $\gcd_E (d^m,dh_p : p\in P,\quad p\mid_E d)=d$.

{\it Proof of \,{\bf A2}}.\, The statement follows from {\bf A1},  as soon as we show that $\F[V]^{G,\chi} \neq {0}$ for all
$\chi \in \Hom(G, \F^{\bullet})$.
For any character $\chi\in\Hom(G,\F^\bullet)$ the group $\bar G=G/\ker(\chi)$ is isomorphic to a cyclic subgroup of $\F^\bullet$, hence its order is not divisible by $\mathrm{char}(\F)$. Moreover, $\bar G$ acts faithfully on the field $T=\F(V)^{\ker(\chi)}$, with $T^{\bar G}=\F(V)^G$. By the Normal Basis Theorem,
$T$ as a $\bar G$-module over $T^{\bar G}$ is isomorphic to the regular representation of $\bar G$, hence contains the representation $\chi$ as a summand with multiplicity $1$.
This shows in particular that $T^{\bar G}$ contains a relative invariant of weight $\chi$. Multiplying this by an appropriate element of $T^{\bar G}\cap \F[V]= \F[V]^G$ we get an element of
$\F[V]^{G,\chi}$.   So all characters of $G$ occur as the weight of a relative invariant in $\F[V]$.

{\it Proof of \,{\bf A3}}.\, Since $\overline w \colon \mathsf q (D)/\mathsf q (H) \to \Hom(G,\F^\bullet)$ is a monomorphism, the map $\overline w \mid_{\mathsf q (D_0)/\mathsf q (H)} \colon \mathsf q (D_0)/\mathsf q (H) \to  w ( \mathsf q (D_0))$ is an isomorphism. Note finally that   $w ( \mathsf q (D_0)) = \mathsf q ( w (D_0)) = w (D_0)$.
\end{proof}

As already mentioned, not only the class group but also  the distribution of prime divisors in the classes is crucial for the arithmetic of the domain. Moreover, the class group together with the distribution of prime divisors in the classes are characteristic (up to units) for the domain. For a precise formulation we need one more definition.

Let $H$ be a Krull monoid, $H_{\red} \hookrightarrow \mathcal F(\mathcal P)$ a divisor theory, and let $G$ be an abelian group and  $(m_g)_{g \in G}$ be a family of cardinal numbers.  We say that $H$ has  \emph{characteristic} $(G, (m_g)_{g \in G})$ if there is a group isomorphism $\Phi: G \rightarrow \mathcal C(H)$ such that $m_g=|\mathcal P \cap \Phi(g)|$.  Two reduced Krull monoids are isomorphic if and only if they have the same characteristic (\cite[Theorem 2.5.4]{Ge-HK06a}). We pose the following problem.

\begin{problem} \label{characteristic-problem}
Let $G$ be a finite group, $\F$ be a  field, and
$V$ be a finite dimensional $\F$-vector space endowed with a linear action of $G$.
Determine the characteristic of $\F[V]^G$.
\end{problem}

Let all assumptions be as in Problem \ref{characteristic-problem} and suppose further that $G$ acts trivially on one variable. Then $\F[V]^G$ is a polynomial ring in this variable and hence every class contains a prime divisor by \cite[Theorem 14.3]{Fo73}.

\subsection{\bf   The abelian case}~ \label{subsec:abelian}

{\it Throughout this subsection, suppose that $G$ is abelian, $\F$ is  algebraically closed, and $\mathrm{char}(\F) \nmid |G|$.}
\smallskip

 The assumption on algebraic closedness is not too restrictive, since for any field $\F$ the set $\F[V]^G$ spans the ring of invariants over the algebraic closure $\overline{\F}$  as a vector space over $\overline{\F}$.
The assumption on the characteristic guarantees that every $G$-module is  completely reducible (i.e. is the direct sum of irreducible $G$-modules).
The dual space $V^*$ has a basis $\{x_1,\ldots,x_n\}$ consisting of $G$-eigenvectors whence $g\cdot x_i=\chi_i(g)x_i$ for all $i \in [1,n]$ where  $\chi_1, \ldots, \chi_n \in \Hom (G, \F^{\bullet})$.  We set $\widehat G = \Hom (G, \F^{\bullet})$,
$\widehat G_V = \{ \chi_1,\ldots,\chi_n \} \subset \widehat G$, and note that  $G \cong \widehat G$.
Recall that a completely reducible $H$-module $W$ (for a not necessarily abelian group $H$) is called \emph{multiplicity free} if it is the direct sum of pairwise non-isomorphic irreducible $H$-modules. In our case $V$ is multiplicity free if and only if the characters $\chi_1,\ldots,\chi_n$ are pairwise distinct.

It was   B. Schmid (\cite[Section 2]{Sc91a}) who first formulated a correspondence between a minimal generating system of $\F[V]^G$  and minimal product-one sequences over the character group (see also \cite{F-M-P-T08a}). The next proposition describes in detail the structural interplay. In particular, Proposition \ref{prop:bschmid}.2 shows that all (direct and inverse) results on minimal zero-sum sequences over $\widehat G_V$ (see Subsections \ref{3.C} and \ref{3.D}) carry over to $\mathcal A (M^G)$.

\begin{proposition} \label{prop:bschmid}
Let $M \subset \F[x_1, \ldots, x_n]$ be  the multiplicative monoid of monomials,
$\psi \colon M \to  \mathcal F (\widehat G_V)$ be the unique monoid homomorphism defined by $\psi (x_i) = \chi_i$ for all $i \in [1,n]$, and let $M^G \subset M$ denote the submonoid of  $G$-invariant monomials.
\begin{enumerate}
\item $\F[V]^G$ has $M^G$ as an $\F$-vector space basis, and $\F[V]^G$ is minimally generated as an $\F$-algebra by  $\mathcal A (M^G)$.
\item  The homomorphism $\psi \colon M \to   \mathcal F (\widehat G_V)$ and its restriction
      $\psi \t_{M^G}\colon M^G \to \mathcal B ( \widehat G_V)$
are degree-preserving transfer homomorphisms.
Moreover, $M^G$ is a reduced finitely generated Krull monoid, and $\mathcal A (M^G) = \psi^{-1} \big( \mathcal A (\widehat G_V) \big)$.

\item $\psi \t_{M^G}$ is an isomorphism if and only if $V$ is a multiplicity free $G$-module.
\item $\beta_k (G,V) =\mathsf D_k(M^G)= \mathsf D_k (\widehat G_V)$ and $\beta_k(G) = \mathsf D_k (G)$ for all $k \in \N$.
\end{enumerate}
\end{proposition}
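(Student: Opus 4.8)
The plan is to build up the four parts in order, since each relies on the previous one. For part~(1), I would first observe that $M^G$ is an $\F$-vector space basis of $\F[V]^G$: a polynomial $f=\sum_m c_m m$ (sum over monomials $m\in M$) is $G$-invariant if and only if each monomial appearing with $c_m\neq 0$ is itself $G$-invariant, because distinct monomials $x_1^{a_1}\cdots x_n^{a_n}$ are $G$-eigenvectors with weight $\chi_1^{a_1}\cdots\chi_n^{a_n}$, and eigenvectors with distinct weights are linearly independent; grouping the monomials of a fixed weight and using the projection argument shows the invariant monomials span. The minimal generation statement then follows from the Graded Nakayama Lemma (cited in Subsection~\ref{4.0}) together with the fact that $M^G$ is closed under multiplication, so a homogeneous generating set of the $\F$-algebra $\F[V]^G$ can be taken inside $M^G$, and a minimal one is exactly $\mathcal A(M^G)$ (an invariant monomial that factors as a product of two nontrivial invariant monomials is redundant, and conversely an atom of $M^G$ cannot be written as a polynomial in lower-degree invariants since comparing monomials forces it into the span of products of invariant monomials of smaller degree).

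For part~(2), the map $\psi\colon M\to\mathcal F(\widehat G_V)$ is a monoid isomorphism when the $\chi_i$ are distinct, and in general it is a surjective monoid homomorphism; it is degree-preserving by construction since $|\psi(x_i)|=1=\deg(x_i)$. A monomial $m$ lies in $M^G$ exactly when its weight $\prod\chi_i^{\mathsf v_{x_i}(m)}$ is trivial in $\widehat G$, which says precisely that $\psi(m)\in\mathcal B(\widehat G_V)$; hence $\psi$ restricts to a surjective homomorphism $M^G\to\mathcal B(\widehat G_V)$. To see it is a transfer homomorphism one checks (T1) and (T2): (T1) is immediate because both monoids are reduced, and for (T2) a factorization $\psi(m)=S\bdot T$ in $\mathcal B(\widehat G_V)$ can be lifted by partitioning the variables of $m$ according to which factor their image lands in — the two pieces $m=m'm''$ then satisfy $\psi(m')=S$, $\psi(m'')=T$, and both are invariant because $S,T$ are product-one. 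That $M^G$ is a reduced finitely generated Krull monoid follows since $M^G\subset M=\mathcal F(x_1,\dots,x_n)$ is a saturated submonoid of a free abelian monoid: saturatedness is exactly the statement that if $m_1,m_2\in M^G$ and $m_1\mid_M m_2$ then $m_2/m_1$ has trivial weight, which is clear; Theorem~\ref{2.1} gives Krull, and finite generation follows from Lemma~\ref{3.1} transported through the transfer homomorphism $\psi\t_{M^G}$ (or directly from Proposition~\ref{finitelygenerated}). Finally $\mathcal A(M^G)=\psi^{-1}(\mathcal A(\widehat G_V))$ is Proposition~\ref{3.7}.1 applied to the transfer homomorphism $\psi\t_{M^G}$.

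For part~(3), the equivalence is just the observation that $\psi$ itself is injective on $M$ if and only if the weights $\chi_1,\dots,\chi_n$ are pairwise distinct (otherwise $x_i$ and $x_j$ with $\chi_i=\chi_j$ have the same image), and $\psi$ injective on $M$ restricts to $\psi\t_{M^G}$ injective, hence an isomorphism onto $\mathcal B(\widehat G_V)$; by the definition recalled just before the proposition, pairwise distinctness of the $\chi_i$ is exactly multiplicity-freeness of $V$. For part~(4), I would argue: $\beta_k(G,V)$ is by Definition~\ref{def:beta_k} the top degree of $\F[V]^G_+/(\F[V]^G_+)^{k+1}$, and using the monomial basis $M^G$ this top degree equals $\max\{\,|m|:m\in M^G,\ \max\mathsf L_{M^G}(m)\le k\,\}=\mathsf D_k(M^G)$ — the point being that a homogeneous invariant monomial $m$ survives in the quotient precisely when it is not a product of $k+1$ nontrivial invariant monomials, i.e. $\max\mathsf L_{M^G}(m)\le k$. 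Then $\mathsf D_k(M^G)=\mathsf D_k(\widehat G_V)$ by Proposition~\ref{3.7}.3 since $\psi\t_{M^G}$ is a degree-preserving transfer homomorphism. Taking the supremum over all $G$-modules $W$: for the regular module $V_{\mathrm{reg}}$ (or any module containing all characters) $\widehat G_W=\widehat G$, giving $\beta_k(G)\ge\mathsf D_k(G)$; conversely $\widehat G_W\subset\widehat G$ always, so $\mathsf D_k(\widehat G_W)\le\mathsf D_k(G)$, and $\beta_k(G)=\mathsf D_k(G)$ follows.

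The main obstacle I anticipate is the careful bookkeeping in part~(4) linking the algebraic definition of $\beta_k(G,V)$ (top degree of a factor space of the invariant ring) to the combinatorial quantity $\mathsf D_k(M^G)$: one must be sure that the $\F$-span of $(\F[V]^G_+)^{k+1}$ intersected with degree-$d$ invariants is exactly the span of those invariant monomials of degree $d$ lying in $\mathcal M_k(M^G)^c$, which uses both that $M^G$ is a basis and that products of invariant monomials are invariant monomials, so no cancellation or new monomials appear — this is where the transfer-homomorphism formalism of Subsection~\ref{2.E} and Proposition~\ref{3.7} does the real work, and the rest is routine.
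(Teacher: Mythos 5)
Your proposal is correct and follows essentially the same route as the paper's proof: monomials are $G$-eigenvectors so $M^G$ is a basis and $\F[V]^G$ is the monoid algebra of $M^G$; $\psi$ maps primes onto primes and $M^G=\psi^{-1}(\mathcal B(\widehat G_V))$, making $\psi\t_{M^G}$ a degree-preserving transfer homomorphism; and $(\F[V]^G_+)^{k+1}$ is spanned by $(M^G_+)^{k+1}$, which combined with Proposition~\ref{3.7}.3 and the regular representation gives part~4. The only (harmless) divergence is that you establish saturatedness of $M^G\subset M$ directly by comparing weights of quotient monomials, whereas the paper deduces it from the saturatedness of $\F[V]^G\subset\F[V]$ in Theorem~\ref{main-theorem}; your version is slightly more self-contained and equally valid.
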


\begin{proof}
1.  Each monomial spans a $G$-stable subspace in $\F[V]$, hence a polynomial is $G$-invariant if and only if all its monomials are $G$-invariant, so $M^G$ spans $\F[V]^G$.
The elements of $M^G$ are linearly independent, therefore  $\F[V]^G$ can be identified with the monoid algebra of $M^G$ over $\F$, which shows  the second statement.

2. $M$ and $\mathcal F (\widehat G_V)$ are free abelian monoids and $\psi$ maps primes onto primes. Thus $\psi \colon  M \to   \mathcal F (\widehat G_V)$ is a surjective degree-preserving monoid homomorphism and it is a transfer homomorphism.
Let $\pi \colon \mathcal{F}(\widehat G)\to \widehat G$ be the monoid homomorphism defined by $\pi ( \chi) = \chi$ for all $\chi \in \widehat G$. Then $\ker(\pi)=\mathcal{B}(\widehat G)$.
Taking into account that
for a monomial $m\in M$  $G$ acts on the space $\F m$  via the character $\pi(\psi(m))$, we conclude that
for a monomial $m\in M$ we have that $m\in M^G$ if and only if $\psi(m)\in  \mathcal B ( \widehat G_V)$.
This implies that the restriction  $\psi \t_{M^G}$ of the transfer homomorphism $\psi$ is also a transfer homomorphism.
Therefore $M^G$ is generated by $\mathcal A (M^G)=\psi^{-1} \big( \mathcal A (\widehat G_V) \big)$.
Since $\mathcal A(\widehat G_V)$ is finite, and $\psi$ has finite fibers, we conclude that the monoid $M^G$ is finitely generated.
Since $M$ is factorial and $\F[V]^G \subset \F[V]$ is saturated by Theorem \ref{main-theorem}, it follows that
\[
M \cap \mathsf q (M^G) \subset M \cap \F[V] \cap \mathsf q ( \F[V]^G ) \subset M \cap \F[V]^G = M^G
\]
whence $M^G \subset M$ is saturated and thus $M^G$ is a Krull monoid.

3. $V$ is a multiplicity free $G$-module if and only if  $\chi_1,\ldots,\chi_n$ are pairwise distinct. Since $\psi \colon  M \to   \mathcal F (\widehat G_V)$ maps the  primes $x_1, \ldots, x_n$ of $M$ onto the primes $\chi_1, \ldots, \chi_n$ of  $\mathcal F (\widehat G_V)$, $\psi$ is an isomorphism if and only if $\chi_1,\ldots,\chi_n$ are pairwise distinct.

4. Let $k \in \N$ and $M^G_+=M^G \setminus \{1\}$. Then $M^G \setminus (M_+^G)^{k+1} = \mathcal M_k (M^G)$. Since $\psi \t_{M^G}\colon M^G \to \mathcal B ( \widehat G_V)$
is degree-preserving transfer homomorphism, Proposition~\ref{3.7}.3 implies that $\mathsf D_k(M^G)=\mathsf D_k(\widehat G_V)$. Since $\F[V]^G$ is spanned by $M^G$, $(\F[V]^G_+)^{k+1}$ is spanned by $(M^G_+)^{k+1}$. Therefore the top degree of a homogeneous $G$-invariant not contained in $(\F[V]^G_+)^{k+1}$ coincides with
the maximal  degree of a monomial in $M^G_+ \setminus (M^G_+)^{k+1}= \mathcal M_k (M^G)$. Thus $\beta_k (G,V)= \mathsf D_k (M^G)$. For the $k$th Noether number $\beta_k (G)$ we have
\[
\begin{aligned}
 \beta_k(G) & = \sup\{ \beta_k(G,W): W \text{ is a $G$-module over $\F$} \} \\
   & = \sup \{ \mathsf D_k (\widehat G_W) \colon W \text{ is a $G$-module over $\F$} \} = \mathsf D_k (\widehat G)
\end{aligned}
\]
because for the regular representation $V_{\text{\rm reg}}$ we have $\widehat G_{V_{\text{\rm reg}
}} = \widehat G$.
\end{proof}

Recalling the notation of   Theorem \ref{main-theorem}, we have
\[
H = (\F[V]^G \setminus \{0\})_{\red} \quad \text{and} \quad D_0  = \{ {\gcd_D} (X) \colon X \subset H \ \text{finite} \} \subset D=(\F[V]^{G,\mathrm{rel}}\setminus\{0\})_{\red} \,.
\]
Furthermore, $M \subset \F[V]=\F[x_1, \ldots , x_n]$ is the monoid of monomials,  $M^G = M \cap \F[V]^G$, and we can view $M$ as a submonoid of $H$ and then $M^G = M \cap H$.
Since $M \subset H$ is saturated, $M = \mathsf q (M) \cap H$, and
\[
\mathsf q (M)/\mathsf q (M^G) = \mathsf q (M)/\mathsf q (M\cap H) = \mathsf q (M)/(\mathsf q (M) \cap \mathsf q (H)) \cong \mathsf q (M)\mathsf q (H)/\mathsf q (H) \subset \mathsf q (D)/\mathsf q (H) \,,
\]
we consider $\mathsf q (M)/\mathsf q (M^G)$ as a subset of $\mathsf q (D)/\mathsf q (H)$.

\begin{proposition} \label{prop:xi^e(xi)}~
Let all notation be as above and set $M_0 = M \cap D_0$.
\begin{enumerate}
\item  $M_0 \subset D_0$ is divisor closed whence $M_0$ is  free abelian, and $\mathcal{A}(M_0)=M\cap \mathcal{A}(D_0)=\{x_1^{e(x_1)},\ldots,x_n^{e(x_n)}\}$.

\item We have $e(x_i)=\min\{k\in\N \colon \chi_i^k\in \langle \chi_j\mid j\neq i \rangle \}$.

\item  $\Hom^0(\rho(G),\F^\bullet)$ is generated by $\{ \chi_1^{e(x_1)}, \ldots,\chi_n^{e(x_n)} \}$ and  $\F[x_1^{e(x_1)},\ldots,x_n^{e(x_n)}]=\F[V]^{G_1}$, where $G_1$ denotes the subgroup of $\rho(G)$ generated by the pseudoreflections in $\rho(G)$.

\item The embedding $M^G \hookrightarrow M_0$ is a divisor theory,
      \[
\overline w \mid_{\mathsf q (M_0)/\mathsf q (M^G)} \colon \mathcal C ( M^G) = \mathsf q (M_0)/\mathsf q (M^G) \to \Hom^0( \rho (G),\F^\bullet)
      \]
      is an isomorphism, and $\overline w ( \mathcal{C}(M^G)^*) =\{\chi_1^{e(x_1)},\ldots,\chi_n^{e(x_n)}\}$.
\end{enumerate}
\end{proposition}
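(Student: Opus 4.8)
The plan is to intersect the divisor theory $H\hookrightarrow D_0$ of Theorem~\ref{main-theorem} with the monomial monoid $M$ and to translate weights of relative invariants into characters. For Part~1 I would first observe that the $G$-orbit of $\F^\bullet x_i$ in $P$ is the singleton $\{\F^\bullet x_i\}$, since $g\cdot x_i=\chi_i(g)x_i$ is only a scalar multiple of $x_i$; hence, under the identification $P/G\subset E=\mathcal F(P)$, the basis element of $D_0$ attached to this orbit is the monomial $x_i^{e(x_i)}$, where $e(x_i)$ is the ramification index $e(q_i)$ of Theorem~\ref{main-theorem}.3. Since $M$ is the free submonoid of $E$ generated by the subset $\{\F^\bullet x_1,\ldots,\F^\bullet x_n\}$ of the basis $P$, it is divisor closed in $E$; consequently $M_0=M\cap D_0$ is divisor closed in $D_0$, and a divisor closed submonoid of a free abelian monoid is free abelian with basis $\mathcal A(D_0)\cap M$. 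A basis element $q^{e(q)}$ of $D_0$ lies in $M$ only if the orbit $q$ consists of associates of the $x_k$, which (each such orbit being a singleton) forces $q=\F^\bullet x_k$; so $\mathcal A(M_0)=M\cap\mathcal A(D_0)=\{x_1^{e(x_1)},\ldots,x_n^{e(x_n)}\}$.

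\textbf{Part 2.} By the definition of the ramification index (equivalently by Theorem~\ref{main-theorem}.3), $e(x_i)=\min\{\mathsf v_{x_i}(h)\colon h\in\F[V]^G,\ \mathsf v_{x_i}(h)\ge 1\}$; since $\F[V]^G$ is spanned by $M^G$ and every monomial of an invariant polynomial is itself $G$-invariant (Proposition~\ref{prop:bschmid}.1 and its proof), this equals $\min\{\mathsf v_{x_i}(m)\colon m\in M^G,\ x_i\mid m\}$. A monomial $m=\prod_j x_j^{a_j}$ with $a_i\ge 1$ is $G$-invariant iff $\prod_j\chi_j^{a_j}$ is trivial, i.e.\ iff $\chi_i^{-a_i}$ lies in the submonoid of $\Hom(\rho(G),\F^\bullet)$ generated by $\{\chi_j\colon j\ne i\}$; as this group is finite the submonoid equals the subgroup $\langle\chi_j\colon j\ne i\rangle$, and $\chi_i^{-a_i}\in\langle\chi_j\colon j\ne i\rangle\Leftrightarrow\chi_i^{a_i}\in\langle\chi_j\colon j\ne i\rangle$. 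Minimizing over admissible exponents gives $e(x_i)=\min\{k\in\N\colon\chi_i^{k}\in\langle\chi_j\colon j\ne i\rangle\}$.

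\textbf{Part 3.} I would fix the common eigenbasis $(v_1,\ldots,v_n)$ of $V$ dual to $(x_1,\ldots,x_n)$, which exists since $\rho(G)$ is a finite abelian group of semisimple transformations over an algebraically closed field. Set $Z_i=\bigcap_{j\ne i}\ker\chi_j\le\rho(G)$. Using $\mathrm{char}(\F)\nmid|G|$, every $g\in Z_i\setminus\{1\}$ fixes the hyperplane $\langle v_k\colon k\ne i\rangle$ pointwise and has an eigenvalue $\ne 1$, hence is a pseudoreflection, and conversely every pseudoreflection of $\rho(G)$ is diagonal in this basis with exactly one eigenvalue $\ne 1$, so lies in some $Z_i$; thus $G_1=Z_1\cdots Z_n$. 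The restriction $\chi_i|_{Z_i}$ is injective (its kernel is $\bigcap_j\ker\chi_j=\{1\}$), so $Z_i$ is cyclic, and combining this with Part~2 and the duality $\langle\chi_j\colon j\ne i\rangle=Z_i^{\perp}$ gives $e(x_i)=|Z_i|$. A monomial $\prod_j x_j^{a_j}$ is $G_1$-fixed iff it is $Z_i$-fixed for every $i$ iff $e(x_i)=|Z_i|$ divides $a_i$ for all $i$; since $\F[V]^{G_1}$ is spanned by its invariant monomials, $\F[V]^{G_1}=\F[x_1^{e(x_1)},\ldots,x_n^{e(x_n)}]$. Finally $\Hom^0(\rho(G),\F^\bullet)=G_1^{\perp}=\bigcap_iZ_i^{\perp}$; since $\chi_i\in Z_j^{\perp}$ for $j\ne i$ and $\chi_i^{e(x_i)}\in Z_i^{\perp}$, while $Z_i^{\perp}=\langle\chi_j\colon j\ne i\rangle$ by double annihilator, a short computation identifies $\bigcap_iZ_i^{\perp}$ with $\langle\chi_1^{e(x_1)},\ldots,\chi_n^{e(x_n)}\rangle$.

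\textbf{Part 4 and the main obstacle.} Since $M^G\hookrightarrow M$ is saturated (Proposition~\ref{prop:bschmid}.2) and $M^G\subset M_0\subset M$, the embedding $M^G\hookrightarrow M_0$ is saturated, in particular a divisor homomorphism into the free abelian monoid $M_0$ of Part~1. To see it is a \emph{divisor theory}, I would fix $i$, put $N=\ord(\chi_i)$, pick $m\in M^G$ with $\mathsf v_{x_i}(m)=e(x_i)$, note $x_i^{N}\in M^G$ and $e(x_i)\mid N$, and check in $M_0$ (basis $\{x_1^{e(x_1)},\ldots,x_n^{e(x_n)}\}$) that $\gcd_{M_0}(m,x_i^{N})=x_i^{e(x_i)}$; so every atom of $M_0$ is a gcd of finitely many elements of $M^G$. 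Hence $\mathcal C(M^G)=\mathsf q(M_0)/\mathsf q(M^G)$ with $\mathcal C(M^G)^*=\{[x_i^{e(x_i)}]\colon i\in[1,n]\}$. The weight map $w$ is trivial on $M^G$ and sends $M_0\subset D_0$ into $\Hom^0(\rho(G),\F^\bullet)=w(D_0)$, so it descends to $\overline w\colon\mathsf q(M_0)/\mathsf q(M^G)\to\Hom^0(\rho(G),\F^\bullet)$ with $\overline w([x_i^{e(x_i)}])=\chi_i^{e(x_i)}$; it is surjective by Part~3, and injective because a quotient $m_1/m_2$ of elements of $M_0$ with trivial weight lies in $\F(V)^G\cap\mathsf q(M)$ and becomes a quotient of invariant monomials after multiplying numerator and denominator by a monomial of weight $w(m_1)^{-1}$, such a monomial existing since the $\chi_j$ generate $\Hom(\rho(G),\F^\bullet)$. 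The main obstacle will be the explicit gcd computations exhibiting the atoms of $M_0$ as gcds of invariant monomials in Part~4, together with the duality step identifying $e(x_i)$ with $|Z_i|$ and $\bigcap_iZ_i^{\perp}$ with $\langle\chi_i^{e(x_i)}\rangle$ in Part~3; the bookkeeping of $\mathcal A(D_0)\cap M$ in Part~1 also requires some care.
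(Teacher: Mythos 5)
Your proposal is correct, and its overall skeleton (intersect the divisor theory $H\hookrightarrow D_0$ of Theorem~\ref{main-theorem} with the monomial monoid and translate weights into characters) matches the paper; Parts~1 and~2 in particular coincide with the paper's argument up to phrasing. Where you genuinely diverge is in Parts~3 and~4. For Part~3 the paper stays inside Theorem~\ref{main-theorem}.4: it uses $D_0=\{f\in D\colon w(f)\in \Hom^0(G,\F^\bullet)\}$ and $\Hom^0(G,\F^\bullet)=\langle w(\mathcal A(D_0))\rangle$, and passes from an atom of $D_0$ to one of its monomials to land in $\mathcal A(M_0)$; you instead run a self-contained Pontryagin-duality computation with the pointwise stabilizers $Z_i=\bigcap_{j\ne i}\ker\chi_i$ of the coordinate hyperplanes, identifying $G_1=Z_1\cdots Z_n$ and $e(x_i)=|Z_i|$ and computing annihilators. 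Your route is more explicit (it recovers the content of Lemma~\ref{lemma:nakajima}.1 in the diagonal abelian case and yields both the generation statement and $\F[V]^{G_1}=\F[x_1^{e(x_1)},\ldots,x_n^{e(x_n)}]$ by direct calculation) at the cost of redoing by hand what Theorem~\ref{main-theorem}.4 already packages. For Part~4 the paper derives the divisor theory by taking the monomials of the polynomials in a representation $m=\gcd_{D_0}(Y)$ with $Y\subset H$ finite, and obtains the isomorphism by restricting the monomorphism $\overline w$, which requires the identification $\mathsf q(M_0)/\mathsf q(M^G)\subset \mathsf q(D_0)/\mathsf q(H)$ set up before the proposition; your explicit witnesses $\gcd_{M_0}(m,x_i^{N})=x_i^{e(x_i)}$ with $N=\ord(\chi_i)$ (valid since $e(x_i)\mid N$ and $\mathsf v_{x_i}(m)=e(x_i)$) and your direct injectivity argument (multiplying numerator and denominator by a monomial of weight $w(m_1)^{-1}$, which exists because $\chi_1,\ldots,\chi_n$ generate the finite group $\Hom(\rho(G),\F^\bullet)$) are more elementary and bypass that identification. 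Both executions are sound; yours buys concreteness and independence from parts of Theorem~\ref{main-theorem}.4, the paper's buys brevity by reusing the machinery already established.
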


\begin{proof}
1. If the product of two polynomials in $\F[V]$ has a single non-zero term, then both polynomials must have only one non-zero term.
Thus,  if $ab\in M$ for some $a,b\in D$, then both $a$ and $b$ belong to $M$. Hence $M \subset D$ is divisor closed implying that  $M_0 \subset D_0$ is divisor-closed. Therefore $\mathcal{A}(M_0)=M\cap \mathcal{A}(D_0)$.

By Theorem \ref{main-theorem}.3, $\mathcal A (D_0) = \{ q^{e(q)} \colon q \in \mathcal A (D) \}$. The divisor closedness of  $M$ in  $D$ implies that if $q^{e(q)}\in M$, then $q\in M \cap \mathcal A (D) =\mathcal A (M)= \{x_1,\ldots,x_n\}$. Thus  $M\cap \mathcal{A}(D_0)=\{x_1^{e(x_1)},\ldots,x_n^{e(x_n)}\}$.

2. For $i \in [1,n]$, we have
\[
e(x_i)=\min\{\mathsf  v_{x_i}(h) \colon x_i\mid_Dh , h \in H\} = \min\{\mathsf  v_{x_i}(m) \colon x_i\mid_Dm, m \in  M^G  \} \,,
\]
where the second equality holds because  for all $h \in H$ we have \newline $\mathsf v_{x_i} (h) = \min \{ \mathsf v_{x_i} (m) \colon m \ \text{ranges over the monomials of} \ h \}$. Note that a monomial $m = \prod_{i=1}^n x_i^{a_i}$ lies in $M^G$ if and only if $\prod_{i=1}^n \chi_i^{[a_i]}$ is a product-one sequence over $\widehat G$ if and only if $\chi_i^{a_i} = \prod_{j \ne i} \chi_j^{-a_j}$. Thus $\min\{\mathsf  v_{x_i}(m) \colon x_i\mid_Dm, m \in  M^G  \} = \min\{k\in\N \colon \chi_i^k\in \langle \chi_j\mid j\neq i \rangle \}$.

3. By Theorem \ref{main-theorem}.4,
$\Hom^0( \rho (G),\F^\bullet) = w (D_0)$ and hence $\Hom^0( \rho (G),\F^\bullet)$ is generated by $w ( \mathcal A (D_0))$.
Thus by 1., it remains to show that $\langle w ( \mathcal A (D_0)) \rangle = \langle w ( \mathcal A (M_0)) \rangle$. Since $\mathcal A (M_0) \subset \mathcal A (D_0)$, it follows that $\langle w ( \mathcal A (D_0)) \rangle  \supset \langle w ( \mathcal A (M_0)) \rangle$. To show the reverse inclusion, let $a \in \mathcal A (D_0)$. For any monomial $m$ occurring in $a$, we have $w (m) = w (a)$. By Theorem \ref{main-theorem}.4,  $D_0 =  \{f\in D\colon w(f)\in  \Hom^0( \rho (G),\F^\bullet)\}$  whence $m \in M \cap D_0 = M_0$ and clearly $w (m) \in \langle w ( \mathcal A (M_0)) \rangle$.

Recall that each monomial in $\F[V]$ spans a $G$-invariant subspace. Thus $f\in\F[V]$ is $G_1$-invariant if and only if all monomials of $f$ are $G_1$-invariant. Furthermore,  a monomial $m$ is $G_1$-invariant if and only if $w (m)$ contains $G_1$ in its kernel; equivalently  (by the characterization of $D_0$)  $m \in M \cap D_0 = M_0$. Thus $\F[V]^{G_1}$ is generated by $\mathcal A (M_0)$ and hence the assertion follows from 1.

4. Since $M\subset D$, $M_0 \subset D_0$ and $M^G \subset H$ are divisor closed and since
 the embedding $H\subset D_0$ is a divisor theory (Theorem~\ref{main-theorem}.4), $M^G \hookrightarrow M_0$ is a divisor homomorphism into a free abelian monoid. Let $m \in M_0$. Then $m \in D_0$ and there is a finite subset $Y \subset H$ such that $m = \gcd_{D_0} (Y)$. Let $X \subset D_0 \cap M = M_0$ be the set of all  monomials occurring in some $y \in Y$. Then $m = \gcd_{D_0} (X) = \gcd_{M_0} (X)$, where the last equality holds because $M_0 \subset D_0$ is divisor closed.

Restricting the isomorphism
\[
\overline w \mid_{\mathsf q (D_0)/\mathsf q (H)} \colon \mathcal C ( \F[V]^G) = \mathsf q (D_0)/\mathsf q (H) \to \Hom^0( \rho (G),\F^\bullet)
\]
 from Theorem \ref{main-theorem}, we obtain a monomorphism
\[
 \overline w\mid_{\mathsf q (M_0)/\mathsf q (M^G)} \colon \mathcal C (M^G) = \mathsf q (M_0)/\mathsf q (M^G) \to \Hom^0( \rho (G),\F^\bullet) \,.
\]
By 1. and 3., the image contains the generating set $\{ \chi_1^{e(x_1)}, \ldots,\chi_n^{e(x_n)} \}$ of the group $\Hom^0( \rho (G),\F^\bullet)$ and hence the above monomorphism is an isomorphism. The last statement follows from 1. by $\overline{w}(\mathcal{C}(M^G)^*)=\overline{w}(\mathcal{A}(M_0))$.
\end{proof}

\begin{proposition}\label{prop:diagram}~
Let $M \subset \F[x_1, \ldots, x_n]$ be  the multiplicative monoid of monomials,
and  $M^G \subset M$  the submonoid of  $G$-invariant monomials.
\begin{enumerate}
\item Every class of $\mathcal C ( \F[V]^G )$ contains a prime divisor.

\item We have the following commutative diagram of monoid homomorphisms
\[
\xymatrix@C=2cm{
H \ar[r]^{\theta_1} & \mathcal B( \mathcal C (H) ) \ar[r]^{w_1}_{\cong} & \mathcal B( \Hom^0( \rho (G),\F^\bullet)  ) \\
  & \mathcal B( \widehat G_V) \ar[ru]^{\nu} & \\
M^G \ar[rr]^{\theta_2} \ar[ru]^{\psi \t_{M^G}} \ar@{^{(}->}[uu]& & \mathcal B ( \mathcal C(M^G)^*) \ar@{^{(}->}[uu]^{w_2}
}
\]
where
\begin{itemize}
\item $\theta_1$ and $\theta_2$ are  transfer homomorphisms of Krull monoids as given in Proposition \ref{3.8}.

\item $w_1$ is the extension to the monoid of product-one sequences of the group isomorphism $\overline w \mid_{\mathsf q (D_0)/\mathsf q (H)}$ given in Theorem \ref{main-theorem}.4

\item $w_2$ is the extension to the monoid of product-one sequences of the restriction to $\mathcal C(M^G)^*$ of the group isomorphism $\overline w \mid_{\mathsf q (M_0)/\mathsf q (M^G)}$ given in Proposition \ref{prop:xi^e(xi)}

\item $\psi$    is given in Proposition \ref{prop:bschmid}.

\item  $\nu$ will be defined below (indeed,  $\nu$ is a transfer homomorphism as given in Proposition \ref{3.9}).
\end{itemize}

\item  If $\widehat G_V=\widehat G$, then every class of  $\mathcal{C}(M^G)$ contains a prime divisor.

\end{enumerate}
\end{proposition}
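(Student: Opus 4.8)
Since the transfer homomorphism $\theta_1$ in part (2) is required to map into $\mathcal B(\mathcal C(H))$ rather than into $\mathcal B(\mathcal C(H)^*)$, I would prove the three assertions in the order (1), (2), (3), so that the diagram is meaningful once (1) is available. Throughout I write $H=(\F[V]^G\setminus\{0\})_{\red}$ and keep the notation of Theorem~\ref{main-theorem} and Proposition~\ref{prop:xi^e(xi)}.

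\textbf{Part (1).} I would reduce to the already recorded case where $G$ acts trivially on one variable, for which \cite[Theorem 14.3]{Fo73} gives a prime divisor in every class. Apply this to $W=V\oplus U$ with $U\cong\F$ a trivial $G$-module: then $\F[W]^G=\F[V]^G[t]$ is a polynomial ring over $\F[V]^G$, adjoining a trivial summand produces no new pseudoreflections, so by Theorem~\ref{main-theorem} (equivalently, by Gauss's theorem for Krull domains) the inclusion induces an isomorphism $\mathcal C(\F[V]^G)\xrightarrow{\cong}\mathcal C(\F[V]^G[t])$, $[\mathfrak p]\mapsto[\mathfrak p[t]]$. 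A height-one prime of $\F[V]^G[t]$ either contracts to a height-one prime $\mathfrak p$ of $\F[V]^G$, and then equals $\mathfrak p[t]$, or contracts to $(0)$, and then is principal, hence lies in the trivial class. Since every class of $\F[V]^G[t]$ contains a prime divisor, this correspondence puts a prime divisor into every \emph{nontrivial} class of $\mathcal C(\F[V]^G)$. For the trivial class one needs a prime \emph{element} of $\F[V]^G$; as $\F[V]^G$ is a finitely generated normal domain over the algebraically closed field $\F$ of dimension $\dim_\F V$, a general-position (Bertini irreducibility) argument yields an irreducible reduced hypersurface section when $\dim_\F V\ge 2$, while for $\dim_\F V\le 1$ the ring $\F[V]^G$ is a polynomial ring and the claim is immediate.

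\textbf{Part (2).} Here $\theta_1,\theta_2$ are the canonical transfer homomorphisms of Proposition~\ref{3.8} for the divisor theories $H\hookrightarrow D_0$ (Theorem~\ref{main-theorem}) and $M^G\hookrightarrow M_0$ (Proposition~\ref{prop:xi^e(xi)}), the target of $\theta_1$ being $\mathcal B(\mathcal C(H))$ by part (1); the isomorphisms $w_1,w_2$ are the functorial extensions to product-one monoids of the group isomorphisms $\overline w|_{\mathsf q(D_0)/\mathsf q(H)}$ and $\overline w|_{\mathsf q(M_0)/\mathsf q(M^G)}$; and $\psi|_{M^G}$ is from Proposition~\ref{prop:bschmid}. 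The only new map is $\nu$, and getting it right is the crux. I would define it via Proposition~\ref{3.9} with $G_0=\widehat G_V$, taking $G_1$ to be the characters occurring in $V^*$ with multiplicity one and $G_2$ those of higher multiplicity, and $e(\chi)=e(\widehat G_V,\chi)$. The point of this split is the identity $\widehat G_V^*=\{\chi_1^{e(x_1)},\dots,\chi_n^{e(x_n)}\}$: for a multiplicity-one weight $\chi_i$ one has $e(x_i)=e(\widehat G_V,\chi_i)$, both discarding the unique copy of $\chi_i$, whereas for a higher-multiplicity weight a second copy survives in $\{\chi_j:j\ne i\}$, forcing $e(x_i)=1$ and $\chi_i^{e(x_i)}=\chi_i\in G_2$. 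By Proposition~\ref{prop:xi^e(xi)}.4 this set is $\overline w(\mathcal C(M^G)^*)\subset\Hom^0(\rho(G),\F^\bullet)$, which is how $\nu$ is read as a map into $\mathcal B(\Hom^0(\rho(G),\F^\bullet))$.

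Commutativity I would verify on an atom $m=x_1^{a_1}\cdots x_n^{a_n}\in\mathcal A(M^G)$. Since $M_0\subset D_0$ is divisor closed with $\mathcal A(M_0)=\{x_i^{e(x_i)}\}$ (Proposition~\ref{prop:xi^e(xi)}.1), the $D_0$- and $M_0$-factorizations of $m$ coincide and use only the coordinate primes, so
\[
w_1\theta_1(m)=\prod_{i=1}^n\big(\chi_i^{e(x_i)}\big)^{[a_i/e(x_i)]}=w_2\theta_2(m),
\]
the two weight isomorphisms agreeing because both are induced by the single homomorphism $w$; and the multiplicity bookkeeping above shows that $\nu\big(\psi|_{M^G}(m)\big)=\nu\big(\prod_i\chi_i^{[a_i]}\big)$ equals the same product. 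As the atoms generate $M^G$ and all arrows are homomorphisms, every triangle commutes. \textbf{Part (3).} Assume $\widehat G_V=\widehat G$. For $|G|\ge 3$ each nonidentity $g$ is nontrivial on at least $|\widehat G|/2\ge 2$ of the weights $\chi_i$, hence fixes no hyperplane, so there are no pseudoreflections and $\Hom^0(\rho(G),\F^\bullet)=\widehat G$; the cases $|G|\le 2$, where $\Hom^0$ is trivial, are checked directly. Moreover $e(x_i)=1$ for all $i$, because $\{\chi_j:j\ne i\}$ still contains every character except possibly one copy of $\chi_i$ and therefore generates $\widehat G\ni\chi_i$ (again the small cases being immediate). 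Hence by Proposition~\ref{prop:xi^e(xi)}.4,
\[
\overline w\big(\mathcal C(M^G)^*\big)=\{\chi_1^{e(x_1)},\dots,\chi_n^{e(x_n)}\}=\{\chi_1,\dots,\chi_n\}=\widehat G=\Hom^0(\rho(G),\F^\bullet),
\]
so $\mathcal C(M^G)^*=\mathcal C(M^G)$, i.e. every class of $\mathcal C(M^G)$ contains a prime divisor.

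I expect the genuine difficulty to sit in two places. In part (1), supplying a prime element for the \emph{trivial} class is the delicate step: the reduction to the polynomial ring covers only the nontrivial classes, and the naive homogeneous candidates all factor — already for $V=\F^2$ with $G=C_2$ acting by $-1$ the ring $\F[x^2,xy,y^2]$ has no homogeneous prime element, and one must use an inhomogeneous section such as $x^2-1$ — so the existence of a principal height-one prime really requires a general-position argument. In part (2), the subtlety is that the two ramification indices $e(x_i)$ (index-based) and $e(\widehat G_V,\chi_i)$ (set-based) differ exactly when $V$ is not multiplicity free; reconciling them is precisely what the split $G_0=G_1\uplus G_2$ in the definition of $\nu$ accomplishes, and checking this carefully is where the argument must not be rushed.
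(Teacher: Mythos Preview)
Your parts (2) and (3) are essentially the paper's argument: the same partition $\widehat G_V=G_1\uplus G_2$ by multiplicity, the same identification of $e(x_i)$ with the combinatorial $e(\widehat G_V,\chi_i)$ on $G_1$ and with $1$ on $G_2$, and commutativity checked by factoring an element $m\in M^G$ in $M_0\subset D_0$. Your part (3) makes the absence of pseudoreflections explicit, which the paper leaves implicit, but the substance is identical.

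Part (1) is where you diverge. The paper's proof is one line: by Proposition~\ref{prop:bschmid}.1 the ring $\F[V]^G$ is the monoid $\F$-algebra of the Krull monoid $M^G$, and Chang's theorem \cite{chang} asserts that every divisor class of a Krull monoid domain contains a height-one prime. This disposes of all classes, trivial and nontrivial, at once. Your detour through $\F[V]^G[t]$ and Fossum correctly recovers the nontrivial classes, but --- as you yourself flag --- leaves the trivial class to a Bertini argument that you do not spell out. Over an algebraically closed field a general affine hyperplane section of a reduced closed subscheme of $\mathbb A^N$ is indeed reduced (the closed embedding is unramified, so transversality is generic even in positive characteristic) and, in dimension $\ge 2$, irreducible; so your approach can be completed, but it needs a careful citation in characteristic $p$ and a separate check for $\dim_\F V\le 1$. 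The monoid-algebra observation bypasses all of this, and since you already have Proposition~\ref{prop:bschmid}.1 at hand, it is the natural route.
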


\begin{proof}
 1. By Proposition~\ref{prop:bschmid}.1, $\F[V]^G$ is the monoid algebra of  $M^G$ over $\F$. Thus,  by \cite[Theorem 8]{chang},
every  class of $\F[V]^G$ contains a prime divisor.

2. In order to show that the diagram is commutative, we fix an $m \in M^G$. We consider the divisor theory $M^G \hookrightarrow M_0$ from
Proposition~\ref{prop:xi^e(xi)}  and factorize $m$ in $M_0$, say $m = \prod_{i=1}^n \big(x_i^{e (x_i)} \big)^{a_i}$ where $a_1, \ldots, a_n \in \N_0$. Since $\overline w (x_i^{e(x_i)}) = \chi_i^{e(x_i)}$ for all $i \in [1,n]$, it follows that
\[
(w_2 \circ \theta_2) (m) = (\chi_1^{e(x_1)})^{[a_1]} \bdot \ldots \bdot (\chi_n^{e(x_n)})^{[a_n]} \in \mathcal B( \Hom^0( \rho (G),\F^\bullet)  ) \,.
\]
Next we view $m$ as an element in $H$ and consider the divisor theory $H \hookrightarrow D_0$. Since $M_0 \subset D_0$ is divisor closed,  $m = \prod_{i=1}^n \big(x_i^{e (x_i)} \big)^{a_i}$ is a factorization of $m$ in $D_0$. Therefore $(w_1 \circ \theta_1) (m) = (w_2 \circ \theta_2) (m)$.

By definition of $\psi$, we infer that
\[
\psi (m) = \chi_1^{[ e(x_1) a_1]} \bdot \ldots \bdot \chi_n^{[e (x_n) a_n]} \,.
\]
We define a partition of $\widehat G_V = G_1 \cup G_2$, where $G_2 = \{ \chi_i \colon \chi_i = \chi_j \ \text{for some distinct} \ i, j \in [1,n] \}$ and $G_1 = \widehat G_V \setminus G_2$.  Let $\nu \colon \mathcal B (\widehat G_V) \to  \mathcal B( \Hom^0( \rho (G),\F^\bullet)  )$ be defined as in Proposition \ref{3.9} (with respect to the partition $G_0 = G_1 \uplus G_2$). By Proposition \ref{prop:xi^e(xi)}.2, $e(x_i)=1$ if $\chi_i \in G_2$, and   $e (x_i)$  equals  the number $e(\chi_i)$  in Proposition \ref{3.9} if $\chi_i \in G_1$. Therefore  it follows that
\[
\nu ( \psi (m) )= (\chi_1^{e(x_1)})^{[a_1]} \bdot \ldots \bdot (\chi_n^{e(x_n)})^{[a_n]} \,,
\]
and hence the diagram commutes.

3. In a finite abelian group all elements are contained in the subgroup generated by the other elements, with the only exception of the generator of a $2$-element group.
Therefore unless $G$ is the $2$-element group and the non-trivial character occurs with multiplicity one in the sequence $\chi_1 \bdot \ldots \bdot \chi_n$, all the $e(x_i)=1$  by Proposition~\ref{prop:xi^e(xi)}.3,  and the elements $x_i$ are all prime  in $M_0$, so they represent all the divisor classes, as $i$ varies in $[1,n]$.
In the missing case we have $\F[V]^G=\F[x_1,\ldots,x_{n-1},x_n^2]$ (after a renumbering of the variables if necessary), hence both class groups are trivial, and $x_1$
and $x_2^2$ are prime elements in the unique class.
 \end{proof}

Thus Proposition \ref{prop:diagram}.1 gives a partial answer to Problem \ref{characteristic-problem}. Using that notation it states that $m_g \ge 1$ for all $g \in  \mathcal C ( \F[V]^G )$.


\begin{example}\label{example:nunotsurjective}
The set $\mathcal{C}(M^G)^*$ may be a proper subset of $\mathcal C (M^G)$, and consequently
the monoid homomorphism $\nu:  \mathcal B ( \widehat G_V)\to  \mathcal B ( \Hom^0( \rho (G),\F^\bullet)   )$ is not surjective in general.

1. Indeed, let $G$ be cyclic of order $3$, $g \in G$ with $\ord (g)=3$, and the action on $\F[x_1,x_2,x_3]$ is given by  $g\cdot x_i=\omega x_i$, where $\omega$ is a primitive cubic root of $1$. Then $\chi_1=\chi_2=\chi_3=\chi$, so $e(x_1)=e(x_2)=e(x_3)=1$,  implying  $\overline w (\mathcal{C}(M^G)^*) =\{\chi\}$ (each of the $x_i$ is a prime element in the class $\chi$),
whereas  $\overline w (\mathcal{C}(M^G)) =\{ \chi,\chi^2,\chi^3=1\}$, the $3$-element group.
Thus   $ \mathcal{B}(\widehat G_V)=\{\chi^{[3k]} \colon k\in\N_0\}$,  and   $\nu(\mathcal{B}(\widehat G_V))$ is the free abelian monoid $\mathcal{F}(\{\chi^3\})$ generated by $\chi^3=1\in\widehat G$.
The polynomials $x_1^2+x_2x_3$ and  $x_1^3+x_2^2x_3$ are irreducible, they are relative invariants of   weight $\chi^2$ and $\chi^3$, so they represent  prime elements    of $D_0$ in the remaining classes  $\chi^2$ and  $\chi^3=1$.

2. To provide an example with a multiplicity free module,
let $G$ be cyclic of order $5$, $g \in G$ with $\ord (g)=5$, and the action  on $\F[x_1,x_2,x_3]$ is given by $g\cdot x_1=\omega x_1$, $g\cdot x_2=\omega^2 x_2$, $g\cdot x_3=\omega^3x_3$,
where $\omega$ is a primitive fifth root of $1$. Then setting $\chi=\chi_1$, we have $\chi_2=\chi^2$, $\chi_3=\chi^3$ and $\overline w (\mathcal{C}(M^G)) =\langle \chi \rangle$ is the $5$-element group, so  $V$ is multiplicity free. Still we have $e(x_1)=e(x_2)=e(x_3)=1$, so  $\overline w (\mathcal{C}(M^G)^* ) =\{\chi,\chi^2,\chi^3\}$ (and
$x_1,x_2,x_3$ are the prime elements of $M_0$ in these classes).  The remaining classes  $\chi^4$ and $\chi^5=1$ contain the prime elements of $D_0$ represented by
 $x_2^2+x_1x_3$ and  $x_1^5+x_2x_3$.
\end{example}

\subsection{{\bf   A  monoid associated with $G$-modules}}~ \label{5.D}

\centerline{\it Throughout this subsection, suppose that $\mathrm{char}(\F)\nmid |G|$.}

\smallskip
In this subsection we  discuss   a monoid associated with representations of not necessarily abelian groups which in the case of abelian groups recovers the monoid of
$G$-invariant monomials. Decompose $V$ into the direct sum of  $G$-modules:
 \begin{align}\label{decomp} V= V_1 \oplus ... \oplus V_r \end{align}
 and denote by $\rho_i\colon G\to \GL(V_i)$ the corresponding group homomorphisms.
Then \eqref{decomp} induces a decomposition  of $\F[V]$  into multihomogeneous components as follows.
The coordinate ring $\F[V]$ is the symmetric algebra
$\Sym(V^*)= \bigoplus_{n=0}^{\infty} \Sym^n (V^*)$.  Writing
$\F[V]_{a} = \Sym^{a_1}(V^*_1) \otimes ... \otimes \Sym^{a_r}(V^*_r)$ we have
$\Sym^n(V^*)=  \oplus_{|a| = n}\F[V]_{a}$, and hence
$\F[V] = \oplus_{a \in \N_0^r} \F[V]_{a}$.
The summands $\F[V]_{a}$ are $G$-submodules in $\F[V]$, and $\F[V]_{a}\F[V]_{b}\subset \F[V]_{a+b}$, so $\F[V]$ is a $\N_0^r$-graded algebra.
Moreover, $\F[V]^G$ is spanned by its multihomogeneous components $\F[V]^G_{a}=\F[V]^G\cap \F[V]_{a}$. For $f\in \F[V]_{a}$ we call $a$ the
\emph{multidegree} of $f$.
We are in the position to define
\begin{align}\label{block_def}
\mathcal{B}(G,V) = \{ a \in \N_0^r \colon \F[V]^G_{a} \neq \{ 0\} \}
\end{align}
the set of multidegrees of multihomogeneous $G$-invariants. We give  precise information on $\mathcal{B}(G,V)$ in terms of quantities associated to the direct summands $V_i$ of $V$.
For $i \in [1,r]$ denote by $c_i$ the greatest common divisor of the elements of $\mathcal{B}(G,V_i)$, and  $F_i$  the Frobenius number of the numerical semigroup $\mathcal{B}(G,V_i)\subset \N_0$, so $F_i$ is the minimal positive integer $N$ such that $\mathcal{B}(G,V_i)$ contains $N+kc_i$ for all $k \in \N_0$.

\begin{proposition} \label{B(G,V)-is-C}~

\begin{enumerate}
\item $\mathcal B (G,V) \subset \N_0^r$ is a reduced finitely generated {\rm C}-monoid.
\item For each $i \in [1, r]$ and all $a \in \N_0^r $ satisfying $a_i\ge b(G,V_i)+F_i$ we have
      \begin{equation}\label{eq:alpha}
      a \in \mathcal  B (G,V) \qquad \text{if and only if} \qquad c_ie_i + a \in  \mathcal B (G,V)  \,.
      \end{equation}
\item For each $i \in [1, r]$ we have $c_i=|\rho_i(G)\cap \F^\bullet \mathrm{id}_{V_i}|$.
\end{enumerate}
\end{proposition}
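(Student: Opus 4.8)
The plan is to establish the three parts in order, since each builds on the previous. For part (1), I would apply Proposition~\ref{finitelygenerated}. Concretely, $\mathcal B(G,V)$ is a submonoid of the factorial monoid $F = \mathcal F(\{e_1,\ldots,e_r\}) = \N_0^r$ (with the trivial unit group, so the condition $H^\times = H \cap F^\times$ is automatic and $\mathcal B(G,V)$ is reduced). The key observation is that for each $i$ there is some $n_i \in \N$ with $n_i e_i \in \mathcal B(G,V)$: indeed, $\F[V]^{G}$ contains a nonzero homogeneous invariant depending only on the variables of $V_i$ — for instance the product $\prod_{g\in G} g\cdot f$ for any nonzero $f \in V_i^*$ (this uses $\mathrm{char}(\F)\nmid|G|$ only insofar as that product is a nonzero invariant; more simply, $\F[V_i]^G \neq \F$). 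Hence for every $a \in \N_0^r$ some positive multiple $a^{[m]}$ lies in $\mathcal B(G,V)$, so condition (b) of Proposition~\ref{finitelygenerated}.3 holds; thus $\mathcal B(G,V)$ is a C-monoid, $P$ is finite, and $\widehat{\mathcal B(G,V)}$ is Krull — and since $\widehat{\mathcal B(G,V)}$ is finitely generated and contained in $\N_0^r$, finite generation of $\mathcal B(G,V)$ itself follows, e.g.\ from Proposition~\ref{2.4}.1 together with $(H\DP\widehat H)\neq\emptyset$, or directly since a C-monoid in a finitely generated factorial monoid is finitely generated.

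For part (2), the point is purely a statement about the numerical semigroup $\mathcal B(G,V_i) \subset \N_0$ and the fact that multiplication of invariants respects multidegrees. Observe first that $\F[V]^G_a \neq 0$ forces $\F[V]^G_{a_ie_i}\neq 0$ coordinate-wise? — no, that is false; rather the correct statement is: if $b(G,V_i)$ is the top degree of the (finite-dimensional) algebra of coinvariants of $V_i$, then every degree $\ge b(G,V_i)+F_i$ that is $\equiv 0 \pmod{c_i}$ lies in $\mathcal B(G,V_i)$ and moreover the invariant can be obtained by multiplying an invariant of $V_i$ of bounded degree by an invariant of $V_i$ that is a product of $c_i$-periodically-spaced generators. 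The mechanism: given $a$ with $a_i \ge b(G,V_i)+F_i$ and $a \in \mathcal B(G,V)$, pick $0\neq f\in\F[V]^G_a$; by the Graded Nakayama Lemma applied to $\F[V_i]$ as a module over $\F[V_i]^G$, one can peel off from the $V_i$-part a homogeneous invariant $h \in \F[V_i]^G$ of degree $c_i$ (since $a_i - c_i$ is still $\ge F_i$ and in the right residue class, so $\mathcal B(G,V_i)$ is "stable" there), giving $f = h\cdot f'$ with $f'$ nonzero of multidegree $a - c_ie_i$; conversely multiply by such an $h$. The Frobenius-number bound $F_i$ is exactly what guarantees $a_i - c_i$ and $a_i + c_i$ remain in $\mathcal B(G,V_i)$, and $b(G,V_i)$ controls the module-generator degrees. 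I would phrase this as: $a\in\mathcal B(G,V)\iff \F[V]^G_a\neq 0 \iff \F[V]^{G}_{a-c_ie_i}\neq 0 \iff a - c_ie_i\in\mathcal B(G,V)$, the middle equivalence being the content that needs the hypothesis $a_i\ge b(G,V_i)+F_i$.

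For part (3), I would identify $c_i = \gcd \mathcal B(G,V_i)$ group-theoretically. Write $\rho_i\colon G\to\GL(V_i)$. A homogeneous polynomial function on $V_i$ of degree $d$ is $G$-invariant precisely when it is fixed by $\rho_i(G)$; the scalars $\rho_i(G)\cap \F^\bullet\mathrm{id}_{V_i}$ form a finite cyclic subgroup $\langle\zeta\,\mathrm{id}\rangle$ of order, say, $c$. Since this subgroup acts on $\F[V_i]_d=\Sym^d(V_i^*)$ by the scalar $\zeta^{-d}$, a nonzero degree-$d$ invariant can exist only if $c \mid d$; thus $c \mid \gcd\mathcal B(G,V_i)=c_i$. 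For the reverse, $\rho_i(G)/(\rho_i(G)\cap\F^\bullet\mathrm{id})$ acts on $\mathbb P(V_i)$, and averaging / using that $\F[V_i]^{\rho_i(G)}$ has transcendence degree $\dim V_i$ over $\F$ produces invariants in every sufficiently large degree divisible by $c$; hence $c_i \mid c$. (Alternatively, one cites Theorem~\ref{4.1} plus the structure of $D_0$ from Theorem~\ref{main-theorem}: $c_i$ is the ramification-type index attached to the scalar part of the action.) Combining, $c_i = |\rho_i(G)\cap\F^\bullet\mathrm{id}_{V_i}|$.

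\textbf{Main obstacle.} I expect the genuinely delicate step to be part (2): making precise the "peeling off a degree-$c_i$ invariant factor" argument and pinning down that $b(G,V_i)+F_i$ is the correct threshold. One must be careful that the relevant module-generator degree bound is $b(G,V_i)$ (top degree of $\F[V_i]$ over $\F[V_i]^G$, which by \eqref{eq:beta=b+1} is $\beta(G,V_i)-1$ when $\mathrm{char}(\F)\nmid|G|$) and that the Frobenius number $F_i$ correctly measures where the numerical semigroup $\mathcal B(G,V_i)$ becomes a full arithmetic progression of common difference $c_i$; the interaction of these two quantities when one multiplies multihomogeneous invariants supported on different $V_j$'s is what requires care, whereas parts (1) and (3) are comparatively routine applications of Proposition~\ref{finitelygenerated} and of Schur-type scalar-action considerations, respectively.
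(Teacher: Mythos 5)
Your proposal contains genuine gaps in all three parts; the most serious are in parts (2) and (3). In part (1) your argument is circular: Proposition \ref{finitelygenerated} is stated under the hypothesis that $H_{\red}$ is finitely generated, so you cannot invoke its part 3 to conclude that $\mathcal B(G,V)$ is a C-monoid and only afterwards derive finite generation from the C-monoid property; and your fallback claims (that $\widehat{\mathcal B(G,V)}$ is finitely generated, or that a C-monoid inside a finitely generated factorial monoid is automatically finitely generated) either rest on the same circular appeal or are not established in the paper. The repair is easy and is what the paper does: by Theorem \ref{4.1} the algebra $\F[V]^G$ is finitely generated, and the multidegrees of a finite multihomogeneous generating system generate the monoid $\mathcal B(G,V)$; with finite generation secured, your (correct) observation that each $e_i$ has a multiple in $\mathcal B(G,V)$ because $\F[V_i]^G$ contains a homogeneous invariant of positive degree makes Proposition \ref{finitelygenerated}.3 applicable and yields the C-monoid property exactly as in the paper.

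In part (2) the mechanism you propose --- peel off a homogeneous invariant $h\in\F[V_i]^G$ of degree $c_i$ so that $f=hf'$, and conversely multiply by such an $h$ --- fails for two reasons. First, $c_i$ is only the gcd of the elements of $\mathcal B(G,V_i)$ and need not itself lie in $\mathcal B(G,V_i)$ (e.g.\ $\mathcal B(G,V_i)=\langle 2,3\rangle$ has $c_i=1$ with no invariant of degree $1$), so the required $h$ generally does not exist; this already kills the direction where you must pass from $a$ to $c_ie_i+a$ by multiplication. Second, a nonzero $f\in\F[V]^G_a$ need not be divisible by any invariant of $\F[V_i]$, so the factorization $f=hf'$ is unjustified. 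The paper's actual mechanism is representation-theoretic: decompose $\Sym^{a_i}(V_i^*)$ into irreducibles, isolate a summand $W$ such that $W\otimes\bigotimes_{j\ne i}\Sym^{a_j}(V_j^*)$ contains a nonzero invariant, use the module bound $b(G,V_i)$ to find a copy $U\cong W$ in some degree $d\le b(G,V_i)$ with $a_i\in d+\mathcal B(G,V_i)$, and note that $hU\cong W$ for every homogeneous $h\in\F[V_i]^G$; hence invariants of the required type occur in every degree of $d+\mathcal B(G,V_i)$, and the Frobenius bound guarantees this set contains both $a_i$ and $a_i+c_i$. In part (3) your direction $c\mid c_i$ is the paper's, but the reverse inclusion is not proved by your sketch: averaging a monomial over the group can vanish, and a homogeneous system of parameters controls nothing about the gcd of degrees, so "invariants in every sufficiently large degree divisible by $c$" is precisely the conclusion of the proposition, not an input. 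The paper proves $c_i\mid c$ by a rigidity argument: since every element of $\mathcal B(G,V_i)$ is a multiple of $c_i$, the scalar group $Z$ of order $c_i$ satisfies $\F[V_i]^{\rho_i(G)}=\F[V_i]^{\rho_i(G)Z}$, and a finite subgroup of $\GL(V_i)$ is determined by its field of invariants (Galois theory), forcing $Z\subset\rho_i(G)$. You need this, or a genuine substitute, to close the argument.
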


\begin{proof}
1. Take $a, b \in \mathcal{B}(G, V)$, so  there exist non-zero $f\in \F[V]^G_{a}$ and $h\in \F[V]^G_{b}$.
Now $0\neq fh\in \F[V]^G_{a+b}$, hence $a + b \in \mathcal{B}(G,V)$.
This shows that $\mathcal B (G,V)$ is a submonoid of $\N_0$.
Moreover, the multidegrees of a multihomogeneous $\F$-algebra generating system of $\F[V]^G$ clearly generate the monoid $\mathcal B (G,V)$. Thus  $\mathcal B (G,V)$ is finitely generated by Theorem~\ref{4.1}.

To show that  $\mathcal B (G,V)$ is also a C-monoid,  recall that by Proposition~\ref{finitelygenerated}.3 a finitely generated submonoid $H$ of $\N_0^r$ is a C-monoid
if and only if each standard basis element $e_i\in\N_0^r$ has a multiple in $H$. Now this condition holds for $\mathcal{B}(G,V)$, since by Theorem~\ref{4.1}.2 $\F[V_i]^G\subset \F[V]^G$ contains a homogeneous element of positive degree for each $i\in [1,r]$.

\smallskip
2. By symmetry it is sufficient to verify \eqref{eq:alpha} in the case $i=1$.
Suppose $a\in  \mathcal B (G,V)$, so there is a non-zero $G$-invariant $f\in \Sym^{a_1}(V_1^*)\otimes \ldots\otimes \Sym^{a_r}(V_r^*)$.
Decompose $\Sym^{a_1}(V^*_1)=\bigoplus_jW_j$ into a direct sum of irreducible $G$-modules.
This gives a direct sum decomposition
$\Sym^{a_1}(V_1^*)\otimes \ldots\otimes \Sym^{a_r}(V_r^*)=\bigoplus_j(W_j\otimes \Sym^{a_2}(V^*_2)\otimes\ldots\otimes\Sym^{a_r}(V^*_r))$.
It follows that $\Sym^{a_1}(V^*_1)$ contains an irreducible $G$-module direct summand $W$ such that $W\otimes \Sym^{a_2}(V^*_2)\otimes\ldots\otimes \Sym^{a_r}(V^*_r)$ contains a non-zero $G$-invariant. By definition of $b(G,V_1)$ we know that $\F[V_1]$ is generated as an $\F[V_1]^G$ module by its homogeneous components of degree $\le b(G,V_1)$.
Therefore there exists a $d\le b(G,V_1)$ such that the degree $d$ homogeneous component of $\F[V]$ contains a $G$-submodule $U\cong W$, and $a_1\in d+\mathcal{B}(G,V_1)$.
Now for any homogeneous $h\in \F[V_1]^G$ we have
$hU\otimes \Sym^{a_2}(V^*_2)\otimes\ldots\otimes\Sym^{a_r}(V^*_r))\subset \F[V]_{(d+\deg(h),a_2,\ldots,a_r)}$ contains a non-zero $G$-invariant, since it is isomorphic to
$W\otimes \Sym^{a_2}(V^*_2)\otimes\ldots\otimes\Sym^{a_r}(V^*_r))$.
It follows that $(k,a_2,\ldots,a_r)\in\mathcal{B}(G,V)$ for all $k\in d+\mathcal{B}(G,V_1)$, in particular, for all $k\in \{d+F_1,d+F_1+c_1,d+F_1+2c_1,\ldots\}$.

3. Let $i \in [1,r]$, and to simplify notation set $W=V_i$, $c=c_i$, and $\phi=\rho_i$.
Recall that $\F[W]^A=\F[W]^B$ for some finite subgroups $A,B\subset \GL(W)$ implies that $A=B$. Indeed, the condition implies equality $\F(W)^A=\F(W)^B$ of the corresponding quotient fields,
and so both $A$ and $B$ are the Galois groups of the field extension $\F(W)$ over $\F(W)^A=\F(W)^B$, implying $A=B$.
Now denote by $Z\subset \GL(W)$ the subgroup of scalar transformations
$Z=\{\omega \mathrm{id}_{W}\colon \omega^{c}=1\}$, so $Z$ is a central cyclic subgroup of $\GL(W)$  of order $c$. Clearly every homogeneous element of $\F[W]$ whose degree is a multiple of $c$ is invariant under $Z$. It follows that $\F[W]^G\subset \F[W]^Z$, hence denoting by $\tilde G$ the subgroup $\phi(G)Z$ of $\GL(W)$, we have $\F[W]^G=\F[W]^{\tilde G}$. It follows that $\phi(G)=\tilde G$, i.e. $Z\subset \phi(G)$, and so $c=|Z|$ divides the order of
$\phi(G)\cap \F^\bullet \mathrm{id}_W$. Conversely, if $\lambda\mathrm{id}_W$ belongs to $\rho(G)$, then every element of $\F[W]^G$ must be invariant under the scalar transformation $\lambda\mathrm{id}_W$, whence all  homogeneous components of $\F[W]^G$ have degree divisible by the order of $\lambda$, so the order of the cyclic
group $\phi(G)\cap \F^\bullet \mathrm{id}_W$ must divide $c$.
\end{proof}

In general $\mathcal{B}(G,V)$ is not a Krull monoid. To provide an example, consider  the two-dimensional irreducible representation $V$ of the symmetric group $S_3 = D_6$. Its ring of polynomial invariants is generated by an element of degree 2 and  3, hence $\mathcal{B}(G,V) = \langle 2,3 \rangle \subset (\N_0, +)$ , which  is not Krull.

\begin{proposition}\label{prop:module-davenport}
For every $k \in \N$ we have  $\mathsf{D}_k(\mathcal{B}(G,V)) \le \beta_k(G,V)$.
\end{proposition}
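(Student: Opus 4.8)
The plan is to read the inequality off the definition of $\beta_k(G,V)$ once one has translated a factorization of a multidegree $a$ in $\mathcal B(G,V)$ into a product decomposition of a multihomogeneous invariant of multidegree $a$. First I would recall that the degree function on $\mathcal B(G,V)\subset\N_0^r=\mathcal F(\{e_1,\dots,e_r\})$ is the restriction of the length function on $\N_0^r$, so $|a|=a_1+\dots+a_r$; and by the definition of the $\N_0^r$-grading on $\F[V]$, this is exactly the total degree in $\F[V]$ of any multihomogeneous polynomial of multidegree $a$. Thus for a nonzero $f\in\F[V]^G_{a}$ one has $\deg f=|a|$.

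Now let $a\in\mathcal M_k(\mathcal B(G,V))$. If $a=0$ then $|a|=0\le\beta_k(G,V)$, so I may assume $a\neq0$ and fix a nonzero $f\in\F[V]^G_{a}$, a positive-degree homogeneous invariant. The heart of the argument is the claim that $f\notin(\F[V]_+^G)^{k+1}$. Granting this, $f$ has nonzero image in the factor space $\F[V]_+^G/(\F[V]_+^G)^{k+1}$, sitting in degree $|a|$, so $|a|\le\beta_k(G,V)$ by the definition of $\beta_k(G,V)$ as the top degree of that factor space. Taking the supremum over all $a\in\mathcal M_k(\mathcal B(G,V))$ then yields $\mathsf D_k(\mathcal B(G,V))\le\beta_k(G,V)$.

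It remains to prove the claim, and I would argue by contraposition: if $f\in(\F[V]_+^G)^{k+1}$, then $a\notin\mathcal M_k(\mathcal B(G,V))$. Since $\F[V]^G$ is spanned by its multihomogeneous components and $\F[V]_+^G$ by those of nonzero multidegree, the ideal power $(\F[V]_+^G)^{k+1}$ is spanned by products $g_1\cdots g_{k+1}$ with each $g_i\in\F[V]^G_{c_i}$ nonzero and $c_i\in\mathcal B(G,V)\setminus\{0\}$; such a product is multihomogeneous of multidegree $c_1+\dots+c_{k+1}$. Writing $f$ as an $\F$-linear combination of such products and projecting onto the multidegree-$a$ component, the assumption $f\neq0$ forces at least one summand $g_1\cdots g_{k+1}\neq0$ with $c_1+\dots+c_{k+1}=a$. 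Hence $a$ is a sum of $k+1$ non-units of $\mathcal B(G,V)$; refining each $c_i$ into atoms of $\mathcal B(G,V)$ produces a factorization of $a$ into at least $k+1$ atoms, so $\max\mathsf L(a)\ge k+1$ and $a\notin\mathcal M_k(\mathcal B(G,V))$, as claimed.

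The argument is essentially bookkeeping, and no serious obstacle arises. The one point that requires care is the interplay of the two gradings: one must pass to \emph{multihomogeneous} generators of $\F[V]_+^G$, so that multidegrees (and not merely total degrees) add up correctly, since this is precisely what converts the membership $f\in(\F[V]_+^G)^{k+1}$ into a genuine factorization of $a$ in the monoid $\mathcal B(G,V)$ rather than just a numerical identity on total degrees.
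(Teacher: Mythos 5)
Your proposal is correct and follows essentially the same route as the paper: the paper argues contrapositively that any $a\in\mathcal B(G,V)$ with $|a|>\beta_k(G,V)$ yields a multihomogeneous invariant $f\in\F[V]^G_a$ lying in $(\F[V]_+^G)^{k+1}$, whose decomposition into sums of products of $k+1$ positive-degree multihomogeneous invariants forces $a$ to be a sum of $k+1$ non-units of $\mathcal B(G,V)$. Your version merely reorganizes this (starting from $a\in\mathcal M_k$ and projecting onto the multidegree-$a$ component), making explicit the bookkeeping the paper leaves implicit.
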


\begin{proof}
Let $k \in \N$. Take $a \in \mathcal{B}(G,V)$ such that $|a| > \beta_k(G,V)$.
By \eqref{block_def} a multihomogeneous invariant $f \in \F[V]^G_a$ exists.
As $\deg(f) =|a|> \beta_k(G,V)$ it follows that $f = \sum_{i=1}^N  f_{i,1}\ldots f_{i,k+1}$
for some non-zero multihomogeneous invariants $f_{i,j}$ of positive degree. Denoting by $a_{i,j}\in\N_0^r$ the multidegree of $f_{i,j}$, we have
that $a=a_{i,1}+\ldots +a_{i,k+1}$, where $0\neq a_{i,j}\in \mathcal{B}(G,V)$. This shows that all $a\in \mathcal{B}(G,V)$ with $|a|>\beta_k(G,V)$ factor into the product of more than $k$ atoms, implying the desired inequality.
\end{proof}

\begin{remarks} \label{remark:block monoid of G module}~
1. Let $G$ be abelian and suppose that  $\F$ is algebraically closed.
Then we may take in \eqref{decomp} a decomposition of $V$  into the direct sum of $1$-dimensional submodules and so $V_i^*$,
is spanned by a variable $x_i$ as in Subsection~\ref{subsec:abelian}. Then  $\F[V]_{a}$ is spanned by the monomial
$x_1^{a_1}\cdots x_r^{a_r}$ and $a \in \mathcal{B}(G,V)$ holds if and only if the corresponding monomial is $G$-invariant.
So in this case $\mathcal{B}(G,V)$ can be naturally identified with $M^G$ and the transfer homomorphism $\psi \t_{M^G}$ of Proposition~\ref{prop:bschmid} can be
thought of as a transfer homomorphism  $\mathcal{B}(G,V) \to \mathcal B ( \widehat G_V)$,
which is an isomorphism if $V$ is multiplicity free.
However, this transfer homomorphism does not seem to have an analogues for non-abelian $G$ (i.e. the study of $\mathcal{B}(G,V)$ can not be reduced to the multiplicity free case), as it is shown by the example below.

2. The binary tetrahedral group $G=\widetilde A_4\cong SL_2(\F_3)$ of order $24$ has a 2-dimensional complex irreducible representation $V$ such that
$\F[V]^G$ is minimally generated by elements of degree $6,8,12$ (see for example \cite[Appendix A]{Be93a}), hence $\mathcal{B}(G,V)=\{0,6,8,12,14,16,18,\ldots\}$.
On the other hand under this representation $G$ is mapped into the special linear group of $V$, so on $V\oplus V$ the function maping $((x_1,x_2),(y_1,y_2))\mapsto
\det\left(\begin{array}{cc}x_1 & y_1 \\x_2 & y_2\end{array}\right)$ is a $G$-invariant of multidegree $(1,1)$, implying that  $(1,1)\in \mathcal{B}(G,V\oplus V)$.
This shows that the transfer homomorphism $\tau:\N_0^2\to \N_0$, $(a_1,a_2)\mapsto a_1+a_2$ does not map $\mathcal{B}(G,V\oplus V)$ into $\mathcal{B}(G,V)$, as
$\tau(1,1)=2\notin \mathcal{B}(G,V)$.
\end{remarks}

Recall that the multigraded Hilbert series of $\F[V]^G$ in $r$ indeterminates $T=(T_1,...,T_r)$ is
\[H(\F[V]^G,T)= \sum_{a \in \N_0^r}\dim_{\F}( \F[V]^G_a)T_1^{a_1} \cdots T_r^{a_r}, \quad \text{and hence}
\]
\[\mathcal{B}(G,V) = \{a \in \N_0^r  \colon \mbox{ {\rm the coefficient of} }T^{a}\mbox{  {\rm in} }H(\F[V]^G, T)\mbox{  {\rm is nonzero}} \}.\]
By this observation Proposition~\ref{prop:module-davenport} can be used for finding lower bounds on the Noether number $\beta(G,V)$, thanks to  the following classical result of Molien (see for example \cite[Theorem 2.5.2]{Be93a}):

\begin{proposition}
Given a $G$-module $V= V_1\oplus...\oplus V_r$ over $\C$,
let $\rho_i(g) \in \GL(V_i)$ be the linear transformation defining the action of $g\in G$ on $V_i$. Then
we have
\[H(\C[V]^G,T)
= \frac 1{|G|} \sum_{g \in G} \prod_{i=1}^r \frac1{\det(\mathrm{id}_{V_i}-\rho_i(g)\cdot T_i)}.
\]
\end{proposition}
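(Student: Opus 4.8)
The plan is to run Molien's classical averaging argument. For each fixed multidegree $a\in\N_0^r$ the space $\F[V]_{a}=\Sym^{a_1}(V_1^*)\otimes\cdots\otimes\Sym^{a_r}(V_r^*)$ is a finite-dimensional $G$-module over $\F=\C$, and the Reynolds operator $\pi_{a}=\frac1{|G|}\sum_{g\in G}g$ is an idempotent $\F$-linear endomorphism of $\F[V]_{a}$ whose image is precisely $\F[V]^G_{a}=\F[V]^G\cap\F[V]_{a}$. Since the trace of an idempotent equals the dimension of its image, this gives
\[
\dim_{\F}\bigl(\F[V]^G_{a}\bigr)=\operatorname{tr}(\pi_{a})=\frac1{|G|}\sum_{g\in G}\operatorname{tr}\bigl(g\mid\F[V]_{a}\bigr).
\]

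Next I would substitute this into the definition of the multigraded Hilbert series and interchange the (finite) sum over $G$ with the sum over $a$:
\[
H(\C[V]^G,T)=\frac1{|G|}\sum_{g\in G}\ \sum_{a\in\N_0^r}\operatorname{tr}\bigl(g\mid\F[V]_{a}\bigr)\,T_1^{a_1}\cdots T_r^{a_r}.
\]
Because the $G$-action on $\F[V]_{a}$ is the tensor product of the actions on the $r$ factors $\Sym^{a_i}(V_i^*)$, one has $\operatorname{tr}(g\mid\F[V]_{a})=\prod_{i=1}^r\operatorname{tr}\bigl(g\mid\Sym^{a_i}(V_i^*)\bigr)$, so for each fixed $g$ the inner double sum factors as
\[
\sum_{a\in\N_0^r}\operatorname{tr}\bigl(g\mid\F[V]_{a}\bigr)\,T^{a}=\prod_{i=1}^r\Bigl(\sum_{n\ge0}\operatorname{tr}\bigl(g\mid\Sym^n(V_i^*)\bigr)\,T_i^n\Bigr).
\]

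To evaluate a single factor, I would use that $G$ is finite, so $\rho_i(g)\in\GL(V_i)$ has finite order and is therefore diagonalizable over $\C$; let $\mu_{i,1},\dots,\mu_{i,d_i}$ be its eigenvalues, where $d_i=\dim_{\C}V_i$. By the convention $\F[V]=\Sym(V^*)$ with $(g\cdot x)(v)=x(\rho(g^{-1})v)$, the transformation $g$ acts on $V_i^*$ with eigenvalues $\mu_{i,j}^{-1}$, hence on $\Sym^n(V_i^*)$ with eigenvalues the degree-$n$ monomials in $\mu_{i,1}^{-1},\dots,\mu_{i,d_i}^{-1}$; summing the resulting geometric series — as formal power series in $T_i$, or equivalently for $|T_i|$ small enough — gives
\[
\sum_{n\ge0}\operatorname{tr}\bigl(g\mid\Sym^n(V_i^*)\bigr)\,T_i^n=\prod_{j=1}^{d_i}\frac1{1-\mu_{i,j}^{-1}T_i}=\frac1{\det\bigl(\mathrm{id}_{V_i}-T_i\,\rho_i(g^{-1})\bigr)}.
\]
Multiplying these over $i$ yields $H(\C[V]^G,T)=\frac1{|G|}\sum_{g\in G}\prod_{i=1}^r\det\bigl(\mathrm{id}_{V_i}-T_i\,\rho_i(g^{-1})\bigr)^{-1}$, and reindexing the sum via the bijection $g\mapsto g^{-1}$ of $G$ produces the claimed identity.

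The argument is entirely classical and there is no real obstacle; the only point demanding care is tracking the duality $\F[V]=\Sym(V^*)$, which is exactly what makes $\rho_i(g^{-1})$ appear before the final reindexing. The interchange of summations is harmless because $G$ is finite, and the geometric-series manipulation is legitimate at the level of formal power series. One could also present the middle computation more crisply via the complete homogeneous symmetric functions $h_n$ together with the identity $\sum_{n\ge0}h_n(y_1,\dots,y_m)t^n=\prod_{k=1}^m(1-y_kt)^{-1}$, which reduces the eigenvalue bookkeeping to a single formula.
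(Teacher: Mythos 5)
Your proof is correct and complete: it is the classical Molien averaging argument (Reynolds operator, trace of an idempotent equals the dimension of its image, diagonalization of each $\rho_i(g)$ over $\C$, geometric series, and the reindexing $g\mapsto g^{-1}$ to absorb the duality coming from $\F[V]=\Sym(V^*)$). The paper does not prove this proposition itself but simply cites it as a classical result of Molien (referring to Benson's book), and the proof given there is essentially the one you wrote, so there is nothing to compare beyond noting that your argument is the standard one.
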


\begin{example}[see p. 54-55 in \cite{Ne-Sm02a}]
Consider the alternating group $A_5$ and its $3$-dimensional representation over $\C^3$ as the group of symmetries of an icosahedron. The Hilbert series then equals
\[ \frac{1+T^{15}}{(1-T^2)(1-T^6)(1-T^{10})} \]
whence it is easily seen that $\mathcal{B}(A_5, \C^3) = \langle 2,6,10,15\rangle$ and consequently $\beta(A_5) \ge \mathsf{D}(\mathcal{B}(A_5,\C^3)) =15$. Note  that this lower bound is stronger than what we could get from  $\beta(G) \ge \max_{H\subsetneq G} \beta(H)$, since $\beta(H)\le |H|\le 12$ for any proper subgroup $H$ of $A_5$.
\end{example}

\section{\bf Constants from Invariant Theory and their counterparts in Arithmetic Combinatorics} \label{sec:5}

In Subsection \ref{sec:5.1} we compare known reduction lemmas for the Noether number with reduction lemmas for the Davenport constants achieved in previous sections. We demonstrate  how to use  them to determine the precise value of Noether numbers and Davenport constants in new examples.  In Subsection \ref{5.A} we consider an invariant theoretic analogue of the constant $\eta (G)$ (for the definition of $\eta (G)$ see the discussions before Proposition \ref{2.8} and Lemma \ref{3.1}).

\smallskip
\centerline{\it Throughout this section,  suppose that  $\mathrm{char}(\F) \nmid |G|$.}

\subsection{The  Noether number versus the Davenport constant}\label{sec:5.1}~

In the non-abelian case no  structural connection (like Proposition~\ref{prop:bschmid}) is known between the $G$-invariant polynomials and the product-one sequences over $G$. Nevertheless, a variety of   features of the $k$th Noether numbers and the $k$th Davenport constants are strikingly similar, and we offer a detailed comparison.

Recall that   $\beta_k(G)=b_k(G)+1$  (\eqref{eq:beta=b+1}) and that  $ \mathsf{d}_k(G)+1\le \mathsf{D}_k(G)$ (Proposition~\ref{2.8}.1).

\begin{enumerate}
\item The  inequalities
\begin{align}\label{eq:trivial}
(a) \quad \beta_k(G)\le k\beta(G) &&
(b) \quad {\mathsf d}_k(G)+1\le k({\mathsf d}(G)+1) &&
(c) \quad \mathsf D_k(G)\le k\mathsf D(G)
\end{align}

\item  Reduction lemma for normal subgroups $N \triangleleft G$:
\begin{align}\label{red_norm}
(a) \quad \beta_k(G)  \le \beta_{\beta_k(G/N)}(N) &&
(b)  \quad {\mathsf d}_k(G)  \le {\mathsf d}_{\mathsf d_k(N) +1 }(G/N) &&
\end{align}

\item Reduction lemma for  arbitrary subgroups $H \le G$ with index $l=[G:H]$:
\begin{align}\label{red_sub}
(a) \ \beta_k(G)  \le \beta_{kl}(H)\le l\beta_k(H) &&
(b) \ {\mathsf d}_k(G)+1  \le l({\mathsf d}_k(H)+1) &&
(c) \ \mathsf D_k(G)  \le l \mathsf D_k(H)
\end{align}

\item Supra-additivity: for a normal subgroup $N \triangleleft G$  we have
\begin{align}\label{subadd}
(a) \quad b_{k+r-1}(G)  \ge b_k(N) + b_r(G/N) \mbox{ if }G/N\mbox{ is abelian}
\end{align}
\[
(b) \quad \mathsf d_{k+r-1}(G)  \ge\mathsf  d_k(N) + \mathsf d_r(G/N)
\]

\item Monotonicity: for an arbitrary subgroup $H \le G$ we have
\begin{align}\label{mono}
(a) \quad \beta_k(G)  \ge \beta_k(H)  &&
(b) \quad \mathsf d_k(G)\ge \mathsf d_k(H) &&
(c) \quad \mathsf D_k(G)  \ge \mathsf D_k(H)
\end{align}

\item Almost linearity in $k$: there are positive constants $C, C', C'',k_0,k_0',k_0''$ depending only on $G$ such that
\begin{align}\label{qlinear}
(a) \ \beta_k(G)  = k\sigma(G) + C \text{ for all }k > k_0 \mbox{ if }\mathrm{char}(\F)=0  &&
(b) \  \mathsf d_k(G)  =k \mathsf e (G) +C'  && \\ \notag
\text{ for all  } k >k_0' \quad \text{and} \quad (c) \  \mathsf D_k(G)  =k \mathsf e (G) +C'' \text{ for all  } k >k_0''
\end{align}

\item The following functions are non-increasing in $k$:
\begin{align} \label{non-increasing}
(a) \quad \beta_k (G)/k  \quad  \text{if} \quad  \mathrm{char} (\F) = 0 &&
(b) \quad \mathsf D_k (G)/k
\end{align}
\end{enumerate}

The inequality \eqref{eq:trivial} (a) is observed in \cite{Cz-Do15a}, (b) is shown in Proposition \ref{gen-dav-5}.4, whereas (c) is observed in the beginning of Subsection \ref{2.E}.

For the proof of \eqref{red_norm} (a) see  \cite[Lemma 1.5]{Cz-Do15a} and for part (b) see Proposition~\ref{gen-dav-5}.2.
Note that the roles of  $N$ and $G/N$ are swapped in the formulas (a) respectively (b), but in the abelian case they amount to the same.

The first inequality in part (a) of \eqref{red_sub} is proved  in \cite[Corollary 1.11]{Cz-Do15a} for cases when (i) $\mathrm{char}(\F) =0$ or $\mathrm{char}(\F) > [G:H]$; (ii) $H$ is normal in $G$ and $\mathrm{char}(\F) \nmid [G:H]$; (iii) $\mathrm{char}(\F)$ does not divide $|G|$. It is conjectured, however that it holds in fact whenever $\mathrm{char}(F) \nmid [G:H]$ (see \cite{kemper-separating}).
By  \cite[Lemma 4.3]{Cz-Do13c}, we have $\beta_{kl}(H)\le l\beta_k(H)$  for all positive integers $k,l$, implying the second inequality in part (a).
Parts (b) and (c) of  \eqref{red_sub} appear in Proposition \ref{gen-dav-5} (3. and 5.)

Part (a) of \eqref{subadd} appears in \cite[Theorem 4.3 and Remark 4.4]{Cz-Do14a} while part (b) is proved in Proposition~\ref{gen-dav-5}.1.

Parts (b) and (c)  of \eqref{mono} are immediate from the definitions, while part (a) follows from an argument of B. Schmid (\cite[Proposition~5.1]{Sc91a}) which also shows that $\beta_k(G, \mathrm{Ind}_H^G V) \ge \beta_k(H,V)$ for all $k \ge 1$ (see \cite[Lemma 4.1]{Cz-Do14a}).

Part (a) of \eqref{qlinear} is proved in   \cite[Proposition 4.5]{Cz-Do13c} (the constant $\sigma(G)$ will be discussed in Subsection \ref{5.A}, and  for \eqref{qlinear} (b) and (c) we refer to Proposition~\ref{2.7}.2 and Proposition~\ref{2.8}.2.

Part (a) of \eqref{non-increasing} is proved in \cite[Section 4]{Cz-Do13c} and for \eqref{non-increasing} (b) we refer to Proposition~\ref{2.7}.3.

Furthermore, for a normal subgroup $N \triangleleft G$ we have
\begin{align}\label{submult}
(a) \quad \beta(G)  \le {\beta(G/N)}\beta(N) &&
(b) \quad \mathsf D(G)  \le \mathsf D(N) \mathsf  D(G/N) \,,
\end{align}
where in (b) we assume that $N \cap G' = \{1\}$.
Here part (a) is originally due to B. Schmid (\cite[Lemma 3.1]{Sc91a})
and it is an immediate consequence of  \eqref{eq:trivial} (a)  and  \eqref{red_norm} (a)
while part (b)  is proven  in \cite[Theorem 3.3]{Ge-Gr13a}.

The above reduction lemmas on the Noether numbers are key tools in the proof of the following theorem.

\begin{theorem}\label{thm:betaindex2}~
Let $k \in \N$.
\begin{enumerate}
\item $\beta_k(A_4) = 4k+2$ and $\beta(\tilde{A_4}) = 12$, where  $A_4$ is the alternating group of degree $4$ and $\tilde{A}_4$ is the binary tetrahedral group.

\smallskip
\item If $G$ is a non-cyclic group with a cyclic subgroup of index two, then
      \[
      \beta_k(G) = \frac{1}{2} |G| k +
               \begin{cases}
               2 	& \text{ if } G=Dic_{4m}, \text{  $m>1$};\\
               1	& \text{ otherwise. }
               \end{cases}
      \]
      where $\text{Dic}_{4m} = \langle a,b: a^{2m}=1, b^2 = a^m, bab^{-1} = a^{-1} \rangle$ is the dicyclic group.

\smallskip
\item \begin{align*}
      \beta(G)\ge \frac 12 |G| \quad \text{if and only if} \quad  &\text{$G$ has a cyclic subgroup of index at most two or} \\
      &  \text{$G$ is isomorphic to $C_3\oplus C_3$, $C_2 \oplus C_2 \oplus C_2$, $A_4$ or $ \tilde{A}_4$}
      \end{align*}
\end{enumerate}
\end{theorem}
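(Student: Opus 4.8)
The plan is to prove Theorem~\ref{thm:betaindex2}.3 as a consequence of parts 1 and 2 of the same theorem together with the known results on the Noether number of abelian groups and the reduction lemmas collected in Subsection~\ref{sec:5.1}. The statement is an equivalence, so I would prove the two implications separately.

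For the ``if'' direction, suppose first that $G$ has a cyclic subgroup of index at most two. If $G$ is cyclic, then by Proposition~\ref{prop:bschmid}.4 we have $\beta(G)=\mathsf D(G)=|G|\ge\frac12|G|$. If $G$ is non-cyclic with a cyclic subgroup of index exactly two, then part 2 of the theorem (with $k=1$) gives $\beta(G)=\frac12|G|+c$ with $c\in\{1,2\}$, so in particular $\beta(G)\ge\frac12|G|$. For the four sporadic groups: $A_4$ and $\widetilde A_4$ are handled by part 1, giving $\beta(A_4)=6=\frac12\cdot 12=\frac12|A_4|$ and $\beta(\widetilde A_4)=12\ge\frac12\cdot24=\frac12|\widetilde A_4|$. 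For the abelian groups $C_3\oplus C_3$ and $C_2\oplus C_2\oplus C_2$, apply Proposition~\ref{prop:bschmid}.4 and Lemma~\ref{dandDabeliancase}: $\beta(C_3\oplus C_3)=\mathsf D(C_3\oplus C_3)=1+\mathsf d^*(C_3\oplus C_3)=1+2(3-1)=5\ge\frac12\cdot 9$, and $\beta(C_2^3)=\mathsf D(C_2^3)=1+3(2-1)=4=\frac12\cdot 8$. (Here I use that both are $p$-groups, so $\mathsf d=\mathsf d^*$ by Theorem~\ref{d*equalsd}.1.)

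For the ``only if'' direction, assume $\beta(G)\ge\frac12|G|$ and deduce that $G$ is on the list. I would split into the abelian and non-abelian cases. In the abelian case, $\beta(G)=\mathsf D(G)\le\mathsf d^*(G)+\mathsf r(G)$ would suffice only under the standing conjecture, so instead I would use the precise/known bounds: writing $G\cong C_{n_1}\oplus\dots\oplus C_{n_r}$, the inequality $\mathsf D(G)=1+\mathsf d(G)\ge\frac12|G|$ combined with $\mathsf d(G)\le\mathsf d^*(G)+$ (a known correction term), or more directly a case analysis on the rank $r$ and the $n_i$, forces $G\in\{C_n, C_3^2, C_2^3\}$ — the point being that $1+\sum(n_i-1)$ is far below $\frac12\prod n_i$ once the group is ``large''. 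For the non-abelian case the main tool is the reduction lemma \eqref{red_sub}(a): if $H\le G$ has index $l=[G:H]$ then $\beta(G)\le l\,\beta(H)$, hence $\beta(G)\ge\frac12|G|=\frac12 l|H|$ forces $\beta(H)\ge\frac12|H|$. Applying this inductively (and using monotonicity \eqref{mono}(a), $\beta(G)\ge\beta(H)$) one reduces to a bounded list of minimal cases; combined with the structural upper bounds on $\mathsf d(G)$ and $\mathsf D(G)$ from Subsection~\ref{3.C} (e.g.\ Theorem~\ref{3.13}.5–6, $\mathsf D(G)\le\frac2p|G|$ for non-cyclic $G$ with smallest prime divisor $p$, and $\mathsf D(G)\le\frac34|G|$ away from cyclic and odd dihedral groups) together with the bound $\beta(G)\le\beta(G/N)\beta(N)$ from \eqref{submult}(a), one narrows the candidates. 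The non-abelian groups surviving the bound $\beta(G)\ge\frac12|G|$ must have a cyclic subgroup of index two (this is essentially the content of part 2, read backwards, after excluding $A_4$ and $\widetilde A_4$ directly), so one invokes the classification of such groups.

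The main obstacle I anticipate is the ``only if'' direction, specifically ruling out all non-abelian groups that do \emph{not} have a cyclic index-two subgroup and are not $A_4$ or $\widetilde A_4$. This requires a genuine finite-group case analysis rather than a one-line estimate: one must show that for every such $G$ the Noether number drops strictly below $\frac12|G|$. The strategy is to use a subgroup or quotient of moderate index to which the reduction lemmas \eqref{red_sub}(a) and \eqref{submult}(a) apply, obtaining $\beta(G)\le[G:H]\beta(H)$ or $\beta(G)\le\beta(G/N)\beta(N)$, and then bound the right-hand side using the known values of $\beta$ for small groups and for abelian groups; the delicate point is choosing $H$ or $N$ so that the resulting product is genuinely smaller than $\frac12|G|$, which fails for exactly the groups on the list. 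A secondary difficulty is making the abelian case airtight without appealing to the (open) conjecture $\mathsf d(G)\le\mathsf d^*(G)+\mathsf r(G)$; here one must instead rely on the unconditional upper bounds for $\mathsf D(G)$ (Theorem~\ref{3.13}) and a direct check of the small-rank, small-exponent groups.
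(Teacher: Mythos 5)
Your ``if'' direction is complete and correct: cyclic groups via $\beta(G)=\mathsf D(G)=|G|$, the index-two case via part 2, $A_4$ and $\widetilde A_4$ via part 1, and the two small abelian groups via $\beta(G)=\mathsf D(G)=\mathsf d^*(G)+1$. But note that the paper itself offers no argument for this theorem beyond citations (parts 1 and 3 to \cite{Cz-Do15a}, part 2 to \cite{Cz-Do14a}), so an honest proof must reproduce the content of those references, and that is exactly where your proposal has a genuine gap: the ``only if'' direction is not proved, only planned, and two steps of the plan fail as stated. First, the assertion that the surviving non-abelian groups ``must have a cyclic subgroup of index two'' because this is ``the content of part 2, read backwards'' is not valid: part 2 computes $\beta_k$ for groups already assumed to have a cyclic index-two subgroup and says nothing about groups that do not; the converse implication is precisely the theorem to be proved. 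Second, the descent via \eqref{red_sub}(a) only shows that the property $\beta(H)\ge\frac12|H|$ is inherited by every subgroup $H\le G$; this is a necessary condition, not a reduction ``to a bounded list of minimal cases'' --- one must still classify the groups all of whose proper subgroups lie on the list and then eliminate each such $G$ individually by finding $H$ or $N$ with $[G\DP H]\beta(H)$ or $\beta(G/N)\beta(N)$ strictly below $\frac12|G|$. That elimination is the bulk of \cite[Theorem 1.1]{Cz-Do15a}; it uses, among other things, the sharper form $\beta(G)\le\beta_{[G:H]}(H)$ of \eqref{red_sub}(a) rather than the crude product bound, and none of it is supplied here.

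The abelian case is also not airtight as sketched. The non-cyclic abelian groups with $\mathsf D(G)\ge\frac12|G|$ include the entire family $C_2\oplus C_{2n}$ (where $\mathsf D=2n+1\ge 2n=\frac12|G|$), not just $C_3^2$ and $C_2^3$; these happen to be covered by the index-two clause of the statement, but the case analysis you describe has to detect them rather than conclude $G\in\{C_n,C_3^2,C_2^3\}$. Moreover, for rank $r\ge 3$ you need an unconditional upper bound on $\mathsf D(G)$: Theorem \ref{3.13}.5 gives only $\mathsf D(G)\le\frac2p|G|$, which is vacuous for $p=2$, so one must argue via $\mathsf D(G)\le\mathsf D(N)\,\mathsf D(G/N)$ and the exact values for rank $\le 2$, and this step is not written out. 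In short: the easy half is done; the hard half rests on two reductions (``part 2 backwards'' and ``the descent terminates'') that do not hold as stated.
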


\begin{proof} For
1. see \cite[Theorem 3.4 and Corollary 3.6]{Cz-Do15a},  for 2. see  \cite[Theorem 10.3]{Cz-Do14a},  and 3. can be found in \cite[Theorem~1.1]{Cz-Do15a}.
\end{proof}

It is  worthwhile to compare Theorem~\ref{thm:betaindex2}.3 with the statement from \cite{olson-white} asserting that $\mathsf d(G)<\frac 12|G|$ unless $G$ has a cyclic subgroup of index at most two.
If $G$ is abelian, then Lemma \ref{dandDabeliancase} and Proposition \ref{prop:bschmid} imply $\mathsf d (G)+1 = \beta (G) = \mathsf D (G)$. Combining Theorems~\ref{3.12} and \ref{thm:betaindex2} we obtain that all groups  $G$ having a cyclic subgroup of index at most two satisfy the inequality $\mathsf d (G)+1 \le \beta (G) \le \mathsf D (G)$.
Moreover, for these groups $\beta (G)=\mathsf{d}(G)+1$, except for the dicyclic groups, where $\beta (G)=\mathsf{d}(G)+2$. 
On the other hand, it was shown in \cite{C-D-S} that for the Heisenberg group $H_{27}$ of order 27 we have 
$\mathsf D (H_{27}) < \beta (H_{27})$.

\begin{problem} \label{Noether-Davenport} 
Study the relationship between the invariants $\mathsf d (G)$, $\beta (G)$, and $\mathsf D (G)$. \\ In particular,
\begin{itemize}
\item Characterize the groups $G$ satisfying $\mathsf d (G)+1 \le \beta (G)$.
\item Characterize the groups $G$ satisfying $\beta (G) \le \mathsf D (G)$.
\end{itemize}
\end{problem}

In the following examples we demonstrate how  the reduction results presented at the beginning of this section do work. This allows us to  determine  Noether numbers and Davenport constants of non-abelian groups, for which they were not  known before.

\begin{example}\label{example:C_pq rtimes C_q}
Let $p, q $ be primes such that $ q \mid p-1$.

1. Consider  the non-abelian semi-direct product $G=C_p \rtimes C_q$. A conjecture  attributed to Pawale (\cite{wehlau}) states that
$\beta(C_p \rtimes C_q) =p+q-1$
and many subsequent research was done in this direction (\cite{Do-He00a}, \cite{Cz-Do15a}). Currently it is fully proved only for the cases $q=2$ in \cite{Sc91a} and  $q=3$ in \cite{Cz14d} whereas for arbitrary $q$ we have only upper bounds in  \cite{Cz-Do15a}, proved using  known results related to the Olson constant  of the cyclic group of order $p$.
Theorem \ref{3.13}.3 implies that $\mathsf{d}(G)+1=p+q-1$  and hence $\mathsf d (G)+1$ coincides with the conjectured value for $\beta(G)$.

2. In view of the great difficulties related to Pawale's conjecture
it is quite remarkable that we can determine the exact value of the Noether number
for the non-abelian semidirect product $C_{pq} \rtimes C_q$.
Indeed, this group contains an index $p$ subgroup isomorphic to $C_q\oplus C_q$, hence
$\beta(C_{pq} \rtimes C_q) \le \beta_p(C_q \oplus C_q)$ by \eqref{red_sub}. By   Proposition~\ref{prop:bschmid} 4. we have $\beta_p(C_q \oplus C_q)= \mathsf D_p(C_q\oplus C_q)$, and finally, $\mathsf D_p(C_q\oplus C_q)=pq+q-1$ by Theorem \ref{gen-dav-abelian}. Thus we have $\beta(C_{pq} \rtimes C_q) \le pq+q-1$.
The reverse inequality also holds, since $\beta(C_{pq} \rtimes C_q)$ contains a normal subgroup $N\cong C_{pq}$ with $G/N\cong C_q$,
so by   \eqref{subadd} and \eqref{eq:beta=b+1} we have $\beta(C_{pq} \rtimes C_q)\ge \beta(C_{pq})+\beta(C_q)-1=pq+q-1$.
So we have $\beta(C_{pq} \rtimes C_q) =pq+q-1$.

Next we determine the small Davenport constant of this group. Since $C_{pq}$ is a normal subgroup and the corresponding factor group is $C_q$, we have by
Proposition \ref{gen-dav-5}.1 that $\mathsf d(C_{pq} \rtimes C_q)\ge \mathsf d(C_{pq})+\mathsf d(C_q)=p+q-2$. The reverse inequality $\mathsf d(C_{pq} \rtimes C_q)\le p+q-2$ follows from Theorem \ref{3.13}.4, since
$C_{pq} \rtimes C_q$ contains also a normal subgroup $N\cong C_p$ such that $G/N\cong C_q\oplus C_q$.
Consequently, by Lemma \ref{3.1}.2.(a) we have
\[
\mathsf D(C_{pq} \rtimes C_q)\ge \mathsf d(C_{pq} \rtimes C_q)+1=pq+q-1 \,.
\]
\end{example}


\begin{example} \label{S_4}
The symmetric group $S_4$ has a normal subgroup $N\cong C_2 \oplus C_2$ such that $S_4 /N \cong D_6$.
We know that $\beta(D_6) = 4$ (say by Theorem~\ref{thm:betaindex2} 2.).
Thus by \eqref{red_norm} and Theorem \ref{gen-dav-abelian} we have
$\beta(S_4) \le \beta_{\beta(D_6)}(C_2\oplus C_2) = \mathsf D _4(C_2\oplus C_2) = 2 \cdot 4 +1 = 9$.

Now let $V$ be the standard $4$-dimensional permutation representation of $S_4$ and $\mathrm{sign}: S_4 \to \{ \pm 1\}$ the sign character.
It is not difficult to prove the algebra isomorphism
 $\F[V \otimes \mathrm{sign}]^{S_4} \cong \F[V]^{S_4}_{even} \oplus \Delta_4 \F[V]^{S_4}_{odd} $
 where  $\Delta_4$ is the Vandermonde determinant in 4 variables, $\F[V]^{S_4}_{even}$ is the span of the even degree homogeneous components of $\F[V]^{S_4}$, and
 $\F[V]^{S_4}_{odd}$ is the span of the odd degree homogeneous components of $\F[V]^{S_4}$.
 Moreover, the  algebra  $\F[V]^{S_4}_{even} \oplus \Delta_4  \F[V]^{S_4}_{odd}$ is easily seen to be minimally generated by 
 $\sigma_2, \sigma_1^2, \sigma_1\sigma_3, 
 \sigma_4, \sigma_3^2, \sigma_1\Delta_4, \sigma_3\Delta_4$,
 where $\sigma_i$ is the $i$-th elementary symmetric polynomial.
As a result $\beta(S_4, V \otimes \mathrm{sign}) = \deg(\sigma_3 \Delta_4)= 3 +\binom 4 2 = 9$.
So we conclude that $\beta(S_4)=9$ (and not $10$, as it is claimed on page 14 of \cite{kraft-procesi}).
\end{example}


\begin{example} \label{Pauli}
Let $G$ be the group generated by the complex Pauli matrices
\[\left(\begin{array}{rr}
0 & 1 \\
1 & 0  \\
\end{array}\right), \quad
\left(\begin{array}{rr}
0 & -i  \\
i & 0  \\
\end{array}\right), \quad
\left(\begin{array}{rr}
1 & 0 \\
0 & -1 \\
\end{array}\right).\]
This is a pseudoreflection group, hence the ring of invariants on $V=\C^2$ is generated by two elements, namely
$\mathbb{C}[x,y]^G = \mathbb{C}[x^4+y^4, x^2y^2]$. Moreover, $b(G,V)$ is the sum of the degrees of the generators minus $\dim(V)$ (again because $G$ is a pseudoreflection group, see \cite{chevalley}), so $b(G,V)=6$.
It follows by \eqref{eq:beta=b+1} that
$\beta(G)=b(G)+1 \ge b(G,V) +1 =7$.

On the other hand,  $G $ is a non-abelian semi-direct product$ (C_4 \oplus C_2) \rtimes C_2$.
Therefore $G$ has a normal subgroup $N$ such that $N\cong G/N\cong C_2 \oplus C_2$ and thus
\[ \beta(G) \le \beta_{\beta(C_2\oplus C_2)}(C_2\oplus C_2) = \mathsf D_3(C_2 \oplus C_2) = 7.\]
So we conclude that $\beta(G) = 7$.
\end{example}

\subsection{{\bf   The constants $\sigma (G, V)$ and $\eta (G,V)$}}~ \label{5.A}

\begin{definition}\label{def:sigma}~

\begin{enumerate}
\item  Let  $\sigma(G,V)$  denote the smallest  $d \in \N_0 \cup \{\infty\}$ such that $\F[V]^G$ is a finitely generated module over a subring $\F[f_1,\ldots,f_r]$ such that $\max \{ \deg(f_i) \colon i \in [1,r]\} = d$.  We define $\sigma(G)=\sup \{  \sigma(G,W) \colon W \ \text{is a $G$-module} \}$.

\item Let $S \subset \F[V]^G$ be the $F$-subalgebra of $\F[V]^G$ generated by its elements of degree at most $\sigma(G,V)$.
Then  $\eta(G,V)$ denotes the maximal degree of generators of $\F[V]^G_+$ as an $S$-module.
\end{enumerate}
\end{definition}

One motivation to study $\sigma(G,V)$ and $\eta(G,V)$ is that  by a straightforward induction argument (\cite[Section 4]{Cz-Do13c}) we have
\[
\beta_k(G,V) \le (k-1)\sigma(G,V) + \eta(G,V) \,.
\]
By    \cite[Proposition~6.2]{Cz-Do13c},  $\sigma(C_p \rtimes C_q) = p$ (this is also true in characteristic $q$, see \cite[Proposition 4.5]{Elm-Ko}).

If $\F$ is algebraically closed, then, by Hilbert's Nullstellensatz,
$\sigma(G,V)$ is the smallest  $d$ such that there exist homogeneous invariants of degree at most $d$ whose common zero locus is the origin.
It is shown in Lemma~5.1, 5.4 and 5.6 of \cite{Cz-Do13c}
 (some extensions to the modular case and for linear algebraic groups are given in  \cite{Elm-Ko}) that
\begin{itemize}
\item $\sigma(G) \le \sigma(G/N)\sigma(N)$  if  $N \triangleleft G$;
\item  $\sigma(H) \le \sigma(G) \le [G:H]\sigma(H)$  if  $H \le G$;
\item  $\sigma(G) = \max \{ \sigma(G,V) \colon V \text{ is an irreducible $G$-module}\}$.
\end{itemize}

\begin{proposition}\label{prop:abeliansigma}
Let  $G$ be abelian.
\begin{enumerate}
\item  $\sigma(G)=\exp(G)= \mathsf e(G) $.
\item $\eta(G) = \sup \{ \eta(G,W) \colon W \ \text{is a $G$-module} \}$.
\end{enumerate}
\end{proposition}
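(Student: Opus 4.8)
\textbf{Proof plan for Proposition \ref{prop:abeliansigma}.}

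The plan is to prove the two parts separately, with part 1 being the substantive one. For part 1, the equality $\exp(G)=\mathsf e(G)$ is immediate for abelian $G$ (as noted in Subsection \ref{3.C}, $G$ is nilpotent so $\mathsf e(G)=\exp(G)$). So the task is to show $\sigma(G)=\exp(G)$. First I would observe $\sigma(G)\ge\exp(G)$: by the last of the three bullet points recalled before the proposition, $\sigma(G)=\max\{\sigma(G,V)\colon V\text{ irreducible}\}$; since $\F$ is algebraically closed, irreducible $G$-modules are $1$-dimensional, say $V=\F x$ with $g\cdot x=\chi(g)x$ and $\ord(\chi)=m$. Then $\F[V]^G=\F[x^m]$, and any homogeneous system of parameters (indeed any generating system) must include $x^m$, so $\sigma(G,\F x)=m$; taking $\chi$ of order $\exp(G)$ gives $\sigma(G)\ge\exp(G)$. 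For the reverse inequality $\sigma(G)\le\exp(G)$, let $V$ be an arbitrary $G$-module with eigenbasis $x_1,\dots,x_n$ of $V^*$ and exponent $n_0:=\exp(G)$. Then $f_i:=x_i^{n_0}\in\F[V]^G$ for each $i$, and $\F[V]$ is a finitely generated module over $\F[f_1,\dots,f_n]$ (each $x_i$ is integral of degree $n_0$), hence so is the submodule-closed-under-multiplication subring $\F[V]^G$ — more carefully, $\F[V]^G$ is a finitely generated $\F$-algebra (Theorem \ref{4.1}) that is integral over $\F[f_1,\dots,f_n]$, so it is a finitely generated $\F[f_1,\dots,f_n]$-module. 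Since $\max\deg f_i=n_0$, this gives $\sigma(G,V)\le n_0=\exp(G)$, and taking the supremum over $V$ yields $\sigma(G)\le\exp(G)$.

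For part 2, the inequality $\eta(G)\le\sup\{\eta(G,W)\}$ is trivial if we read the right-hand side as a definition; presumably the content is the reverse, namely that $\eta(G)$ as a "global" constant (defined however it is defined in the surrounding text, via $\widehat G$ or the regular representation along the lines of Proposition \ref{prop:abeliansigma}.1) agrees with the supremum of $\eta(G,W)$ over all $G$-modules $W$. The plan is to exhibit a single module realizing the supremum: as in the proof of Proposition \ref{prop:bschmid}.4, the regular representation $V_{\mathrm{reg}}$ has $\widehat G_{V_{\mathrm{reg}}}=\widehat G$, so every character occurs among the $\chi_i$, and by the transfer-homomorphism / monomial description of $\F[V]^G$ one checks that $\eta(G,W)\le\eta(G,W\oplus V_{\mathrm{reg}})$ for every $W$ using the monotonicity-type argument of B. Schmid (\cite[Proposition 5.1]{Sc91a}, cf. \eqref{mono}), and that $\eta(G,V_{\mathrm{reg}}^{[k]})$ stabilizes; hence $\sup_W\eta(G,W)$ is attained and equals the value computed from $V_{\mathrm{reg}}$, which is the definition of $\eta(G)$. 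I would phrase this cleanly via the combinatorial model: under the isomorphism $M^G\cong\mathcal B(\widehat G_V)$-type transfer homomorphisms, $\sigma(G,V)$ corresponds to $\mathsf e(\widehat G_V)=\max\{\ord\chi\colon\chi\in\widehat G_V\}$ and $\eta(G,V)$ to the quantity $\eta(\widehat G_V)$ from Subsection \ref{2.E}; since $\widehat G_V\subset\widehat G$ with equality for $V_{\mathrm{reg}}$, monotonicity of $\eta$ in the ground set (Proposition \ref{3.8} combined with the definitions) gives that the sup over $W$ is realized at $V_{\mathrm{reg}}$.

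The main obstacle I anticipate is part 2 rather than part 1: pinning down exactly what "$\eta(G)$" means as a stand-alone symbol (the paper defines $\eta(G,V)$ but a bare $\eta(G)$ for abelian $G$ must be the value at $V_{\mathrm{reg}}$ or, equivalently, $\eta(\widehat G)$ in the sense of Subsection \ref{2.E}), and then verifying the monotonicity statement $\eta(G,W)\le\eta(G,V_{\mathrm{reg}})$ rigorously — this requires knowing that enlarging $V$ to contain every eigencharacter does not decrease $\eta$, which should follow from the transfer-homomorphism picture of Proposition \ref{prop:bschmid} together with the fact that $\sigma(G,V)=\exp(G)$ already for such enlargements (so the subring $S$ is "the same" combinatorially). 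Part 1's only mild subtlety is justifying that a finitely generated $\F$-algebra integral over a polynomial subalgebra is a finitely generated module over it, which is standard commutative algebra (a finitely generated algebra that is integral over a Noetherian subring is a finitely generated module over it).
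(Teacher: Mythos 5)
Your treatment of part 1 is correct and is in fact more self-contained than the paper's, which simply cites \cite[Corollary 5.3]{Cz-Do13c} for $\sigma(G)=\exp(G)$: the lower bound via one-dimensional modules and the upper bound via the invariants $x_i^{\exp(G)}$ together with integrality both work in the algebraically closed setting (which is the relevant one here, since an eigenbasis of $V^*$ is needed), and the equality $\exp(G)=\mathsf e(G)$ is indeed immediate for abelian $G$.

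Part 2, however, has a genuine gap, and it sits exactly where you anticipated. First, $\eta(G)$ is not ``the value at $V_{\mathrm{reg}}$''; it is the combinatorial constant $\eta(\mathcal B(G))$ of Subsections \ref{2.E} and \ref{3.A}, i.e.\ the smallest $\ell$ such that \emph{every} sequence over $G$ of length at least $\ell$ has a nontrivial zero-sum subsequence of length at most $\mathsf e(G)$; your two proposed readings are not ``equivalent'' --- their agreement is essentially the content of the statement, so taking it as a definition begs the question. Second, your dictionary ``$\eta(G,V)$ corresponds to $\eta(\widehat G_V)$'' does not match up: minimal generators of $\F[V]^G_+$ over $S$ correspond to \emph{zero-sum} sequences (invariant monomials) with no short zero-sum subsequence, whereas the combinatorial $\eta(\widehat G_V)$ quantifies over \emph{arbitrary} elements of $\mathcal F(\widehat G_V)$; moreover $S$ is cut off at $\sigma(G,V)=\mathsf e(\widehat G_V)$, which may be strictly smaller than $\mathsf e(G)$. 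Third, the monotonicity $\eta(G,W)\le\eta(G,W\oplus V_{\mathrm{reg}})$ is asserted but not proved, and it is not a formal consequence of Schmid's restriction argument, because the subalgebra $S$ in the definition of $\eta(G,\cdot)$ changes with the module ($\sigma(G,W\oplus V_{\mathrm{reg}})=\exp(G)$ can exceed $\sigma(G,W)$). The paper avoids this architecture entirely: for the upper bound it takes an arbitrary module $W$ and an arbitrary monomial $m$ with $\deg(m)>\eta(G)$, notes that $\psi(m)$ then has a zero-sum subsequence of length at most $\mathsf e(G)=\sigma(G)$, so $m$ is divisible by a short invariant monomial, which gives $\eta(G,W)\le\eta(G)$ directly from the combinatorial definition; for the lower bound it takes an extremal sequence $T$ of length $\eta(G)-1$ with no short zero-sum subsequence and pulls it back through the isomorphism $\psi\colon M\to\mathcal F(\widehat G)$ attached to the regular representation, producing a monomial that witnesses $\eta(G,V_{\mathrm{reg}})\ge\eta(G)$. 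To repair your proof you would need to replace the monotonicity-and-stabilization plan by these two direct inequalities.
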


\begin{proof}
For 1. see  \cite[Corollary 5.3]{Cz-Do13c}. To prove 2., let
$T\in\mathcal F(\widehat G)$ with $|T|=\eta(G)-1$ such that $T$ has no product-one subsequence $U$ with $|U|\in [1, \mathsf e(G)]$.
Let $V$ be the regular representation of $G$, and  denote by $S$ the subalgebra of $\F[V]^G$ generated by its elements of degree at most $\sigma(G)=\mathsf e(G)$.
Now  $\psi \colon M \to \mathcal{F}(\widehat G)$ is an isomorphism (see the proof of Proposition~\ref{prop:bschmid}.3.).
Thus $\psi^{-1}(T)\in M$ is not divisible by a $G$-invariant monomial of degree smaller than $\mathsf e(G)$. Since both $S$ and $\F[V]$ are spanned by monomials, it follows that $\psi^{-1}(T)\in M$ is not contained in the $S$-submodule of $\F[V]^G_+$ generated by elements of degree less than $\deg(\psi^{-1}(T))$. This shows that for the regular representation $V$ of $G$ we have $\eta(G,V)\ge \eta(\widehat G)$.

On the other hand let  $W$ be an arbitrary $G$-module, and $m\in M$ a monomial with $\deg(m)>\eta(G)$. Then $\psi(m)$ has a product-one subsequence with length at most $\mathsf e(G)=\sigma(G)$, hence  $m$ is divisible by a $G$-invariant monomial of length at most $\sigma(G)$ (see the beginning of the proof of Proposition~\ref{prop:bschmid}.2). This shows the inequality $\eta(G,W)\le \eta(\widehat G)$. Taking into account the isomorphism $\widehat G\cong G$ we are done.
\end{proof}

For the state of the art on $\eta (G)$ (in the abelian case) we refer to \cite[Theorem 5.8.3]{Ge-HK06a}, \cite{Fa-Ga-Zh11a,Fa-Ga-Wa-Zh13a}. Proposition \ref{prop:abeliansigma} inspires the following problem.

\begin{problem}
Let $G$ be a finite non-abelian group. Is $\sup \{ \eta(G,W) \colon W \ \text{is a $G$-module} \}$ finite? Is it related to $\eta ( \mathcal B (G))$ (see Subsection \ref{2.E} and \ref{3.A})?
\end{problem}

\acknowledgement{This work was supported by the {\it Austrian Science Fund FWF} (Project No. P26036-N26) and  by OTKA K101515 and PD113138.  }


\bibliographystyle{amsplain}
\providecommand{\bysame}{\leavevmode\hbox to3em{\hrulefill}\thinspace}
\providecommand{\MR}{\relax\ifhmode\unskip\space\fi MR }
\providecommand{\MRhref}[2]{%
  \href{http://www.ams.org/mathscinet-getitem?mr=#1}{#2}
}
\providecommand{\href}[2]{#2}

\end{document}